\renewcommand{\Im}{\operatorname{Im}}
\renewcommand{\Re}{\operatorname{Re}}
\newcommand{\T}{\mathbb{T}}
\newcommand{\norm}[1]{\|#1\|}
\newcommand{\mnorm}[1]{\left\|#1\right\|}
\newcommand{\normhs}[1]{\|#1\|_{\mathrm{HS}}}
\newcommand{\C}{\mathbb{C}}
\renewcommand{\epsilon}{\varepsilon}
\renewcommand{\Im}{\operatorname{Im}}
\let\deg\relax
\DeclareMathOperator{\deg}{deg}
\DeclareMathOperator{\spec}{spec}
\DeclareMathOperator{\rank}{rank}
\newcommand{\fS}{\mathfrak{S}}
\newcommand{\oN}{\overline N}
\newcommand{\oell}{\overline \ell}
\newcommand{\cA}{{\mathcal{A}}}
\newcommand{\car}{\mathop{\rm{Car}}\nolimits}
\newcommand{\Car}{\mbox{\rm Car}}
\newcommand{\nn}{\nonumber}
\newcommand{\uw}{{\underline{w}}}
\newcommand{\cB}{{\mathcal{B}}}
\newcommand{\cD}{{\mathcal{D}}}
\newcommand{\cG}{{\mathcal{G}}}
\newcommand{\cN}{{\mathcal{N}}}
\newcommand{\cR}{{\mathcal{R}}}
\newcommand{\cT}{{\mathcal{T}}}
\newcommand{\cS}{{\mathcal{S}}}
\newcommand{\cP}{{\mathcal{P}}}
\newcommand{\IC}{{\mathbb{C}}}
\newcommand{\IZ}{{\mathbb{Z}}}
\newcommand{\uz}{{\underline{z}}}
\newcommand{\Res}{\mathop{\rm{Res}}}
\newcommand{\La}{\Lambda}
\renewcommand{\deg}{\mbox{\rm deg}}
\renewcommand{\mod}{{\rm mod\ }}
\newcommand{\be}{\begin{eqnarray}}
\newcommand{\ee}{\end{eqnarray}}
\newcommand{\supp}{\operatorname{supp}}
\newcommand{\R}{{\mathbb R}}
\newcommand{\tor}{{\mathbb T}}
\newcommand{\Z}{{\mathbb Z}}
\newcommand{\dist}{\mbox{\rm dist}}
\newcommand{\mes}{{\rm mes}}
\newcommand{\Proj}{{\rm Proj}}
\renewcommand{\mod}{{\rm{mod}\, }}
\newtheorem{theorem}{Theorem}[section]
\newtheorem{thma}{Theorem}
\newtheorem{thmb}{Theorem}
\newtheorem{thmc}{Theorem}
\newtheorem{thmd}{Theorem}
\newtheorem{thme}{Theorem}
\newtheorem{lemma}[theorem]{Lemma}
\newtheorem{cor}[theorem]{Corollary}
\newtheorem{prop}[theorem]{Proposition}
\theoremstyle{definition}
\newtheorem{defi}[theorem]{Definition}
\theoremstyle{remark}
\newtheorem{remark}[theorem]{Remark}
\numberwithin{equation}{section}
\begin{document}
\title[On multi-frequency quasi-periodic operators]
{On localization and the spectrum of multi-frequency quasi-periodic operators}

\date{}
\author{ Michael Goldstein, Wilhelm Schlag, Mircea Voda}

\address{Department of Mathematics, University of Toronto,
  Toronto, Ontario, Canada M5S 1A1}
\email{gold@math.toronto.edu}

\address{Department of Mathematics, The University of Chicago, 5734 S. University Ave., Chicago, IL 60637, U.S.A.}
\email{schlag@math.uchicago.edu}

\address{Department of Mathematics, The University of Chicago, 5734 S. University Ave., Chicago, IL 60637, U.S.A.}
\email{mvoda@uchicago.edu}

\thanks{The second author was partially supported by the NSF, DMS-1500696. The first author thanks the University of Chicago for its hospitality during the months of July and August of 2016.}

\maketitle

\begin{abstract}
  We study multi-frequency quasi-periodic Schr\"{o}dinger operators on $\Z$ in the regime of positive Lyapunov exponent and for general analytic potentials.
  Combining  Bourgain's semi-algebraic elimination of multiple resonances~\cite{Bou07} with the method of elimination of
  double resonances from \cite{GolSch11},  we establish exponential finite-volume localization as well as
    the separation between the eigenvalues.
   In a follow-up paper~\cite{GolSchVod16a} we develop the method further
  to show that for potentials given by large generic trigonometric polynomials the spectrum consists of a single interval,
  as conjectured by Chulaevski and Sinai~\cite{ChuSin89}.
\end{abstract}

\tableofcontents


\section{Introduction}\label{Sec:intro}

In their pioneering paper \cite{ChuSin89}, Chulaevsky and Sinai analyzed the  spectrum and localized eigenfunctions of Schr\"odinger operators on $\Z$
with a large quasi-periodic potential
given by evaluating a generic smooth function on $\tor^{2}$ along the orbit of an ergodic shift.
They conjectured that in contrast to the shift on the one-dimensional torus $\tor$, for which the spectrum is typically a Cantor set,
 for the two-dimensional shift the spectrum can be an interval. 

\smallskip

In the 25 years since \cite{ChuSin89},
the theory of quasi-periodic operators has been  developed extensively, see 
\cite{Bou05} and \cite{JitMar16}.
Techniques from complex and harmonic analysis, the theory of semi-algebraic sets,
and the theory of quasi-periodic cocyles played a key role in these developments.
For the one-dimensional shift the spectral theory of quasi-periodic Schr\"odinger co-cycles is almost
complete. Most of the results have been established {\em non-perturbatively}, i.e., either in the regime
of almost reducibility or in the regime of positive Lyapunov exponent, and
Avila's global theory, see \cite{Avi15}, gives a qualitative spectral picture, covering both regimes, for generic
potentials.
In the regime of positive Lyapunov exponent with generic
frequency the spectrum is a Cantor set which is Carleson-homogeneous, see \cite{GolSch11} and
\cite{DamGolSchVod15}. In this regime finite-volume exponential localization holds outside a set in phase space
which is  exponentially small in terms of the size of the interval. Moreover, one has a quantitative control
on the repulsion between all eigenvalues, see \cite{GolSch08}. For the case of the almost Mathieu operator
(corresponding to a cosine potential), both 
 localization and the spectrum have been studied in great detail under arithmetic conditions
on the frequency, see  Jitomirskaya~\cite{Jit99} for localization, and Puig~\cite{Pui04} and
Avila, Jitmoriskaya~\cite{AviJit09,AviJit10} for the Cantor structure of the spectrum.

\smallskip

The spectral theory of Schr\"odinger co-cycles over  multidimensional shifts  on $\tor^{d}$ turns out to be more intricate to analyze.
For $d=2$ exponential  localization was established in \cite{BouGol00} almost everywhere 
 in phase space.
For $d>2$ the same result was developed
in~\cite{Bou05}. However, these results were established directly in infinite volume, whereas localization in finite volume remained unknown. Regarding the structure of the spectrum, we note that Bourgain \cite{Bou02a} proved
existence of gaps for small potential and atypical frequencies. Numerical investigations by Puig, Sim\'o
\cite{PuiSim11} indicate that the spectral set may range from a Cantor
set, to a finite union of intervals, to one interval, depending on the largeness of the potential, which can be tuned
through a coupling constant.

\smallskip

In this paper we develop some basic features of the multi-frequency model, which are needed in the resolution of the Chulaevski-Sinai
conjecture. Heuristically speaking, gaps in the spectrum of the one-frequency operators  are created by resonances between an eigenvalue of one scale, and another shifted 
 eigenvalue of the same scale, see~\cite{Sin87,GolSch11}. In contrast to this, the heuristic principle underlying \cite{ChuSin89}  is that the graph of an eigenvalue
 on finite volume parametrized by the phase is too large to be destroyed along an entire horizontal section by the ``forbidden zones'' created by resonances. 
It is clear that some genericity assumption on the potential function is needed for this to be true, since $V(x,y)=v(x)$ has Cantor spectrum.
Implementing such an argument, however, appears to be very challenging for a number of reasons.
First, long chains of resonances   might occur rendering the
eigenvalue parametrization hard to handle. Second, the analytical techniques available in finite volume
are less favorable (mainly the large deviation theorems and everything that depends on them, see below) as compared to the case of one frequency.

\smallskip

Nevertheless,   this paper obtains  precisely finite volume localization and level repulsion estimates as in~\cite{GolSch08,GolSch11}
 for the case of multiple frequencies. This is made possible by introducing a new device into the analysis, namely Bourgain's semi-algebraic elimination technique from~\cite{Bou07}.
 We emphasize that we do not assume large disorder, so our arguments are nonperturbative and rely only on positive Lyapunov exponents.
 In a subsequent paper~\cite{GolSchVod16a}  the results obtained here are used in a spectral analysis to show that indeed \textit{level sets of eigenvalues cannot be completely destroyed} for large generic potentials.

\smallskip

To give a more detailed explanation of the difficulties we encounter in the finite interval localization problem
for the case of several frequencies, we need to recall some
standard definitions and some basic results.

Let V be a real-valued analytic function on the $ d $-dimensional torus $\mathbb{T}^d$ ($ \tor=\R/\Z $). We consider
the family of discrete Schr\"odinger operators defined by
\begin{equation}
  \label{eq:schr}
  [H(x,\omega)\psi](n)= -\psi(n+1)-\psi(n-1) + V(x+n\omega)\psi(n)
\end{equation}
with the frequency vector $ \omega\in \T^d $ satisfying
the standard Diophantine condition
\begin{equation}\label{vecdiophant}
  \|k\cdot\omega \|\ge\frac{a}{|k|^{b}}\mbox{\ \ for all nonzero $k\in\Z^d$},
\end{equation}
where $ a>0 $, $ b>d $ are some constants (here $ \norm{\cdot} $ denotes the distance to the nearest integer and
$ |\cdot| $ stands for the sup-norm on $ \Z^d $). It is well known that for any $ b>d $, a.e.~$\omega\in \T^d $
satisfies~\eqref{vecdiophant} with some $ a=a(\omega) $. We denote by $\tor^d(a,b)$ the set of $\omega$ which
obey \eqref{vecdiophant}.
We denote by $H_{[a,b]}(x,\omega)$ the operator
on the finite interval $[a,b]$ with Dirichlet boundary conditions and by
\begin{equation*}
  f_{[a,b]}(x,\omega,E):= \det (H_{[a,b]}(x,\omega)-E)
\end{equation*}
the Dirichlet determinants.

\smallskip

The development of finite volume localization starts with {\em large deviation theorems} (LDT)
 of the form
\begin{equation}\label{eq:LDEIntro}
\mes \left\{ x\in\tor^d : |\log |f_{[1,N]}(x,\omega,E)|-NL(\omega,E)| > N^{1-\tau} \right\}
  < \exp(-N^{\sigma}),
\end{equation}
where $L(\omega,E)$ stands for the Lyapunov exponent, see~\cite{GolSch08}.
This applies for arbitrary $ d\ge 1 $.
For the one-dimensional torus $\tor$,
 the estimate was sharpened in~\cite{GolSch08} so that it implies a $(\log N)^A$--estimate
for the local number of zeros of $f_{[1,N]}(z,\omega,E)$ when $\omega$ and $E$ are fixed and
the phase $z$ runs over a  complex neighborhood of the
torus.
This level of precision   allows for a Weierstrass preparation theorem factorization of the
determinant $f_{[1,N]}(\cdot,\omega,E)$ with a polynomial factor of degree $(\log N)^A$.
This low   degree is crucial as it allows, in combination with other tools developed in
\cite{GolSch11}, for an effective control over the {\em double resonances} of the problem.
The latter refers to the phases $x$ for which the (LDT) estimate \eqref{eq:LDEIntro} fails
twice:  at $x$ and at $x+n\omega$ with $n$ in the range $N<|n|<N^A$, $A\gg 1$. In terms of exponentially localized eigenfunctions
at   scale  $N$, it means that after shifting one of them  on $\Z$ by  $n$, this pair will be very close to forming an eigenfunction of a larger scale, say $N^C$
with $C$ large. That is the meaning of {\em resonance} in an inductive scheme and it is a key feature    in \cite{GolSch11} and
\cite{DamGolSchVod15}.

\smallskip

It is well-known that for $d>1$ the deviations in the large deviations theorem are much larger than
for $d=1$, because the discrepancy of an orbit of length $N$ of a shift on $\tor^d$ cannot be reduced beyond some fixed power of $N$, whereas in one dimension
it can be made essentially logarithmic in $N$. In particular, for $ d>1 $, the (LDT) estimate \eqref{eq:LDEIntro} cannot be improved beyond
$N^{1/2}$-deviations. 
One of the main ideas here is to gain control over the {\em local deviations}
of $\log |f_{[1,N]}(x,\omega,E)|$ rather than insisting  on the whole torus $\tor^d$.
To do so, we rely on  Bourgain's~\cite{Bou07} analysis of the structure
of an arbitrary set $\cR\subset \mathbb{Z}$ of shifts of a given semi-algebraic set
$\cA\subset\tor^d\times\tor^d\times \mathbb{R}$ with
controlled size and complexity,
such that
\begin{equation}\label{eq:bourgainIntro1}
  \bigcap_{n\in\cR} \{x\in\tor^d:(x+n\omega,\omega,E)\in \cA\}\neq \emptyset.
\end{equation}
The phases $x$ in \eqref{eq:bourgainIntro1}
are called multiple resonances. Bourgain's result states that after removing a small set of $\omega$'s
{\em uniformly in} $x,E$ the set
$\cR$ in \eqref{eq:bourgainIntro1} has a {\em lacunary structure}.  This structure defines
for any given phase $x$ and scale $L$ a suitable size $L\le N\le L^A$ for which
we show that the local deviations
of $\log |f_{[1,N]}(x,\omega,E)|$ behave almost as well as in the one-dimensional case.

The aforementioned statement from~\cite{Bou07} is a combination of two techniques: (i) a purely semi-algebraic analysis
of the set $\tilde\cA:=\{(x,y)\in \tor^{2d}\::\: \cA(x)\cap \cA(y)\neq \emptyset\}$, stating that this is a small semi-algebraic set
(ii) the {\em method of steep planes} from \cite{BouGol00,Bou05,Bou07} which implies that it is unlikely that $(x,x+n\omega)\in \tilde\cA$ for fixed $x$; this means that the measure
of $\omega\in\tor^d$ for which this occurs is small provided $n$ is large enough.  

We note that, while Bourgain's technique applies uniformly to all phases and energies, the scales determined by it are  sensitive to the choice of these parameters. 
In and of itself, the measure estimate on resonant $\omega$'s obtained in this fashion is too weak to be useful for finite volume localization and the separation of the eigenvalues. 
Indeed, it cannot be summed over the range $N\le n\le 2N$,  since the measure bound is never better than $N^{-1}$ for a single choice of such $n$.
In contrast, the technique from \cite{GolSch11}, which is based on  resultants and the Cartan bound on large negative values of
subharmonic functions, gives subexponential bounds which can be summed. However, one needs
good control on the degree of the polynomials going into the resultant and
a nondegeneracy condition on the resultant, in order to avoid that the resultant
is close to $0$ everywhere on a disk. For $ d=1 $ these issues are addressed by the sharpening of the (LDT), whereas
for $ d>1 $ we have to take advantage of the flexibility of the methods based on semi-algebraic sets.
We gain the needed control on the degrees of the polynomials by working at
the scales afforded by the lacunary structure of the set of multiple resonances and we obtain the nondegeneracy
condition by the aforementioned semialgebraic-steep-planes elimination method.  The combination of all of these techniques in effect allows us to work with {\em three successive scales} of our inductive procedure.

\medskip

We now state  the main results of the paper.
Given an interval $ \Lambda\subset \Z $ we use $ E_j^\Lambda(x,\omega) $, $ \psi_j^\Lambda(x,\omega) $ to
denote the eigenpairs of
$ H_\Lambda(x,\omega) $, with $ \psi_j^\Lambda(x,\omega) $ a unit vector. When $ \Lambda=[1,N] $ we
use the shorter notation $ H_N(x,\omega) $, $ E^{(N)}_j(x,\omega) $, $ \psi_j^{(N)}(x,\omega) $.
\begin{thma}\label{thm:A}
  Let $ \epsilon\in(0,1/5) $, $ \gamma>0 $. There exists $ \sigma=\sigma(a,b) $ and
  a set $ \Omega_N $,
  \begin{equation*}
    \mes(\Omega_N)<\exp(-(\log N)^{\epsilon\sigma}),
  \end{equation*}
  such that for
  $ \omega\in \T^d(a,b)\setminus \Omega_N  $ there exists a set $ \cB_{N,\omega} $,
  \begin{equation*}
    \mes(\cB_{N,\omega})<\exp(-\exp((\log N)^{\epsilon\sigma}))
  \end{equation*}
  and the following holds.
  For any $ N\ge N_0(V,a,b,\gamma,\epsilon) $, $ \omega\in \T^d(a,b)\setminus \Omega_N $,
  $ x\in \T^d\setminus \cB_{N,\omega} $,
  and any eigenvalue $ E=E_j^{(N)}(x,\omega) $, such that $ L(\omega,E)>\gamma $,
  there exists an interval
  \begin{equation*}
    I=I (x, \omega,E)\subset [1,N],\quad  |I|<\exp((\log N)^{5\epsilon}),
  \end{equation*}
  such that
  \begin{equation}\label{eq:thm-a-ev-decay}
    \left| \psi_j^{(N)}(x,\omega;m) \right|< \exp\left(-\frac{\gamma}{4}\dist(m,I)\right),
  \end{equation}
  provided  $ \dist(m,I)> \exp((\log N)^{2\epsilon\sigma})$.
\end{thma}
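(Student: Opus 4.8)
The plan is to run the standard Cramer's-rule mechanism that converts large deviation bounds on the Dirichlet determinants $f_{[a,b]}(x,\omega,E)$ into exponential decay of eigenfunctions, with the new ingredient being Bourgain's semi-algebraic elimination, which is what supplies usable determinant bounds at the scales we need when $d>1$.

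\textbf{Step 1 (decay from good intervals).} For an interval $\Lambda'=[a',b']\subset[1,N]$ with $E\notin\spec H_{\Lambda'}(x,\omega)$, Cramer's rule gives
\[
  |G_{\Lambda'}(x,\omega,E)(p,q)|=\frac{|f_{[a',p-1]}(x,\omega,E)|\,|f_{[q+1,b']}(x,\omega,E)|}{|f_{[a',b']}(x,\omega,E)|},\qquad p\le q ,
\]
and the eigenfunction $\psi=\psi_j^{(N)}(x,\omega)$ of $H_N(x,\omega)$ satisfies $\psi(p)=-G_{\Lambda'}(p,a')\psi(a'-1)-G_{\Lambda'}(p,b')\psi(b'+1)$ for $p\in\Lambda'$. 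Hence, if $|f_{[a',b']}|\ge\exp((b'-a')L(\omega,E))$ up to an error that is $o(\gamma(b'-a'))$, while the numerators obey the routine submultiplicative upper bound, then for $p$ in the middle third of $\Lambda'$ one gets $|\psi(p)|\le\exp(-\tfrac{\gamma}{3}(b'-a'))\max_{\partial\Lambda'}|\psi|$. Call such $\Lambda'$ a good interval for $E$. Chaining overlapping good intervals propagates this into genuine exponential decay of $|\psi|$ as one moves away from a fixed point, as long as one never has to cross a position that fails to sit in the middle of any good interval of the relevant length.

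\textbf{Step 2 (producing good intervals at the right scale for $d>1$).} Fix an intermediate length $\ell:=\exp((\log N)^{c\epsilon})$. The content of the body of the paper, built on \cite{Bou07} and \cite{GolSch11}, is that after deleting $\Omega_N$ with $\mes(\Omega_N)<\exp(-(\log N)^{\epsilon\sigma})$, for every $x\notin\cB_{N,\omega}$ (with $\mes(\cB_{N,\omega})<\exp(-\exp((\log N)^{\epsilon\sigma}))$) and every $E$ with $L(\omega,E)>\gamma$, the positions $p$ along the orbit $\{x+p\omega\}$ at which the (LDT) \eqref{eq:LDEIntro} fails at scale $\ell$ form — \emph{uniformly in $x$ and $E$} — a single cluster of length $<\exp((\log N)^{5\epsilon})$; every $p\in[1,N]$ outside this cluster is the center of a good interval of length comparable to $\ell$. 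This is precisely where Bourgain's lacunarity for the shift set $\cR$ in \eqref{eq:bourgainIntro1} is used to fix the scale, and the Goldstein–Schlag resultant-plus-Cartan argument, run at that scale, where the polynomial degrees and the nondegeneracy of the relevant resultants are under control, is what upgrades the weak uniform measure bound into the summable bound needed to absorb all excluded parameters into $\Omega_N$ and $\cB_{N,\omega}$ with the stated sizes; the ``three successive scales'' $\ell\ll\ell^A\ll N$ are what make the resultant hypotheses available.

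\textbf{Step 3 (localization interval and the decay estimate).} Let $m_0\in[1,N]$ maximize $|\psi(m_0)|$, so $|\psi(m_0)|\ge N^{-1/2}>\exp(-\tfrac{\gamma}{3}\ell)$ by the choice of $\ell$. By the contrapositive of Step 1, $m_0$ cannot lie in the middle of a good interval, so it lies in the resonance cluster; since by Step 2 there is only one such cluster and it has length $<\exp((\log N)^{5\epsilon})$, we take $I=I(x,\omega,E)$ to be (a slight enlargement of) this cluster, $|I|<\exp((\log N)^{5\epsilon})$. For $m\notin I$ no resonances are encountered, so chaining good intervals from $\partial I$ outward gives $|\psi(m)|\le\exp(-\tfrac{\gamma}{3}\dist(m,I))\max_{\partial I}|\psi|\le\exp(-\tfrac{\gamma}{3}\dist(m,I))$; once $\dist(m,I)>\exp((\log N)^{2\epsilon\sigma})$ the exponential gain absorbs the $N^{-1/2}$ and the accumulated error factors, yielding $|\psi_j^{(N)}(x,\omega;m)|<\exp(-\tfrac{\gamma}{4}\dist(m,I))$, as claimed.

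\textbf{Main obstacle.} The crux is Step 2. Bourgain's semi-algebraic-plus-steep-planes elimination by itself only yields $\omega$-measure bounds of order $N^{-1}$ for a single shift, which cannot be summed over a dyadic block of shifts; the Goldstein–Schlag subharmonic/resultant method does give summable subexponential bounds but only under a priori degree control and a nondegeneracy condition on the resultant. Interlocking the two — using the lacunary structure of $\cR$ exactly to select the scale at which degree control and nondegeneracy hold, and then carrying the three-scale bookkeeping so that the final bounds on $\mes(\Omega_N)$ and $\mes(\cB_{N,\omega})$ come out as stated — is the heart of the argument; Steps 1 and 3 are the by-now-standard localization machinery of \cite{GolSch08,GolSch11}.
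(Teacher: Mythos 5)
Your proposal is correct and follows essentially the same route as the paper: a resonant point $m_0$ is located at the maximum of the eigenfunction, the interval $I$ is a cluster around it, decay away from $I$ comes from Poisson's formula/Cramer's rule plus the covering form of the large deviation estimate, and the key input is the elimination of long-range double resonances obtained by interlocking Bourgain's semi-algebraic/steep-planes elimination (to get lacunary (NDR) scales with degree control) with the resultant-plus-Cartan method (to get summable measure bounds), exactly as in Sections 4--8. The only imprecision is in Step 2: the paper does not show that the scale-$\ell$ LDT failures form a single cluster; rather, given one resonant point $m_0$, it shows the LDT holds at a strictly larger derived scale $\oell$ everywhere at distance $\gtrsim \exp(\ell^2)$ from $m_0$ (Lemma~\ref{lem:elimination}), which is what feeds into Proposition~\ref{prop:NDRloc1} — but this is a bookkeeping difference, not a gap in the strategy.
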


\begin{thmb}\label{thm:B}
  Let $ \epsilon\in(0,1/5) $, $ \gamma>0 $, and $ \Omega_N $, $ \cB_{N,\omega} $ as in \cref{thm:A}.
  For any $ N\ge N_0(V,a,b,\gamma,\epsilon) $, $ \omega\in \T^d(a,b)\setminus \Omega_N $,
  $ x\in \T^d\setminus \cB_{N,\omega} $,
  and any eigenvalue $ E=E_j^{(N)}(x,\omega) $, such that $ L(\omega,E)>\gamma $,
  we have
  \begin{equation*}
    |E_k^{(N)}(x,\omega)-E_{j}^{(N)}(x,\omega)|>\exp(-C(V)|I|)
  \end{equation*}
  for any $ k\neq j $, with $ I=I(x,\omega,E) $ as in \cref{thm:A}.
\end{thmb}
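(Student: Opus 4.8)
The plan is to adapt the argument of \cite{GolSch08} to the multi-frequency setting. Its soft core is an elementary separation bound for Jacobi matrices on \emph{short} intervals; the real work lies in transporting it to $H_N(x,\omega)$ by means of the resolvent estimates off the resonant set that are produced while proving \cref{thm:A}.

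First I would record the following. For any finite interval $\Lambda=[p,q]\subset\Z$, writing $H_\Lambda=H_\Lambda(x,\omega)$, $f_\Lambda(E)=\det(H_\Lambda-E)$, and $\psi_i^\Lambda(m)$ for the $m$-th entry of the normalized eigenvector $\psi_i^\Lambda(x,\omega)$, the eigenvalues of $H_\Lambda$ are simple and satisfy
\begin{equation*}
  \bigl|E_i^\Lambda(x,\omega)-E_\ell^\Lambda(x,\omega)\bigr|\ge\bigl(4+2\|V\|_\infty\bigr)^{-|\Lambda|},\qquad i\ne\ell .
\end{equation*}
Indeed, differentiating the Dirichlet determinant gives $f_\Lambda'(E)=-\sum_{m=p}^{q}f_{[p,m-1]}(E)\,f_{[m+1,q]}(E)$, and at $E=E_i^\Lambda$ the truncated determinants $f_{[p,m-1]}(E_i^\Lambda)$, $f_{[m+1,q]}(E_i^\Lambda)$ solve the difference equation with the appropriate boundary condition, hence equal $\psi_i^\Lambda(m)/\psi_i^\Lambda(p)$ and $\psi_i^\Lambda(m)/\psi_i^\Lambda(q)$ (both denominators are nonzero); summing over $m$ gives $f_\Lambda'(E_i^\Lambda)=-\bigl(\psi_i^\Lambda(p)\,\psi_i^\Lambda(q)\bigr)^{-1}$. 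Since $\psi_i^\Lambda$ is a real unit vector, $\prod_{\ell\ne i}|E_\ell^\Lambda-E_i^\Lambda|=|f_\Lambda'(E_i^\Lambda)|\ge 1$, while all but one of these factors are at most $4+2\|V\|_\infty$ because $\spec H_\Lambda\subset[-2-\|V\|_\infty,2+\|V\|_\infty]$; the bound follows.

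Now fix $E=E_j^{(N)}(x,\omega)$ with $L(\omega,E)>\gamma$, let $I=I(x,\omega,E)$ and $\psi=\psi_j^{(N)}(x,\omega)$ be as in \cref{thm:A}, and suppose toward a contradiction that $|E_k^{(N)}(x,\omega)-E|<\exp(-C(V)|I|)$ for some $k\ne j$, with $C(V)$ a large constant to be fixed. By H\"older continuity of $L(\omega,\cdot)$ in $E$ this forces $L(\omega,E_k^{(N)})>\gamma/2$, so \cref{thm:A} with $\gamma$ replaced by $\gamma/2$ (at the cost of a larger $N_0$ and slightly larger exceptional sets, which we absorb into $\Omega_N$ and $\cB_{N,\omega}$) localizes $\psi_k:=\psi_k^{(N)}(x,\omega)$ exponentially on an interval $I_k$ with $|I_k|<\exp((\log N)^{5\epsilon})$. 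The structural point, which I would extract from the analysis underlying \cref{thm:A}, is that for $\omega\notin\Omega_N$, $x\notin\cB_{N,\omega}$ there is no double resonance at scale $N$: two eigenfunctions of $H_N(x,\omega)$ with eigenvalues this close cannot localize far apart, and the localization intervals attached by \cref{thm:A} to nearby energies are comparable in length. Hence $I$ and $I_k$ lie in a common interval $\Lambda\subset[1,N]$ with $|\Lambda|\le C(V)|I|$, which I enlarge within $[1,N]$ to $\Lambda'$ (still $|\Lambda'|\le C(V)|I|$) so that $[1,N]\setminus\Lambda'$ is well separated from $I\cup I_k$. By \eqref{eq:thm-a-ev-decay}, $\psi$ and $\psi_k$ are then extremely small on $[1,N]\setminus\Lambda'$, so $\chi_{\Lambda'}\psi$ and $\chi_{\Lambda'}\psi_k$ (with $\chi_{\Lambda'}$ the indicator of $\Lambda'$) are approximate eigenvectors of $H_{\Lambda'}(x,\omega)$ for the eigenvalues $E$ and $E_k^{(N)}$, with residuals supported next to $\partial\Lambda'$ and far below $(4+2\|V\|_\infty)^{-|\Lambda'|}$. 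Thus $\spec H_{\Lambda'}(x,\omega)$ contains two distinct points $\widetilde E,\widetilde E_k$ within such a residual of $E$ and $E_k^{(N)}$; the first ingredient, applied to $\Lambda'$, then forces $|E-E_k^{(N)}|\ge\tfrac12(4+2\|V\|_\infty)^{-|\Lambda'|}\ge\exp(-C(V)|I|)$ once $C(V)$ is chosen large enough, contradicting the hypothesis.

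The main obstacle is not the first ingredient, which is classical, but the justification of the second: ruling out a double resonance at scale $N$ and, quantitatively, showing that the quasimode residuals are genuinely negligible against $(4+2\|V\|_\infty)^{-|\Lambda'|}$. Both rest on the resolvent bounds for $H_N(x,\omega)$ restricted to $[1,N]\setminus I$ that are established in proving \cref{thm:A} — i.e.\ on the local large-deviation estimates for $\log|f_{[1,N]}(x,\omega,E)|$ at the scales furnished by the lacunary structure of the multiple resonances of \cite{Bou07}, together with the resultant and Cartan-type arguments of \cite{GolSch11}. For $d=1$ this is precisely the route of \cite{GolSch08}; the content of the present paper is that the analytic inputs it requires are now available for $d>1$ as well.
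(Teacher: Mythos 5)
Your first ingredient is correct and classical: from $f_\Lambda'(E)=-\sum_m f_{[p,m-1]}(E)f_{[m+1,q]}(E)$ one indeed gets $f_\Lambda'(E_i^\Lambda)=-\bigl(\psi_i^\Lambda(p)\psi_i^\Lambda(q)\bigr)^{-1}$ and hence a minimal gap of order $(4+2\|V\|_\infty)^{-|\Lambda|}$ for $H_\Lambda$. The gap in your argument is the transfer step, and it is not a matter of sharpening the resolvent estimates from \cref{thm:A} --- it is a mismatch of exponential rates that no choice of $\Lambda'$ can repair. Write $\Lambda'=\{n:\dist(n,I)\le d\}$, so $|\Lambda'|\approx|I|+2d$. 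The quasimode residual of $\chi_{\Lambda'}\psi$ is governed by the boundary values of $\psi$, i.e.\ by \eqref{eq:thm-a-ev-decay}, and is therefore of size $\exp(-\tfrac{\gamma}{4}d)$ --- the decay rate is the Lyapunov exponent lower bound $\gamma$, nothing better. For your scheme to close you need this residual to be far below the elementary separation threshold $(4+2\|V\|_\infty)^{-|\Lambda'|}$, i.e.\ $\tfrac{\gamma}{4}d\gg C_1(V)(|I|+2d)$ with $C_1(V)=\log(4+2\|V\|_\infty)$. This forces $\tfrac{\gamma}{4}>2C_1(V)$, which never holds: on the spectrum $\gamma\le L(\omega,E)\le L_1(\omega,E)\le\log(3+2\|V\|_\infty)<C_1(V)$. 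So in the nonperturbative regime the paper works in (no largeness assumption on the potential), the residual always swamps the target gap, and the same obstruction also prevents you from certifying that the two eigenvalues $\widetilde E,\widetilde E_k$ of $H_{\Lambda'}$ are distinct.

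The paper's proof (\cref{prop:Ej_NDRsep}) avoids this entirely by never comparing the eigenfunction decay against an exponentially small quantity in $|I|$. It uses the orthogonality $\|\psi_1-\psi_2\|^2=\|\psi_1\|^2+\|\psi_2\|^2$ of the two (unnormalized) eigenfunctions $\psi_i(n)=f_{n-1}(x,\omega,E_i)$, and splits the sum: outside a slight enlargement of $I$ the decay from \cref{prop:NDRloc1} only has to beat an absolute constant ($\exp(-\tfrac{\gamma}{4}\ell^{2/\sigma})\ll1$), while inside $I$ the difference $|\psi_1(n)-\psi_2(n)|$ is bounded by $\exp(C(V)|I|)\,|E_1-E_2|\,\max_i\|\psi_i\|$ via a Harnack-inequality estimate for $\log|f_n(x,\omega,\cdot)|$ (\cref{lem:Harnack-app}) together with the transfer-matrix bound $\|M_{[m,n]}\|\le\exp(C(V)|I|)$. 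There the rate $C(V)$ appears on the \emph{loss} side, matched against $|E_1-E_2|<\exp(-C|I|)$ with $C$ chosen larger, so the two rates are compatible and the contradiction with orthogonality closes. If you want to salvage your route you would have to replace the elementary Jacobi bound by something whose deterioration rate is at most $\gamma$, which is essentially what the Harnack argument accomplishes.
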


Similarly to \cite{GolSch11}, finite scale localization and separation of eigenvalues allows us to give an effective,
quantitative, and detailed description of the spectrum on the whole lattice $ \Z $ in terms of the spectrum on
finite volume. This is achieved for eigenpairs in \cref{thm:C} and for spectral sets in \cref{thm:D}. This kind of
results are crucial for carrying out a spectral analysis along the lines of \cite{GolSch11}.
In what follows, when we take the norm of vectors like $ \psi_j^{(N)} $, we will always use the $ \ell^2 $ norm.
\begin{thmc}\label{thm:C}
  Let $ \epsilon\in (0,1/5) $, $ \gamma>0 $.
  With $ \Omega_N $, $ \cB_{N,\omega} $ as in \cref{thm:A}, applied on $ [-N,N] $ instead of $ [1,N] $,
  let $ \hat\Omega_{N_0}=\bigcup_{N\ge N_0}\Omega_{N} $,
  $ \hat\cB_{N_0,\omega}=\bigcup_{N\ge N_0} \cB_{N,\omega} $. The following statements hold for any
  $ N_0\ge C(V,a,b,\gamma,\epsilon) $, $ \omega\in \T^d(a,b)\setminus \hat\Omega_{N_0} $,
  $ x\in\T^d\setminus \hat \cB_{N_0,\omega} $.

  \noindent{\normalfont (a)} Let $ N_k=N^{2^k} $. If $ L(\omega,E_j^{[-N,N]}(x,\omega))>2\gamma $ and
  $ I=I(x,\omega,E_j^{[-N,N]}(x,\omega))\subset [-N/2,N/2] $, then for each $ k\ge 1 $ there exist $ j_k $
  such that
  \begin{gather}
    \label{eq:jk}
    \begin{gathered}
      \left| E_{j_k}^{[-N_k,N_k]}(x,\omega)-E_{j}^{[-N,N]}(x,\omega) \right|
      <\exp\left( -\frac{\gamma}{10}N \right), \\
      \norm{\psi_{j_k}^{[-N_k,N_k]}(x,\omega)-\psi_{j}^{[-N,N]}(x,\omega)}
      <\exp\left( -\frac{\gamma}{10}N \right),
    \end{gathered}\\
    \label{eq:psik}
    \left| \psi_{j_k}^{[-N_k,N_k]}(x,\omega;n) \right|<\exp\left( -\frac{\gamma}{20}\dist(n,I) \right),\
    3N/4\le |n|\le N_k.
  \end{gather}
  Furthermore, for any $ k'\ge k\ge 0 $,
  \begin{equation}\label{eq:jk'k}
    \begin{gathered}
      \left| E_{j_{k'}}^{[-N_{k'},N_{k'}]}(x,\omega)-E_{j_k}^{[-N_k,N_k]}(x,\omega) \right|
      <\exp\left( -\frac{\gamma}{10}N_k \right),\\
      \norm{\psi_{j_{k'}}^{[-N_{k'},N_{k'}]}(x,\omega)-\psi_{j_k}^{[-N_k,N_k]}(x,\omega)}
      <\exp\left( -\frac{\gamma}{10}N_k \right).
    \end{gathered}
  \end{equation}
  In particular, the limits
  \begin{equation*}
    E(x,\omega):=\lim_{k\to \infty} E_{j_k}^{[-N_k,N_k]}(x,\omega),\quad
    \psi(x,\omega;n):= \lim_{k\to \infty}\psi_{j_k}^{[-N_k,N_k]}(x,\omega;n),\ n\in \Z
  \end{equation*}
  exist, $ \norm{\psi}=1 $, and
  \begin{equation}\label{eq:psi}
    \left| \psi(x,\omega;n) \right|<\exp\left( -\frac{\gamma}{20}\dist(n,I) \right),\
    3N/4\le |n|.
  \end{equation}

  \noindent{\normalfont (b)} If $ L(\omega,E)>3\gamma $ for all $ E\in [E',E''] $, then
  $ P([E',E''])H(x,\omega)P([E',E'']) $
  has purely pure point spectrum and all the eigenpairs, with unit eigenvector,
  can be obtained from {\normalfont (a)}.
\end{thmc}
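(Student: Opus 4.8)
For part (a) I would run a telescoping bootstrap from finite volume to $\Z$, in the spirit of \cite{GolSch11}. Fix the data $E=E_j^{[-N,N]}(x,\omega)$, $\psi=\psi_j^{[-N,N]}(x,\omega)$, $I\subset[-N/2,N/2]$, $|I|<\exp((\log N)^{5\epsilon})$ as in the hypothesis, set $j_0:=j$, and argue by induction on $k$: given $j_k$ with $E_{j_k}^{[-N_k,N_k]}(x,\omega)$ within $\exp(-\gamma N/10)$ of $E$ --- whence $L(\omega,E_{j_k}^{[-N_k,N_k]})>\gamma$ by continuity of $L(\omega,\cdot)$ once $N\ge N_0$, so \cref{thm:A} applies at scale $N_k$ --- and with the localization interval $I_k:=I(x,\omega,E_{j_k}^{[-N_k,N_k]})$ contained in $[-N,N]$, extend $\psi_{j_k}^{[-N_k,N_k]}$ by zero to $[-N_{k+1},N_{k+1}]$ to obtain a unit vector $\widetilde\psi_k$. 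The operators $H_{[-N_{k+1},N_{k+1}]}(x,\omega)$ and $H_{[-N_k,N_k]}(x,\omega)$ differ only through the hopping across $\pm N_k$, so
\[
  \bigl\| (H_{[-N_{k+1},N_{k+1}]}(x,\omega) - E_{j_k}^{[-N_k,N_k]})\widetilde\psi_k \bigr\| \le \sqrt2\,\max_{|n|=N_k}\bigl| \psi_{j_k}^{[-N_k,N_k]}(x,\omega;n) \bigr| =: \delta_k ,
\]
and since $\dist(\pm N_k,I_k)\ge N_k-N\ge N_k/2$ for $k\ge1$, \cref{thm:A} gives $\delta_k\le\exp(-\gamma N_k/8)$ (for $k=0$ one uses $I\subset[-N/2,N/2]$ directly). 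Hence $\spec H_{[-N_{k+1},N_{k+1}]}(x,\omega)$ meets $(E_{j_k}^{[-N_k,N_k]}-\delta_k,E_{j_k}^{[-N_k,N_k]}+\delta_k)$, which selects $j_{k+1}$; the rapidly decaying increments $\delta_l$ then sum to the eigenvalue bounds in \eqref{eq:jk} and \eqref{eq:jk'k} and keep every $E_{j_{k+1}}^{[-N_{k+1},N_{k+1}]}$ within $\exp(-\gamma N/10)$ of $E$.

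The eigenvector estimates are where \cref{thm:B} enters. Applied at scale $N_{k+1}$ to $E_{j_{k+1}}^{[-N_{k+1},N_{k+1}]}$, it shows this eigenvalue is separated from all other eigenvalues of $H_{[-N_{k+1},N_{k+1}]}(x,\omega)$ by at least $\rho_{k+1}:=\exp(-C(V)\exp((\log N_{k+1})^{5\epsilon}))$. The crucial arithmetic is that $\epsilon<1/5$ forces $(\log N_{k+1})^{5\epsilon}=2^{5\epsilon(k+1)}(\log N)^{5\epsilon}\ll 2^k\log N=\log N_k$ for all $k$ once $N\ge N_0$, so $\delta_k\ll\rho_{k+1}$; hence the spectral projection $P_{k+1}$ of $H_{[-N_{k+1},N_{k+1}]}(x,\omega)$ onto the $\rho_{k+1}/2$-neighborhood of $E_{j_{k+1}}^{[-N_{k+1},N_{k+1}]}$ is rank one, $\|\widetilde\psi_k-P_{k+1}\widetilde\psi_k\|\le 2\delta_k/\rho_{k+1}$, and after fixing the phase of $\psi_{j_{k+1}}^{[-N_{k+1},N_{k+1}]}$ one gets $\|\psi_{j_{k+1}}^{[-N_{k+1},N_{k+1}]}-\widetilde\psi_k\|<\exp(-\gamma N_k/10)$, which is \eqref{eq:jk'k} (and \eqref{eq:jk} after iterating from $k=0$). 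Since $\psi_{j_{k+1}}^{[-N_{k+1},N_{k+1}]}$ is, up to exponentially small $\ell^2$-error, supported within distance $\exp((\log N_{k+1})^{2\epsilon\sigma})$ of $I_{k+1}$ (by \cref{thm:A} at scale $N_{k+1}$) while $\widetilde\psi_k$ concentrates within a comparable neighborhood of $I_k\subset[-N,N]$, the exponential $\ell^2$-closeness just obtained forces $I_{k+1}$ to lie within $\exp((\log N_{k+1})^{2\epsilon\sigma})$ of $I$, in particular inside $[-N,N]$, closing the induction; \eqref{eq:psik} follows by comparing $\psi_{j_k}^{[-N_k,N_k]}$ with $\widetilde\psi_{k-1}$ for $3N/4\le|n|\le N_k$ and quoting \cref{thm:A} at scale $N_k$ (relative to $I_k$, then $I$) for $N_{k-1}\le|n|\le N_k$. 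Finally \eqref{eq:jk'k} makes $(E_{j_k}^{[-N_k,N_k]})_k$ and $(\psi_{j_k}^{[-N_k,N_k]})_k$ Cauchy in $\C$ and $\ell^2$, so $E(x,\omega),\psi(x,\omega)$ exist with $\|\psi\|=1$, $H(x,\omega)\psi=E(x,\omega)\psi$ (boundary terms vanish as $k\to\infty$), and \eqref{eq:psi} is the limit of \eqref{eq:psik}.

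For part (b), since $L(\omega,E)>3\gamma$ on all of $[E',E'']$, \emph{every} eigenvalue $E_i^{[-N,N]}(x,\omega)\in[E',E'']$ meets the hypotheses of \cref{thm:A} and of part (a); thus $\psi_i^{[-N,N]}$ is localized at an interval $I_i$ with $|I_i|<\exp((\log N)^{5\epsilon})$, and whenever $I_i\subset[-N/2,N/2]$ part (a) attaches an honest $\ell^2$-eigenpair $(E^{(i)},\psi^{(i)})$ of $H(x,\omega)$ with $\|\psi_i^{[-N,N]}-\psi^{(i)}\|<\exp(-\gamma N/10)$ (identifying $\psi^{(i)}$ with its restriction). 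Let $\mathcal E$ collect all such $(E^{(i)},\psi^{(i)})$, over all large $N$; since Dirichlet spectra are simple the $\psi_i^{[-N,N]}$ are orthonormal, so distinct $i$'s at one scale give distinct members of $\mathcal E$ (two close to the same $\psi_i^{[-N,N]}$ would be mutually close, hence equal). To see that $\mathcal E$ spans $\mathrm{ran}\,P([E',E''])$, fix finitely supported $\phi$ with $\supp\phi\subset[-M,M]$ and $N\gg M$: letting $\widetilde H_N$ be $H_{[-N,N]}(x,\omega)$ extended to $\ell^2(\Z)$ by a constant potential on $\Z\setminus[-N,N]$ so large that its spectrum avoids $[E',E'']$, one has $\widetilde H_N\to H(x,\omega)$ in the strong resolvent sense and $P_{\widetilde H_N}([E',E''])\phi=\sum_{i:\,E_i^{[-N,N]}\in[E',E'']}\langle\phi,\psi_i^{[-N,N]}\rangle\psi_i^{[-N,N]}$. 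The boundary-localized $i$'s (those with $I_i$ within $\exp((\log N)^{2\epsilon\sigma})$ of $\pm N$) contribute at most $\|\phi\|\exp(-c\gamma(N-M))$ to this sum, and each remaining term is within $\exp(-\gamma N/10)$ of $\langle\phi,\psi^{(i)}\rangle\psi^{(i)}$; letting $N\to\infty$ (assuming, as we may by a standard limiting argument in the endpoints, that $E',E''$ are not eigenvalues of $H(x,\omega)$, so $P_{\widetilde H_N}([E',E''])\phi\to P([E',E''])\phi$) yields $P([E',E''])\phi\in\overline{\mathrm{span}}\,\mathcal E$. As such $\phi$ are dense, $\mathrm{ran}\,P([E',E''])=\overline{\mathrm{span}}\,\mathcal E$, so $P([E',E''])H(x,\omega)P([E',E''])$ is diagonalized by $\mathcal E$ --- this is both the pure point assertion and the completeness assertion.

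The main obstacle is the scale arithmetic of part (a): at step $k$ one must beat a boundary error $\delta_k\approx\exp(-c\gamma N_k)$ against a level-repulsion gap $\rho_{k+1}\approx\exp(-C(V)\exp((\log N_{k+1})^{5\epsilon}))$ at the \emph{next} scale, and every estimate --- on eigenvalues, on eigenvectors, on the decay exponents, and the very existence of the limit --- rests on $(\log N_{k+1})^{5\epsilon}\ll N_k$, which is exactly what $\epsilon<1/5$ and $N_{k+1}=N_k^2$ deliver. A related recurring nuisance is that the a priori bound $|I_k|<\exp((\log N_k)^{5\epsilon})$ for the localization intervals grows with $k$, so \cref{thm:A} at scale $N_k$ cannot be quoted verbatim to obtain decay relative to the original interval $I$; the drift of $I_k$ from $I$ must be tracked by hand via the exponential $\ell^2$-closeness of consecutive eigenfunctions, and the same bookkeeping resurfaces in part (b) when keeping the localization centers of the near-resonant finite-volume eigenfunctions in a bounded set.
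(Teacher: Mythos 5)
Your part (a) is essentially the paper's argument: the paper packages your induction step as \cref{lem:C} (extend $\psi_{j_k}^{[-N_k,N_k]}$ by zero, bound the boundary defect via \cref{thm:A}, apply \cref{lem:eigenvector-stability}(b) using the separation $\exp(-C(V)|I|)$ supplied by \cref{thm:B}, then relocate the new localization interval by mass concentration), and then telescopes exactly as you do; your scale arithmetic is precisely why \cref{lem:C} restricts to $N'\le\exp((\log N)^{1/(10\epsilon)})$. One slip: your inductive hypothesis that $I_k\subset[-N,N]$, and the claim that $I_{k+1}$ stays within $\exp((\log N_{k+1})^{2\epsilon\sigma})$ of $I$ ``in particular inside $[-N,N]$'', cannot hold for all $k$ --- the length bound $|I_k|<\exp((\log N_k)^{5\epsilon})$ itself exceeds $2N$ once $k\gtrsim\log\log N$ (you even flag this growth yourself in your closing paragraph, so the hypothesis contradicts your own observation). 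The mass-concentration argument actually yields only $I_{k+1}\subset[-3N_k/4,3N_k/4]$, which is what the paper proves; this still gives $\dist(\pm N_{k+1},I_{k+1})\ge N_{k+1}/4$, closes the induction, and the decay relative to the fixed $I$ on $3N/4\le|n|\le N_k$ is then recovered by the telescoping comparison with earlier scales, as you indicate. So (a) is correct after replacing $[-N,N]$ by $[-N_k/2,N_k/2]$ in the hypothesis.

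Part (b) is where you genuinely diverge. The paper argues indirectly: \cref{lem:Cb} shows that polynomially bounded generalized eigenfunctions decay exponentially (via \cref{lem:elimination} with $m_0=0$ and the covering form of (LDT)), Schnol's theorem then gives pure point spectrum, and each eigenpair is matched to the sequence from (a) by perturbing back to finite volume and using \cref{thm:B} to pin down the index. You instead prove completeness directly by approximating $P([E',E''])\phi$ with finite-volume spectral sums and discarding boundary-localized terms --- exactly the alternative the paper's closing remark acknowledges and describes as ``more complicated than the above indirect argument'' in general. The complications surface in your sketch: eigenvalues whose localization interval lies neither in $[-N/2,N/2]$ nor near $\pm N$ are missed by your stated dichotomy (they too must be discarded, via the exponentially small overlap with $\phi$); the endpoint issue for strong convergence of the spectral projections needs the countability of the point spectrum plus a monotone limit; and your conclusion that $\mathrm{ran}\,P([E',E''])$ is \emph{spanned} by vectors from (a) is marginally weaker than the paper's claim that every eigenpair \emph{is} a limit from (a), should an eigenvalue have multiplicity two. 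The trade-off: the paper's route is shorter and avoids these issues, while yours delivers completeness of the eigenbasis as an explicit by-product.
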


By the minimality of the rationally-independent shift on $ \T^d $ the spectrum of $ H(x,\omega) $
does not depend on $ x $. We denote it by $ \cS_{\omega} $. Let
$$ \cS_{N,\omega}=\bigcup_{x\in\T^d} \spec H_{[-N,N]}(x,\omega). $$ We would like to say that $ \cS_{N,\omega} $
approximates $ \cS_\omega $ as $ N\to \infty $. However, it turns out that there can exist large segments in
$ \cS_{N,\omega}\setminus \cS_\omega $ that persist as $ N\to \infty $. This segments correspond to eigenvalues
that are localized near the edges of $ [-N,N] $. We will argue that if we focus only on the eigenvalues localized
away from the edges (as in \cref{thm:C}), we do get a finite scale approximation of the spectrum.
To this end we replace
$ \cS_{N,\omega} $ by a set defined as follows.
Let $ N\ge 1 $, $ s>1 $, $ k_0\ge 0 $ integers, $ \rho\in \R^{k_0+1} $. We define
\begin{equation}\label{eq:respec-C}
  \fS_{N,\omega}(s,k_0,\rho)
  =\bigcup_{x\in\tor^d}
  \left( \bigcap_{0\le k\le k_0}(\spec H_{[-N^{(k)},N^{(k)}]}(x,\omega))^{(\rho_{k})} \right),
  \quad N^{(k)}=N^{s^k}.
\end{equation}
Note that given a set $ S $ we let
\begin{equation}\label{eq:rho-neighborhood}
  S^{(\rho)}= \{ x: \dist(x,S)<\rho \}.
\end{equation}
The motivation behind \cref{eq:respec-C} is the fact that any eigenvalue $ E_j^{[-N,N]}(x,\omega) $ that is
localized
away from the edges of $ [-N,N] $ is an approximate eigenvalue on any larger interval $ [-N',N'] $, namely
$ E_j^{[-N,N]}(x,\omega)\in (\spec H_{[-N',N']}(x,\omega))^{(\rho)} $, $ \rho=\exp(-cN) $. To fully
justify the use of \cref{eq:respec-C} we would also need to argue that any energy in
$  \fS_{N,\omega}(s,k_0,\rho) $ is
at least close to an eigenvalue localized away from the edges. This leads to the following type of problem.
If $ \cB\subset \T^d $ is a set with small measure, does $ E_j^{[-N,N]}(\cB,\omega) $ also have small measure? This
is true for $ d=1 $, but it is generally false for $ d>1 $. To work around this type of issues we take advantage
again of Bourgain's elimination of multiple resonances. As a trade-off we have to be quite particular about the
choice of parameters $ s,k_0,\rho $. So, for example we have to take $ k_0=2^{2d+1}-1 $ instead of the natural choice
$ k_0=1 $ (which works only for $ d=1 $). However, such technicalities have no effect on the applicability of
the result.

\begin{thmd}\label{thm:D}
  Let $ \gamma>0 $, $ A\ge 1 $.
  For any $ N\ge N_0(V,a,b,\gamma,A) $, $ s\ge s_0(a,b,A) $, there exists a
  set $ \Omega_N=\Omega_N(s) $, $ \mes(\Omega_N)<N^{-A} $, such that the following holds with
  \begin{gather*}
    \fS_{N,\omega}=\fS_{N,\omega}(s,k_0,\rho_N),\ k_0=2^{2d+1}-1,\\
    \rho_{N,0}=\exp(-N^{c(a,b)}),\ \rho_{N,k}=\exp\left( -\frac{\gamma}{10}N \right),k=1,\ldots,k_0.
  \end{gather*}
  If $ \omega\in \T^d\setminus \Omega_N $ and $ L(\omega,E)>\gamma $ for all $ E\in [E',E''] $,  then
  \begin{gather}
    \label{eq:S-SN}
    \cS_\omega\cap [E',E'']\subset \fS_{N,\omega},\\
    \label{eq:SN-S}
    \mes \left( (\fS_{N,\omega}\cap [E',E''])\setminus  \cS_\omega \right)<\exp\left( -\frac{\gamma}{20}N \right).
  \end{gather}
\end{thmd}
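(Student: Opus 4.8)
The plan is to establish the two inclusions \eqref{eq:S-SN} and \eqref{eq:SN-S} separately, the first being essentially the ``easy'' direction coming from the approximate-eigenvalue property already recorded in the discussion preceding the statement, and the second being the genuinely hard direction that forces the choice $k_0 = 2^{2d+1}-1$ and requires Bourgain's elimination of multiple resonances. For \eqref{eq:S-SN}: given $E \in \cS_\omega \cap [E',E'']$, by minimality of the shift and standard arguments $E$ is an approximate eigenvalue of $H(x,\omega)$ for every $x$, hence of $H_{[-M,M]}(x,\omega)$ for all large $M$ with error $\exp(-cM)$ once one localizes a generalized eigenfunction; one then picks $x$ (using \cref{thm:A,thm:C}) so that for this $x$ the eigenfunctions at all the scales $N^{(k)}$, $0 \le k \le k_0$, are genuinely localized away from the edges, which puts $E$ within $\rho_{N,k}$ of $\spec H_{[-N^{(k)},N^{(k)}]}(x,\omega)$ simultaneously for all $k$, and hence $E \in \fS_{N,\omega}$. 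The only subtlety here is matching the decay rate $\exp(-cN)$ to the choice $\rho_{N,0} = \exp(-N^{c(a,b)})$ versus $\rho_{N,k}=\exp(-\gamma N/10)$ for $k\ge 1$, which is why the $k=0$ neighborhood is taken much larger.

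For \eqref{eq:SN-S}, the strategy is as follows. Fix $\omega$ outside the bad set and suppose $E \in \fS_{N,\omega} \cap [E',E'']$ with $E \notin \cS_\omega$. By definition there is a phase $x$ with $E \in \bigcap_{0\le k \le k_0}(\spec H_{[-N^{(k)},N^{(k)}]}(x,\omega))^{(\rho_{N,k})}$. Since $E \notin \cS_\omega$ (in fact one works with $E$ at definite distance from $\cS_\omega$ and estimates the measure of the exceptional layer), the corresponding near-eigenfunctions at each scale $N^{(k)}$ \emph{cannot} all be localized away from the edges — if they were, \cref{thm:C} would force $E$ into $\cS_\omega$. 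Therefore, at each scale at least one of them is concentrated near an edge of $[-N^{(k)},N^{(k)}]$; here one invokes \cref{thm:A,thm:B} to say that the eigenvalue in question has a localization interval $I_k$ of length at most $\exp((\log N^{(k)})^{5\epsilon})$ sitting near an endpoint. The point of taking $k_0 = 2^{2d+1}-1$ many scales is a pigeonhole/Bourgain-type counting argument: the set of $x$ for which an edge-localized eigenvalue at scale $N^{(k)}$ lands within $\rho_{N,k}$ of a given value $E$ is a semi-algebraic set of controlled complexity, and requiring this to happen \emph{at all} of the exponentially-separated scales $N, N^{s}, \dots, N^{s^{k_0}}$ simultaneously — after Bourgain's removal of a small $\omega$-set, uniformly in $x$ and $E$ — forces the configuration of shifts into a lacunary structure that cannot be realized unless $2^{2d+1}-1$ exceeds the dimensional bound governing how many independent ``steep plane'' constraints the torus $\tor^d \times \tor^d \times \R$ can carry. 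Running this through the measure estimate from \cite{Bou07} bounds the measure of admissible $E$ (equivalently, of the offending $x$'s pushed forward under the eigenvalue map) by $\exp(-\gamma N/20)$.

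The main obstacle is precisely the last step: controlling $\mes(E_j^{[-N,N]}(\cB,\omega))$ for a small-measure phase set $\cB$, which as the authors note is \emph{false} in general for $d>1$. Circumventing it requires combining (i) the semi-algebraic description of the set $\tilde\cA = \{(x,y): \cA(x)\cap\cA(y)\neq\emptyset\}$ of \cite{Bou07} applied to the sublevel sets of the Dirichlet determinants $f_{[-N^{(k)},N^{(k)}]}$, (ii) the method of steep planes to kill the $\omega$'s for which a fixed $x$ is doubly (or multiply) resonant across the scales, and (iii) the resultant-plus-Cartan machinery of \cite{GolSch11}, which needs the degree control afforded by the intermediate scales $N^{(k)}$ and a nondegeneracy statement for the resultants — exactly the two ingredients the introduction promises are supplied by the lacunary structure and the semi-algebraic elimination respectively. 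Getting all the parameters $s$, $k_0=2^{2d+1}-1$, $c(a,b)$, and the deviation exponents to line up so that the final summation over $n$ in the relevant range converges (recall a single shift only gives an $N^{-1}$ measure bound, which is not summable) is the crux, and is what dictates the somewhat unnatural-looking constants in the statement.
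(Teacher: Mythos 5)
Your sketch gets the broad architecture right (two separate inclusions; Bourgain's multi-scale elimination drives the hard one; $k_0=2^{2d+1}-1$ comes from the number of scales in that elimination), but the mechanism you describe for \eqref{eq:SN-S} is essentially backwards, and this is not a cosmetic point. You assume $E\in\fS_{N,\omega}\setminus\cS_\omega$ and argue that the approximating eigenfunctions must be concentrated near an edge \emph{at every scale}, then try to bound the measure of the set of such $E$ by a counting argument. What the paper actually proves (\cref{lem:SN-NDR}) is the opposite and stronger statement: for $\omega$ outside a set of measure $N^{-A}$, membership of $E$ in all the neighborhoods $(\spec H_{[-N^{(k)},N^{(k)}]}(x,\omega))^{(\rho_{N,k})}$ \emph{forces} the existence of some scale $k\ge 1$ at which the approximating eigenvalue is localized \emph{away} from the edges — this is exactly where \cref{prop:NDR} with $q=2^{2d+1}$ annular sets $\cN_k$ is used, and it holds for every $x$, not just up to a small phase set. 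Interior localization at scale $N^{(k)}$ then places $E$ within $\exp(-\gamma N/11)$ of $\fS_{\oN,\omega}$ at the super-scale $\oN=\lfloor\exp(N^{1/2})\rfloor$ (\cref{lem:SN1-SN2}), and the measure of the excess $\fS_{N,\omega}\setminus\fS_{\oN,\omega}$ is controlled \emph{not} by Bourgain's $\omega$-measure estimate (which only gives $N^{-1}$-type bounds and is reserved for $\mes(\Omega_N)$), nor by the resultant-plus-Cartan machinery (which is not used in the proof of Theorem D at all), but by a Tarski--Seidenberg count of the connected components of a semialgebraic approximant of $\fS_{\oN,\omega}$ times the width of the fattening (\cref{lem:SN-fattening}). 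Finally one iterates along $N_j=\lfloor\exp(N_{j-1}^{1/2})\rfloor$, uses $\bigcap_j\fS_{N_j,\omega}\cap[E',E'']\subset\cS_\omega$, and telescopes (\cref{prop:SN-S}). None of these three steps — the ``one good interior scale'' lemma, the component count, the iteration — appears in your proposal, and without them the claimed bound $\exp(-\gamma N/20)$ does not follow.

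For \eqref{eq:S-SN} your direct route also has a gap: a fixed $E\in\cS_\omega$ need not be a generalized eigenvalue of $H(x,\omega)$ for any particular $x$, so you cannot start from a polynomially bounded solution and localize it. The paper argues by contraposition (\cref{lemma:full-to-restricted}): if $E\notin\fS_{N,\omega}$ then for \emph{every} $x$ there is a scale $k$ with $\dist(E,\spec H_{[-N^{(k)},N^{(k)}]}(x,\omega))\ge\rho_{N,k}\ge\exp(-(N^{(k)})^{\sigma/2})$; the spectral form of (LDT) (\cref{cor:4.6zeros}) converts this into a large-deviation lower bound on the determinant, the covering form (\cref{lem:Greencoverap1}) patches these intervals along the orbit into $\dist(E,\spec H_{\Lambda_{\oN}}(0,\omega))\ge\exp(-N)$ for arbitrarily large $\oN$, and \cref{lem:elemspec1} gives $E\notin\cS_\omega$. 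This also explains the actual role of $\rho_{N,0}=\exp(-N^{c(a,b)})$: it is the largest neighborhood for which the spectral form of (LDT) still applies at the base scale $N^{(0)}=N$, not a matter of matching an eigenfunction decay rate.
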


Finally, as an application we prove the local homogeneity of the spectrum at a supercritical energy. In particular,
this implies that if the Lyapunov exponent is positive for all energies, then the spectrum is homogeneous (in the
sense of Carleson; cf.\ \cite{DamGolSchVod15}). It would be interesting to establish such a result for all
Diophantine frequency vectors.
\begin{thme}\label{thm:E}
  Let $ \gamma>0 $. For any $ N\ge N_0(V,a,b,\gamma) $ there exists
  a set $ \Omega_N $, $ \mes(\Omega_N)<N^{-1}$, and a constant $ \delta_0=\delta_0(a,b,N) $ such that
  the following holds.
  If $ \omega\in \T^d(a,b)\setminus \Omega_N $,
  $ E\in \cS_\omega $, and $ L(\omega,E)>\gamma $, then
  \begin{equation*}
    \mes(\cS_\omega\cap (E-\delta,E+\delta))>\frac{\delta}{2},
  \end{equation*}
  for any $ \delta\in (0,\delta_0] $.
\end{thme}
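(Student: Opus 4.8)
The plan is to deduce Theorem~\ref{thm:E} from the finite-scale approximation of the spectrum in Theorem~\ref{thm:D}, exactly in the spirit of how homogeneity of the spectrum is derived in \cite{GolSch11} and \cite{DamGolSchVod15}. The underlying heuristic is that at a supercritical energy the finite-volume Dirichlet determinant $f_{[-N,N]}(x,\omega,E)$ behaves, in its dependence on the phase $x$ along a suitable one-dimensional direction, like a polynomial of controlled degree whose value is comparable to $\exp(2NL(\omega,E))$ away from its zeros; hence its zero set in $x$ produces eigenvalue branches $E_j^{[-N,N]}(x,\omega)$ that sweep out an interval of energies of length bounded below by a fixed positive quantity around $E$, and these branches cannot all be pushed out of a short energy window.

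First I would fix $\gamma$, choose $N$ large, and invoke Theorem~\ref{thm:D} (with, say, $A=1$) to obtain the set $\Omega_N$ with $\mes(\Omega_N)<N^{-1}$ and the approximating set $\fS_{N,\omega}$ such that, for $\omega\in\T^d(a,b)\setminus\Omega_N$ and $L(\omega,\cdot)>\gamma$ on $[E',E'']$, one has $\cS_\omega\cap[E',E'']\subset\fS_{N,\omega}$ together with the measure bound \eqref{eq:SN-S}. Now take $E\in\cS_\omega$ with $L(\omega,E)>\gamma$; by continuity of the Lyapunov exponent and of the spectrum we may assume $L(\omega,\cdot)>\gamma$ on a fixed neighborhood $[E',E'']\ni E$. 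The next step is the core quantitative input: to show that $\fS_{N,\omega}\cap(E-\delta,E+\delta)$ has measure at least $\delta-\exp(-cN)$ for all sufficiently small $\delta$. This follows from the structure of $\fS_{N,\omega}$ in \eqref{eq:respec-C}: since $E\in\cS_\omega\subset\fS_{N,\omega}$, there is a phase $x_0$ with $E\in\bigcap_{0\le k\le k_0}(\spec H_{[-N^{(k)},N^{(k)}]}(x_0,\omega))^{(\rho_{N,k})}$, and then by moving $x$ along the line $x_0+t\omega$ (or any line on which the relevant determinant is a genuinely non-degenerate analytic function of the real parameter) the eigenvalue branches emanating from the cluster near $E$ fill, to within the error $\rho_{N,k}$, a full subinterval of $(E-\delta,E+\delta)$; the quantitative sweeping rate is controlled by the positivity of the Lyapunov exponent through the lower bound $|f_{[-N,N]}|\gtrsim\exp((2N-o(N))L)$ off the zero set, which forces consecutive zeros in $E$ to be separated by no more than $\exp(-cN)$ while also guaranteeing that the image covers length comparable to $\delta$. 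Combining this with \eqref{eq:SN-S} gives $\mes(\cS_\omega\cap(E-\delta,E+\delta))\ge\delta-2\exp(-cN)$, and choosing $\delta_0=\delta_0(a,b,N)$ so that $\exp(-cN)<\delta_0/4$ (recall $\delta\le\delta_0$) yields $\mes(\cS_\omega\cap(E-\delta,E+\delta))>\delta/2$, which is the claimed homogeneity bound. The set $\Omega_N$ is the one from Theorem~\ref{thm:D}, so its measure bound $N^{-1}$ is inherited.

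The main obstacle I expect is the ``sweeping'' step: making precise that the eigenvalue branches parametrized by the phase actually cover a definite proportion of any short interval around $E$, uniformly down to scale $\exp(-cN)$. For $d=1$ this is classical and rests on the sharpened LDT and the Weierstrass preparation of $f_{[1,N]}$ with a low-degree polynomial factor; for $d>1$ the honest way to get an effective statement is to work at the lacunary scales of Theorem~\ref{thm:A}/\ref{thm:D} so that the determinant restricted to a line in phase space is again well-approximated by a polynomial of degree at most $\exp((\log N)^{O(\epsilon)})$ with the correct size off its zero set. One must verify that the neighborhoods $\rho_{N,k}$ built into $\fS_{N,\omega}$ in \eqref{eq:respec-C} are small enough not to blur out the lower bound on the swept measure, but large enough — via the $k_0=2^{2d+1}-1$ nested intersections — to capture genuine spectrum; this is precisely the bookkeeping that Theorem~\ref{thm:D} was designed to handle, so once that theorem is in hand the remaining argument is the short deduction sketched above. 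A secondary, more routine point is the reduction from an arbitrary supercritical $E\in\cS_\omega$ to one sitting inside a fixed supercritical window $[E',E'']$, which uses only joint continuity of $L(\omega,\cdot)$ and upper semicontinuity of the spectrum, together with the fact that the bad set $\Omega_N$ can be taken independent of $E$ within such a window.
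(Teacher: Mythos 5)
Your overall architecture --- reduce to \cref{thm:D} and then show that an eigenvalue branch, parametrized by the phase, sweeps out a definite amount of energy near $E$ --- is indeed the paper's strategy, but there are two genuine gaps. The first is a quantifier problem in $\delta$. All of your error terms (the fattening radii $\rho_{N,k}$ built into $\fS_{N,\omega}$ and the loss $\exp(-\gamma N/20)$ from \cref{eq:SN-S}) are fixed once $N$ is fixed, so the inequality $\delta-2\exp(-cN)>\delta/2$ requires $\delta>4\exp(-cN)$; your condition $\exp(-cN)<\delta_0/4$ only secures this at $\delta=\delta_0$, and the argument collapses for $\delta$ near the bottom of $(0,\delta_0]$. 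The paper handles this by proving a single-scale statement (\cref{prop:pre-homogeneous}: homogeneity at the one radius $\delta_N=\exp(-(\log N)^{4/\sigma^2})$, with exceptional frequency set of measure $<N^{-A}$) and then, in the actual proof of \cref{thm:E}, taking $\Omega_N=\bigcup_{N'\ge N}\Omega_{N'}'$ so that each $\delta\in(0,\delta_0]$ can be treated at the scale $N'$ with $\delta_{N'}\simeq\delta$. This is precisely why the measure bound is proved with a large exponent ($A=3$): the union over all larger scales must still have measure $<N^{-1}$. A single-scale reduction cannot produce the ``for all $\delta\in(0,\delta_0]$'' conclusion.

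The second gap is the sweeping step, which you flag but leave heuristic, and your proposed mechanism is not the one that works here. The paper's tool is Wegner's estimate in the form of \cref{lem:Wegner-graphs}: a branch $E_j^{(N)}(\cdot,\omega)$ over a connected phase set $S$ with $\mes(S)\ge\exp(-\ell^{\sigma/2})$ sweeps an interval of length $\ge\exp(-\ell)$, since otherwise all of $S$ would lie in a thin spectral pre-image that \cref{prop:Wegner} forbids; no Weierstrass factorization or zero-counting along a line is used. Moreover, before \cref{eq:SN-S} can be applied you must know that the swept interval actually lies in $\fS_{N^{(k)},\omega}$, and this is not automatic from $E\in\fS_{N,\omega}$: one needs \cref{lem:SN-NDR} (Bourgain's elimination at the lacunary scales $N^{(k)}$) to produce a specific eigenvalue $E_j^{[-N^{(k)},N^{(k)}]}(x,\omega)$ near $E$ whose eigenfunction remains exponentially small at $\pm N^{(k)}$ for every $x'$ in a whole ball $S$ around $x$; only then does \cref{lem:eigenvalue-stabilization} place the entire branch $E_j^{[-N^{(k)},N^{(k)}]}(S,\omega)$ inside $\fS_{N^{(k)},\omega}$, so that \cref{prop:SN-S} applies. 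Your sketch moves the phase along a line from a point witnessing $E\in\fS_{N,\omega}$ without this localization input, which is exactly where the $d>1$ edge-localization issue discussed before \cref{thm:D} would defeat a naive argument.
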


\section{Basic Tools}\label{sec:basic-tools}

In this section we discuss the basic results we will employ throughout the paper. Many of them can be traced back
to \cite{GolSch01} and \cite{GolSch08}, and we will refer to these papers for some of the proofs. Some of the
results
were originally derived only for the single frequency case with a strong Diophantine condition, however they easily
extend to our more general context. For the convenience of the reader we will sketch the proofs of such results.

We start by recalling some basic information related to the transfer matrix and the Lyapunov exponent.
If $ \psi $ is a solution of the difference equation
\begin{equation}
\label{eq:hamiltonC}
-\psi(n+1) - \psi(n-1) + V(x+n\omega)\psi(n) = E\psi(n)\ ,\quad n \in \IZ\,
\end{equation}
then for any $ a<b $ we have
\begin{equation}\label{eq:transfer}
  \begin{bmatrix}
	\psi(b+1)\\ \psi(b)
  \end{bmatrix}
  = M_{[a,b]}(x,\omega,E) \begin{bmatrix}
	\psi(a) \\ \psi (a-1)
  \end{bmatrix}
\end{equation}
where the transfer matrix is given by
\begin{equation*}
  M_{[a,b]}(x,\omega,E)= \prod_{n=b}^a \begin{bmatrix}
	V(x+n\omega)-E & -1 \\
    1 & 0
  \end{bmatrix}.
\end{equation*}
The transfer matrix is related to the Dirichlet determinants
\begin{equation*}
  f_{[a,b]}(x,\omega,E):= \det (H_{[a,b]}(x,\omega)-E)
\end{equation*}
through the following  formula
\begin{equation}\label{eq:Mndetbasic}
  M_{[a,b]}(x,\omega,E) = \begin{bmatrix}
    f_{[a, b]} (x,\omega,E) & - f_{[a+1, b]} (x,\omega,E)\\
    f_{[a, b-1]} (x,\omega,E) & - f_{[a+1, b-1]}(x,\omega,E)
 \end{bmatrix}.
\end{equation}

We let $ M_N=M_{[1,N]} $ and
\begin{equation*}
  L_N(\omega,E)
  = \frac{1}{N} \int_{\tor^d} \log\|M_N(x,\omega,E)\|\,dx.
\end{equation*}
The sequence $ L_N $ is subadditive and the {\em Lyapunov exponent} is defined as
\begin{equation*}
  L(\omega,E)=\lim_{N\to\infty} L_N(\omega,E)= \inf_{N} L_N(\omega,E).
\end{equation*}
Note that
\begin{equation}\label{eq:monodr1}
  0\le \log \|M_N(x,\omega,E)\|\le C(V,|E|)N
\end{equation}
and therefore
\begin{equation}\label{eq:LEupperb1}
  0\le L_N(\omega,E)\le C(V,|E|).
\end{equation}
The lower bound in \cref{eq:monodr1} is due to the fact that $ \det M_N(x,\omega,E)=1 $.

It is known that $ V $ extends to be complex analytic on a set
\begin{equation*}
  \cA_\rho:=\{x+iy: x\in \tor^d,  y\in \R^d, |y|<\rho\},
\end{equation*}
with $ \rho=\rho(V) $. We use $ |\cdot| $ to denote the Euclidean norm.
Let
\begin{equation}\label{eq:lapyomega}
\begin{split}
 L_N(y,\omega,E) &= \frac{1}{N}\int_{\mathbb{T}^d}  \log \|M_N(x+iy,\omega,E)\|\,dx, \\
L(y,\omega,E) &= \lim_{N\to \infty} L_N(y,\omega,E).
\end{split}
\end{equation}
For $y=0$ we reserve the notation $L(\omega,E)$. Most of  the results in this section
{\em do not use} the fact that $V$ assumes only real values on the torus $\mathbb{T}^d$ and therefore, they
also hold on $ \T^d+iy $, $ |y|<\rho $, with $ L(y,\omega,E) $ instead of $ L(\omega,E) $. In particular, we care
that this applies to all the results up to and including the uniform upper estimates in \cref{sec:uniform}.

\begin{remark}
  We briefly comment on the use of constants. Unless stated otherwise, the constants denoted by
  $ C $ might have different values each time they are used.
  They will be allowed to depend on $ \gamma,\omega,V,E,d $. In most cases we leave the dependence on $ d $
  implicit.
  The dependence on $ V $ will be through $ \rho(V) $ and the sup norm of $ V $ on $ \cA_\rho $,
  $ \norm{V}_\infty $. The dependence on $ \omega $
  will be only on the parameters $ a,b $ of the Diophantine condition. Constants depending on $ E $ can be
  chosen uniformly for $ E $ in a bounded set. We let $ a\lesssim b $ denote $ a\le Cb $ with some positive $ C $
  and  and $ a\ll b $ denote $ a\le C^{-1} b $ with a sufficiently large positive $ C $. Finally, $ a\simeq b $
  stands for $ a\lesssim b $ and $ b\lesssim a $. It
  will be clear from the context what the implicit constants are
  allowed to depend on.
\end{remark}

\subsection{Large Deviations Estimates}

The following result, called Large Deviations Theorem (LDT) is
the most basic tool in the localization theory. We refer to \cite{BouGol00}, \cite{GolSch01}
for two different approaches to its proof.
\begin{theorem}\label{thm:LDT}
  Assume $ \omega\in \tor^d(a,b) $, $ E\in \C $.
  There exist $\sigma =\sigma(a,b)$, $\tau=\tau(a,b)$,
  $ \sigma,\tau\in (0,1) $,
  such that for $N\ge N_0(V,a,b,|E|)$ one has
  \begin{equation*}
    \mes \left\{ x\in\tor^d : |\log\|M_N(x,\omega,E)\|-NL_N(\omega,E)| > N^{1-\tau} \right\}
    < \exp(-N^{\sigma}).
  \end{equation*}
\end{theorem}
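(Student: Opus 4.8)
The plan is to transfer the statement to the subharmonic function
\[
  u_N(z):=\frac1N\log\|M_N(z,\omega,E)\|,\qquad z\in\cA_{\rho},
\]
and to exploit three of its properties. First, $z\mapsto\log\|M_N(z,\omega,E)\|$ is plurisubharmonic on $\cA_\rho$ — the entries of $M_N$ are holomorphic in $z$, and $\log\|\cdot\|$ of a holomorphic matrix-valued function is a supremum of plurisubharmonic functions — and by \eqref{eq:monodr1} it satisfies $0\le u_N\le C(V,|E|)$, with $\int_{\tor^d}u_N(x)\,dx=L_N(\omega,E)$. Second, $u_N$ is \emph{almost invariant} under the shift by $\omega$: since $M_N(z+\omega,\omega,E)=M_{[2,N+1]}(z,\omega,E)=A(z+(N+1)\omega)\,M_N(z,\omega,E)\,A(z+\omega)^{-1}$, where $A$ denotes a one-step transfer matrix with $\|A^{\pm1}\|\le C(V,|E|)$ on $\cA_\rho$, one gets $\bigl|\log\|M_N(z+\omega)\|-\log\|M_N(z)\|\bigr|\le C(V,|E|)$, hence $\|u_N(\cdot+\omega)-u_N\|_{L^\infty(\cA_{\rho/2})}\le C/N$, and by telescoping $\|u_N(\cdot+j\omega)-u_N\|_\infty\le C|j|/N$.

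Next I would record the Fourier-side consequences. Plurisubharmonicity and the two-sided bound give, via the Riesz representation applied coordinate by coordinate, $|\hat u_N(k)|\le C/|k|$ for the Fourier coefficients $\hat u_N(k)$, $k\neq0$; the almost invariance gives $|\hat u_N(k)|\,\|k\cdot\omega\|\le C/N$, which together with the Diophantine condition \eqref{vecdiophant} yields $|\hat u_N(k)|\le C|k|^{b}/(aN)$. In particular the low frequencies are negligible, $\sum_{0<|k|\le K}|\hat u_N(k)|\le CK^{d+b}/(aN)$ for any $K\ge1$. The core step is then to upgrade ``almost invariant $+$ plurisubharmonic'' to ``almost constant off an exponentially small set''. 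Using the Riesz decomposition $u_N=h_N+p_N$ on a thinner strip ($h_N$ harmonic, $p_N$ the logarithmic potential of the Riesz mass $\mu_N=\frac1{2\pi}\triangle u_N\ge0$, which has bounded total mass), the almost invariance of $u_N$ forces $\mu_N$ to be almost $\omega$-invariant; by minimality of the shift and the quantitative equidistribution afforded by \eqref{vecdiophant}, $\mu_N$ is close in the relevant topology to a measure that is Haar in the torus variable, and for such measures the logarithmic potential is independent of $x$. Making this quantitative, and estimating the deep valleys of $p_N$ by a Cartan-type bound for subharmonic functions (see \cite{GolSch01}), produces
\[
  \mes\bigl\{x\in\tor^d:\ |u_N(x)-L_N(\omega,E)|>N^{-\tau}\bigr\}<\exp(-N^{\sigma}),
\]
with $\tau=\tau(a,b)$, $\sigma=\sigma(a,b)$ coming out of optimizing the truncation level $K$ against the Diophantine loss $|k|^{b}$ and the measure of the exceptional set; multiplying by $N$ gives the claim. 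This is the scheme of \cite{GolSch01}, whose lemmas on averages of shifts of subharmonic functions can be quoted directly; an alternative route is \cite{BouGol00}.

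I expect the last step to be the main obstacle, and it is genuinely harder for $d>1$. When $d=1$ the orbit $\{j\omega\}_{0\le j\le N}$ has discrepancy $O(N^{-1}\log N)$, so $u_N$ is pinned extremely close to $L_N$ and one even obtains polylogarithmic deviations; when $d>1$ the discrepancy is only a fixed negative power of $N$, and the crude bound $|\hat u_N(k)|\le C/|k|$, which is not square-summable over $\Z^d$, does not by itself control the high-frequency tail of $u_N$. One must therefore exploit the near-product structure of $\mu_N$ together with the smoothing of the logarithmic kernel away from the real axis, and this is exactly why the deviation cannot be pushed below $N^{1-\tau}$ and the probability is only sub-exponential, $\exp(-N^{\sigma})$ with $\sigma,\tau$ small and depending on the Diophantine exponents $a,b$, which is the form asserted.
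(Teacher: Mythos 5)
The paper does not prove \cref{thm:LDT}; it quotes it and refers to \cite{BouGol00} and \cite{GolSch01} for the proof, and your sketch reproduces the scheme of \cite{GolSch01} (subharmonicity plus almost shift-invariance, the Fourier-coefficient bounds $|\hat u_N(k)|\lesssim |k|^{-1}$ and $|\hat u_N(k)|\lesssim |k|^{b}/N$, and the Cartan/BMO-type machinery for averages of shifts of subharmonic functions), which is exactly the intended route. The argument is correct in outline, with the acknowledged heavy lifting (controlling the high-frequency tail for $d>1$) delegated to the quoted lemmas of \cite{GolSch01}, precisely as the paper itself does.
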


In \cite{GolSch08} it was shown (see \cite[Prop.~2.11]{GolSch08}) that in the the regime of positive
Lyapunov exponent, the large deviations estimate
extends to the entries of the transfer matrix.

\begin{theorem}\label{thm:DirLDT}
  Assume $ \omega\in \tor^d(a,b) $, $ E\in \C $, and $ L(\omega,E)> \gamma >0 $.
  There exist $\sigma =\sigma(a,b)$, $\tau=\tau(a,b)$,
  $ \sigma,\tau\in (0,1) $,
  such that for $N\ge N_0(V,a,b,|E|,\gamma)$ one has
  \begin{equation*}
    \mes \left\{ x\in\tor^d : |\log |f_N(x,\omega,E)|-NL_N(\omega,E)| > N^{1-\tau} \right\}
    < \exp(-N^{\sigma}).
  \end{equation*}
\end{theorem}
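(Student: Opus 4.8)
The plan is to deduce Theorem~\ref{thm:DirLDT} from Theorem~\ref{thm:LDT} by comparing $\log|f_N(x,\omega,E)|$ with $\log\|M_N(x,\omega,E)\|$, exactly as in \cite[Prop.~2.11]{GolSch08}. The identity \eqref{eq:Mndetbasic} shows that $f_N=f_{[1,N]}$ is the $(1,1)$-entry of $M_N$, so trivially
\begin{equation*}
  \log|f_N(x,\omega,E)|\le \log\|M_N(x,\omega,E)\|,
\end{equation*}
and the content of the theorem is the matching lower bound, valid off a set of exponentially small measure. Thus the one genuinely substantial point is: \emph{the largest entry of $M_N$ cannot be much bigger than $|f_N|$ for most $x$}. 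The mechanism for this is the avalanche principle / shifting argument: if $|f_{[1,N]}(x,\omega,E)|$ is small at $x$, one looks instead at the shifted determinant $f_{[1,N]}(x+\omega,\omega,E)=f_{[2,N+1]}(x,\omega,E)$, which is the $(2,2)$-entry (up to sign) of a transfer matrix over an adjacent block, and argues that the two cannot both be small because the product structure of $M_N$ and the cocycle identity $M_{[1,N+1]}=A(x+(N+1)\omega)M_{[1,N]}$ force at least one of the relevant entries to carry the full growth $\approx NL_N$.

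Concretely, I would proceed as follows. First, record that by Theorem~\ref{thm:LDT} applied at two (or a bounded number of) consecutive phases $x, x+\omega$, outside a set of measure $<\exp(-N^\sigma/2)$ one has $\log\|M_N(x+j\omega,\omega,E)\|\ge NL_N(\omega,E)-N^{1-\tau}$ for $j=0,1$ simultaneously; here one uses that $L_N(\omega,E)$ is an average and that shifting $x$ by $\omega$ only permutes the phases appearing in the product, so $L_N$ is unchanged and $\tau,\sigma$ may be taken uniform. Second, use \eqref{eq:Mndetbasic} to write each of $\|M_N(x,\omega,E)\|$ and $\|M_N(x+\omega,\omega,E)\|$ as comparable (up to a factor $\le 4$) to the maximum of four Dirichlet determinants $|f_{[a',b']}|$ with $\{a',b'\}\subset\{1,2,N-1,N,N+1\}$; by the telescoping relations these four determinants all differ from $f_{[1,N]}$-type quantities by \emph{one} shift of the phase and by a block of length $N$ or $N-1$. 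Combining the two LDT instances then shows that among these boundedly many determinants at phases $x$ and $x+\omega$, at least one has $\log|\cdot|\ge NL_N-N^{1-\tau}$. Third — and this is where the positivity hypothesis $L(\omega,E)>\gamma$ enters, as opposed to Theorem~\ref{thm:LDT} which needs no such assumption — one upgrades ``at least one of them is large'' to ``$f_{[1,N]}(x,\omega,E)$ itself is large for most $x$'': since $L(\omega,E)>\gamma>0$, the transfer matrix norm grows genuinely exponentially, and a Cramer/inverse-matrix computation (the inverse of $M_N$ has entries that are again Dirichlet determinants, combined with $\det M_N=1$) shows that if $|f_{[1,N]}(x,\omega,E)|$ were $\le \exp(NL_N-N^{1-\tau})$ on a set of measure $\ge\exp(-N^\sigma)$ then the off-diagonal entries would be forced to be of size $\approx\exp(NL_N)$ there, contradicting the LDT for the shifted blocks after one more application of Theorem~\ref{thm:LDT}; the positivity $L>\gamma$ is what makes the ``diagonal small forces off-diagonal large'' dichotomy quantitatively usable (otherwise $NL_N$ could be comparable to the error $N^{1-\tau}$ and the argument is vacuous).

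The main obstacle I anticipate is the bookkeeping in that third step: one must track that the various shifted Dirichlet determinants entering the bound really are controlled by LDT at a uniformly bounded number of phases, with the \emph{same} $L_N$ and the \emph{same} exponents $\sigma,\tau$, and that the exceptional sets (one from each application of Theorem~\ref{thm:LDT}, plus possibly one handling the degenerate locus where $M_N$ is nearly a rank-one matrix with small diagonal entries) can all be absorbed into a single set of measure $<\exp(-N^\sigma)$ after slightly shrinking $\sigma$ and $\tau$ — which is harmless since they are already allowed to depend on $a,b$ and the theorem only asserts existence of such exponents. Everything else — the matrix identities \eqref{eq:Mndetbasic}, the bound \eqref{eq:monodr1}, the invariance of $L_N$ under the shift $x\mapsto x+\omega$ — is routine. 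Since \cite[Prop.~2.11]{GolSch08} carries this out in the one-frequency setting and, as the authors note, the argument ``easily extends'' because it uses only \eqref{eq:Mndetbasic}, Theorem~\ref{thm:LDT}, and positivity of $L$, I would present it as a sketch referring to \cite{GolSch08} for the details of the avalanche-type estimate, spelling out only the points where the multi-frequency Diophantine condition \eqref{vecdiophant} replaces the one-dimensional one (which it does transparently, since the only role of the Diophantine condition here is inside Theorem~\ref{thm:LDT}, already stated for $\tor^d(a,b)$).
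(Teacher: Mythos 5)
Your step from ``some entry of $M_N(x,\omega,E)$ is comparable to $\|M_N(x,\omega,E)\|$'' to ``the specific entry $f_{[1,N]}$ is large for most $x$'' is the entire content of \cref{thm:DirLDT}, and the mechanism you propose for it does not work. The pointwise dichotomy you invoke is false: $f_{[1,N]}(x,\omega,E)$ and $f_{[1,N]}(x+\omega,\omega,E)=f_{[2,N+1]}(x,\omega,E)$ can both vanish, or be extremely small, simultaneously --- nothing prevents $E$ from lying close to an eigenvalue of both $H_{[1,N]}(x,\omega)$ and $H_{[2,N+1]}(x,\omega)$ --- so no contradiction is available at a fixed $x$. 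Moreover, the contradiction you try to extract in your third step is illusory: if $f_{[1,N]}(x,\omega,E)$ is tiny while, say, the $(2,2)$ entry $f_{[2,N-1]}(x,\omega,E)$ has modulus $\approx\exp(NL_N)$, this is perfectly consistent with \cref{thm:LDT} and with the uniform upper bound, because $f_{[2,N-1]}$ is itself a Dirichlet determinant of length $N-2$ whose \emph{typical} size is exactly $\exp(NL_N+O(N^{1-\tau}))$. This is precisely what happens at every zero of $f_{[1,N]}(\cdot,\omega,E)$, and the theorem is exactly a measure bound on a neighbourhood of that zero set; finitely many applications of the norm-LDT at shifted phases cannot produce such a bound, since they only give lower bounds on norms and upper bounds that the off-diagonal entries are allowed to saturate.

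What is actually needed --- and what \cite[Prop.~2.11]{GolSch08} supplies --- is a large-deviation estimate for the subharmonic function $\log|f_N(\cdot,\omega,E)|$ itself. The ingredients are: (i) the uniform upper bound $\sup_x\log|f_N|\le NL_N+CN^{1-\tau}$ (immediate from $|f_N|\le\|M_N\|$ and \cref{prop:logupper}); (ii) a matching lower bound at the level of the mean $\int_{\tor^d}\log|f_N|\,dx$, obtained by applying the avalanche principle \cref{prop:AP} to the block factorization of $M_N$ with the first and last factors replaced by the rank-one modifications $M_\ell e_1e_1^{T}$ and $e_1e_1^{T}M_\ell$, $e_1=(1,0)^{T}$, so that the resulting product has norm $|f_{[1,N]}|$; the endpoint hypotheses of the avalanche principle are where positivity of $L$ and an induction on scales enter; and (iii) the Cartan/Riesz-mass machinery for subharmonic functions to convert (i) and (ii) into the stated measure bound, controlling in particular the contribution of the zeros of $f_N$. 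None of (ii)--(iii) appears in your sketch, and your ``Cramer/inverse-matrix'' step is not a substitute, since the entries of $M_N^{-1}$ are again Dirichlet determinants of the same kind and the argument becomes circular. Deferring to \cite{GolSch08} is legitimate --- the paper itself gives no proof beyond that citation --- but the argument you attribute to that reference is not the one it contains, and as written it has a genuine gap.
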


The constants $ \sigma,\tau $ in the (LDT) for determinants depend on the $ \sigma,\tau $ from the (LDT) for
the transfer matrix. Since the sharpness of these constants plays no role in our work, we choose them to be
the same (by making the constants in \cref{thm:LDT} smaller). We will refer to either of the deviations estimates
as (LDT).

\subsection{The Avalanche Principle}

The following statement, known as the Avalanche Principle (AP), is another
 basic tool in the theory
of quasi-periodic Schr\"{o}dinger operators.
\begin{prop}[{\cite[Prop.~2.2]{GolSch01}}]
\label{prop:AP}
Let $A_1,\ldots,A_n$ be a sequence of  $2\times 2$--matrices whose determinants satisfy
\begin{equation}
\label{eq:detsmall}
\max\limits_{1\le j\le n}|\det A_j|\le 1.
\end{equation}
Suppose that
\be
&&\min_{1\le j\le n}\|A_j\|\ge\mu>n\mbox{\ \ \ and}\label{large}\\
   &&\max_{1\le j<n}[\log\|A_{j+1}\|+\log\|A_j\|-\log\|A_{j+1}A_{j}\|]<\frac12\log\mu\label{diff}.
\ee
Then
\begin{equation}
\Bigl|\log\|A_n\ldots A_1\|+\sum_{j=2}^{n-1} \log\|A_j\|-\sum_{j=1}^{n-1}\log\|A_{j+1}A_{j}\|\Bigr|
< C\frac{n}{\mu}
\label{eq:AP}
\end{equation}
with some absolute constant $C$.
\end{prop}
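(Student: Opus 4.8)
The plan is to prove the statement by induction on $n$, the natural base case being $n=2$ (where the bound is trivial since the sum over $j=2$ to $n-1$ is empty and the bound reads $|\log\|A_2A_1\| - \log\|A_2A_1\|| = 0$). The heart of the matter is a good two-matrix estimate: if $\det A, \det B$ are bounded by $1$ in modulus, $\|A\|, \|B\| \ge \mu$, and the ``defect'' $\log\|A\| + \log\|B\| - \log\|BA\|$ is small, then the unit vector $u_A$ realizing $\|Au_A\| \simeq \|A\|$ (i.e.\ the image direction that $A$ expands most) is nearly aligned with the direction that $B$ expands most, in the sense that $\|B A u_A\| \ge (1 - O(\mu^{-2}))\|B\|\|Au_A\|$. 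The geometric reason is that for a $2\times 2$ matrix $M$ with $|\det M| \le 1$ and $\|M\| = \mu$ large, $M$ maps all of the unit circle into a thin neighborhood of a single line (the top singular direction) except for vectors within angle $O(\mu^{-2})$ of the bottom singular direction, which get contracted to length $\le \mu^{-1}$. So I would first establish this \emph{cone/alignment lemma} quantitatively.

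With the alignment lemma in hand, the inductive step goes as follows. Write $P_n = A_n \cdots A_1$ and $P_{n-1} = A_{n-1}\cdots A_1$. By the induction hypothesis applied to $A_1, \dots, A_{n-1}$ we control $\log\|P_{n-1}\|$ in terms of the $\log\|A_j\|$ and $\log\|A_{j+1}A_j\|$. I then need to relate $\log\|A_n P_{n-1}\|$ to $\log\|A_n A_{n-1}\| + \log\|P_{n-1}\| - \log\|A_{n-1}\|$, with error $O(\mu^{-1})$. This is where the alignment lemma is used twice: the top expanding direction of $P_{n-1}$ is close (within $O(\mu^{-2})$) to the top expanding direction of $A_{n-1}$ — this needs a short sub-argument tracking how the expanding direction of a product is determined, essentially, by the last factor — and then $A_n$ expands that direction by nearly its full norm, precisely because the defect $\log\|A_n\| + \log\|A_{n-1}\| - \log\|A_n A_{n-1}\|$ is $< \frac12\log\mu$, which forces the top direction of $A_{n-1}$ to lie outside the contracting cone of $A_n$. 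Summing the per-step errors gives $\sum_{j} O(\mu^{-1})$, but one must be careful: a naive induction accumulates the error additively as $C n/\mu$, which is exactly the claimed bound, so the constant must be tracked to make sure it does not grow geometrically. The trick is to run the induction on the quantity $\log\|A_n\cdots A_1\| - \log\|A_{n-1}\cdots A_1\|$ compared to $\log\|A_nA_{n-1}\| - \log\|A_{n-1}\|$ with a \emph{fixed} per-step error, so that the total is genuinely the sum of $n-1$ such errors.

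The main obstacle I anticipate is making the alignment propagation uniform: showing that the most-expanded direction of the long product $P_{n-1}$ stays within $O(\mu^{-2})$ of the most-expanded direction of $A_{n-1}$, \emph{independently of $n$}. This is a fixed-point / contraction-in-projective-space argument — each application of $A_{j}$ to the image of $A_{j-1}\cdots A_1$ squeezes the angular spread by a factor $O(\mu^{-2})$ while the hypotheses (\ref{large}) and (\ref{diff}) guarantee the incoming direction never falls in the bad cone — and the condition $\mu > n$ in (\ref{large}) is what ultimately prevents the accumulated angular errors, of which there are $O(n)$, from overwhelming the gap. I would isolate this as a separate lemma (``the expanding direction of $A_j \cdots A_1$ is within $C\mu^{-1}$ of that of $A_j$ for every $j$''), prove it by its own induction, and only then run the main norm estimate, which becomes a fairly mechanical consequence. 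Everything else is elementary $2\times 2$ linear algebra and bookkeeping of logarithms.
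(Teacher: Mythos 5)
The paper does not actually prove this proposition: it is imported verbatim from \cite[Prop.~2.2]{GolSch01}, and the argument there is organized quite differently from yours. In \cite{GolSch01} each $A_j$ is written as $\|A_j\|(\widehat A_j+R_j)$, where $\widehat A_j=u_j\otimes v_j$ is the rank-one operator built from the top singular vectors and $\|R_j\|=|\det A_j|\,\|A_j\|^{-2}\le\mu^{-2}$; expanding $A_n\cdots A_1$ into $2^n$ terms, the pure rank-one term has norm $\prod_{j=1}^{n-1}b_j$ with $b_j=|\langle v_{j+1},u_j\rangle|$, hypothesis \eqref{diff} forces $b_j\gtrsim\mu^{-1/2}$, the remaining $2^n-1$ terms are summed using $n<\mu$, and eliminating the $b_j$ via $\log\|A_{j+1}A_j\|=\log\|A_{j+1}\|+\log\|A_j\|+\log b_j+O(\mu^{-3/2})$ gives \eqref{eq:AP} with no induction on $n$. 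Your plan --- induct on $n$, track the top output direction of the partial products, and telescope $\log\|P_j\|-\log\|P_{j-1}\|=\log\|A_jA_{j-1}\|-\log\|A_{j-1}\|+O(\mu^{-1})$ --- is a legitimate and well-known alternative (essentially the projective-contraction proof of the avalanche principle), and your identification of the alignment lemma as the crux is correct. Two points to tighten when executing it: the alignment you can actually guarantee is $O(\mu^{-3/2})$ rather than $O(\mu^{-2})$, since \eqref{diff} only keeps the incoming direction at angle $\gtrsim\mu^{-1/2}$ from the contracting direction of the next factor --- this still yields a per-step error $O(\mu^{-1})$ after dividing by the overlap $b_j\gtrsim\mu^{-1/2}$, which is all you need; and the angular errors do \emph{not} accumulate over the $n$ steps, because each application of $A_j$ collapses the image to within $O(\mu^{-3/2})$ of $u_j$ regardless of history, so the hypothesis $\mu>n$ is needed only to keep the summed error $Cn/\mu$ bounded (and the intermediate logarithmic expansions valid), not to beat an accumulating angular drift. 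Your route buys transparent error bookkeeping (a genuine sum of $n-1$ fixed-size contributions); the route of \cite{GolSch01} avoids induction and any analysis of the singular directions of long products.
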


\subsection{Estimates for the Lyapunov Exponent}

Using (LDT) and (AP) one gets the following estimate on the rate of
convergence for the finite scale Lyapunov exponent.
\begin{prop}[{\cite[Lem.~10.1]{GolSch01}}]\label{prop:uniform}
  Assume $ \omega\in \tor^d(a,b) $, $ E\in \C $, and
  $L(\omega,E)>\gamma>0$.  Then for any $ N\ge 2 $,
  \[ 0\le L_N(\omega,E)-L(\omega,E)< C\frac{(\log
      N)^{1/\sigma}}{N},\]
  where $C=C(V,a,b,|E|,\gamma)$ and $ \sigma $ is as in (LDT).
\end{prop}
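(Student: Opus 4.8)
The plan is to prove Proposition~\ref{prop:uniform} by the standard telescoping argument combining (LDT) with (AP), applied at a geometrically increasing sequence of scales. The key point is that subadditivity already gives $L_N(\omega,E)\ge L(\omega,E)$, so only the upper bound on $L_N(\omega,E)-L(\omega,E)$ requires work, and for this it suffices to compare $L_N$ with $L_{N'}$ for $N'$ comparable to a large power of $N$ and then pass to the limit.

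First I would fix a scale $N$ and a multiple $n$, and decompose the transfer matrix $M_{nN}(x,\omega,E)$ (or rather $M_{[1,nN]}$ up to harmless endpoint adjustments) as a product of $n$ blocks $A_j=M_{[(j-1)N+1,\,jN]}(x,\omega,E)$, each of determinant $1$, so \cref{eq:detsmall} holds. To apply \cref{prop:AP} I need the two hypotheses \cref{large} and \cref{diff}. For \cref{large}: by (LDT) applied at scale $N$ (\cref{thm:LDT}), for $x$ outside a set of measure $<\exp(-N^\sigma)$ one has $\log\|A_j\|\ge NL_N(\omega,E)-N^{1-\tau}\ge NL(\omega,E)-N^{1-\tau}\gtrsim \gamma N$, so $\mu:=\exp(c\gamma N)$ works once $N$ is large relative to $n$ — and I will choose $n$ polynomial in $N$, say $n\le N^{C}$, so that $\mu>n$ is automatic for $N\ge N_0$. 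For \cref{diff}: the pair $A_{j+1}A_j$ is itself a transfer matrix over an interval of length $2N$, so applying (LDT) at scale $2N$ together with the bound on $\log\|A_{j+1}\|$, $\log\|A_j\|$ gives that the bracketed quantity in \cref{diff} is $\lesssim N^{1-\tau}\ll \tfrac12\log\mu\simeq \gamma N$, again off an exceptional set of measure $<\exp(-N^\sigma)$ (summed over $j=1,\dots,n$, still $<n\exp(-N^\sigma)<\exp(-\tfrac12 N^\sigma)$ for $N$ large). So for $x$ outside a set $\mathcal{E}_N$ of measure $<\exp(-\tfrac12 N^\sigma)$, the conclusion \cref{eq:AP} holds with error $Cn/\mu<1$.

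Next I would integrate the (AP) conclusion in $x$ over $\tor^d$. On the good set this gives
\[
\Bigl|\log\|M_{nN}(x,\omega,E)\|+\sum_{j=2}^{n-1}\log\|A_j\|-\sum_{j=1}^{n-1}\log\|A_{j+1}A_j\|\Bigr|<1,
\]
and on the bad set $\mathcal{E}_N$ each term is $O(N)$ by \cref{eq:monodr1} while $|\mathcal{E}_N|<\exp(-\tfrac12 N^\sigma)$, so the contribution of the bad set to the integral is $\lesssim nN\exp(-\tfrac12 N^\sigma)$, which is negligible. Using that each $A_j$ and each $A_{j+1}A_j$ is a translate of $M_N$ resp.\ $M_{2N}$ (translation by a multiple of $\omega$, which preserves the integral over $\tor^d$), the integrated identity becomes, after dividing by $nN$,
\[
\bigl|L_{nN}(\omega,E)-\bigl(2L_{2N}(\omega,E)-L_N(\omega,E)\bigr)\bigr|\lesssim \frac{1}{nN}+\exp(-\tfrac14 N^\sigma).
\]
Letting $n\to\infty$ (with $N$ fixed) and using $L_{nN}\to L$ yields $|L(\omega,E)-2L_{2N}(\omega,E)+L_N(\omega,E)|\lesssim \exp(-\tfrac14 N^\sigma)$, i.e.\ $0\le L_N(\omega,E)-L(\omega,E)\le 2\bigl(L_N(\omega,E)-L_{2N}(\omega,E)\bigr)+\exp(-\tfrac14 N^\sigma)$. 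Write $a_N:=L_N(\omega,E)-L(\omega,E)\ge 0$; this reads $a_N\le 2(a_N-a_{2N})+O(\exp(-\tfrac14 N^\sigma))$, hence $a_{2N}\le \tfrac12 a_N+O(\exp(-\tfrac14 N^\sigma))$. Iterating this recursion down from any scale, together with the trivial bound $a_N\le C(V,|E|)$ from \cref{eq:LEupperb1}, gives $a_N\lesssim (\log N)^{1/\sigma}/N$: indeed after $\sim\log_2$-many halvings starting from scale $\sim (\log N)^{1/\sigma}$ the geometric factor $2^{-k}$ beats $1/N$ while the accumulated error terms $\sum 2^{-j}\exp(-\tfrac14(N2^{-j})^\sigma)$ are dominated by the first one, which is $\ll 1/N$ once the starting scale exceeds a fixed multiple of $(\log N)^{1/\sigma}$.

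The main obstacle is bookkeeping rather than conceptual: one must check that the exceptional sets in (LDT) at the two scales $N$ and $2N$, summed over the $n$ blocks, still have measure small enough that their contribution to the integral is beaten by the geometric gain — this is where the subexponential bound $\exp(-N^\sigma)$ in (LDT) is exactly what is needed (any polynomially small exceptional set would fail), and where one must be slightly careful that $n$ is only polynomial in $N$ so that $nN\exp(-N^\sigma)$ and $n\exp(-N^\sigma)$ remain negligible. The second point of care is the passage from the two-scale inequality $a_{2N}\le\tfrac12 a_N+(\text{error})$ to the final rate: one should start the iteration at a scale $M_0\simeq (\log N)^{1/\sigma}$ and run it up to $N$, so that the number of steps is $\log_2(N/M_0)$, the surviving geometric term is $\lesssim 2^{-\log_2(N/M_0)}=M_0/N\simeq (\log N)^{1/\sigma}/N$, and the error terms telescope against it. Everything else — the matrix identity $\det A_j=1$, the translation invariance of $\int_{\tor^d}$, the crude bound $\log\|M_N\|\le CN$ — is routine and already recorded in the excerpt.
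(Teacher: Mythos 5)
Your overall architecture --- (AP) on blocks of length $N$, (LDT) at the paired scales $N$ and $2N$, integration over $\tor^d$ using translation invariance, the almost-additivity relation $|L+L_N-2L_{2N}|\lesssim e^{-cN^{\sigma}}$, and the dyadic iteration down to a base scale $\simeq(\log N)^{1/\sigma}$ --- is exactly that of the cited \cite[Lem.~10.1]{GolSch01}. However, there is a genuine gap in your verification of the hypothesis \cref{diff}. Off the exceptional set, (LDT) gives
\[
\log\|A_{j+1}\|+\log\|A_j\|-\log\|A_{j+1}A_j\|\le 2N\bigl(L_N(\omega,E)-L_{2N}(\omega,E)\bigr)+CN^{1-\tau},
\]
not $\lesssim N^{1-\tau}$: the upper bounds on the two factors contribute $2NL_N$ while the lower bound on the product contributes $-2NL_{2N}$, and these do not cancel. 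The term $L_N-L_{2N}$ is nonnegative by subadditivity but a priori only bounded by $L_N-L\le C(V,|E|)$; it is precisely the rate of convergence the proposition asserts, so assuming it is $o(1)$ (uniformly in the stated parameters) is circular. The standard repair is a bootstrap: (i) produce a uniform base scale $\ell_0=\ell_0(V,|E|,\gamma)$ with $L_{\ell_0}-L_{2\ell_0}<\gamma/8$ by pigeonholing the telescoping sum $\sum_k(L_{2^k}-L_{2^{k+1}})=L_1-L\le C(V,|E|)$ of nonnegative terms; (ii) note that the (AP) conclusion at scale $\ell$ controls $L_{N'}$ and $L_{2N'}$ by the \emph{same} quantity $2L_{2\ell}-L_\ell$ up to $O(e^{-c\ell^{\sigma}})$, so $L_{N'}-L_{2N'}$ is exponentially small for every $N'$ in the reachable range $[\ell^2,\exp(c\ell^{\sigma})]$, and \cref{diff} is then available at all subsequent scales of the induction.

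A secondary but real defect is the step ``letting $n\to\infty$ with $N$ fixed'': the requirement $\mu>n$ in \cref{large} and the accumulated exceptional measure $\sim n\exp(-N^{\sigma})$ confine you to $n\le\exp(cN^{\sigma})$, so you cannot reach $L=\lim_n L_{nN}$ directly at a single scale. Instead, record $|L_{N'}-(2L_{2N}-L_N)|\le Ce^{-cN^{\sigma}}$ for all $N'\in[N^2,\exp(cN^{\sigma})]$ (also, the error after dividing by $nN$ is $O(1/n)+e^{-cN^{\sigma}}$, not $O(1/(nN))$), observe that $\beta_N:=2L_{2N}-L_N$ is then Cauchy along a rapidly increasing sequence of scales with summable increments and converges to $L$, and conclude $|L+L_N-2L_{2N}|\le Ce^{-cN^{\sigma}}$. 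From there your recursion $a_{2N}\le\tfrac12 a_N+O(e^{-cN^{\sigma}})$ and the dyadic iteration (started at $\max(\ell_0,\,C(\log N)^{1/\sigma})$, with small $N$ absorbed into the constant via \cref{eq:LEupperb1}) go through as you describe.
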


We will need some estimates on the modulus of continuity of the Lyapunov
exponent. A first rough estimate can be obtained from the next lemma.

\begin{lemma}\label{lem:Eomdiff}
  Let $ N\ge 1 $, $ (z_i,w_i,E_i)\in \C^d\times \C^d\times \C $, $ i=1,2 $, such that
  \begin{equation*}
    |\Im z_i|,\ N|\Im w_i|<\rho(V).
  \end{equation*}
  Then
  \begin{multline}\label{eq:modcont1}
    \bigl\|M_N(z_1, w_1, E_1) - M_N(z_2, w_2,E_2)\bigr\| \\
    \le
    \Big(C(V)  +    |E_1| + |E_2|\Big)^N
    \left(|z_1-z_2|+|w_1 - w_2| +  |E_1 - E_2|\right).
  \end{multline}
  In particular we have
  \begin{multline}\label{eq:modcont111}
    \big|\log\bigl\|M_N(z_1, w_1, E_1)\bigr\| -
    \log\bigl\|M_N(z_2, w_2,E_2)\bigr\|\big| \\
    \le
    \Big(C(V)  + |E_1| + |E_2|\Big)^N \left(|z_1-z_2|+|w_1 - w_2| +
      |E_1 - E_2|\right),	
  \end{multline}
  provided the right-hand side is less than $ 1/2 $.
\end{lemma}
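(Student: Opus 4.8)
The plan is to estimate the difference of the two matrix products factor-by-factor, using the telescoping identity for products. Write $M_N(z,w,E) = \prod_{n=N}^{1} B_n(z,w,E)$ with $B_n(z,w,E) = \begin{bmatrix} V(z+nw)-E & -1 \\ 1 & 0 \end{bmatrix}$. First I would record the two elementary bounds that drive everything: each single factor satisfies $\|B_n(z_i,w_i,E_i)\| \le C(V) + |E_i| \le C(V)+|E_1|+|E_2|$, where the bound on $|V(z+nw)|$ comes from the analyticity of $V$ on $\cA_\rho$ together with the hypothesis $|\Im z_i|, N|\Im w_i| < \rho(V)$ (which guarantees $z_i + n w_i \in \cA_\rho$ for all $1\le n\le N$). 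Second, the difference of corresponding single factors is controlled by
\begin{equation*}
  \|B_n(z_1,w_1,E_1) - B_n(z_2,w_2,E_2)\| \le |V(z_1+nw_1) - V(z_2+nw_2)| + |E_1-E_2|,
\end{equation*}
and since $V$ is analytic on $\cA_\rho$ it is Lipschitz on any slightly smaller strip, so $|V(z_1+nw_1)-V(z_2+nw_2)| \le C(V)(|z_1-z_2| + n|w_1-w_2|) \le C(V)(|z_1-z_2| + N|w_1-w_2|)$; absorbing $N$ is acceptable since it will be dominated by the exponential prefactor anyway, but in fact one can keep it clean by noting $n|w_1-w_2| \le$ (the quantity $N|\Im w_i|<\rho$ already forces $w$'s to be small, but more simply) we just bound $|z_1+nw_1 - (z_2+nw_2)| \le |z_1-z_2| + n|w_1-w_2|$ and then crude up to $|z_1-z_2|+|w_1-w_2|$ times an $n$-dependent constant to be swallowed. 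Thus each factor difference is at most $C(V)\big(|z_1-z_2|+|w_1-w_2|+|E_1-E_2|\big)$, uniformly in $n\le N$.

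Next I would apply the standard telescoping estimate: for any two products $\prod A_n$ and $\prod \tilde A_n$ of matrices,
\begin{equation*}
  \Bigl\| \prod_{n=N}^1 A_n - \prod_{n=N}^1 \tilde A_n \Bigr\|
  \le \sum_{k=1}^N \Bigl(\prod_{n>k}\|A_n\|\Bigr)\, \|A_k - \tilde A_k\|\, \Bigl(\prod_{n<k}\|\tilde A_n\|\Bigr).
\end{equation*}
Plugging in $\|A_n\|, \|\tilde A_n\| \le C(V)+|E_1|+|E_2| =: Q$ and the factor-difference bound, the right-hand side is at most $N\, Q^{N-1}\, C(V)\big(|z_1-z_2|+|w_1-w_2|+|E_1-E_2|\big)$. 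Finally, absorb the $N$ and the extra $C(V)$ into the base of the exponential: $N\,C(V)\,Q^{N-1} \le (C'(V) + |E_1|+|E_2|)^N$ for a suitably enlarged constant $C'(V)$ (this is the routine "$N\cdot Q^{N-1} \le (CQ)^N$" trick, valid once $Q\ge 1$, which we may assume by enlarging $C(V)$). This gives \eqref{eq:modcont1}. The estimate \eqref{eq:modcont111} then follows from \eqref{eq:modcont1} by the elementary inequality $|\log\|A\| - \log\|B\|| \le \|A-B\|/\min(\|A\|,\|B\|) \le 2\|A-B\|$ when $\|A-B\| \le \tfrac12\min(\|A\|,\|B\|)$, using $\|M_N\|\ge 1$ (since $\det M_N = 1$ forces $\|M_N\|\ge 1$), so the right-hand side of \eqref{eq:modcont1} being $< 1/2$ suffices.

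The only mildly delicate point — and the one I would be most careful about — is the bookkeeping that lets one absorb the polynomial factor $N$ and the constant $C(V)$ from the Lipschitz estimate into the clean exponential base $(C(V)+|E_1|+|E_2|)^N$ while keeping the constant depending only on $V$ (and $d$), not on $N$ or $E$; this is purely a matter of choosing $C(V)$ large enough and is not a real obstacle. There is no conceptual difficulty here: the lemma is a soft continuity statement and the proof is a one-line telescoping argument once the per-factor bounds are in place. I would present it in roughly the two displays above plus the $\log$-Lipschitz remark.
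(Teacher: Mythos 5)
Your proposal is correct and follows essentially the same route as the paper: the paper's proof is exactly the telescoping identity $M_N(z_1,w_1,E_1)-M_N(z_2,w_2,E_2)=\sum_n M_{N-n}(z_2,w_2,E_2)(A_{1,n}-A_{2,n})M_{n-1}(z_1,w_1,E_1)$ combined with the per-factor bound $\|A_{i,n}\|\le C(V)+|E_i|$ and the Mean Value Theorem for the factor differences, followed by $|\log x|\le 2|x-1|$ and $\|M_N\|\ge1$ for the logarithmic estimate. The minor constant bookkeeping you flag (absorbing $N$ and the Lipschitz constant into the exponential base, and the factor of $2$ in the log step) is handled just as loosely in the paper and poses no issue.
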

\begin{proof}
  Let
  \begin{equation*}
    A_{i,n}= \begin{bmatrix}
      V(z_i+nw_i)-E_i & -1 \\
      1 & 0
    \end{bmatrix},\ i=1,2.
  \end{equation*}
  Then
  \begin{multline}\label{eq:M_N1vs2}
    M_N(z_1,w_1,E_2)-M_N(z_2,w_2,E_2)=\sum_{n=1}^N  A_{2,N}\ldots A_{2,n+1}(A_{1,n}
    -A_{2,n})A_{1,n-1}\ldots A_{1,1}\\
    =\sum_{n=1}^N M_{N-n}(z_2,w_2,E_2)(A_{1,n}-A_{2,n})M_{n-1}(z_1,w_1,E_1).
  \end{multline}
  Now \cref{eq:modcont1} follows by using the Mean Value Theorem and the fact that we clearly have
  \begin{equation*}
    \norm{A_{i,n}}\le C(V)+|E_i|.
  \end{equation*}

  The second
  inequality follows from \cref{eq:modcont1} and the fact that
  $ |\log x|\le 2|x-1| $, provided $ |x-1|\le 1/2 $. Indeed, we have
  \begin{multline*}
	\big|\log\bigl\|M_N(z_1, w_1, E_1)\bigr\| -
    \log\bigl\|M_N(z_2, w_2,E_2)\bigr\|\big|
    \lesssim \left| \frac{\norm{M_N(z_1,w_1,E_1)}}{\norm{M_N(z_2, w_2,E_2)}}-1 \right|\\
    \le\frac{\norm{M_N(z_1, w_1,E_1)-M_N(z_2, w_2,E_2)}}{\norm{M_N(z_2, w_2,E_2)}}
  \end{multline*}
  provided the right-hand side is less than $ 1/2 $. The conclusion follows by recalling
  that
  \begin{equation*}
    \norm{M_N(z,w,E)}\ge 1  \text{ for any }  z,w,E.
  \end{equation*}
\end{proof}

We refine the result of the previous lemma using (AP) and then we
deduce our estimate on the modulus of continuity for
$ L_N(\omega,E) $.

\begin{lemma}\label{lem:compL}
  Assume $ \omega_0\in \tor^d(a,b) $, $ E_0\in \C $, and
  $L(\omega_0,E_0)>\gamma>0$. Let $ \sigma $ be as in (LDT) and let $A$
  be a constant such that $ \sigma A\ge 1 $.  Then for all
  $N\ge N_0(V,a,b,|E_0|,\gamma,A)$,
  \begin{equation}\label{eq:3.35}
    \left | \log \|M_N(z, w,E)\|
      -\log \|M_N(x_0, \omega_0,E_0)\|\right|
    < \exp \left(-(\log N)^A\right)
  \end{equation}
  for any $x_0 \in\tor^d\setminus \cB_{N,\omega_0, E_0}$,
  $\mes (\cB_{N,\omega_0, E_0}) < \exp\left(-(\log N)^{\sigma A}\right)$,
  and $ (z,w,E)\in \C^d\times \C^d\times \C  $ such that
  \begin{equation}\label{eq:compL}
    |z-x_0|,\ |w - \omega_0|,\ |E - E_0| < \exp\left(-(\log N)^{4A}\right).
  \end{equation}
\end{lemma}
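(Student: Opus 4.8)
The plan is to prove \cref{eq:3.35} by combining \cref{lem:Eomdiff} (the crude modulus of continuity) with \cref{prop:AP} (the avalanche principle) in the telescoping form typical of \cite{GolSch01}. First I would record the elementary reduction: since \cref{eq:3.35} wants an estimate at scale $N$, I split the interval $[1,N]$ into $m\simeq N/\ell$ blocks of length $\ell$, where $\ell$ is an intermediate scale of the form $\ell\simeq(\log N)^{C A}$ for a suitable large constant $C$. On each block the transfer matrix is $M^{(i)}:=M_{[(i-1)\ell+1,\,i\ell]}$, and by \cref{eq:monodr1} the factors have norm at most $\exp(C(V,|E_0|)\ell)$. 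The point of choosing $\ell$ polylogarithmic in $N$ is that $\exp((\log N)^{4A})$-sized perturbations in $(z,w,E)$ — the size allowed in \cref{eq:compL} — feed into \cref{eq:modcont1} as $\bigl(C(V)+|E_1|+|E_2|\bigr)^\ell\cdot\exp(-(\log N)^{4A})$, and since $\ell$ is polylogarithmic with a smaller exponent this product is still $\le\exp(-(\log N)^{3A})$, say; in particular it is $\le 1/2$, so \cref{eq:modcont111} applies on each block and on each pair of adjacent blocks.

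Next I would verify the hypotheses of the (AP) for the product $M_N=M^{(m)}\cdots M^{(1)}$ at the point $(x_0,\omega_0,E_0)$, for $x_0$ outside an exceptional set. The determinant condition \cref{eq:detsmall} is automatic since each $\det M^{(i)}=1$. For the largeness condition \cref{large} and the almost-multiplicativity condition \cref{diff}, the standard mechanism is: by (LDT) at scale $\ell$ (\cref{thm:LDT}), for $x_0$ outside a set of measure $\exp(-\ell^\sigma)=\exp(-(\log N)^{\sigma C A})$ one has $\log\|M^{(i)}(x_0,\omega_0,E_0)\|\ge \ell L(\omega_0,E_0)-\ell^{1-\tau}\ge (\gamma/2)\ell$ for each of the $m$ blocks, provided we also use \cref{prop:uniform} to replace $L_\ell$ by $L$; a union bound over the $m\le N$ blocks still leaves an exceptional set of measure $\le N\exp(-(\log N)^{\sigma C A})\le \exp(-(\log N)^{\sigma A})$ after adjusting $C$ — this is exactly $\cB_{N,\omega_0,E_0}$. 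Likewise, applying (LDT) to the length-$2\ell$ blocks $M^{(i+1)}M^{(i)}$ gives $\log\|M^{(i+1)}M^{(i)}\|\ge 2\ell L-(2\ell)^{1-\tau}$, and combined with the upper bounds on $\log\|M^{(i)}\|,\log\|M^{(i+1)}\|$ from (LDT) this yields \cref{diff} with $\mu=\exp((\gamma/2)\ell)\gg m$. So (AP) gives, at $(x_0,\omega_0,E_0)$,
\begin{equation*}
  \Bigl|\log\|M_N\|+\sum_{i=2}^{m-1}\log\|M^{(i)}\|-\sum_{i=1}^{m-1}\log\|M^{(i+1)}M^{(i)}\|\Bigr|<C\frac{m}{\mu}<\exp(-(\log N)^{2A}).
\end{equation*}

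Now I would run the same argument at the perturbed point $(z,w,E)$. The largeness and almost-multiplicativity hypotheses transfer from $(x_0,\omega_0,E_0)$ to $(z,w,E)$ because, by the block-wise version of \cref{eq:modcont111} established in the first step, every quantity $\log\|M^{(i)}\|$ and $\log\|M^{(i+1)}M^{(i)}\|$ changes by at most $\exp(-(\log N)^{3A})$, which is negligible compared to the gaps $(\gamma/2)\ell$ and $\frac12\log\mu$; hence (AP) applies at $(z,w,E)$ too, with the same telescoping identity up to error $\exp(-(\log N)^{2A})$. Subtracting the two identities, the left-hand side of \cref{eq:3.35} is bounded by the two (AP) errors plus the sum over $i$ of the block-wise discrepancies: at most $2\exp(-(\log N)^{2A})+2m\cdot\exp(-(\log N)^{3A})\le\exp(-(\log N)^{A})$ for $N$ large, since $m\le N=\exp(\log N)$ and $(\log N)^{3A}-\log N\ge (\log N)^{2A}$ eventually. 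Choosing the constant $C$ in $\ell\simeq(\log N)^{CA}$ large enough so that $\sigma C\ge 1$ (ensuring $\mes\cB_{N,\omega_0,E_0}<\exp(-(\log N)^{\sigma A})$) and so that all the exponent arithmetic above closes, completes the proof. The main obstacle is the bookkeeping of exponents: one must choose the intermediate scale $\ell$ as a power of $\log N$ large enough that (LDT) at scale $\ell$ gives an exceptional set summable over the $\sim N/\ell$ blocks (forcing $\sigma\cdot(\text{exponent of }\ell)\ge \sigma A$ after the union bound) and simultaneously small enough that $\bigl(C(V)+|E|\bigr)^\ell$ is dominated by $\exp((\log N)^{4A})$ so that the allowed perturbation in \cref{eq:compL} is genuinely absorbed — the constraint $\sigma A\ge1$ in the hypothesis is precisely what makes these two requirements compatible.
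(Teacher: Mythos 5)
Your proposal is correct and follows essentially the same route as the paper: apply (LDT) at an intermediate polylogarithmic scale $\ell$ to verify the avalanche-principle hypotheses at $(x_0,\omega_0,E_0)$ off a set of measure $\lesssim (N/\ell)\exp(-\ell^\sigma)$, transfer those hypotheses to $(z,w,E)$ via \cref{lem:Eomdiff}, apply (AP) at both points, and subtract term-wise. One small correction to your exponent bookkeeping: the constraint on $C$ in $\ell\simeq(\log N)^{CA}$ is not $\sigma C\ge 1$ but rather $1<C<4$ — the upper bound so that $(C(V)+|E|)^{2\ell}$ is absorbed by $\exp(-(\log N)^{4A})$, and the lower bound (together with the hypothesis $\sigma A\ge 1$) so that $\ell^\sigma\gg\log N$ makes the union bound over blocks close; the paper takes $\ell\simeq(\log N)^{A+1}$, which sits in this window.
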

\begin{proof}
  Let $\ell\simeq (\log N)^{A+1}$, $n=[N/\ell]$,
  $$A_j=A_j(x,\omega,E)=M_\ell(x+(j-1)\ell\omega,E).$$
  To simplify notation we assume that $
  N=n\ell
  $.  It is easy to see how to adjust the argument for general $N$.

  Using (LDT) and Proposition~\ref{prop:uniform} we get that
  \begin{equation}\label{eq:APcondA1}
    \min_{1\le j\le n}\|A_j(x_0,\omega_0,E_0)\|
    \ge \exp(\ell L_\ell(\omega_0,E_0)-\ell^{1-\tau})\ge \exp(\ell \gamma/2)>n,
  \end{equation}
  \begin{multline*}
    \max_{0\le
      j<n}\Bigl[\log\|A_{j+1}(x_0,\omega_0,E_0)\|+\log\|A_j(x_0,\omega_0,E_0)\|\\
    -\log\|A_{j+1}(x_0,\omega_0,E_0)A_{j}(x_0,\omega_0,E_0)\|\Bigr] \le
    \frac{\gamma}{16}\ell,
  \end{multline*}
  for all $  x_0 $ outside a set $\cB_{N,\omega_0,E_0} \subset \tor^d$ with
  \begin{equation*}
    \mes\left(\cB_{N,\omega_0,E_0}\right) \lesssim n \exp(-\ell^\sigma)< \exp(-(\log N)^{\sigma A})
  \end{equation*}
  (the last inequality holds provided that $
  \sigma A\ge 1 $). Take $x_0\in  \tor^d\setminus \cB_{N,E_0,\omega_0}$. We apply (AP) with
  $\mu=\exp(\ell\gamma/2)$ to get
  \begin{multline}\label{eq:apome}
    \Bigl|\log\|M_N(x_0,\omega_0,E_0)\|+\sum_{j=2}^{n-1}\log\|A_{j}(x_0,\omega_0,E_0)\|\\
    -\sum_{j=1}^{n-1}\log\|A_{j+1}(x_0,\omega_0,E_0)A_{j}(x_0,\omega_0,E_0)\|\Bigr|<
    Cn\exp(-\gamma\ell /2).
  \end{multline}
  Take $ z,w,E $ satisfying \eqref{eq:compL}. Using Lemma~\ref{lem:Eomdiff} it
  follows that
  \begin{equation}\label{eq:APcondA1-0}
    \min_{1\le j\le n}\|A_j(z,w,E)\|
    \ge \exp(\ell \gamma/4)>n,
  \end{equation}
  \begin{multline*}
    \max_{0\le
      j<n}\Bigl[\log\|A_{j+1}(z,w,E)\|+\log\|A_j(z,w,E)\|\\
    -\log\|A_{j+1}(z,w,E)A_{j}(z,w,E)\|\Bigr] \le
    \frac{\gamma}{8}\ell.
  \end{multline*}
  We  apply (AP) again, this time with $  \mu=\exp(\gamma\ell/4) $, to get
  \begin{multline}\label{eq:apome1}
    \Bigl|\log\|M_N(z,w,E)\|+\sum_{j=2}^{n-1}\log\|A_{j}(z,w,E)\|\\
    -\sum_{j=1}^{n-1}\log\|A_{j+1}(z,w,E)A_{j}(z,w,E)\|\Bigr|
    <Cn\exp(-\gamma\ell /4).
  \end{multline}
  Subtracting \eqref{eq:apome} from \eqref{eq:apome1}
  and applying Lemma~\ref{lem:Eomdiff}, term-wise, yields
  \eqref{eq:3.35}.
\end{proof}

\begin{prop}\label{prop:L_comp}
  Assume $ \omega_0\in \tor^d(a,b) $, $ E_0\in \C $, and
  $L(\omega_0,E_0)>\gamma>0$. Let $ \sigma $ be as in (LDT) and let $A$
  be a constant such that $ \sigma A\ge 1 $.  Then for all
  $N\ge N_0(V,a,b,|E_0|,\gamma,A)$,
  \begin{equation*}
    \bigl| L_N(\omega,E) - L_N(\omega_0, E_0)\bigr | < \exp \left(-(\log N)^{\sigma A}\right)
  \end{equation*}
  provided
  $|\omega - \omega_0|,\ |E - E_0| \le \exp \left(-(\log  N)^{4A}\right)$.
\end{prop}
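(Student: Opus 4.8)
The plan is to obtain the bound on $|L_N(\omega,E)-L_N(\omega_0,E_0)|$ by integrating the pointwise comparison from Lemma~\ref{lem:compL} over the torus and controlling the bad set where that lemma does not apply. First I would recall that $L_N(\omega,E) = N^{-1}\int_{\tor^d}\log\|M_N(x,\omega,E)\|\,dx$, and similarly for $L_N(\omega_0,E_0)$, so that
\begin{equation*}
  \bigl|L_N(\omega,E)-L_N(\omega_0,E_0)\bigr|
  \le \frac1N\int_{\tor^d}\bigl|\log\|M_N(x,\omega,E)\|-\log\|M_N(x,\omega_0,E_0)\|\bigr|\,dx.
\end{equation*}
On the set $\tor^d\setminus\cB_{N,\omega_0,E_0}$, Lemma~\ref{lem:compL} applied with $x_0=z=x$, $w=\omega$ (which satisfies the smallness hypothesis \eqref{eq:compL} by the assumption $|\omega-\omega_0|,|E-E_0|\le\exp(-(\log N)^{4A})$) gives an integrand bounded by $\exp(-(\log N)^A)$, hence a contribution to the right-hand side of at most $\exp(-(\log N)^A)$.

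The second step is to estimate the contribution of the bad set $\cB_{N,\omega_0,E_0}$, whose measure is $<\exp(-(\log N)^{\sigma A})$. Here I would use the crude a priori bounds: by \eqref{eq:monodr1} we have $0\le \log\|M_N(x,\omega,E)\|\le C(V,|E|)N$ for every $x$, and likewise at $(\omega_0,E_0)$, so the integrand is pointwise $\le C(V,|E|)N$. Thus the contribution of the bad set is at most $\frac1N\cdot C(V,|E|)N\cdot\exp(-(\log N)^{\sigma A}) = C(V,|E|)\exp(-(\log N)^{\sigma A})$. Combining the two contributions, and noting $\sigma A\le A$ (so $\exp(-(\log N)^A)\le\exp(-(\log N)^{\sigma A})$), the total is $\lesssim \exp(-(\log N)^{\sigma A})$; absorbing the constant $C(V,|E|)$ for $N\ge N_0(V,a,b,|E_0|,\gamma,A)$ large enough (and $|E|$ bounded in terms of $|E_0|$ since $|E-E_0|\le 1$) gives the claimed bound $\exp(-(\log N)^{\sigma A})$.

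I do not anticipate a serious obstacle here: the proposition is essentially a corollary of Lemma~\ref{lem:compL} plus the trivial measure/sup-norm bookkeeping on the bad set. The only point requiring a little care is the arithmetic of exponents — verifying that the loss from the bad-set term, which comes with the weaker exponent $\sigma A$ rather than $A$, is what dictates the final rate, and checking that the hypothesis $|\omega-\omega_0|,|E-E_0|\le\exp(-(\log N)^{4A})$ is exactly the one needed to invoke \eqref{eq:compL}. One should also remember that Lemma~\ref{lem:compL}'s bad set depends only on $N,\omega_0,E_0$ (not on $\omega,E$), which is what makes the single integration over a fixed bad set legitimate.
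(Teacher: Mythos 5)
Your proof is correct and is essentially the paper's own argument: the paper likewise bounds $|L_N(\omega,E)-L_N(\omega_0,E_0)|$ by $\frac1N\int_{\tor^d}|\log\|M_N(x,\omega,E)\|-\log\|M_N(x,\omega_0,E_0)\||\,dx$ and invokes Lemma~\ref{lem:compL}, leaving the good-set/bad-set bookkeeping (which you spell out, correctly) implicit. The only cosmetic point, shared with the paper, is the multiplicative constant in front of $\exp(-(\log N)^{\sigma A})$ coming from the bad set, which is harmless since the measure bound in Lemma~\ref{lem:compL} actually has slack of order $\exp(-c(\log N)^{\sigma(A+1)})$.
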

\begin{proof}
  We have
  \begin{equation*}
    \bigl| L_N(\omega,E) - L_N(\omega_0, E_0)\bigr |
    \le \frac{1}{N}\int_{\tor^d}|\log\norm{M_N(x,\omega,E)}-\log\norm{M_N(x,\omega_0,E_0)}|\,dx
  \end{equation*}
  and the conclusion follows from \cref{lem:compL}.
\end{proof}

As a consequence we obtain the log-H\"older continuity of the Lyapunov exponent.

\begin{prop}\label{prop:log-Hoelder}
  Assume $ \omega_0\in \tor^d(a,b) $, $ E_0\in \C $, and  $L(\omega_0,E_0)>\gamma>0$. There exists
  $ \epsilon_0=\epsilon_0(V,a,b,|E_0|,\gamma) $ such that if $ |E-E_0|<\epsilon_0 $,
  then  $ L(\omega_0,E)> \gamma/2 $ and
  \begin{equation*}
    |L(\omega_0,E)-L(\omega_0,E_0)|\le
    \exp \left( \frac{1}{2} \left(-\log |E-E_0| \right)^{\sigma/4} \right),
  \end{equation*}
  with $ \sigma $ as in (LDT). Furthermore, if $ \omega\in \T^d(a,b)\cap(\omega_0-\epsilon_0,\omega_0+\epsilon_0) $,
  then $ L(\omega,E_0)>\gamma/2 $ and
  \begin{equation*}
    |L(\omega,E_0)-L(\omega_0,E_0)|\le
    \exp \left( \frac{1}{2} \left(-\log |\omega-\omega_0| \right)^{\sigma/4} \right).
  \end{equation*}
\end{prop}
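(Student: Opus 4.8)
The plan is to compare $L(\omega_0,E)$ with $L(\omega_0,E_0)$ through a finite scale $N=N(E)$ tuned to $|E-E_0|$, using \cref{prop:L_comp} for the comparison at scale $N$ and \cref{prop:uniform} to pass between finite and infinite scales. I will take $A:=1/\sigma$ in \cref{lem:compL,prop:L_comp} (so that $\sigma A=1$ and the exceptional factor $\exp(-(\log N)^{\sigma A})$ there equals $N^{-1}$; the hypothesis $\sigma A\ge 1$ is met), and let $N=N(E)$ be the largest integer with $\exp(-(\log N)^{4A})\ge|E-E_0|$, so that $\log N\simeq(-\log|E-E_0|)^{\sigma/4}$. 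Shrinking $\epsilon_0$ forces $N\ge N_0(V,a,b,|E_0|,\gamma)$, and this is all the smallness of $|E-E_0|$ that will be used.

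First, $L_N(\omega_0,E_0)\ge L(\omega_0,E_0)>\gamma$, so \cref{prop:L_comp} (applicable since $|E-E_0|\le\exp(-(\log N)^{4A})$) gives $|L_N(\omega_0,E)-L_N(\omega_0,E_0)|<N^{-1}$, hence $L_N(\omega_0,E)>3\gamma/4$ once $N\ge N_0$. Granting that this finite-scale positivity yields $L(\omega_0,E)>\gamma/2$ together with the rate $0\le L_N(\omega_0,E)-L(\omega_0,E)<C(\log N)^{1/\sigma}/N$, one combines it with the same rate at $(\omega_0,E_0)$ (legitimate by \cref{prop:uniform}, as $L(\omega_0,E_0)>\gamma$) and with the comparison above to obtain, by the triangle inequality,
\[ |L(\omega_0,E)-L(\omega_0,E_0)|<\frac{C(\log N)^{1/\sigma}}{N}. \]
Inserting $N=N(E)$ and using $t^{1/4}e^{-t^{\sigma/4}}\le e^{-t^{\sigma/4}/2}$ for $t=-\log|E-E_0|$ large (which also absorbs the constant $C$), the right-hand side becomes $\exp(-\tfrac12(-\log|E-E_0|)^{\sigma/4})$, the asserted modulus of continuity.

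The one non-mechanical point, and the \emph{main obstacle}, is the passage from the finite-scale bound $L_N(\omega_0,E)>3\gamma/4$ to $L(\omega_0,E)>\gamma/2$ and to the convergence rate for $L_N(\omega_0,E)$, since \cref{prop:uniform} as stated presupposes positivity of the \emph{limit}. I would handle this by rerunning the avalanche-principle scheme underlying \cref{prop:uniform} directly at $(\omega_0,E)$, observing that it needs only finite-scale positivity as input: for a block length $\ell\le(\log N)^{A+2}$, subadditivity of $M\mapsto ML_M(\omega_0,E)$ gives $L_\ell(\omega_0,E)\ge L_N(\omega_0,E)-C\ell/N>\gamma/2$, so \cref{thm:LDT} at scales $\ell$ and $2\ell$ (valid for $N$ large) supplies the size and pairwise hypotheses of (AP) for the blocks $M_\ell(x+(j-1)\ell\omega_0,\omega_0,E)$ — the pairwise defect being $L_\ell(\omega_0,E)-L_{2\ell}(\omega_0,E)$, which is small by \cref{prop:L_comp} at the small scales $\ell,2\ell$ together with \cref{prop:uniform} at $(\omega_0,E_0)$. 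Integrating in $x$ yields $|L_M(\omega_0,E)-L_N(\omega_0,E)|<C(\log N)^{1/\sigma}/N$ uniformly for $M$ in a large range above $N$; one then iterates, taking a scale in that range as the new base (using only the positivity already established, with no further reference to $E_0$), which extends the bound to all $M\ge N$ after finitely many steps with the accumulated error still $\lesssim(\log N)^{1/\sigma}/N$. Hence $L(\omega_0,E)=\inf_M L_M(\omega_0,E)>\gamma/2$ and $|L(\omega_0,E)-L_N(\omega_0,E)|<C(\log N)^{1/\sigma}/N$, which closes the argument of the previous paragraph.

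The frequency statement is proved verbatim: choose $N$ as the largest integer with $\exp(-(\log N)^{4A})\ge|\omega-\omega_0|$, apply \cref{prop:L_comp} with the frequency perturbation in place of the energy perturbation, and use that \cref{thm:LDT} and \cref{prop:uniform} hold for every $\omega\in\tor^d(a,b)$; the same bootstrap then gives $L(\omega,E_0)>\gamma/2$ and the bound $\exp(-\tfrac12(-\log|\omega-\omega_0|)^{\sigma/4})$.
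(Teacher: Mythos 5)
Your proposal follows the same route as the paper's proof: choose $N$ with $\log N\simeq(-\log|E-E_0|)^{\sigma/4}$ (equivalently, $\exp(-(\log(N+1))^{4/\sigma})\le|E-E_0|\le\exp(-(\log N)^{4/\sigma})$), compare the finite-scale exponents via \cref{prop:L_comp}, and pass to the limit via \cref{prop:uniform}. The point you isolate as the main obstacle is real: the paper's four-line proof applies \cref{prop:uniform} at $(\omega_0,E)$ without comment, even though that proposition is stated under positivity of the limit $L(\omega_0,E)$, which is part of the conclusion. Your resolution --- rerunning the avalanche-principle iteration underlying \cref{prop:uniform} (i.e.\ the scheme of \cite[Lem.~10.1]{GolSch01}) starting from the finite-scale bound $L_N(\omega_0,E)>3\gamma/4$, noting that the iteration only consumes finite-scale positivity and (LDT), which hold without any assumption on the limit --- is the standard and correct way to close this loop, so your write-up is, if anything, more complete than the paper's. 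One cosmetic remark: the bound you actually derive is $\exp\bigl(-\tfrac12(-\log|E-E_0|)^{\sigma/4}\bigr)$; the exponent in the printed statement is missing this minus sign (as printed the right-hand side exceeds $1$), so your version is the intended, stronger one, and the same applies to the frequency statement.
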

\begin{proof}
  From \cref{prop:uniform} and \cref{prop:L_comp} we have that
  \begin{multline*}
    |L(\omega_0,E)-L(\omega_0,E_0)|< C\frac{(\log N)^{1/\sigma}}{N}\le \exp(-\frac{1}{2}\log (N+1))\\
    \le \exp \left( \frac{1}{2} \left(-\log |E-E_0| \right)^{\sigma/4} \right)
    < \frac{\gamma}{2},
  \end{multline*}
  provided
  \begin{equation*}
    \exp(-(\log (N+1))^{4/\sigma})\le |E-E_0|\le\exp(-(\log N)^{4/\sigma})
  \end{equation*}
  and $ N\ge N_0(V,a,b,|E_0|,\gamma) $. The first statement follows by setting
  $$ \epsilon_0=\exp(-(\log N_0)^{4/\sigma}). $$ The second statement follows in the same way (note that we need
  $ \omega\in \T^d(a,b) $ to use \cref{prop:uniform}).
\end{proof}

\begin{remark}
  The above result is essentially proved in \cite[Prop. 10.2]{GolSch01}, however the existence of the interval
  $ (E_0-\epsilon_0,E_0+\epsilon_0) $ is not covered explicitly there.
\end{remark}

We are also interested in the modulus of continuity of
$ L_N(y,\omega,E) $ with respect to $ y $. We will show that $ L_N $
is in fact Lipschitz in $ y $. The proof will be based on the
following fact.

\begin{lemma}[{\cite[Lem.~4.1]{GolSch08}}]
  \label{lem:liprad}
  Let $1>\rho>0$ and suppose $u$ is subharmonic on
  \[
    \cA_\rho:=\{x+iy: x\in \tor, |y|<\rho\},
  \]
  such that $\sup_{ \cA_\rho} u\le 1$ and $\int_{\tor}
  u(x)\,dx\ge0$.
  Then for any $y,y'$ so that
  $-\frac{\rho}{2} < y,y' < \frac{\rho}{2}$ one has
  \begin{equation*}
    \left| \int_\tor u(x+iy)\,dx-\int_\tor u(x+iy')\,dx \right|\le C_{\rho}|y-y'|.
  \end{equation*}
\end{lemma}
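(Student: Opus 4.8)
The plan is to base everything on the convexity in the vertical variable of the fiberwise average $g(y):=\int_{\tor}u(x+iy)\,dx$, $y\in(-\rho,\rho)$. The three properties I would isolate are: (i) $g$ is convex on $(-\rho,\rho)$; (ii) $g(y)\le\sup_{\cA_\rho}u\le 1$ for all $y$; (iii) $g(0)=\int_{\tor}u(x)\,dx\ge 0$. Items (ii) and (iii) are immediate from the hypotheses. For (i) the issue is that $u$ need not be smooth, so I would mollify: with a smooth radial bump $\phi_\eps$ set $u_\eps=u*\phi_\eps$, which is smooth and subharmonic on $\cA_{\rho-\eps}$, satisfies $u\le u_\eps\le 1$ by the sub-mean value property, and decreases to $u$ as $\eps\to 0$. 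For $g_\eps(y):=\int_{\tor}u_\eps(x+iy)\,dx$ one computes, differentiating under the integral, $g_\eps''(y)=\int_{\tor}\partial_{yy}u_\eps\,dx=\int_{\tor}\bigl(\Delta u_\eps-\partial_{xx}u_\eps\bigr)\,dx=\int_{\tor}\Delta u_\eps\,dx\ge 0$, the $\partial_{xx}$ term vanishing by periodicity in $x$. Hence each $g_\eps$ is convex on $(-(\rho-\eps),\rho-\eps)$ and still satisfies (ii) and (iii) (for (iii) because $u_\eps\ge u$).

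The next step is an elementary one-variable fact: if $g$ is convex on $(-r,r)$ with $g\le 1$ and $g(0)\ge 0$, then $g$ is Lipschitz on $(-r/2,r/2)$ with constant $8/r$. First a lower bound: for $0<y<r/2$ write $0$ as the convex combination $\frac{3r/4}{y+3r/4}\,y+\frac{y}{y+3r/4}\,(-3r/4)$, apply convexity, and use $g(0)\ge 0$ and $g(-3r/4)\le 1$ to get $g(y)\ge -y/(3r/4)\ge -1$; the case $-r/2<y<0$ is symmetric, so $|g|\le 1$ on $[-r/2,r/2]$. Then use monotonicity of difference quotients of a convex function: for $y\in(-r/2,r/2)$ the right derivative obeys $g'_+(y)\le\frac{g(3r/4)-g(y)}{3r/4-y}$, whose numerator is $\le 2$ and whose denominator exceeds $r/4$, so $g'_+(y)<8/r$; likewise $g'_-(y)\ge\frac{g(y)-g(-3r/4)}{y+3r/4}$, whose numerator is $\ge-2$ and denominator exceeds $r/4$, so $g'_-(y)>-8/r$. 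Since $g'_-(y)\le g'_+(y)$, both one-sided derivatives lie in $(-8/r,8/r)$, and integrating the right derivative over a subinterval gives the Lipschitz bound. I want to stress that this genuinely uses $g\le 1$ on all of $(-r,r)$, not merely on $[-r/2,r/2]$: convexity plus a bound on a closed subinterval alone does not control the slope near the endpoints.

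Finally I would pass to the limit. Applying the one-variable fact to each $g_\eps$ gives $|g_\eps(y)-g_\eps(y')|\le\frac{8}{\rho-\eps}|y-y'|$ for $y,y'\in(-(\rho-\eps)/2,(\rho-\eps)/2)$; the same estimate together with $0\le g_\eps(0)\le 1$ shows $|g_\eps|$ is bounded uniformly in $\eps$ on each fixed compact subinterval of $(-\rho/2,\rho/2)$, so $g=\lim_\eps g_\eps$ (a decreasing limit, by monotone convergence) is finite there. Fixing $y,y'\in(-\rho/2,\rho/2)$, choosing $\eps$ small enough that they lie in $(-(\rho-\eps)/2,(\rho-\eps)/2)$, and letting $\eps\to 0$ yields $|g(y)-g(y')|\le\frac{8}{\rho}|y-y'|$, i.e.\ the claim with $C_\rho=8/\rho$.

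I do not expect a serious obstacle. The only points needing care are the standard potential-theoretic facts invoked in the mollification step (that $u_\eps$ is subharmonic, lies above $u$, and decreases to $u$, so that $g$ is convex and the limits are legitimate) and the conceptual observation made above that one must exploit the sup bound on the full strip $\cA_\rho$ rather than only on a half-width strip; everything else is routine convexity bookkeeping.
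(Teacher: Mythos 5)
Your proof is correct. Note, however, that the paper does not prove this lemma at all: it imports it verbatim from \cite[Lem.~4.1]{GolSch08}, where the argument runs through the Riesz representation $u=\log|\cdot|*\mu+h$ on a slightly smaller strip, a mass bound $\mu(\cA_{\rho/2})\le C_\rho$, the explicit piecewise-linear-in-$y$ formula for $\int_\tor\log|e(x+iy)-e(\zeta)|\,dx$, and gradient bounds for the bounded harmonic part $h$. Your route is genuinely different and more elementary: you replace all of that machinery by the single observation that the vertical means $g(y)=\int_\tor u(x+iy)\,dx$ are convex (the strip analogue of the three-circles theorem), after which the Lipschitz bound on the half-width strip is pure one-variable convexity bookkeeping, correctly using the upper bound on the \emph{full} strip to control the slopes near $\pm\rho/2$. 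The mollification step, the monotone passage $g_\eps\downarrow g$, and the limit of the Lipschitz constants $8/(\rho-\eps)\to 8/\rho$ are all handled properly; one could even shortcut slightly by noting that $g$ itself, as a decreasing limit of convex functions that is finite at $0$, is convex, and applying your one-variable lemma directly with $r=\rho$. What the Riesz-representation proof buys in exchange for its heavier machinery is quantitative control of $C_\rho$ in terms of $\sup u-\int u$, which \cite{GolSch08} exploits elsewhere (e.g.\ for BMO splittings); for the statement as used in this paper your argument suffices and is self-contained.
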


\begin{cor}\label{cor:liplap}
  Let $ \omega\in \T^d $, $ E\in \C $. There exists  $C=C(V,|E|)$ such that
  \[
    |L_N(y,\omega,E) - L_N(\omega,E)| \le C\sum_{i=1}^d |y_i| \text{\ \ \ for all\ \
      \ }|y| < \rho(V),
  \]
  uniformly in $N$. In particular, the same bound holds with $ L $ instead of $ L_N $.
\end{cor}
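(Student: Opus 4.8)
The plan is to reduce the $d$-dimensional estimate to the one-variable bound of \cref{lem:liprad}, applied separately in each complex coordinate. The starting observation is that, for fixed $\omega,E$ and fixed values of the coordinates other than the $k$-th, the entries of
\[
  z_k\longmapsto M_N(x_1+iy_1,\dots,x_{k-1}+iy_{k-1},z_k,x_{k+1},\dots,x_d,\omega,E)
\]
are analytic in $z_k$ (since $V$ is analytic on $\cA_{\rho(V)}$), and hence $z_k\mapsto \log\|M_N\|$ is subharmonic in $z_k$ --- this is the standard fact that the log-norm of an analytic matrix-valued function is subharmonic, being the supremum of the subharmonic functions $\log|\langle M_N(z_k)u,v\rangle|$ over unit vectors $u,v$. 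To fit the hypotheses of \cref{lem:liprad} we normalize: by \eqref{eq:monodr1}, which holds verbatim on $\cA_{\rho(V)}$ with a constant $C_0=C(V,|E|)$ depending on $\|V\|_\infty$ over $\cA_{\rho(V)}$ and on $|E|$, we have $0\le \log\|M_N(x+iy,\omega,E)\|\le C_0N$ there; thus $u:=(C_0N)^{-1}\log\|M_N\|$ satisfies $0\le u\le 1$ and $\int_\tor u\,dx\ge 0$ in each variable.

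Next I would telescope over coordinates. Put $y^{(k)}=(y_1,\dots,y_k,0,\dots,0)$, so $y^{(0)}=0$ and $y^{(d)}=y$, and write
\[
  \log\|M_N(x+iy,\omega,E)\|-\log\|M_N(x,\omega,E)\|
  =\sum_{k=1}^{d}\Bigl(\log\|M_N(x+iy^{(k)},\omega,E)\|-\log\|M_N(x+iy^{(k-1)},\omega,E)\|\Bigr).
\]
For each $k$ the two points $x+iy^{(k)}$ and $x+iy^{(k-1)}$ differ only in the $k$-th coordinate (where they are $x_k+iy_k$ and $x_k$) and both lie in $\cA_{\rho(V)}$. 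Freezing the remaining coordinates and applying \cref{lem:liprad} to $u$ in the variable $z_k$ gives, for a.e.\ choice of the frozen coordinates,
\[
  \Bigl|\int_\tor \bigl(\log\|M_N(x+iy^{(k)},\omega,E)\|-\log\|M_N(x+iy^{(k-1)},\omega,E)\|\bigr)\,dx_k\Bigr|
  \le C_0 N\, C_{\rho(V)}\,|y_k|,
\]
and integrating the $d-1$ frozen coordinates over $\tor^{d-1}$ (total mass $1$) preserves this bound. Summing over $k$, dividing by $N$, and recalling the definition of $L_N(y,\omega,E)$ from \eqref{eq:lapyomega} yields
\[
  |L_N(y,\omega,E)-L_N(\omega,E)|\le C\sum_{i=1}^{d}|y_i|,\qquad C=C_0\,C_{\rho(V)}=C(V,|E|),
\]
uniformly in $N$. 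The estimate for $L$ then follows by letting $N\to\infty$: the right-hand side is independent of $N$, while $L_N(y,\omega,E)\to L(y,\omega,E)$ and $L_N(\omega,E)\to L(\omega,E)$.

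I do not expect a genuine obstacle here; the argument is routine modulo two bookkeeping points. The first is the verification of subharmonicity of $\log\|M_N\|$ in a single complex variable, which is classical. The second, slightly more delicate, is the range restriction $-\rho/2<y,y'<\rho/2$ in \cref{lem:liprad}: after freezing the first $k-1$ coordinates at imaginary parts $y_1,\dots,y_{k-1}$ the variable $z_k$ is only analytic on the sub-strip $|\Im z_k|<\sqrt{\rho(V)^2-(y_1^2+\cdots+y_{k-1}^2)}$, so to reach all $|y|<\rho(V)$ one invokes \cref{lem:liprad} in the form valid on compact sub-strips, where the Lipschitz constant stays bounded --- equivalently, one uses the fixed margin built into the choice of $\rho(V)$. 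This affects only the value of the constant $C$ and not the structure of the proof.
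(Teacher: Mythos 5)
Your proof is correct and follows essentially the same route as the paper's: both reduce to the one-variable Lemma~\ref{lem:liprad} by telescoping over the coordinates of $y$, using subharmonicity of $\log\|M_N\|$ in each complex variable separately and the normalization supplied by \eqref{eq:monodr1}. The only cosmetic difference is that the paper packages the frozen-variable integration into intermediate averaged functions $v_{y_2}$ and runs an induction on $d$, whereas you integrate the frozen coordinates at the end; the caveat you raise about the shrinking strip width affects the paper's argument equally and, as you say, only the constant.
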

\begin{proof}
  If $d=1$, then the statement follows directly from
  Lemma~\ref{lem:liprad} applied to  $ u(z)= \frac{1}{CN}\log \norm{M_N(z,\omega,E)} $ (with $ C $
  as in \cref{eq:monodr1}).  Let us verify the
  statement for $d=2$.
  Let
  \begin{equation*}
    v_{y_2}(x_1+iy_1):=\int_\T \frac{1}{CN}\log\norm{M_N(x_1+iy_1,x_2+iy_2,\omega,E)}\,dx_2.
  \end{equation*}
  By \cref{lem:liprad}
  \begin{equation*}
    |v_{y_2}(x_1+iy_1)-v_{0}(x_1+iy_1)|\le C|y_2|.
  \end{equation*}
  Since $ v_{y_2} $ is subharmonic for any fixed $ y_2 $, \cref{lem:liprad} also implies
  \begin{equation*}
    \left| \int_\tor v_{y_2}(x_1+iy_1)\,dx_1-\int_\tor v_{y_2}(x_1)\,dx_1 \right|\le C|y_1|.
  \end{equation*}
  So we have
  \begin{multline*}
    |L_N(\omega,E)-L_N(y,\omega,E)|
    = \left| \int_\tor v_0(x_1)\,dx_1-\int_\tor v_{y_2}(x_1+iy_1)\,dx_1 \right|\\
    \le \left| \int_\tor (v_0(x_1)-v_0(x_1+iy_1))\,dx_1 \right|
    + \left| \int_\tor (v_0(x_1+iy_1)-v_{y_2}(x_1+iy_1))\,dx_1\right| \\
    \le C(|y_1|+|y_2|).
  \end{multline*}
  For general $d$ the  proof is completely similar with help of induction over $d$.
\end{proof}

\subsection{Uniform Upper Estimates}\label{sec:uniform}

The upper bound from \cref{eq:monodr1} can be improved by invoking
 the sub-mean value property for subharmonic functions.

\begin{prop}\label{prop:logupper}
  Assume $ \omega\in \tor^d(a,b) $, $ E\in \C $, and $ L(\omega,E)> \gamma >0 $. Then for all $N\ge 1$,
  \begin{equation}\label{eq:uniform-estimate}
    \sup\limits_{x\in\tor^d} \log \| M_N (x,\omega,E) \| \le
    NL_N(\omega,E)+ C N^{1-\tau} \ ,
  \end{equation}
  with $C=C(V,a,b,|E|,\gamma)$ and $ \tau $ as in (LDT).
\end{prop}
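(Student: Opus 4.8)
The plan is to derive the pointwise upper bound on $\log\|M_N(x,\omega,E)\|$ from the subharmonicity of this quantity combined with the large deviation estimate, using the standard sub-mean value argument. First I would observe that for fixed $\omega,E$ the function
\[
  u(z)=\frac{1}{N}\log\|M_N(z,\omega,E)\|
\]
is subharmonic on a complex strip $\cA_\rho$ with $\rho=\rho(V)$, since it is a maximum of logarithms of moduli of entire functions of $z$ (the entries of $M_N$ being, up to sign, Dirichlet determinants, which are polynomials in $V(x+n\omega)$, hence analytic in $z$), and subharmonicity is preserved under such maxima and under composition with the analytic map $z\mapsto x+n\omega$. Moreover by \cref{eq:monodr1} we have the a priori bound $0\le u\le C(V,|E|)$ on $\cA_\rho$.

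Next, for a fixed $x_0\in\tor^d$ I would apply the sub-mean value property on a polydisc of radius $r\simeq N^{-\tau}$ centered at $x_0$ (small enough to stay inside $\cA_\rho$ for large $N$; for $N$ of moderate size the trivial bound \cref{eq:monodr1} already suffices after adjusting $C$). This gives
\[
  u(x_0)\le \frac{1}{\mes(D_r)}\int_{D_r} u(x)\,dx,
\]
where $D_r$ is the polydisc. I would then split this average into the contribution from the ``good'' set where $|\log\|M_N(x,\omega,E)\|-NL_N(\omega,E)|\le N^{1-\tau}$, which contributes at most $L_N(\omega,E)+N^{-\tau}$, and the contribution from the ``bad'' set, whose measure is at most $\exp(-N^\sigma)$ by \cref{thm:LDT}, while on that set $u\le C(V,|E|)$ by \cref{eq:monodr1}. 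The bad contribution is therefore bounded by
\[
  C(V,|E|)\,\frac{\exp(-N^\sigma)}{\mes(D_r)}\le C(V,|E|)\,r^{-2d}\exp(-N^\sigma),
\]
which, since $r^{-2d}\simeq N^{2d\tau}$ is only polynomial, is far smaller than $N^{-\tau}$ for $N\ge N_0(V,a,b,|E|,\gamma)$. Multiplying through by $N$ yields $\log\|M_N(x_0,\omega,E)\|\le NL_N(\omega,E)+CN^{1-\tau}$, uniformly in $x_0$, which is the claim.

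The main technical point — which is routine but worth stating carefully — is the choice of the radius $r$: it must be small enough that $D_r\subset\cA_\rho$ (so $\rho=\rho(V)$ enters $N_0$) yet the dilation factor $1/\mes(D_r)$ must not overwhelm the subexponential gain $\exp(-N^\sigma)$ from the (LDT); any $r$ that is an inverse power of $N$, e.g. $r\simeq N^{-\tau}$, works comfortably since then $1/\mes(D_r)$ is polynomial. One also needs that $L_N(\omega,E)$ appearing in the (LDT) and in the target estimate is the same normalization, which it is. I do not anticipate a genuine obstacle here; this is the classical upgrade of an averaged large-deviation bound to a pointwise bound via subharmonicity, and the positivity hypothesis $L(\omega,E)>\gamma$ is used only insofar as it is needed to invoke \cref{thm:LDT} with its stated constants.
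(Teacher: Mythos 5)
Your overall strategy is exactly the paper's: subharmonicity of $\log\|M_N\|$, the sub-mean value inequality over a small polydisc, and a good/bad splitting of the average using the (LDT), with the bad set's contribution killed by $\exp(-N^\sigma)$ against a polynomial dilation factor. The choice $r\simeq N^{-\tau}$ versus the paper's $r=N^{-1}$ is immaterial.

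There is, however, one step you elide that the paper treats explicitly and that your write-up as stated does not justify. The sub-mean value inequality averages $u$ over a polydisc in $\C^d$, i.e.\ over points $x+iy$ with $y\neq 0$, but \cref{thm:LDT} as stated controls only the real slice $y=0$. You need the (LDT) on each shifted torus $\T^d+iy$, $|y|\le r$, where it is centered at $NL_N(y,\omega,E)$ rather than at $NL_N(\omega,E)$; to return to $NL_N(\omega,E)$ one must invoke the Lipschitz bound $|L_N(y,\omega,E)-L_N(\omega,E)|\le C|y|$ of \cref{cor:liplap} (this is the passage from \cref{eq:LDEAP1} to \cref{eq:LDEAP12} in the paper). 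With your radius $r\simeq N^{-\tau}$ this correction contributes an extra $CN^{1-\tau}$, which is harmless, and the bad set is then handled slice-by-slice via Fubini exactly as you describe. So the gap is routine to fill from results already in \cref{sec:basic-tools}, but the splitting condition you wrote, with $NL_N(\omega,E)$ as the center on the full complex polydisc, is not literally what the (LDT) gives you.
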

\begin{proof}
  We only check \cref{eq:uniform-estimate} for $ N $ large enough, for smaller $ N $ we simply choose
  $ C $ large enough.
  By (LDT), 
  \begin{equation}\label{eq:LDEAP1}
    \mes \left\{ x\in\tor^d :
      |\log\|M_N(x+iy,\omega,E)\|-NL_N(y,\omega,E)| > N^{1-\tau} \right\}<
    \exp(-N^{\sigma}),
  \end{equation}
  and using  Corollary~\ref{cor:liplap} we have
  \begin{equation}\label{eq:LDEAP12}
    \mes \left\{ x\in\tor^d :
      |\log\|M_N(x+iy,\omega,E)\|-NL_N(\omega,E)| > 2N^{1-\tau} \right\}<
    \exp(-N^{\sigma}),
  \end{equation}
  provided $ |y|\le 1/N $.
  Due to the sub-mean value property for subharmonic functions we have
  \begin{equation}\label{eq:submeanMN}
    \log\|M_N(x,\omega,E)\| \le (\pi r^2)^{-d}\int_{\cP}\log\|M_N(\xi+iy,\omega,E)\|\,d\xi dy,
  \end{equation}
  where $\cP=\prod\cD(x_{j},r)$, $x=(x_1,\dots,x_d)$,  $r=N^{-1}$.
  Denote by $\cB_y\subset \mathbb{T}^d$ the set in \eqref{eq:LDEAP12}.  Let
  \[
    \cB=\{(\xi,y)\in [0,1]^d\times (-r,r)^d:\xi\in\cB_y\}.
  \]
  Due to \eqref{eq:LDEAP12} we have
  \begin{equation}\label{eq:submeanMN1}
    \begin{split}
      (\pi
      r^2)^{-d}\int_{\cP\setminus\cB}\log\|M_N(\xi+iy,\omega,E)\|\,d\xi
      dy\le NL_N(\omega,E)+2N^{1-\tau}.
    \end{split}
  \end{equation}
  On the other hand, due to   \eqref{eq:monodr1}
  \begin{equation}\label{eq:submeanMN2}
    \begin{split}
      (\pi r^2)^{-d}\int_{\cP\cap\cB}\log\|M_N(\xi+iy,\omega,E)\|\,d\xi dy\le
      (\pi r^2)^{-d}CN\mes (\cB)<\exp(-N^{\sigma}/2).
    \end{split}
  \end{equation}
  The conclusion follows by combining \eqref{eq:submeanMN}, \eqref{eq:submeanMN1}, and
  \eqref{eq:submeanMN2}.
\end{proof}

For the purpose of Cartan's estimate (see \cref{sec:Cartan}) we need  to extend the uniform estimate
to complex neighborhoods of $ (x,\omega,E) $. It is crucial that the size of the neighborhood in the next
result is much larger than what can be obtained by simply applying \cref{lem:Eomdiff}.
\begin{cor}\label{cor:logupperc}
  Assume $ \omega_0\in \tor^d(a,b) $, $ E_0\in \C$, and $ L(\omega_0,E_0)> \gamma >0 $.
  Let $ \sigma,\tau $ as in (LDT).
  For all $ N\ge N_0(V,a,b,|E_0|,\gamma) $ and
  $ (y,w,E)\in \R^d\times \C^d\times \C $ such that
  \begin{equation*}
    |y|<1/N,\ |w-\omega_0|,\ |E-E_0|< \exp(-(\log N)^{8/\sigma}),
  \end{equation*}
  we have
  \begin{equation}\nn
    \sup\limits_{x\in\tor^d} \log \| M_N (x+iy,w,E) \| \le
    NL_N(\omega_0,E_0)+ C N^{1-\tau},
  \end{equation}
  with $C=C(V,a,b,|E_0|,\gamma)$.  In
  particular, we also have
  \begin{equation}\nn
    \sup\limits_{x\in\tor^d} \log | f_N (x+iy,w,E)| \le
    NL_N(\omega_0,E_0)+ C N^{1-\tau}.
  \end{equation}
\end{cor}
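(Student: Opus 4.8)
The plan is to prove the estimate for $\log\|M_N\|$; the one for $f_N$ then follows at once, since by \cref{eq:Mndetbasic} the determinant $f_N$ is the $(1,1)$-entry of $M_N$, whence $\log|f_N|\le\log\|M_N\|$. The strategy is to combine the sub-mean value argument that proves \cref{prop:logupper} with an avalanche-principle comparison of $(w,E)$ against $(\omega_0,E_0)$ of the type carried out in \cref{lem:compL}, the single new ingredient being that this comparison has to be run at base points of the form $\xi+i\eta$ with $|\eta|\lesssim 1/N$ — far larger than the complex neighborhood permitted by the \emph{statement} of \cref{lem:compL}, but tolerated by its proof. Throughout fix $A=2/\sigma$ (so $\sigma A=2\ge1$ and $4A=8/\sigma$), $\delta=\exp(-(\log N)^{8/\sigma})$, $\ell\simeq(\log N)^{A+1}$, $n=[N/\ell]$; the scheme rests on the two elementary facts $N\delta=\exp\!\big(\log N-(\log N)^{8/\sigma}\big)\to0$ and $\ell\log C\ll(\log N)^{4A}$ (valid since $A+1<4A$).

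\emph{Step 1: comparison at imaginarily shifted points.} The heart of the matter is to show that for every $\eta\in\R^d$ with $|\eta|<2/N$ there is a set $\cB_\eta\subset\tor^d$ with $\mes(\cB_\eta)<\exp\!\big(-\tfrac12(\log N)^{2+\sigma}\big)$ such that for all $(w,E)$ with $|w-\omega_0|,|E-E_0|<\delta$ and all $\xi\in\tor^d\setminus\cB_\eta$,
\[
\log\|M_N(\xi+i\eta,w,E)\|\le NL_N(\omega_0,E_0)+CN^{1-\tau}.
\]
I would obtain this by rerunning the proof of \cref{lem:compL} with $\xi+i\eta$ in place of the real base point $x_0$. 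Since $|\eta|<2/N$ stays below a fixed constant, \cref{thm:LDT} on the shifted torus $\T^d+i\eta$ (legitimate by the remark that the results of this section hold on $\T^d+iy$, $|y|<\rho$), together with \cref{cor:liplap} and $L_\ell(\omega_0,E_0)\ge L(\omega_0,E_0)>\gamma$, shows that outside a set of $\xi$'s of measure $\lesssim n\exp(-\ell^\sigma)$ the avalanche-principle hypotheses \eqref{large} and \eqref{diff} hold for the length-$\ell$ blocks $A_j=M_\ell(\xi+i\eta+(j-1)\ell\omega_0,\omega_0,E_0)$; comparing these with $M_\ell(\xi+i\eta+(j-1)\ell w,w,E)$ via \cref{lem:Eomdiff} — the perturbation being at most $N\delta+2\delta$, so the block error is $(C(V)+|E_0|)^{\ell}(N\delta+2\delta)=o(1)$ — carries the hypotheses over to the $(w,E)$-blocks. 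Two applications of \cref{prop:AP}, followed by a term-by-term comparison of the two avalanche-principle sums (again by \cref{lem:Eomdiff}, now on blocks of length $\le 2\ell$), give $\big|\log\|M_N(\xi+i\eta,w,E)\|-\log\|M_N(\xi+i\eta,\omega_0,E_0)\|\big|<1$; and \cref{prop:logupper}, whose proof already delivers \cref{eq:uniform-estimate} on $\T^d+i\eta$ once the $O(1)$ error from \cref{cor:liplap} is absorbed into $CN^{1-\tau}$, bounds $\log\|M_N(\xi+i\eta,\omega_0,E_0)\|$ by $NL_N(\omega_0,E_0)+CN^{1-\tau}$.

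\emph{Step 2: sub-mean value.} Fix $x\in\tor^d$, $|y|<1/N$, set $r=N^{-1}$, and let $\cP=\prod_j\cD(x_j+iy_j,r)$; on $\cP$ one has $|\Im\zeta|<2/N$ and $N|\Im w|<N\delta$, hence $|\Im\zeta|+N|\Im w|<\rho$ and $\log\|M_N(\cdot,w,E)\|$ is (separately) subharmonic on $\cP$. The sub-mean value property then gives, writing $\zeta=\xi+i\eta$,
\[
\log\|M_N(x+iy,w,E)\|\le(\pi r^2)^{-d}\int_{\cP}\log\|M_N(\xi+i\eta,w,E)\|\,d\xi\,d\eta .
\]
Split $\cP$ along $\cB=\{\xi+i\eta\in\cP:\xi\in\cB_\eta\}$. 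On $\cP\setminus\cB$, Step 1 bounds the integrand by $NL_N(\omega_0,E_0)+CN^{1-\tau}$. On $\cB$, use the crude bound $\log\|M_N(\xi+i\eta,w,E)\|\le C(V,|E_0|)N$ (the complex-argument analogue of \cref{eq:monodr1}, valid since $\cP$ lies in the domain of analyticity) together with $\mes(\cB)\le(2r)^d\sup_{|\eta|<2/N}\mes(\cB_\eta)$, so that the contribution of $\cB$ to the normalized integral is $\lesssim N^{d+1}\exp\!\big(-\tfrac12(\log N)^{2+\sigma}\big)<1$. Adding the two contributions yields $\log\|M_N(x+iy,w,E)\|<NL_N(\omega_0,E_0)+CN^{1-\tau}$ for every $x$, and the bound for $f_N$ follows.

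The only real obstacle I anticipate lies in Step 1: one must check that the entire avalanche-principle bookkeeping of \cref{lem:compL} — the measure of the exceptional set, the verification of \eqref{large} and \eqref{diff}, and the block-wise error estimates — survives the imaginary shift. It does, essentially because $|\eta|<2/N$ is negligible on the scale of a single block of length $\ell\simeq(\log N)^{A+1}$, so the shift is ``invisible'' within a block; and, crucially, one does \emph{not} need $\cB_\eta$ to be independent of $\eta$, since $\cB_\eta$ enters Step 2 only through the $2d$-dimensional integral over $\cP$, where a polynomial-in-$N$ loss measured against $\mes(\cP)=(\pi r^2)^{d}$ is harmless.
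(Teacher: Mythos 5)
Your proposal is correct and takes essentially the same route as the paper's proof: apply \cref{lem:compL} (with $A=2/\sigma$) together with \cref{prop:logupper} on the shifted torus $\T^d+i\eta$ to obtain the bound off a small exceptional set $\cB_\eta$, and then remove that set by the sub-mean value argument from the proof of \cref{prop:logupper}. The care you take in rerunning the avalanche-principle bookkeeping at imaginarily shifted base points is precisely what the paper's remark that the results of \cref{sec:basic-tools} hold on $\T^d+iy$ (with $L(y,\omega,E)$ in place of $L(\omega,E)$) is meant to license.
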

\begin{proof}
  Take $ y,w,E $ satisfying the assumptions.
  Due to \cref{prop:logupper} and \cref{lem:compL} (with $ A=2/\sigma $; both results are applied on
  $ \T^d+iy $), we have that
  \begin{equation*}
    \log\norm{M_N(x+iy,w,E)}\le NL_N(y,\omega_0,E_0)+CN^{1-\tau}
  \end{equation*}
  for any $ x\in \T^d\setminus \cB_{y} $, $ \mes(\cB_y)<\exp(-(\log N)^2) $.
  Now we can employ the sub-mean value property for subharmonic functions just as in the proof of
  \cref{prop:logupper} and the conclusion follows.
\end{proof}

Another consequence of the uniform upper estimate is an improvement of the stability estimate from
\cref{lem:Eomdiff}.
\begin{cor}\label{cor:4.6}
  Assume $ \omega_0\in \tor^d(a,b) $, $ E_0\in \C$, and $ L(\omega_0,E_0)> \gamma >0 $.
  Let $ \sigma,\tau $ as in (LDT).
  For all $ N\ge N_0(V,a,b,|E_0|,\gamma) $ and
  $ (z_i,w_i,E_i)\in \C^d\times \C^d\times \C $ such that
  \begin{equation*}
    |\Im z_i|<1/N,\ |w_i-\omega_0|,\ |E_i-E_0|< \exp(-(\log N)^{8/\sigma}),\ i=1,2
  \end{equation*}
  we have
  \begin{multline*}
	\left\| M_N \left(z_1, w_1,E_1\right) - M_N \left( z_2,w_2,E_2\right) \right\|\\
    \le  \left(|z_1-z_2| + |w_1-w_2|+|E_1 - E_2| \right)
    \exp \left(NL \left(\omega_0, E_0\right) + C N^{1-\tau}\right),
  \end{multline*}
  with $C=C(V,a,b,|E_0|,\gamma)$.
  In  particular,
  \begin{multline}\label{eq:modcont222}
    \big|\log\bigl\|M_N(z_1, w_1, E_1)\bigr\| -
    \log\bigl\|M_N(z_2,w_2,E_2)\bigr\|\big| \\
    \le\left(|z_1 - z_2|+|w_1-w_2|+|E_1-E_2|\right)
     \exp\left(NL(\omega_0, E_0\right) + C N^{1-\tau}\bigr),	
  \end{multline}
  \begin{multline} \label{eq:fN_comp}
    \left| \log | f_N(z_1,w_1,E_1)|
      -\log | f_N(z_2, w_2,E_2)| \right|\\
    \le
    \left(|z_1-z_2|+|w_1-w_2|+|E_1-E_2|\right)
    {\exp\left(NL(\omega_0, E_0\right) + C N^{1-\tau}\bigr)
      \over \max_i| f_N(z_i,w_i,E_i)|},
  \end{multline}
  provided the right-hand sides of~\eqref{eq:modcont222} and~\eqref{eq:fN_comp} are less than
  $1/2$.
\end{cor}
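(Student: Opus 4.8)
The plan is to rerun the telescoping argument from the proof of \cref{lem:Eomdiff}, but to estimate the partial products appearing there by the sharp uniform bound of \cref{cor:logupperc} instead of by the crude bound $\norm{A_{i,n}}\le C(V)+|E_i|$. Writing $A_{i,n}$ for the one-step transfer matrix at site $n$ with parameters $(z_i,w_i,E_i)$, identity \cref{eq:M_N1vs2} gives
\[
  M_N(z_1,w_1,E_1)-M_N(z_2,w_2,E_2)=\sum_{n=1}^{N}M_{[n+1,N]}(z_2,w_2,E_2)\,(A_{1,n}-A_{2,n})\,M_{[1,n-1]}(z_1,w_1,E_1),
\]
and the Mean Value Theorem still yields $\norm{A_{1,n}-A_{2,n}}\le C(V)\big(|z_1-z_2|+N|w_1-w_2|+|E_1-E_2|\big)$ for every $n\le N$, the hypotheses on the imaginary parts keeping all arguments inside $\cA_\rho$. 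The new ingredient is that $M_{[n+1,N]}(z_2,w_2,E_2)=M_{N-n}(z_2+nw_2,w_2,E_2)$ and $M_{[1,n-1]}(z_1,w_1,E_1)=M_{n-1}(z_1,w_1,E_1)$, so \cref{cor:logupperc} (combined with \cref{prop:uniform} to replace $L_{N-n}$ and $L_{n-1}$ by $L(\omega_0,E_0)$, absorbing $(\log N)^{1/\sigma}$ into $N^{1-\tau}$) bounds each such partial product by $\exp\big(mL(\omega_0,E_0)+CN^{1-\tau}\big)$, where $m$ is its length; for the at most $2N_0$ indices $n$ for which one of the lengths $N-n$, $n-1$ is below $N_0$, the trivial bound $(C(V)+|E_0|)^{N_0}=C(V,|E_0|)$ is used for that factor. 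Since $L(\omega_0,E_0)\ge0$, the product of the pre- and post-factors in each summand is at most $\exp\big(NL(\omega_0,E_0)+CN^{1-\tau}\big)$, and summing over $n$ contributes an extra factor $N^2$ (one $N$ from the number of summands, one from the $N|w_1-w_2|$ above), which is absorbed into $\exp(CN^{1-\tau})$ since $2\log N\le N^{1-\tau}$ for $N\ge N_0$. This is the first asserted inequality.

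The one step needing genuine care — which I expect to be the only real obstacle — is checking that \cref{cor:logupperc} really applies to these shifted partial products, i.e.\ that $|\Im(z_2+nw_2)|<1/(N-n)$ (and the analogous bound for $M_{n-1}$), together with $|w_i-\omega_0|,|E_i-E_0|<\exp(-(\log(N-n))^{8/\sigma})$. The latter two conditions are immediate, since shrinking the length only enlarges the admissible radius. For the imaginary part, $|\Im(z_2+nw_2)|\le|\Im z_2|+n|\Im w_2|<1/N+N\exp(-(\log N)^{8/\sigma})$, and since $8/\sigma>1$ one has $\exp(-(\log N)^{8/\sigma})\le N^{-2}$ for $N$ large, so this is $<1/N+N^{-1}<1/(N-n)$ for every $1\le n\le N-1$. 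This is precisely the kind of estimate that fails for polynomially small neighborhoods but holds comfortably for the super-polynomially small neighborhoods assumed here; once it is in place the remaining bookkeeping is routine.

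Finally, \cref{eq:modcont222,eq:fN_comp} will follow from the first inequality exactly as the corresponding statements in \cref{lem:Eomdiff}. For \cref{eq:modcont222} one uses $\big|\log\norm{M_1}-\log\norm{M_2}\big|\le\norm{M_1-M_2}/\min(\norm{M_1},\norm{M_2})\le\norm{M_1-M_2}$, valid because $\norm{M_N(z,w,E)}\ge1$, provided the right-hand side is less than $1/2$. For \cref{eq:fN_comp} one first notes $|f_N(z_1,w_1,E_1)-f_N(z_2,w_2,E_2)|\le\norm{M_N(z_1,w_1,E_1)-M_N(z_2,w_2,E_2)}$ by \cref{eq:Mndetbasic}; then the hypothesis that the right-hand side of \cref{eq:fN_comp} is less than $1/2$ forces $\min_i|f_N(z_i,w_i,E_i)|>\tfrac12\max_i|f_N(z_i,w_i,E_i)|$, so that $\big|\log|f_N(z_1,w_1,E_1)|-\log|f_N(z_2,w_2,E_2)|\big|\le|f_N(z_1,w_1,E_1)-f_N(z_2,w_2,E_2)|/\min_i|f_N(z_i,w_i,E_i)|$, and substituting the first inequality and absorbing the resulting factor $2$ into $C$ gives \cref{eq:fN_comp}.
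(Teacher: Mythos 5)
Your proposal is correct and takes essentially the same route as the paper: the paper's entire proof is the remark that the argument is ``completely analogous to that of \cref{lem:Eomdiff}, the only difference being that one uses \cref{cor:logupperc} to bound the $M_{N-n}$, $M_{n-1}$ factors in \cref{eq:M_N1vs2}'', which is exactly your plan, including the (correct) observation that the only point needing care is that the shifted partial products satisfy the hypotheses of \cref{cor:logupperc}. One small arithmetic slip in that verification: bounding $N\exp(-(\log N)^{8/\sigma})$ by $N^{-1}$ gives $1/N+N^{-1}=2/N$, which is \emph{not} below $1/(N-n)$ when $n\le N/2$; since $1/(N-n)-1/N=n/(N(N-n))\ge 1/N^2$, you should instead use the equally available bound $N\exp(-(\log N)^{8/\sigma})<N^{-2}$ for $N\ge N_0$, after which the rest goes through as you wrote it.
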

\begin{proof}
  The proof is completely analogous to that of \cref{lem:Eomdiff}. The only difference is that now we can
  use \cref{cor:logupperc} to bound the $ M_{N-n}, M_{n-1} $ factors in \cref{eq:M_N1vs2}.
\end{proof}

In a similar manner one can treat the stability of $ M_N,f_N $ under
a change of the potential. Let $ \tilde M_N $ and $ \tilde f_N $
denote the transfer matrices and the Dirichlet determinants associated
to the operator having $ \tilde V $ as a potential instead of $ V $.

\begin{lemma}\label{lem:stability-in-V}
  Assume $ \omega\in \tor^d(a,b) $, $ E\in \C$, and $ L(\omega,E)> \gamma >0 $.
  Then for any $ x\in \T^d $ and  $ N\ge 1 $ we have
  \begin{equation*}
    \left\| M_N \left(x, \omega,E\right) - \tilde M_N \left(x,\omega,E\right) \right\|
    \le \norm{V-\tilde V}_\infty
    \exp \left(NL \left(\omega, E\right) + C N^{1-\tau}\right),
  \end{equation*}
  with $ C=C(V,a,b,|E|,\gamma) $.
  In particular,
  \begin{equation*}
    \left| \log \| M_N(x,\omega,E)\|
      -\log \|\tilde M_N(x,\omega,E)\| \right|\le
    \norm{V-\tilde V}_\infty
    \exp\left(NL(\omega, E\right) + C  N^{1-\tau}\bigr),
  \end{equation*}
  \begin{equation*}
    \left| \log | f_N(x,\omega,E)|
      -\log |\tilde f_N(x,\omega,E)| \right|\le
    \norm{V-\tilde V}_\infty
    {\exp\left(NL(\omega, E\right) + C
      N^{1-\tau}\bigr)\over \max \left(\bigl| f_N\bigl(x,\omega,E\bigr)\bigr|,
        \bigl| \tilde f_N\bigl(x,\omega,E\bigr)\bigr|\right)}\ ,
  \end{equation*}
  provided the right-hand sides  are less than  $1/2$.
\end{lemma}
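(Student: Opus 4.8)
The plan is to carry out the same telescoping argument that proves \cref{lem:Eomdiff} and \cref{cor:4.6}. Writing $A_{n}$ for the one-step transfer matrix built from $V$ at site $n$ and $\tilde A_{n}$ for the one built from $\tilde V$, and switching the factors from $V$ to $\tilde V$ one site at a time, one obtains the analogue of \cref{eq:M_N1vs2},
\begin{equation*}
  M_{N}(x,\omega,E)-\tilde M_{N}(x,\omega,E)
  =\sum_{n=1}^{N}\tilde M_{[n+1,N]}(x,\omega,E)\,\bigl(A_{n}-\tilde A_{n}\bigr)\,M_{[1,n-1]}(x,\omega,E).
\end{equation*}
The one genuinely new ingredient is the bound on the middle factor: $A_{n}-\tilde A_{n}$ vanishes off the $(1,1)$ entry, where it equals $V(x+n\omega)-\tilde V(x+n\omega)$, so $\norm{A_{n}-\tilde A_{n}}=\bigl|(V-\tilde V)(x+n\omega)\bigr|\le\norm{V-\tilde V}_{\infty}$. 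After that it is just a matter of inserting uniform upper bounds for the long transfer-matrix factors and summing the resulting series in $n$.

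For the factors built from $V$, \cref{prop:logupper} together with \cref{prop:uniform} (the latter to trade $L_{m}$ for $L$, at the cost of a $(\log m)^{1/\sigma}\ll m^{1-\tau}$ error) gives $\sup_{x}\norm{M_{m}(x,\omega,E)}\le g_{m}:=\exp\bigl(mL(\omega,E)+Cm^{1-\tau}\bigr)$ for every $m\ge1$, with $C=C(V,a,b,|E|,\gamma)$; since the sup over $x\in\T^{d}$ is translation invariant, this bounds $M_{[1,n-1]}$ by $g_{n-1}$, and, estimating $\tilde M_{[n+1,N]}$ by $\norm{M_{[n+1,N]}}+\norm{\tilde M_{[n+1,N]}-M_{[n+1,N]}}\le g_{N-n}+D_{N-n}$ where $D_{m}:=\sup_{x}\norm{M_{m}(x,\omega,E)-\tilde M_{m}(x,\omega,E)}$, the displayed identity becomes the recursion
\begin{equation*}
  D_{N}\le\norm{V-\tilde V}_{\infty}\sum_{n=1}^{N}\bigl(g_{N-n}+D_{N-n}\bigr)\,g_{n-1},
\end{equation*}
and an induction on $N$ — using $a^{1-\tau}+b^{1-\tau}\le2(a+b)^{1-\tau}$ to control the products $g_{N-n}g_{n-1}$, and $N\le e^{N^{1-\tau}}$ to absorb the number of terms — yields the claimed $D_{N}\le\norm{V-\tilde V}_{\infty}\exp\bigl(NL(\omega,E)+CN^{1-\tau}\bigr)$. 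I expect this induction to be the one place that needs care: the constants only stay under control when $\norm{V-\tilde V}_{\infty}$ is not too large (say, bounded by a fixed negative power of $N$), which is exactly the range in which the lemma is applied and in which the logarithmic estimates below are not vacuous; in that range one can also simply note that $\tilde L(\omega,E)$ remains above $\gamma/2$ and close to $L(\omega,E)$, so that \cref{prop:logupper} applies verbatim with $\tilde V$ in place of $V$ and the bootstrap through $D_{N-n}$ is unnecessary.

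Finally, the two ``in particular'' estimates follow from the matrix bound by the device at the end of the proof of \cref{lem:Eomdiff}. Because $\det M_{N}=\det\tilde M_{N}=1$ we have $\norm{M_{N}},\norm{\tilde M_{N}}\ge1$, so whenever the right-hand side is $<1/2$,
\begin{equation*}
  \bigl|\log\norm{M_{N}}-\log\norm{\tilde M_{N}}\bigr|
  \le2\left|\frac{\norm{M_{N}}}{\norm{\tilde M_{N}}}-1\right|
  \le2\norm{M_{N}-\tilde M_{N}},
\end{equation*}
and inserting the matrix bound (and absorbing the $2$ into $C$) gives the first one. For the determinants, \cref{eq:Mndetbasic} exhibits $f_{N}$ and $\tilde f_{N}$ as the $(1,1)$ entries of $M_{N}$ and $\tilde M_{N}$, whence $|f_{N}-\tilde f_{N}|\le\norm{M_{N}-\tilde M_{N}}$; then $\bigl|\log|f_{N}|-\log|\tilde f_{N}|\bigr|\le2|f_{N}-\tilde f_{N}|/\max(|f_{N}|,|\tilde f_{N}|)$ via $|\log t|\le2|t-1|$ for $|t-1|\le1/2$, and the matrix bound finishes the second one.
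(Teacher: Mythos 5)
Your proof is correct and follows the route the paper intends: the paper omits the argument entirely (it is introduced with ``in a similar manner one can treat the stability\dots''), the intended proof being the telescoping identity of \cref{eq:M_N1vs2} with the long factors controlled by the uniform upper bound of \cref{prop:logupper}, exactly as in \cref{cor:4.6}, and the two logarithmic estimates deduced by the same $|\log t|\le 2|t-1|$ device. The only point the analogy does not hand you for free is the bound on the $\tilde M$ factors, and your recursion (or the shortcut via the crude a priori bound on $\tilde M_m-M_m$) settles it under a smallness assumption on $\norm{V-\tilde V}_\infty$ that is in fact forced by the ``right-hand sides less than $1/2$'' proviso and is amply satisfied in every application, where $\norm{V-\tilde V}_\infty\lesssim\exp(-N^2)$.
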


\subsection{Cartan's Estimate}\label{sec:Cartan}

We adopt the definition of Cartan sets from \cite{GolSch08}.
\begin{defi}\label{defi:cartansets} Let $H \gg 1$.  For an arbitrary subset $\cB \subset \cD(z_0,
1)\subset \IC$ we say that $\cB \in \car_1(H, K)$ if $\cB\subset
\bigcup\limits^{j_0}_{j=1} \cD(z_j, r_j)$ with $j_0 \le K$, and
\begin{equation}
\sum_j\, r_j < e^{-H}\ .
\end{equation}
If $d$ is a positive integer greater than one and $\cB \subset
\prod\limits_{i=1}^d \cD(z_{i,0}, 1)\subset \IC^d$ then we define
inductively that $\cB\in \car_d(H, K)$ if for any $1 \le j \le d$ there
exists $\cB_j \subset \cD(z_{j,0}, 1)\subset \IC, \cB_j \in \car_1(H,
K)$ so that $\cB_z^{(j)} \in \car_{d-1}(H, K)$ for any $z \in \IC
\setminus \cB_j$,  here $\cB_z^{(j)} = \left\{(z_1, \dots, z_d) \in \cB:
z_j = z\right\}$.
\end{defi}

The above definition appears naturally from the proof of the following
generalization of the usual Cartan estimate (see \cite[Lecture 11]{Lev96}) to several variables.

\begin{lemma}[{\cite[Lem.~2.15]{GolSch08}}]\label{lem:high_cart}
 Let $\varphi(z_1, \dots, z_d)$ be an analytic function defined
in a polydisk $\cP = \prod\limits^d_{j=1} \cD(z_{j,0}, 1)$, $z_{j,0} \in
\IC$.  Let $M \ge \sup\limits_{\uz\in\cP} \log |\varphi(\uz)|$,  $m \le \log
\bigl |\varphi(\uz_0)\bigr |$, $\uz_0 = (z_{1,0},\dots, z_{d,0})$.  Given $H
\gg 1$ there exists a set $\cB \subset \cP$,  $\cB \in
\car_d\left(H^{1/d}, K\right)$, $K = C_d H(M - m)$,  such that
\begin{equation}\label{eq:cart_bd}
  \log \bigl | \varphi(z)\bigr | > M-C_d H(M-m)
\end{equation}
for any $z \in \prod^d_{j=1} \cD(z_{j,0}, 1/6)\setminus \cB$.
Furthermore, when $ d=1 $ we can take
$ K=C(M-m) $ and keep only the disks of $ \cB $ containing a zero of $\phi$ in them.
\end{lemma}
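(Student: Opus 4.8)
I would argue by induction on $d$. The base case $d=1$ is the classical one-variable Cartan estimate recalled above (see \cite[Lecture~11]{Lev96}), in the sharpened form in which the exceptional disks may be taken centered at zeros of $\varphi$ and at most $C(M-m)$ in number: one divides out from $\varphi$ a finite Blaschke product $B$ carrying its zeros in $\cD(z_{1,0},1/4)$ (there are $\le C(M-m)$ of them, by Jensen's formula), observes that the zero-free quotient $g=\varphi/B$ satisfies $\sup\log|g|\le M$ and $\log|g(z_{1,0})|\ge m$, so that Harnack's inequality for the nonnegative harmonic function $M-\log|g|$ gives $\log|g|\ge M-C(M-m)$ on $\cD(z_{1,0},1/6)$, and applies the Cartan covering lemma to the polynomial data of $B$ to get $\log|B|\ge -CH(M-m)$ off a union of disks whose radii sum to $<e^{-H}$; adding the two bounds yields \eqref{eq:cart_bd}.

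For the inductive step, suppose the statement holds in dimension $d-1$, and let $\varphi,M,m$ on $\cP=\prod_{j=1}^d\cD(z_{j,0},1)$ and $H\gg1$ be given. I would first treat the last coordinate. Apply the one-variable case to $\psi(z_d):=\varphi(z_{1,0},\dots,z_{d-1,0},z_d)$ with parameter $H^{1/d}$, obtaining a set $\cB_d\in\car_1(H^{1/d},C(M-m))$ such that $\log|\psi(z_d)|>M-CH^{1/d}(M-m)=:m_1$ for every $z_d\in\cD(z_{d,0},1/6)\setminus\cB_d$. Fix such a $z_d$ and apply the inductive hypothesis to $\varphi_{z_d}(z_1,\dots,z_{d-1}):=\varphi(z_1,\dots,z_{d-1},z_d)$, which has supremum $\le M$ on $\prod_{j<d}\cD(z_{j,0},1)$ and value $\ge m_1$ at $(z_{1,0},\dots,z_{d-1,0})$, but \emph{run at parameter} $\widehat H:=H^{(d-1)/d}$. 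Since $\widehat H^{1/(d-1)}=H^{1/d}$, this produces an exceptional set in $\car_{d-1}(H^{1/d},K_{d-1})$ with $K_{d-1}=C_{d-1}\widehat H(M-m_1)=C\,C_{d-1}\,H(M-m)$, off which $\log|\varphi_{z_d}|>M-C\,C_{d-1}\,H(M-m)$. Setting $C_d:=C\,C_{d-1}$ (hence $C_d=C^d$) and $K:=C_dH(M-m)$, both the level threshold and the complexity match the statement exactly.

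It remains to define $\cB:=\{\,z\in\prod_{j=1}^d\cD(z_{j,0},1/6):\log|\varphi(z)|\le M-C_dH(M-m)\,\}$ — so that \eqref{eq:cart_bd} is immediate — and to verify $\cB\in\car_d(H^{1/d},K)$, i.e.\ the recursive covering property of \cref{defi:cartansets} for each coordinate. For $j=d$ this is precisely the previous paragraph: $\cB_d\in\car_1(H^{1/d},K)$, and for $z_d\notin\cB_d$ the slice $\cB_{z_d}^{(d)}$ sits inside the $(d-1)$-dimensional exceptional set just constructed, hence lies in $\car_{d-1}(H^{1/d},K)$ (a subset of a Cartan set is again a Cartan set with the same parameters). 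For the remaining coordinates one repeats the argument after relabeling, slicing $\varphi$ through $\uz_0$ in the $z_j$-direction; the resulting $\cB_j$ need not be compatible with one another. The one genuinely delicate point, as I see it, is this parameter bookkeeping: the $(d-1)$-dimensional step must be run at exponent $\tfrac{d-1}{d}$ of $H$, precisely so that its internal $(d-1)$-st root returns the target exponent $\tfrac1d$ and the exceptional disks in each coordinate really do sum below $e^{-H^{1/d}}$, at the (harmless) cost of the loss $M-m$ compounding to $C^dH(M-m)$; the Jensen/Harnack/Cartan-covering inputs of the base case, and the monotonicity of the hypotheses under restriction, are routine.
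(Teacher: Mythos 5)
Your argument is correct, but note that the paper itself offers no proof of this lemma --- it is imported verbatim as \cite[Lem.~2.15]{GolSch08} --- and your induction on $d$ (one--variable Cartan via Jensen/Blaschke/Harnack as the base case, then slicing the last coordinate at parameter $H^{1/d}$ and running the $(d-1)$--dimensional step at $H^{(d-1)/d}$ so that the internal root returns $H^{1/d}$, with the loss compounding to $C^dH(M-m)$) is essentially the original proof in that reference. The only points worth tightening are routine: in the base case the Blaschke product should be taken for a disk strictly between where the zeros are collected and the unit disk so that the quotient is both bounded by $e^M$ (maximum principle on the intermediate circle, where $|B|=1$) and zero--free where Harnack is applied; and one should record explicitly that subsets of $\car_{d-1}$ sets are again $\car_{d-1}$ sets and that slices of $\cB$ over $z_d\notin\cD(z_{d,0},1/6)$ are empty, so that the recursive covering condition of \cref{defi:cartansets} holds for every coordinate $j$ by the relabeling you describe.
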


We note that the definition of the Cartan sets gives implicit information
about their measure.
\begin{lemma}\label{lem:Cartan-measure}
  If  $ \cB\in \Car_d(H,K) $ then
  \begin{equation*}
    \mes_{\C^d}(\cB)\le C(d) e^{-H} \text{ and } \mes_{\R^d}(\cB\cap \R^d)\le C(d) e^{-H}.
  \end{equation*}
\end{lemma}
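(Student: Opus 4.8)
The plan is to argue by induction on the dimension $d$, peeling off one coordinate at a time, and to reduce everything to the one-dimensional base case. For $d=1$, if $\cB\in\Car_1(H,K)$ then by definition $\cB\subset\bigcup_{j=1}^{j_0}\cD(z_j,r_j)$ with $\sum_j r_j<e^{-H}$, so the planar Lebesgue measure satisfies $\mes_\C(\cB)\le\sum_j\pi r_j^2\le\pi\bigl(\sum_j r_j\bigr)^2<\pi e^{-2H}\le\pi e^{-H}$ (using $H\gg1$), and the one-dimensional measure of $\cB\cap\R$ is at most $\sum_j 2r_j<2e^{-H}$. So the base case holds with, say, $C(1)=2$ (absorbing the constant by enlarging if needed).

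For the inductive step, suppose the statement holds for $\Car_{d-1}$ and let $\cB\in\Car_d(H,K)$, say with reference polydisk $\prod_{i=1}^d\cD(z_{i,0},1)$. Using the definition with $j=d$: there exists $\cB_d\subset\cD(z_{d,0},1)$, $\cB_d\in\Car_1(H,K)$, such that for every $w\in\C\setminus\cB_d$ the slice $\cB_w^{(d)}=\{(z_1,\dots,z_{d-1}): (z_1,\dots,z_{d-1},w)\in\cB\}$ lies in $\Car_{d-1}(H,K)$. I would then apply Fubini in the last coordinate. Write $\mes_{\C^d}(\cB)=\int_{\cD(z_{d,0},1)}\mes_{\C^{d-1}}(\cB_w^{(d)})\,dm(w)$, where $m$ is planar measure on the $w$-disk. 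Split the integral over $w\in\cB_d$ and $w\notin\cB_d$. On $\cB_d$, bound $\mes_{\C^{d-1}}(\cB_w^{(d)})$ trivially by $\mes_{\C^{d-1}}(\prod_{i<d}\cD(z_{i,0},1))=\pi^{d-1}$, and use $\mes_\C(\cB_d)<\pi e^{-H}$ from the base case; this contributes at most $\pi^d e^{-H}$. On the complement, $\cB_w^{(d)}\in\Car_{d-1}(H,K)$, so by the inductive hypothesis $\mes_{\C^{d-1}}(\cB_w^{(d)})\le C(d-1)e^{-H}$, and integrating over a $w$-disk of area $\pi$ gives at most $\pi C(d-1)e^{-H}$. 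Adding, $\mes_{\C^d}(\cB)\le (\pi^d+\pi C(d-1))e^{-H}=:C(d)e^{-H}$. The real case is identical: restrict to $w\in\R\cap\cD(z_{d,0},1)$, use $\mes_\R(\cB_d\cap\R)<2e^{-H}$ on the bad part (trivially bounding the slice measure by $2^{d-1}$), and the inductive hypothesis $\mes_{\R^{d-1}}(\cB_w^{(d)}\cap\R^{d-1})\le C(d-1)e^{-H}$ on the good part, integrating $w$ over an interval of length $2$.

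The only point requiring a little care — and the main (mild) obstacle — is the measurability of the slices and the applicability of Fubini: one needs $\cB$ to be at least measurable (which it is, being covered by finitely many disks at each level of the inductive construction, hence a countable/finite union of measurable sets), and one needs the bound $\mes_{\C^{d-1}}(\cB_w^{(d)})\le C(d-1)e^{-H}$ to hold for \emph{every} $w\notin\cB_d$, which is exactly what the definition of $\Car_d$ supplies. There is no genuine difficulty here; the estimate is essentially a bookkeeping consequence of the recursive definition, and the constants depend only on $d$. I would present it in the two displays above and leave the routine verification of measurability to the reader.
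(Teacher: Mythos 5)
Your proof is correct and follows exactly the route the paper takes: the $d=1$ case directly from the definition of $\car_1$, then induction on $d$ via Fubini, splitting the last coordinate over $\cB_d$ and its complement. The paper's own proof is just a two-line indication of this same argument, so your write-up is simply a fully detailed version of it.
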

\begin{proof}
  The case $ d=1 $ follows immediately from the definition of
  $ \Car_1 $. The case $ d>1 $ follows by induction, using Fubini and
  the definition of $ \Car_d $.
\end{proof}

Cartan's estimate allows us to argue that if the (LDT) estimate fails for $ f_N(x,\omega,E) $, then $ x $ and $ E $
are close to zeros of $ f_N $.
\begin{lemma}\label{lem:LDT-fail}
  Assume $ x\in \T^d $, $ \omega\in \tor^d(a,b) $, $ E\in \C $, and $ L(\omega,E)> \gamma >0 $. Let
  $ H\gg 1 $ and $ \tau,\sigma $ as in (LDT).
  There exists $ C_0=C_0(V,a,b,|E|,\gamma) $, such that for any $ N\ge N_0(V,a,b,|E|,\gamma) $, if
  \begin{equation*}
    \log|f_N(x,\omega,E)|\le NL_N(\omega,E)-C_0 H N^{1-\tau},
  \end{equation*}
  then there exists $ z\in \C^d $,
  $ |z-x|\lesssim \exp(-(H+N^\sigma/d)) $ such that $ f_N(z,\omega,E)=0 $. Furthermore,
  \begin{equation*}
    \norm{(H_N(x,\omega)-E)^{-1}}\ge c(V) \exp(H+N^\sigma/d).
  \end{equation*}
\end{lemma}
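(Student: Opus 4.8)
The plan is to use Cartan's estimate from \cref{lem:high_cart} applied to the analytic function $\varphi(z,E'):=f_N(z,\omega,E')$ on a suitable polydisk centered at $(x,E)$, and to combine it with the uniform upper bound from \cref{cor:logupperc}. First I would fix the polydisk $\cP=\prod_{j=1}^d\cD(x_j,r_0)\times \cD(E,r_0)$ with $r_0=\exp(-(\log N)^{8/\sigma})$, so that \cref{cor:logupperc} applies and furnishes the upper bound $M:=\sup_{\cP}\log|f_N|\le NL_N(\omega,E)+CN^{1-\tau}$. If $f_N$ had no zero in the slightly smaller polydisk $\cP'=\prod_j\cD(x_j,r_0/12)\times\cD(E,r_0/12)$, then $\log|f_N|$ would be harmonic (in fact $f_N$ analytic and nonvanishing) there, and I would run Cartan's estimate in reverse: the hypothesis $\log|f_N(x,\omega,E)|\le NL_N(\omega,E)-C_0HN^{1-\tau}$ says the value at the center is very negative, while \cref{lem:high_cart} (applied to $-\log|f_N|+M$, suitably normalized, or rather directly) guarantees that off an exceptional Cartan set the function is within $C_{d+1}H(M-m)$ of $M$. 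The point is to choose $C_0$ large enough (depending on $d$ and the Cartan constant) that the lower bound from Cartan's estimate at a generic point near $x$ contradicts continuity/subharmonicity unless a zero is present; more precisely, the deep negative value propagates a Cartan-small neighborhood of genuinely small values, and the only way $\log|f_N|$ can be that negative is to sit near a zero.

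The cleanest route, which I would actually carry out, is: apply \cref{lem:high_cart} to $\varphi(z,E'):=f_N(z,\omega,E')/e^{M}$ on $\cP$ with the parameter $H':=H+N^\sigma/d$ in place of $H$, obtaining an exceptional set $\cB\in\Car_{d+1}((H')^{1/(d+1)},K)$ off which $\log|\varphi|>-C_{d+1}H'(M-m)$. By \cref{lem:Cartan-measure}, $\mes(\cB)\lesssim e^{-(H')^{1/(d+1)}}$, which is much smaller than the volume of $\cP'$ for $N$ large; hence $\cP'\setminus\cB\neq\emptyset$. But the center $(x,E)\notin\cP'\setminus\cB$ would be forced if there were no zero: indeed, if $f_N(\cdot,\omega,\cdot)$ is zero-free on $\cP'$, then $\log|f_N|$ is harmonic there, and the standard Harnack-type / Cartan argument shows the value at the center cannot be smaller than $M-C_{d+1}H'(M-m)$ — but $M-m\lesssim N$ from \cref{eq:monodr1}, so $C_{d+1}H'(M-m)\lesssim H'N$, and this is incompatible with the hypothesis $\log|f_N(x,\omega,E)|\le NL_N(\omega,E)-C_0HN^{1-\tau}$ only if we are careful. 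I realize here the bookkeeping must be done with the \emph{one-variable} Cartan statement along a real line segment through $x$ in the direction where $f_N$ oscillates, or more robustly: since $(x,E)$ lies in the \emph{center} of the polydisk, it is automatically outside the Cartan exceptional set unless a zero disk of $\cB$ contains it, and when $d=1$ \cref{lem:high_cart} lets us keep only disks containing zeros of $\varphi$. So $(x,E)$ lies within distance $\sim e^{-((H+N^\sigma/d))}$ — using $H'^{1/(d+1)}$ replaced by a genuine exponent after adjusting $H$ by a constant factor — of a zero of $f_N(\cdot,\omega,\cdot)$; projecting, either $f_N(z,\omega,E)=0$ for some $z$ with $|z-x|\lesssim\exp(-(H+N^\sigma/d))$, or $f_N(x,\omega,E'')=0$ for some $E''$ near $E$. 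The latter case still gives the resolvent bound directly since $\|(H_N(x,\omega)-E)^{-1}\|=\max_k|E-E_k^{(N)}(x,\omega)|^{-1}$ and $E''$ is such an eigenvalue.

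For the furthermore: once we know $x$ (resp.\ $E$) is within $\delta:=c\exp(-(H+N^\sigma/d))$ of a point where $f_N(z,\omega,E)=0$ (equivalently $\det(H_N(z,\omega)-E)=0$), I would use the stability estimate \cref{cor:4.6} — specifically \cref{eq:fN_comp} with the role of the two points played by $(x,\omega,E)$ and $(z,\omega,E)$ — together with $f_N(z,\omega,E)=0$ to transfer smallness: $|f_N(x,\omega,E)|\le \delta\cdot\exp(NL(\omega,E)+CN^{1-\tau})$. Since $|f_N(x,\omega,E)|=\prod_k|E-E_k^{(N)}(x,\omega)|$ and each factor is $\lesssim(C(V)+|E|)$, the smallest factor $\operatorname{dist}(E,\spec H_N(x,\omega))$ is at most $\delta\cdot\exp(NL(\omega,E)+CN^{1-\tau})\cdot(C(V)+|E|)^{N-1}\le \exp(-(H+N^\sigma/d))\cdot\exp(CN)$, and absorbing the $\exp(CN)$ factor is fine since $H\gg 1$ is chosen large and the $N^\sigma$ term... here I must be slightly careful: the bound $\exp(H+N^\sigma/d)$ in the conclusion suggests $H$ is taken comparable to or larger than $N^\sigma$, so $\delta^{-1}\ge\exp(H+N^\sigma/d)$ dominates and the claimed resolvent bound $\|(H_N(x,\omega)-E)^{-1}\|\ge c(V)\exp(H+N^\sigma/d)$ follows after adjusting $H$ by a harmless constant and renaming. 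The main obstacle is precisely this last bookkeeping — matching the exponent $H+N^\sigma/d$ through Cartan's $(d{+}1)$-st root loss (resolved by feeding a suitably inflated $H$ into \cref{lem:high_cart}) and through the $\exp(CN)$-type losses in the stability estimate (resolved because $H$ is large and $N^{1-\tau}\ll N^\sigma$ is false in general, so one really does need $H\gtrsim N^\sigma$ for the statement to be non-vacuous, and indeed that is the regime in which the lemma is used).
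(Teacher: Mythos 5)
Your overall toolkit (Cartan's estimate plus the uniform upper bound of \cref{cor:logupperc}) is the right one, but the proposal has a genuine gap: you never arrange for $M-m$ to be small, and without that the Cartan argument yields nothing. The paper's first move is to use (LDT) itself to find a point $x_0$ with $|x-x_0|\lesssim\exp(-N^{\sigma}/d)$ at which $\log|f_N(x_0,\omega,E)|>NL_N(\omega,E)-N^{1-\tau}$ (such a point exists in any ball of that radius because the exceptional set has measure $<\exp(-N^\sigma)$), and then to apply the \emph{one-variable} Cartan estimate to $f_N$ restricted to the complex line through $x_0$ and $x$, rescaled so that the segment $[x_0,x]$ sits inside $\cD(0,1/7)$, with the Cartan disk \emph{centered at $x_0$}. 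This gives $M-m\le CN^{1-\tau}$, so the Cartan deficiency $CH(M-m)\le CHN^{1-\tau}$ is exactly of the size of the hypothesis's deficit $C_0HN^{1-\tau}$; choosing $C_0$ large forces $\zeta_x$ into the exceptional set, and the $d=1$ refinement of \cref{lem:high_cart} (keep only disks containing zeros) produces a zero within $\exp(-H)$ in the rescaled variable, i.e.\ within $\exp(-(H+N^\sigma/d))$ in $z$. In your version the polydisk is centered at $(x,E)$ itself, so $m=\log|f_N(x,\omega,E)|$ and $M-m\gtrsim C_0HN^{1-\tau}$ (indeed possibly $\sim N$ by \cref{eq:monodr1}); the Cartan lower bound $M-C_{d+1}H'(M-m)$ is then \emph{weaker} than what you already know at the center, so no contradiction arises and nothing forces $(x,E)$ into the exceptional set. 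Your statement that the center ``is automatically outside the Cartan exceptional set unless a zero disk contains it'' is not what \cref{lem:high_cart} asserts. Note also that the factor $\exp(-N^\sigma/d)$ in the conclusion comes from the recentering step, not from inflating $H$; the lemma does not require $H\gtrsim N^\sigma$.

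The ``furthermore'' part also goes astray. Transferring smallness of the determinant from $z$ to $x$ via \cref{cor:4.6} and then extracting the smallest factor of $\prod_k|E-E_k^{(N)}(x,\omega)|$ costs a factor $\exp(NL(\omega,E)+CN^{1-\tau})\cdot(C(V)+|E|)^{N-1}=\exp(CN)$, which cannot be absorbed in the regime where the lemma is actually applied (there $H$ is of size $N^{\tau/2}$ or so, not $\gtrsim N$). The paper's argument is elementary and lossless: since $f_N(z,\omega,E)=0$, the operator $H_N(z,\omega)-E$ is singular, while $\norm{H_N(z,\omega)-H_N(x,\omega)}\le C(V)|z-x|$; if one had $\norm{(H_N(x,\omega)-E)^{-1}}\,\norm{H_N(z,\omega)-H_N(x,\omega)}<1$ then $H_N(z,\omega)-E$ would be invertible by a Neumann series, a contradiction, whence $\norm{(H_N(x,\omega)-E)^{-1}}\ge c(V)|z-x|^{-1}\ge c(V)\exp(H+N^\sigma/d)$.
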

\begin{proof}
  By (LDT) there exists $ x_0 $, $ |x-x_0|\lesssim \exp(-N^{\sigma}/d) $ such that
  \begin{equation*}
    \log|f_N(x_0,\omega,E)|> NL_N(\omega,E)-N^{1-\tau}.
  \end{equation*}
  Let
  \begin{equation*}
    \phi(\zeta)=f_N \left( x_0+\frac{ C \exp(-N^{\sigma}/d)\zeta}{|x-x_0|}(x-x_0),\omega,E  \right).
  \end{equation*}
  Let $ \zeta_x $ be such  that $ \phi(\zeta_x)=f_N(x,\omega,E) $. Our choice of scaling is such that
  $ \zeta_x\in \cD(0,1/7) $.
  Using the uniform upper estimate from \cref{cor:logupperc}, we can apply Cartan's estimate to get
  \begin{equation}\label{eq:Cartan}
    \log|\phi(\zeta)|> NL_N(\omega,E)-CHN^{1-\tau}
  \end{equation}
  for all $ \zeta\in \cD(0,1/6)\setminus \cB $, with $ \cB\in \car_1(H,CHN^{1-\tau}) $. It follows that
  $ \zeta_x\in \cD(\zeta_j,r_j)\subset \cB $, with $ r_j<\exp(-H) $. By the second part of \cref{lem:high_cart},
  there exists $ \zeta'\in \cD(\zeta_j,r_j) $ such that $ \phi(\zeta')=0 $. The first statement follows with
  \begin{equation*}
    z=x_0+\frac{C \exp(-N^{\sigma}/d)\zeta'}{|x-x_0|}(x-x_0).
  \end{equation*}

  The second statement follows from the facts that
  \begin{equation*}
    \norm{H_N(z,\omega)-H_N(x,\omega)}\le C(V) |z-x|
  \end{equation*}
  and that if
  \begin{equation*}
    \norm{(H_N(x,\omega)-E)^{-1}}\norm{H_N(z,\omega)-H_N(x,\omega)}<1,
  \end{equation*}
  then $ H_N(z,\omega)-E $ would  be invertible.
\end{proof}

We will need the following immediate consequence of the previous lemma.
We refer to this result as the {\em spectral form} of
(LDT).

\begin{cor}\label{cor:4.6zeros}
  Assume $ x\in \T^d $, $ \omega\in \tor^d(a,b) $, $ E\in \C $, and $ L(\omega,E)> \gamma >0 $. Let
  $ \tau,\sigma $ as in (LDT). If $ N\ge N_0(V,a,b,|E|,\gamma) $ and
  \begin{equation*}
    \norm{(H_N(x,\omega)-E)^{-1}}\le \exp(N^{\sigma/2}),
  \end{equation*}
  then
  \begin{equation*}
    \log|f_N(x,\omega,E)|> NL_N(\omega,E)-N^{1-\tau/2}.
  \end{equation*}
\end{cor}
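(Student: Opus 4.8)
The plan is to deduce this corollary from \cref{lem:LDT-fail} by contraposition. Suppose, for a given $x\in\T^d$, $\omega\in\tor^d(a,b)$, $E\in\C$ with $L(\omega,E)>\gamma>0$, and $N$ sufficiently large, that the conclusion fails, i.e.
\begin{equation*}
  \log|f_N(x,\omega,E)|\le NL_N(\omega,E)-N^{1-\tau/2}.
\end{equation*}
I want to show this forces $\norm{(H_N(x,\omega)-E)^{-1}}> \exp(N^{\sigma/2})$, which is exactly the negation of the hypothesis of the corollary. Thus the corollary follows.

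To run \cref{lem:LDT-fail} I need to exhibit an $H\gg 1$ for which the hypothesis $\log|f_N(x,\omega,E)|\le NL_N(\omega,E)-C_0HN^{1-\tau}$ of that lemma holds; then its conclusion gives $\norm{(H_N(x,\omega)-E)^{-1}}\ge c(V)\exp(H+N^\sigma/d)$. So I should choose $H$ as large as the assumed deficit permits: comparing $N^{1-\tau/2}$ with $C_0HN^{1-\tau}$, the natural choice is $H\simeq c_0 N^{\tau/2}$ for a small constant $c_0=c_0(V,a,b,|E|,\gamma)$ chosen so that $C_0 H N^{1-\tau}\le N^{1-\tau/2}$, i.e.\ $c_0\le 1/C_0$ (and $H\gg 1$ once $N\ge N_0$). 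Note the exponents are consistent precisely because $\tau\in(0,1)$, so $\tau/2<\tau<1$ and $N^{1-\tau/2}\gg N^{1-\tau}$ for large $N$. Plugging this $H$ into the resolvent bound from \cref{lem:LDT-fail} yields
\begin{equation*}
  \norm{(H_N(x,\omega)-E)^{-1}}\ge c(V)\exp\bigl(c_0 N^{\tau/2}+N^{\sigma/d}\bigr)\ge \exp(N^{\sigma/2}),
\end{equation*}
where the last inequality holds for $N\ge N_0(V,a,b,|E|,\gamma)$: the dominant term in the exponent is $c_0 N^{\tau/2}$ (together with $N^{\sigma/d}$), both of which are polynomial powers of $N$ with positive exponent, and since $\sigma,\tau\in(0,1)$ one can absorb the constant $c(V)$ and arrange $c_0 N^{\tau/2}+N^{\sigma/d}\ge N^{\sigma/2}$ for $N$ large (indeed $\tau/2$ and $\sigma/d$ need not exceed $\sigma/2$, but the left side is a sum of two such powers with an unrestricted small constant $c_0$—if one wants a clean inequality, simply note $N^{\sigma/d}$ alone may not suffice when $d>2$, so the relevant term to keep is $c_0N^{\tau/2}$, and we additionally shrink $\sigma$ in (LDT) at the outset so that $\sigma/2\le\tau/2$, i.e.\ $\sigma\le\tau$, which is harmless since we are free to decrease $\sigma$). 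This contradiction with the hypothesis $\norm{(H_N(x,\omega)-E)^{-1}}\le\exp(N^{\sigma/2})$ of the corollary completes the argument.

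The only genuinely delicate point is the bookkeeping of exponents: one must ensure simultaneously that (i) $H\simeq c_0N^{\tau/2}\gg1$ so \cref{lem:LDT-fail} applies, (ii) $C_0HN^{1-\tau}\le N^{1-\tau/2}$ so the lemma's hypothesis is met, and (iii) $H+N^\sigma/d\ge N^{\sigma/2}-\log c(V)$ so the resulting resolvent lower bound beats $\exp(N^{\sigma/2})$. All three are arranged by first replacing $\sigma$ in (LDT) by $\min(\sigma,\tau)$ (permissible, as remarked after \cref{thm:DirLDT}, since shrinking $\sigma$ only weakens (LDT)) and then taking $c_0$ small and $N_0$ large depending on $V,a,b,|E|,\gamma$. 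Everything else is a direct invocation of \cref{lem:LDT-fail}; there is no new analytic content, only the translation of a determinant lower bound into a resolvent upper bound, which is the ``spectral form'' of (LDT) the corollary is named after.
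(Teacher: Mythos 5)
Your overall strategy is exactly the intended one: the paper offers no separate proof because \cref{cor:4.6zeros} is the contrapositive of \cref{lem:LDT-fail} with $H$ chosen of size $N^{\tau/2}/C_0$, precisely as you set it up. The one real issue is that you have misread the lower bound in \cref{lem:LDT-fail}: the exponent there is $H+N^{\sigma}/d$, i.e.\ $H+\frac{1}{d}N^{\sigma}$ (it comes from the measure bound $\exp(-N^{\sigma})$ on the (LDT) exceptional set, whose complement meets every cube of side $\exp(-N^{\sigma}/d)$), not $H+N^{\sigma/d}$. With the correct reading, $\frac{1}{d}N^{\sigma}\gg N^{\sigma/2}$ already for $N\ge N_0$, so the resolvent bound $c(V)\exp(H+N^{\sigma}/d)>\exp(N^{\sigma/2})$ holds with no condition relating $\sigma$ and $\tau$, and your entire final maneuver of shrinking $\sigma$ is unnecessary. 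Worse, that maneuver as you state it does not actually close the gap you perceived: with $\sigma=\tau$ allowed and $c_0=1/C_0<1$, the quantity $c_0N^{\tau/2}+N^{\sigma/d}$ is eventually smaller than $N^{\sigma/2}$ when $d>2$, so you would need $\sigma<\tau$ strictly; and altering $\sigma$ changes the constant that the corollary (and its later uses) is stated with. None of this is needed once the lemma is read correctly, and the rest of your bookkeeping (choice of $H$, the requirement $H\gg1$, and the contraposition) is fine.
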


\subsection{Poisson's Formula}

Recall that for any solution $ \psi $ of the difference equation
\eqref{eq:hamiltonC}, Poisson's formula reads
\begin{equation}\label{eq:poissonC}
  \psi(m) = \cG_{[a, b]} (x,\omega,E;m, a)\psi(a-1) + \cG_{[a, b]} (x,\omega,E;m,b)\psi(b+1),\quad m \in [a, b],
\end{equation}
where $\cG_{[a,b]} (x,\omega,E) = \left(H_{[a,b]}(x,\omega) -E\right)^{-1}$ is the
Green's function.  In particular, if $\psi$ is a solution of equation
\eqref{eq:hamiltonC}, which satisfies a zero boundary condition at the
left or the right edge, i.e.,
\begin{equation}\label{dircondagain}
  \psi(a-1) = 0 \text{ or }\psi(b+1) = 0,
\end{equation}
then
\begin{equation}\label{eq:poissondirichlet1}
  \psi(m) =\cG_{[a,b]}(x,\omega,E;m, b)\psi(b+1) \text{ or } \psi(m)=\cG_{[a,b]}(x,\omega,E;m,a) \psi(a-1).
\end{equation}

Poisson's formula gives us a way to show how the decay of Green's
function on an interval can be deduced, via a covering argument, from
its decay on smaller subintervals. We let
\begin{equation*}
  \delta_{m,n}= \begin{cases}
	0 &, n\neq m\\
    1 &, n=m
  \end{cases}.
\end{equation*}
Our {\em covering lemma} is as follows.
\begin{lemma}\label{lem:Poissoncover}
  Let $ x,\omega\in \mathbb{T}^d$, $ E\in\mathbb{R} $, and
  $ [a,b]\subset \mathbb{Z} $. If for any $ m\in[a,b] $, there exists an interval
  $ I_m=[a_m,b_m]\subset[a,b] $ containing $m$ such that
  \begin{equation}\label{eq:Poissoncover-condition}
    (1- \delta_{a,a_m}) \left| \cG_{I_m}(x,\omega,E;m,a_m) \right|
    +(1- \delta_{b,b_m}) \left| \cG_{I_m}(x,\omega,E;m,b_m) \right|
    <1,
  \end{equation}
  then $ E\notin \spec H_{[a,b]}(x,\omega) $.
\end{lemma}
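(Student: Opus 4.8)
The plan is to argue by contradiction: suppose $E \in \spec H_{[a,b]}(x,\omega)$, so there is a normalized eigenfunction $\psi$ with $(H_{[a,b]}(x,\omega)-E)\psi = 0$ and $\psi(a-1) = \psi(b+1) = 0$. Pick $m_0 \in [a,b]$ with $|\psi(m_0)| = \max_{m \in [a,b]} |\psi(m)| =: \|\psi\|_\infty > 0$. The strategy is to apply Poisson's formula \eqref{eq:poissonC} on the subinterval $I_{m_0} = [a_{m_0}, b_{m_0}]$ guaranteed by the hypothesis, and derive a strict inequality $|\psi(m_0)| < \|\psi\|_\infty$, which is absurd.

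The key point is to handle the boundary terms of $I_{m_0}$ correctly. Poisson's formula on $I_{m_0}$ gives
\begin{equation*}
  \psi(m_0) = \cG_{I_{m_0}}(x,\omega,E;m_0,a_{m_0})\psi(a_{m_0}-1) + \cG_{I_{m_0}}(x,\omega,E;m_0,b_{m_0})\psi(b_{m_0}+1).
\end{equation*}
First I would note that this formula itself presupposes that $E \notin \spec H_{I_{m_0}}(x,\omega)$, i.e.\ that the Green's function $\cG_{I_{m_0}}$ exists; this is implicitly part of the hypothesis \eqref{eq:Poissoncover-condition}, since the quantities $\cG_{I_m}(x,\omega,E;\cdot,\cdot)$ appearing there are assumed to be well-defined (and indeed if $E$ were in the spectrum of some $H_{I_m}$ the statement would need that excluded — one should read the hypothesis as including solvability on each $I_m$). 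Then: if $a_{m_0} = a$, the Dirichlet condition $\psi(a-1) = 0$ kills the first term; otherwise $a_{m_0} > a$ so $a_{m_0}-1 \in [a,b]$ and $|\psi(a_{m_0}-1)| \le \|\psi\|_\infty$. The factor $(1-\delta_{a,a_m})$ in \eqref{eq:Poissoncover-condition} is exactly the bookkeeping device that records which of the two cases occurs. Symmetrically for the right endpoint with $\psi(b+1) = 0$.

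Combining these observations, I would bound
\begin{equation*}
  |\psi(m_0)| \le \Bigl[ (1-\delta_{a,a_{m_0}})|\cG_{I_{m_0}}(x,\omega,E;m_0,a_{m_0})| + (1-\delta_{b,b_{m_0}})|\cG_{I_{m_0}}(x,\omega,E;m_0,b_{m_0})| \Bigr] \|\psi\|_\infty,
\end{equation*}
and the hypothesis \eqref{eq:Poissoncover-condition} applied with $m = m_0$ makes the bracket strictly less than $1$, giving $|\psi(m_0)| < \|\psi\|_\infty = |\psi(m_0)|$, a contradiction. Hence $E \notin \spec H_{[a,b]}(x,\omega)$.

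The only genuine subtlety — the "main obstacle" — is the circularity concern about whether $\cG_{I_m}$ is defined at all: one must be careful that the statement of the lemma is only meaningful when $E \notin \spec H_{I_m}(x,\omega)$ for each $m$, and strictly speaking this should either be part of the hypotheses or established first. A clean way around it, if one does not want to assume it, is to run the same maximum-principle argument directly on the resolvent identity / eigenfunction without invoking $\cG_{I_m}$ as a literal inverse — but since the lemma as written already presupposes these Green's functions exist, the contradiction argument above suffices and everything else is routine.
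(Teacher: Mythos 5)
Your proof is correct and is essentially the paper's own argument: assume $E$ is an eigenvalue, take the maximizer $m_0$ of $|\psi|$, apply Poisson's formula on $I_{m_0}$ with the Dirichlet conditions killing the boundary terms when $a_{m_0}=a$ or $b_{m_0}=b$, and contradict maximality via the hypothesis \cref{eq:Poissoncover-condition}. Your remark about the implicit assumption that each $\cG_{I_m}$ exists is a fair observation (the paper leaves it tacit), but it does not change the argument.
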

\begin{proof}
  Assume to the contrary that $ E\in \spec H_{[a,b]}(x,\omega) $ and
  let $ \psi $ be a corresponding eigenvector.  Let $ m\in [a,b] $ be
  such that $ |\psi(m)|=\max_n |\psi(n)| $. The hypothesis together
  with the Poisson formula \eqref{eq:poissonC} gives us that
  $ |\psi(m)|<\max(|\psi(a_m)|,|\psi(b_m)|) $ if $ a_m\neq a $ and
  $ b_m\neq b $, $ |\psi(m)|<|\psi(b_m)| $ if $ a_m=a $, and
  $ |\psi(m)|<|\psi(a_m)| $ if $ b_m=b $. In either case we reach a
  contradiction, so we must have
  $ E\notin \spec H_{[a,b]}(x,\omega) $.
\end{proof}

\begin{remark}
  We comment on the use of \cref{lem:Poissoncover}. By stability
  considerations, the condition \cref{eq:Poissoncover-condition} will
  hold for energies $ E $ in some interval
  $ (E_0-\delta,E_0+\delta) $.  The conclusion will then be that
  $ (E_0-\delta,E_0+\delta)\cap \spec H_{[a,b]}(x,\omega)=\emptyset $
  and therefore $ \norm{\cG_{[a,b]}(x,\omega,E_0)}<\frac{1}{\delta} $. One
  would then apply the spectral form of (LDT) and the next lemma to get
  the decay of Green's function on $ [a,b] $.
\end{remark}

The condition \cref{eq:Poissoncover-condition} and its stability in
$ E $ will be obtained from the following lemma in conjuction with
(LDT) for determinants.
\begin{lemma}\label{lem:Green}
  Assume $ x_0\in \tor^d $, $ \omega_0\in \tor^d(a,b) $,
  $ E_0\in \C $, and $ L(\omega_0,E_0)> \gamma >0 $.  Let $ K\in \R $ and $ \tau $
  be as in (LDT). There exists $ C_0=C_0(V,a,b,|E_0|,\gamma) $ such that if $ N\ge N_0(V,a,b,|E_0|,\gamma) $ and
  \begin{equation}\label{eq:fN_unter}
    \log \big | f_N(x_0, \omega_0,E_0) \big | > NL_N(\omega_0,E_0) - K,
  \end{equation}
  then for any $ (x,\omega,E)\in \T^d\times \T^d\times \C $ with
  $|x - x_0|,\ |\omega - \omega_0|,\ |E - E_0| < \exp(-(K+C_0N^{1-\tau}))$ we have
  \begin{align} \label{eq:Gjk}
    \big | \cG_{[1, N]} (x,\omega,E;j,k)\big | & \le \exp\left(-
               \frac{\gamma}{2}|k - j|+K+2C_0N^{1-\tau}\right), \\
    \big \| \cG_{[1, N]} (x, \omega,E) \big \| & \le
                                                 \exp(K+3C_0N^{1-\tau}). \label{eq:normG}
  \end{align}
\end{lemma}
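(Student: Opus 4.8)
The plan is to write each Green's function entry as a ratio of Dirichlet determinants via Cramer's rule, bound the numerator from above by the uniform upper estimates and the denominator from below using the hypothesis \eqref{eq:fN_unter} together with the stability of $f_N$, and then pass to the operator norm by a Schur test. Concretely, expanding the tridiagonal matrix $H_{[1,N]}(x,\omega)-E$ in cofactors (cf.\ \eqref{eq:Mndetbasic}) gives, for $1\le j\le k\le N$,
\[
  \bigl|\cG_{[1,N]}(x,\omega,E;j,k)\bigr|=\frac{\bigl|f_{[1,j-1]}(x,\omega,E)\bigr|\,\bigl|f_{[k+1,N]}(x,\omega,E)\bigr|}{\bigl|f_{[1,N]}(x,\omega,E)\bigr|},
\]
with the convention $f_{\emptyset}\equiv1$, and the analogous formula with $j,k$ interchanged when $j>k$. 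In particular the lower bound on $|f_{[1,N]}|$ established below automatically shows $E\notin\spec H_{[1,N]}(x,\omega)$, so $\cG_{[1,N]}(x,\omega,E)$ is well defined.

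\emph{The denominator.} Comparing \eqref{eq:fN_unter} with the trivial consequence $\log|f_N(x_0,\omega_0,E_0)|\le\log\|M_N(x_0,\omega_0,E_0)\|\le NL_N(\omega_0,E_0)+CN^{1-\tau}$ of \cref{prop:logupper} forces $K\ge-CN^{1-\tau}$; hence $K+C_0N^{1-\tau}\gtrsim N^{1-\tau}\gg(\log N)^{8/\sigma}$ once $C_0$ exceeds this $C$ and $N$ is large, so the smallness conditions of \cref{cor:4.6} hold throughout the neighborhood $|x-x_0|,|\omega-\omega_0|,|E-E_0|<\exp(-(K+C_0N^{1-\tau}))$. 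Applying \eqref{eq:fN_comp} to the pair $(x_0,\omega_0,E_0)$, $(x,\omega,E)$, bounding $\max_i|f_N|^{-1}$ by $|f_N(x_0,\omega_0,E_0)|^{-1}\le\exp(-NL_N(\omega_0,E_0)+K)$ via \eqref{eq:fN_unter} and using $L(\omega_0,E_0)\le L_N(\omega_0,E_0)$, the right-hand side of \eqref{eq:fN_comp} is $\le3\exp((C-C_0)N^{1-\tau})<1/2$ for $C_0$ large. Therefore $\log|f_{[1,N]}(x,\omega,E)|>NL_N(\omega_0,E_0)-K-1$ on this neighborhood.

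\emph{The numerator and conclusion.} For any subinterval of length $\ell\le N$, \cref{cor:logupperc} (with $y=0$; its smallness condition is implied since $\ell\le N$) and \cref{prop:uniform} give $\log|f_{[a,a+\ell-1]}(x,\omega,E)|\le\ell L(\omega_0,E_0)+CN^{1-\tau}$ on the same neighborhood (the finitely many $\ell<N_0$ handled by the crude bound $\log|f_\ell|\le C\ell$ after enlarging $N_0$). Adding the contributions of $[1,j-1]$ and $[k+1,N]$, subtracting the denominator bound, and using $NL_N(\omega_0,E_0)\ge NL(\omega_0,E_0)$ together with $L(\omega_0,E_0)>\gamma$, one obtains
\[
  \log\bigl|\cG_{[1,N]}(x,\omega,E;j,k)\bigr|\le-(|k-j|+1)L(\omega_0,E_0)+K+CN^{1-\tau}\le-\tfrac{\gamma}{2}|k-j|+K+2C_0N^{1-\tau}
\]
for $C_0$ large, which is \eqref{eq:Gjk}. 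Finally \eqref{eq:normG} follows from the Schur test: each row and column sum of the matrix $\bigl|\cG_{[1,N]}(x,\omega,E;\cdot,\cdot)\bigr|$ is at most $\bigl(\sum_{m\in\Z}e^{-\gamma|m|/2}\bigr)\exp(K+2C_0N^{1-\tau})\le\exp(K+3C_0N^{1-\tau})$ for $N$ large.

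\emph{Main difficulty.} There is no conceptual obstacle here: all the analytic content sits in \cref{cor:4.6}, \cref{cor:logupperc} and \cref{prop:uniform}. The only point requiring genuine care is the bookkeeping of the $O(N^{1-\tau})$ errors and, in particular, verifying that the neighborhood radius $\exp(-(K+C_0N^{1-\tau}))$ is small enough for the cited stability and uniform-upper estimates to be applicable throughout it — this is exactly what forces the preliminary observation $K\ge-CN^{1-\tau}$ and the choice of $C_0$ sufficiently large in terms of the constants appearing in those results.
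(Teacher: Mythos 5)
Your proof is correct and follows essentially the same route as the paper's: Cramer's rule for the Green's function entries, \cref{cor:4.6} to transfer the lower bound \eqref{eq:fN_unter} on the determinant to the perturbed triple $(x,\omega,E)$, and the uniform upper estimates for the numerator determinants. Your bookkeeping is in fact slightly more careful than the paper's at two points the paper leaves implicit — the observation that \eqref{eq:fN_unter} together with \cref{prop:logupper} forces $K\gtrsim -N^{1-\tau}$, so the stated neighborhood is small enough for the stability estimates to apply, and the use of \cref{cor:logupperc} rather than \cref{prop:logupper} so that the numerator bounds hold at the perturbed $(\omega,E)$ without needing $\omega\in\T^d(a,b)$.
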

\begin{proof}
  Take $|x - x_0|,\ |\omega - \omega_0|,\ |E - E_0| < \exp(-(K+CN^{1-\tau}))$ with $ C $ large enough.
  Using Corollary~\ref{cor:4.6} we have
  \begin{multline*}\label{eq:fN_untera1}
    \log \big | f_N(x, \omega,E) \big | \ge \log \big |
    f_N(x_0,\omega_0,E_0)\big |-1
    \ge NL_N(\omega_0,E_0) - K-1\\
    \ge NL_N(\omega,E)-K-2\ge NL(\omega,E)-K-2.
  \end{multline*}
  By Cramer's rule and the uniform upper bound of
  \cref{prop:logupper},
  \begin{multline*}
	\big | \cG_{[1, N]}(x, \omega,E; j, k) \big |
    = \frac{|f_{j-1} (x, \omega, E)|\cdot|f_{N - k}(x+k\omega,\omega, E)|}{|f_N(x,\omega,E)|}\\
    \le \exp \left( - (k-j) L(\omega, E) + CN^{1-\tau}+K+2\right)\le
    \exp(-\frac{\gamma}{2}(k-j)+K+2CN^{1-\tau})
  \end{multline*}
  (we assumed $ j\le k $ and we also used \cref{prop:log-Hoelder}).
  The estimate \eqref{eq:normG} follows from \eqref{eq:Gjk}.
\end{proof}

Finally, we give a very important application of the covering lemma in
combination with the spectral form of (LDT) and with Lemma~\ref{lem:Green}.
We call it the {\em covering form}  of (LDT).

\begin{lemma}\label{lem:Greencoverap1}
  Assume $ N\ge 1 $, $ x_0\in \tor^d $, $ \omega_0\in \tor^d(a,b) $, $ E_0\in \R $, and
  $ L(\omega_0,E_0)> \gamma >0 $.  Let $ \tau,\sigma $ be as in (LDT).
  Suppose that for each point $ m\in [1,N] $ there exists an interval $ I_m\subset [1,N] $
  such that:
  \begin{enumerate}
    \renewcommand{\theenumi}{\roman{enumi}}
  \item $ \dist(m,[1,N]\setminus I_m)\ge |I_m|/100 $,
  \item $ \ell_0(V,a,b,|E_0|,\gamma)\le |I_m| $,
  \item $ \log|f_{I_m}(x_0,\omega_0,E_0)|> |I_m|L_{|I_m|}(\omega_0,E_0)-|I_m|^{1-\tau/4} $.
  \end{enumerate}
  Then for any $ (x,\omega,E)\in \T^d\times \T^d\times \C $ such that
  \begin{equation*}
    |x-x_0|,\ N|\omega-\omega_0|,\ |E-E_0|< \exp(- 2 \max_m |I_m|^{1-\tau/4}),
  \end{equation*}
  we have
  \begin{equation*}
    \dist(E,\spec H_N(x,\omega))\ge \exp(- 2 \max_m |I_m|^{1-\tau/4}).
  \end{equation*}
  Furthermore, if we also have $ \omega\in \T^d(a,b) $ and $ \max_m |I_m| \le N^{\sigma/2} $, then
  \begin{equation*}
    \log \bigl | f_N\bigl(x,\omega, E\bigr)\bigr|> NL_N(\omega,E)- N^{1-\tau/2}.
  \end{equation*}
\end{lemma}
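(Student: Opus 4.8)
The plan is to reduce the statement to the covering lemma \cref{lem:Poissoncover}, feeding it the smallness of the Green's function entries produced by \cref{lem:Green} on each interval $I_m$, and then to upgrade the resulting resolvent bound to a quantitative spectral gap which is passed to the spectral form of (LDT), \cref{cor:4.6zeros}.

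First I would fix $m\in[1,N]$ and apply \cref{lem:Green} to $I_m=[a_m,b_m]$ --- after the translation making it a statement about $f_{[1,|I_m|]}(x_0+(a_m-1)\omega_0,\omega_0,E_0)$ --- with $K=|I_m|^{1-\tau/4}$; hypothesis (iii) is precisely the lower bound required there, and (ii) will be used throughout to keep $|I_m|\ge\ell_0$ large enough to beat error terms. Since $K+C_0|I_m|^{1-\tau}\le\frac32|I_m|^{1-\tau/4}$ for $|I_m|\ge\ell_0$, \cref{lem:Green} applies whenever $|x-x_0|+N|\omega-\omega_0|$ and $|E-E_0|$ are both $<\exp(-\frac32|I_m|^{1-\tau/4})$ (the term $N|\omega-\omega_0|$ coming from the phase shift $x\mapsto x+(a_m-1)\omega$, $|a_m-1|<N$), and then
\[
|\cG_{I_m}(x,\omega,E;m,a_m)|,\ |\cG_{I_m}(x,\omega,E;m,b_m)|\le\exp\Bigl(-\tfrac\gamma2 r+|I_m|^{1-\tau/4}+2C_0|I_m|^{1-\tau}\Bigr),
\]
where $r\in\{m-a_m,\,b_m-m\}$ is the relevant endpoint distance. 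Hypothesis (i) gives $m-a_m\ge|I_m|/100-1\ge|I_m|/200$ when $a_m\ne1$, and likewise $b_m-m\ge|I_m|/200$ when $b_m\ne N$, so the corresponding entry is $\le\exp(-\frac{\gamma}{800}|I_m|)\le\frac14$ for $|I_m|\ge\ell_0$; when $a_m=1$ (resp.\ $b_m=N$) the matching summand in \cref{eq:Poissoncover-condition} is absent. Thus \cref{eq:Poissoncover-condition} holds for every $m$ and \cref{lem:Poissoncover} yields $E\notin\spec H_N(x,\omega)$.

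To make this quantitative, put $\delta:=\exp(-2\max_m|I_m|^{1-\tau/4})$ and $\eta:=\exp(-\frac32\max_m|I_m|^{1-\tau/4})$, so that $2\delta\le\eta\le\exp(-\frac32|I_m|^{1-\tau/4})$ for every $m$ once $\exp(\frac12\max_m|I_m|^{1-\tau/4})\ge2$, which holds by (ii). The argument of the previous paragraph then applies to the fixed $(x,\omega)$ of the statement (for which $|x-x_0|+N|\omega-\omega_0|<2\delta$) together with any real $E'$ satisfying $|E'-E_0|<\eta$, giving $E'\notin\spec H_N(x,\omega)$. Since $x,\omega$ are real, $H_N(x,\omega)$ is self-adjoint and $\spec H_N(x,\omega)\subset\R$, hence $(E_0-\eta,E_0+\eta)\cap\spec H_N(x,\omega)=\emptyset$; for our (possibly complex) $E$ with $|E-E_0|<\delta$ this gives
\[
\dist(E,\spec H_N(x,\omega))\ge\dist(\Re E,\spec H_N(x,\omega))\ge\eta-|\Re E-E_0|>\eta-\delta\ge\tfrac12\eta\ge\delta,
\]
the last inequalities again using $\exp(\frac12\max_m|I_m|^{1-\tau/4})\ge2$. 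This is the first assertion.

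For the ``furthermore'' part, assume in addition $\omega\in\T^d(a,b)$ and $\max_m|I_m|\le N^{\sigma/2}$; by (ii) this forces $N\ge\ell_0^{2/\sigma}$, so $N$ is large. The gap just proved gives $\norm{(H_N(x,\omega)-E)^{-1}}\le\exp(2\max_m|I_m|^{1-\tau/4})\le\exp\bigl(2N^{(\sigma/2)(1-\tau/4)}\bigr)\le\exp(N^{\sigma/2})$, the last step since $(\sigma/2)(1-\tau/4)<\sigma/2$. Because $|\omega-\omega_0|,|E-E_0|<\delta$ are small, \cref{prop:log-Hoelder} (applied once in $E$ and then in $\omega$) gives $L(\omega,E)>\gamma/4>0$, so \cref{cor:4.6zeros} applies and yields $\log|f_N(x,\omega,E)|>NL_N(\omega,E)-N^{1-\tau/2}$.

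The hard part is not any single estimate but the uniform-in-$m$ coordination of three scales: one must verify that the perturbation radius tolerated by \cref{lem:Green} on the \emph{shortest} $I_m$ --- only about $\exp(-|I_m|^{1-\tau/4})$, and degraded by the $N|\omega-\omega_0|$ phase shift --- still dominates $\delta=\exp(-2\max_m|I_m|^{1-\tau/4})$, while leaving enough exponential slack both to turn ``$E\notin\spec$'' into the clean bound $\dist(E,\spec H_N)\ge\delta$ and, in the last step, to absorb the gap between $2\max_m|I_m|^{1-\tau/4}$ and $N^{\sigma/2}$ using only $1-\tau/4<1$. All of this is arranged by choosing $\ell_0=\ell_0(V,a,b,|E_0|,\gamma)$ (hence $C_0$) large; the minor wrinkle that \cref{lem:Poissoncover} is stated for real energies is exactly why self-adjointness of $H_N(x,\omega)$ was invoked above.
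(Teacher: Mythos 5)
Your proposal is correct and follows essentially the same route as the paper's proof: apply \cref{lem:Green} on each $I_m$ (using (iii) for the lower bound and (i), (ii) for the endpoint distances), feed the resulting Green's function decay into the covering lemma \cref{lem:Poissoncover} to exclude a whole neighborhood of $E_0$ from the spectrum, and then pass the resolvent bound to the spectral form of (LDT), \cref{cor:4.6zeros}. The extra details you supply (reality of the spectrum to handle complex $E$, and \cref{prop:log-Hoelder} to guarantee $L(\omega,E)>0$ before invoking \cref{cor:4.6zeros}) are points the paper leaves implicit, and your quantitative bookkeeping matches the paper's.
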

\begin{proof}
  Due to (iii) and \cref{lem:Green} we
  have that
  \begin{equation*}
    \left| \cG_{I_m}(x,\omega,E;m,k) \right|\le
    \exp \left( -\frac{\gamma}{2}|m-k|+\frac{3}{2}|I_m|^{1-\tau/4} \right),
  \end{equation*}
  provided
  \begin{equation*}
    |x-x_0|,N|\omega-\omega_0|,|E-E_0|<\exp\left(-\frac{3}{2}|I_m|^{1-\tau/4}\right).
  \end{equation*}
  This and assumptions (i) and (ii)
  guarantee that the assumptions of \cref{lem:Poissoncover} are satisfied, and therefore
  $ E\notin \spec H_N(x,\omega) $, whenever $$ |E-E_0|< \exp(-\frac{3}{2}\max_m|I_m|^{1-\tau/4}).$$ Therefore, if
  $ |E-E_0|<\exp(-\frac{3}{2}\max_m |I_m|^{1-\tau/4})/2 $, then
  \begin{equation*}
    \dist(E,\spec H_N(x,\omega))\ge  \exp\left(-\frac{3}{2}\max_m|I_m|^{1-\tau/4}\right)/2.
  \end{equation*}
  This proves the first statement.

  If we have  $ \max_m |I_m| \le N^{\sigma/2} $, then
  \begin{equation*}
    \dist(E,\spec H_N(x,\omega))\ge  \exp(-N^{\sigma/2})
  \end{equation*}
  and the second statement follows from \cref{cor:4.6zeros}.
\end{proof}

\subsection{Wegner's Estimate}
We will need an estimate for the probability that there exists an eigenvalue of $ H_N(x,\omega) $ in some given
interval $ (E-\epsilon,E+\epsilon) $. From \cref{lem:Green} and (LDT) it follows immediately that if
$ \epsilon=\exp(-CN^{1-\tau}) $, then this probability is less than $ \exp(-N^\sigma) $. However, we will need to
have control over intervals which are much larger. This is readily achieved by using the covering form of (LDT).

\begin{prop}\label{prop:Wegner}
  Assume $ \omega_0\in \tor^d(a,b) $, $ E_0\in \C $, and
  $ L(\omega_0,E_0)> \gamma >0 $.  Let $ \sigma $ be as in (LDT).
  Let $ \ell,N $ be integers such that $ (2\log N)^{1/\sigma}\le \ell\le N $.
  Then for any
  $N\ge N_0(V,a,b,|E_0|,\gamma)$ there exists a set $ \cB_{N,\omega_0,E_0} $,
  $ \mes(\cB_{N,\omega_0,E_0})<\exp(-\ell^\sigma/2) $ such that
  for any $ x\in \T^d\setminus\cB_{N,\omega_0,E_0} $ and any $ (\omega,E)\in \T^d\times \C $, 
  $N|\omega-\omega_0|,\ |E-E_0|< \exp(-\ell)$,  we have
  \begin{equation*}
    \dist(E,\spec H_N(x,\omega))\ge \exp(-\ell)
  \end{equation*}
\end{prop}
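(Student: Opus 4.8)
The plan is to deduce this from the covering form of (LDT), \cref{lem:Greencoverap1}, by choosing the intervals $I_m$ to all have length comparable to $\ell$ and applying the (LDT) for determinants, \cref{thm:DirLDT}, at that scale. First I would set $L=\ell$ (or a fixed multiple thereof, chosen so that $L\mid N$ after a harmless adjustment, exactly as in the proof of \cref{lem:compL}). For a fixed point $m\in[1,N]$ we need an interval $I_m\subset[1,N]$ of length $\simeq \ell$ containing $m$ in its ``bulk'', i.e.\ with $\dist(m,[1,N]\setminus I_m)\ge |I_m|/100$; one can simply take the interval of length $\ell$ centered at $m$, truncated to $[1,N]$ and shifted inward near the endpoints so that $m$ stays at distance $\ge \ell/100$ from the complement (this is possible because $\ell\le N$). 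Since there are only $O(N/\ell)\le N$ essentially distinct such intervals as $m$ ranges over $[1,N]$, it suffices to control the (LDT) estimate $\log|f_{I}(x,\omega_0,E_0)|>|I|L_{|I|}(\omega_0,E_0)-|I|^{1-\tau/4}$ simultaneously for this family of $O(N)$ intervals $I$, each of length $\simeq \ell$.

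Next I would invoke \cref{thm:DirLDT} at scale $\simeq\ell$: for each fixed interval $I$ in the family, the exceptional set of $x$ where $\log|f_I(x,\omega_0,E_0)|\le |I|L_{|I|}(\omega_0,E_0)-|I|^{1-\tau}$ (note $|I|^{1-\tau}\le |I|^{1-\tau/4}$) has measure $<\exp(-|I|^\sigma)\le \exp(-(c\ell)^\sigma)$. Taking the union over the $O(N)$ intervals in the family, and using the hypothesis $\ell\ge(2\log N)^{1/\sigma}$ to absorb the polynomial factor $N$ (indeed $N\exp(-(c\ell)^\sigma)\le \exp(\log N-(c\ell)^\sigma)\le \exp(-\ell^\sigma/2)$ once $\ell$ is large, after possibly shrinking $\sigma$ or enlarging $N_0$), we obtain a single set $\cB_{N,\omega_0,E_0}$ with $\mes(\cB_{N,\omega_0,E_0})<\exp(-\ell^\sigma/2)$ such that for every $x\notin\cB_{N,\omega_0,E_0}$, hypothesis (iii) of \cref{lem:Greencoverap1} holds for all $I_m$, while (i) holds by construction and (ii) holds because $|I_m|\simeq\ell\ge(2\log N)^{1/\sigma}\ge \ell_0$ for $N$ large. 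The covering form of (LDT) then gives, for any $(\omega,E)$ with $|x-x_0|$ irrelevant here since $x$ is already outside the bad set—more precisely for any $(\omega,E)\in\T^d\times\C$ with $N|\omega-\omega_0|,|E-E_0|<\exp(-2\max_m|I_m|^{1-\tau/4})$—that $\dist(E,\spec H_N(x,\omega))\ge\exp(-2\max_m|I_m|^{1-\tau/4})$.

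Finally I would compare the exponent $2\max_m|I_m|^{1-\tau/4}$ with $\ell$. Since $|I_m|\simeq\ell$, we have $2\max_m|I_m|^{1-\tau/4}\le C\ell^{1-\tau/4}\le \ell$ for $N$ (hence $\ell$) large enough, so both the hypothesis $N|\omega-\omega_0|,|E-E_0|<\exp(-\ell)$ implies $N|\omega-\omega_0|,|E-E_0|<\exp(-2\max_m|I_m|^{1-\tau/4})$, and the conclusion $\dist(E,\spec H_N(x,\omega))\ge\exp(-2\max_m|I_m|^{1-\tau/4})\ge\exp(-\ell)$ follows, which is exactly the claimed bound. The main obstacle—really the only point requiring care—is the bookkeeping of constants: one must make sure that the number of distinct intervals in the covering family is bounded by a polynomial in $N$ (not worse), that the scale $|I_m|$ can be kept $\simeq\ell$ uniformly in $m$ including near the endpoints $1$ and $N$, and that the various power-of-$\ell$ losses ($\ell^{1-\tau/4}$ versus $\ell$, and $(c\ell)^\sigma$ versus $\ell^\sigma/2$) are absorbed by taking $N\ge N_0(V,a,b,|E_0|,\gamma)$ large; none of this is deep, but it is where the hypothesis $(2\log N)^{1/\sigma}\le\ell\le N$ is used in an essential way.
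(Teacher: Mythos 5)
Your proposal is correct and follows essentially the same route as the paper: the paper defines $\cB_{N,\omega_0,E_0}$ as the union over $m\in[1,N]$ of the scale-$\ell$ (LDT) failure sets for the shifted determinants $f_\ell(x+(m-1)\omega_0,\omega_0,E_0)$, bounds its measure by $N\exp(-\ell^\sigma)\le\exp(-\ell^\sigma/2)$ using $\ell^\sigma\ge 2\log N$, and then applies the covering form of (LDT) exactly as you do. Your extra bookkeeping about centering the intervals $I_m$ and comparing $2\max_m|I_m|^{1-\tau/4}$ with $\ell$ is the same harmless detail the paper leaves implicit.
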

\begin{proof}
  Take $ \omega,E $ satisfying the assumptions. Let $ \cB_{N,\omega_0,E_0} $ be the set of $ x $ such that
  \begin{equation*}
    \log|f_\ell(x+(m-1)\omega_0,\omega_0,E_0)|> \ell L_\ell(\omega_0,E_0)-\ell^{1-\tau},\ m\in [1,N].
  \end{equation*}
  By (LDT), we have
  \begin{equation*}
    \mes(\cB_{N,\omega_0,E_0})< N\exp(-\ell^{\sigma})\le \exp(-\ell^\sigma/2).
  \end{equation*}
  By the covering form of (LDT), for any $ x\notin \cB_{N,\omega_0,E_0} $ we have $$\dist(E,\spec H_N(x,\omega))\ge \exp(-\ell),$$
  thus concluding the proof.
\end{proof}

An important consequence of Wegner's estimate is that the graphs of the eigenvalues cannot be too flat.

\begin{lemma}\label{lem:Wegner-graphs}
  Let $ \omega\in \tor^d(a,b) $, $ \gamma>0 $,
  $ \sigma $ be as in (LDT), and $ \ell,N $ be integers such that $ (2\log N)^{1/\sigma}\le \ell\le N $.
  For any  $N\ge N_0(V,a,b,\gamma)$  we have that if $ S\subseteq \T^d $ is connected and
  \begin{equation*}
    \mes(S)\ge \exp(-\ell^{\sigma/2}),
  \end{equation*}
  then
  \begin{equation*}
    \mes(E_j^{(N)}(S,\omega))\ge \exp(-\ell)
  \end{equation*}
  for any $ j\in \{ 1,\ldots,N \} $ such that $ L(\omega,E)>\gamma $ on $ E_j^{(N)}(S,\omega) $.
\end{lemma}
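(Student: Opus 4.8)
The plan is to argue by contradiction, playing off \cref{prop:Wegner}: suppose $\mes(E_j^{(N)}(S,\omega))<\exp(-\ell)$. First I note that $x\mapsto E_j^{(N)}(x,\omega)$ is continuous on $\tor^d$ (the $j$-th eigenvalue, listed in increasing order, of the Hermitian matrix $H_N(x,\omega)$ depends continuously on $x$), so $J:=E_j^{(N)}(S,\omega)$ is the continuous image of the connected set $S$ and is therefore a subinterval of $\R$; consequently $\diam(J)=\mes(J)<\exp(-\ell)$. This is the only place where connectedness of $S$ enters — it is what turns the smallness of $\mes(J)$ into the pointwise bound $|E-E'|<\exp(-\ell)$ for all $E,E'\in J$.

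Next I fix a point $E_0\in J$, say $E_0=E_j^{(N)}(x_*,\omega)$ for some $x_*\in S$. By hypothesis $L(\omega,E)>\gamma$ for every $E\in J$, so in particular $L(\omega,E_0)>\gamma$; moreover $|E_0|\le\norm{H_N(x_*,\omega)}\le C(V)$, which is why the threshold on $N$ depends only on $V,a,b,\gamma$. I then invoke \cref{prop:Wegner} with $\omega_0=\omega$ and this $E_0$ (the arithmetic hypothesis $(2\log N)^{1/\sigma}\le\ell\le N$ is exactly the one assumed here): it produces a set $\cB=\cB_{N,\omega,E_0}$ with $\mes(\cB)<\exp(-\ell^\sigma/2)$ such that $\dist(E,\spec H_N(x,\omega))\ge\exp(-\ell)$ whenever $x\in\tor^d\setminus\cB$ and $|E-E_0|<\exp(-\ell)$.

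Finally I compare measures. For $N\ge N_0$ we have $\ell\ge(2\log N)^{1/\sigma}$ large enough that $\ell^{\sigma/2}\ge 2$, hence $\ell^\sigma/2\ge\ell^{\sigma/2}$ and so $\mes(\cB)<\exp(-\ell^\sigma/2)\le\exp(-\ell^{\sigma/2})\le\mes(S)$; thus $S\setminus\cB\neq\emptyset$. Choosing any $x\in S\setminus\cB$ and setting $E:=E_j^{(N)}(x,\omega)\in J$, we get $|E-E_0|\le\diam(J)<\exp(-\ell)$, so \cref{prop:Wegner} forces $\dist(E,\spec H_N(x,\omega))\ge\exp(-\ell)>0$. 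But $E$ is an eigenvalue of $H_N(x,\omega)$, so $\dist(E,\spec H_N(x,\omega))=0$ — a contradiction. Hence $\mes(E_j^{(N)}(S,\omega))\ge\exp(-\ell)$. No step here is genuinely hard; the only thing to keep track of is the interplay between the two measure thresholds $\exp(-\ell^{\sigma/2})$ and $\exp(-\ell^\sigma/2)$, which fixes the admissible range of $\ell$ (equivalently of $N$).
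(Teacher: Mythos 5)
Your proof is correct and follows essentially the same route as the paper: both arguments use connectedness to make $E_j^{(N)}(S,\omega)$ an interval of small diameter and then derive a contradiction with the measure bound in \cref{prop:Wegner}. The only cosmetic difference is that the paper takes the midpoint of the interval and notes $S$ would be contained in the Wegner exceptional set, whereas you pick an arbitrary reference energy and exhibit a point of $S$ outside that set; these are the same argument read in opposite directions.
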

\begin{proof}
  Since the eigenvalues are continuous in phase, the sets $ E_j^{(N)}(S,\omega) $ are intervals.
  Assume that $ L(\omega,E)>\gamma $ on $ E_j^{(N)}(S,\omega) $ and let $ E $ be the middle point of the interval
  $ E_j^{(N)}(S,\omega) $. Then
  \begin{equation*}
    S\subset \{ x\in \T^d: \dist(E,\spec H_N(x,\omega))<\mes(E_j^{(N)}(S,\omega)) \}.
  \end{equation*}
  By \cref{prop:Wegner}, we need to have $ \mes(E_j^{(N)}(S,\omega))\ge \exp(-\ell) $, otherwise the above would
  imply $ \mes(S)<\exp(-\ell^{\sigma}/2) $.
\end{proof}

\subsection{Weierstrass' Preparation Theorem}
We  also need  to discuss shortly a version of  Weierstrass' preparation theorem for an analytic function
$f(z, w_1, \dots, w_d)$ defined in a polydisk
\begin{equation}
\cP = \cD(z_0, R_0) \times \prod^d_{j=1} \cD(w_{j,0}, R_0),\quad z_0,\ w_{j, 0} \in \IC,\quad
 R_0 > 0\ .
\end{equation}

\begin{lemma}
\label{lem:weier}
Assume that $f(\cdot, w_1, \dots, w_d)$ has no zeros on some circle
\begin{equation*}
  \left\{z:|z-z_0| = r \right\}, 0 < r < R_0/2,
\end{equation*}
for any $\uw = (w_1, \dots, w_d) \in
\cP = \prod\limits^d_{j=1} \cD(w_{j, 0}, r_{j,0})$ where $0<r_{j,0}<R_0$.
Then there exist a polynomial $P(z, \uw)
= z^k +a_{k-1} (\uw) z^{k-1} + \cdots + a_0 (\uw)$ with $a_j(\uw)$ analytic in
$\cP$ and an analytic function $g(z, \uw), (z, \uw) \in \cD(z_0, r) \times \cP$
so that the following properties hold:
\begin{enumerate}
\item[(a)] $f(z, \uw) = P(z, \uw) g(z, \uw)$ for any $(z, \uw) \in \cD(z_0, r) \times
\cP$,

\item[(b)] $g(z, \uw) \ne 0$ for any $(z, \uw) \in \cD(z_0, r) \times \cP$,

\item[(c)] for any $\uw \in \cP$, $P(\cdot, \uw)$ has no zeros in
  $\IC \setminus \cD(z_0,r)$.
\end{enumerate}
\end{lemma}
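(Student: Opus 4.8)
The plan is to run the standard argument-principle proof of Weierstrass preparation, extracting the coefficients of $P$ from the Newton power sums of the zeros of $f(\cdot,\uw)$ lying in $\cD(z_0,r)$. \emph{Counting zeros.} For $\uw\in\cP=\prod_{j=1}^d\cD(w_{j,0},r_{j,0})$, let $\zeta_1(\uw),\dots,\zeta_{k(\uw)}(\uw)$ be the zeros of $f(\cdot,\uw)$ in $\cD(z_0,r)$, counted with multiplicity. Since $f(\cdot,\uw)$ has no zero on $\{|z-z_0|=r\}$, the argument principle gives
\[
  k(\uw)=\frac{1}{2\pi i}\oint_{|z-z_0|=r}\frac{\partial_z f(z,\uw)}{f(z,\uw)}\,dz ,\qquad
  s_m(\uw):=\sum_{j=1}^{k(\uw)}\zeta_j(\uw)^m=\frac{1}{2\pi i}\oint_{|z-z_0|=r}z^m\,\frac{\partial_z f(z,\uw)}{f(z,\uw)}\,dz .
\]
The first integral is integer-valued and jointly continuous on the circle times $\cP$, hence locally constant, so $k(\uw)\equiv k$. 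For each fixed $z$ on the circle the integrands are analytic in $\uw$ (the denominator never vanishes there), so differentiating under the integral sign shows every $s_m$ is analytic on $\cP$.

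\emph{Constructing $P$.} Set $P(z,\uw):=\prod_{j=1}^k\bigl(z-\zeta_j(\uw)\bigr)=z^k+a_{k-1}(\uw)z^{k-1}+\dots+a_0(\uw)$. Each $a_j$ is, up to sign, an elementary symmetric function of the $\zeta_j(\uw)$, and Newton's identities express the elementary symmetric functions as polynomials in $s_1(\uw),\dots,s_k(\uw)$; hence every $a_j$ is analytic on $\cP$, and $P$ is a monic polynomial of degree $k$ in $z$. Property (c) is then immediate, since every zero of $P(\cdot,\uw)$ equals some $\zeta_j(\uw)\in\cD(z_0,r)$.

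\emph{Constructing $g$.} Put $g:=f/P$. For fixed $\uw$, the zeros of $P(\cdot,\uw)$ in $\cD(z_0,r)$ coincide, with multiplicities, with those of $f(\cdot,\uw)$, while $P(\cdot,\uw)$ has no further zeros and $f$ is analytic on $\cD(z_0,R_0)\supset\overline{\cD(z_0,r)}$; thus the singularities of $f/P$ are removable and $g(\cdot,\uw)$ is analytic and nowhere zero on $\cD(z_0,r)$, which gives (b) and, slicewise, (a). To upgrade $g$ to a jointly analytic function on $\cD(z_0,r)\times\cP$ I would use the Cauchy representation
\[
  g(z,\uw)=\frac{1}{2\pi i}\oint_{|\zeta-z_0|=r}\frac{f(\zeta,\uw)}{P(\zeta,\uw)}\,\frac{d\zeta}{\zeta-z},\qquad |z-z_0|<r ,
\]
valid because $g(\cdot,\uw)$ is holomorphic across the closed disk; its integrand is jointly analytic in $(z,\uw)$ on $\cD(z_0,r)\times\cP$ for $\zeta$ on the circle, since there $P(\cdot,\uw)\ne0$ and $\zeta\ne z$. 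Hence $g$ is analytic on $\cD(z_0,r)\times\cP$ and $f=Pg$ there, which is (a). The routine ingredients are differentiation under the integral and Newton's identities; the one step needing a little care is the passage from slicewise to joint analyticity of $g$, which is why I would present $g$ via the Cauchy kernel rather than combine a removable-singularity argument with a Hartogs-type theorem. All the non-vanishing used along the way is supplied directly by the hypothesis that $f(\cdot,\uw)$ has no zeros on $\{|z-z_0|=r\}$, so I do not expect any genuine obstacle.
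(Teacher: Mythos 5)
Your proposal is correct and follows essentially the same route as the paper's proof: the paper also extracts the power sums $b_p(\uw)$ of the zeros in $\cD(z_0,r)$ via the contour integral $\frac{1}{2\pi i}\oint_{|z-z_0|=r} z^p\,\partial_z f/f\,dz$, deduces analyticity of the coefficients $a_j(\uw)$ from them, and leaves the joint analyticity of $g$ to "standard arguments," which you supply explicitly via the Cauchy kernel. The only difference is that you correctly invoke Newton's identities (polynomial, not linear, combinations of the power sums) and spell out the details the paper omits.
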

\begin{proof} By the usual Weierstrass argument, one notes that
$$
b_p(\uw) := \sum^k_{j=1} \zeta^p_j (\uw) = {1\over 2\pi i} \oint\limits_{|z-z_0|=r} z^p \
{\partial _z f(z, \uw) \over f(z, \uw)}\, dz
$$
are analytic in $\uw \in \cP$.  Here $\zeta_j (\uw)$ are the zeros of $f(\cdot, \uw)$
in $\cD(z_0, r)$.  Since the coefficients $a_j(\uw)$ are linear combinations of the $b_p$,
they are analytic in $\uw$.  Analyticity
of $g$ follows by standard arguments.
\end{proof}

\subsection{Resultants}\label{sec:resultants}
Let us recall the definition of the resultant of two polynomials and deduce a key property that
will be needed in conjunction with Weierstrass' Preparation Theorem.
Let $f(z) = z^k + a_{k-1} z^{k-1} + \cdots + a_0$,
$g(z) = z^m + b_{m-1} z^{m-1} + \cdots + b_0$ be polynomials with complex coefficients.
Let $\zeta_i$, $1 \le i \le k$ and $\eta_j$,
$1 \le j \le m$ be the zeros of $f(z)$ and $g(z)$ respectively.  The
resultant of $f$ and $g$ is defined as follows:
\begin{equation}\label{eq:resdef}
  \Res(f, g) = \prod_{i,j} (\zeta_i - \eta_j)=\prod_i g(\zeta_i)=(-1)^k\prod_j f(\eta_j).
\end{equation}
The resultant can also be expressed explicitly in terms of the
coefficients (see \cite[p.~200]{Lan02}):
\begin{equation*}\label{eq:resdef1}
  \Res(f, g) = \left|\begin{array}{ll}
                       {\overbrace{\begin{array}{lll}
                                     1 & 0 & \cdots\\
                                     a_{k-1}  & 1 & \cdots\\
                                     a_{k-2} & a_{k-1} & \cdots\\
                                     \cdots & \cdots & \cdots\\
                                     \cdots & \cdots & \cdots\\
                                     \cdots & \cdots & \cdots\\
                                     a_0 & a_1 &  \\
                                     0 & a_0 &  \end{array}}^m} &
                                                                  {\overbrace{\begin{array}{llll}
                                                                                1 & 0 & \cdots & 0\\
                                                                                b_{m-1} & 1 & \cdots & \cdots\\
                                                                                b_{m-2} & b_{m-1} &\cdots & \cdots\\
                                                                                \cdots & \cdots & \cdots & \cdots\\
                                                                                \cdots & \cdots & \cdots & \cdots\\
                                                                                \cdots & \cdots & \cdots& \cdots\\
                                                                                  &&&\\
                                                                                  &&&\\
                                                                              \end{array}}^k}
                     \end{array}\right|
                 \end{equation*}

\begin{lemma}\label{lem:resbasicprop1}
  Let $ f,g $ be polynomials as above and set
  \[
    s=\max(k,m),\quad r=\max (\max |\zeta_i|,\max    |\eta_j|).
  \]
  Let $ \delta\in(0,1) $.    If $       |\Res(f, g)|>\delta $
  and $ r\le 1/2 $, then
  \[
    \max  (|f(z)|,|g(z)|)> \left( \frac{\delta}{2} \right)^s\quad\text{for all }z.
  \]
\end{lemma}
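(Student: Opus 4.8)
The plan is to use the product formula $\Res(f,g)=\prod_i g(\zeta_i)$ from \cref{eq:resdef} to extract information at a single arbitrary point $z$. Fix $z\in\C$ and suppose, aiming at the contrapositive, that both $|f(z)|$ and $|g(z)|$ are small, say $\le\theta^s$ for a quantity $\theta$ to be matched with $\delta/2$ at the end. First I would compare $g(\zeta_i)$ with $g(z)$ for each root $\zeta_i$ of $f$. Writing $g(w)=\prod_{j}(w-\eta_j)$, we have $g(\zeta_i)=\prod_j(\zeta_i-\eta_j)$ and $g(z)=\prod_j(z-\eta_j)$, so the natural move is to control the ratio, or additively to bound $|g(\zeta_i)|$ in terms of $|g(z)|$ plus an error coming from $|z-\zeta_i|$.

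The cleanest route is the factorization $g(w)-g(z)=\sum_{j}(w-z)\,\big(\text{product of the remaining factors, partially evaluated at }w\text{ and }z\big)$, or more simply the telescoping identity for the difference of the two products $\prod_j(w-\eta_j)-\prod_j(z-\eta_j)$. Each of the (at most $m\le s$) terms in that telescoping sum is a product of $m-1$ factors each of the form $(w-\eta_j)$ or $(z-\eta_j)$, and since all roots satisfy $|\eta_j|\le r\le 1/2$ and we only care about $z$ in, effectively, the relevant region, I need an a priori bound on $|z|$. Here is the point worth being careful about: the claimed inequality is "for all $z$", but if $|z|$ is enormous then $|f(z)|$ and $|g(z)|$ are both enormous (they are monic of degree $\ge 1$), so the inequality is trivially true there. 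Concretely, if $|z|\ge 1$ then $|f(z)|=\prod_i|z-\zeta_i|\ge\prod_i(|z|-|\zeta_i|)\ge(1-r)^k\ge 2^{-k}\ge(\delta/2)^s$ since $r\le 1/2$, $\delta<1$, $k\le s$; similarly for $g$. So the whole content is in the region $|z|<1$, where $|w-\eta_j|,|z-\eta_j|<2$ for $w=\zeta_i$ as well (since $|\zeta_i|\le r\le 1/2$), and the telescoping sum gives $|g(\zeta_i)-g(z)|\le m\,2^{m-1}|z-\zeta_i|$.

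From there the argument is: $|g(\zeta_i)|\le|g(z)|+m2^{m-1}|z-\zeta_i|$, and since $|z-\zeta_i|\le|z|+r<1+1/2<2$, we get a crude but sufficient bound $|g(\zeta_i)|\le|g(z)|+m2^m\le |g(z)|+ C$ — but this constant is too lossy. Instead I would not bound $|z-\zeta_i|$ by a constant but rather observe that if $|f(z)|=\prod_i|z-\zeta_i|\le\theta^s$ then at least one factor satisfies $|z-\zeta_{i_0}|\le\theta$ (since there are $k\le s$ factors, each is $\le$ the geometric mean raised appropriately — more precisely $\min_i|z-\zeta_i|\le(\prod_i|z-\zeta_i|)^{1/k}\le\theta^{s/k}\le\theta$ as $\theta<1$ and $s\ge k$). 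For that particular root $\zeta_{i_0}$, $|g(\zeta_{i_0})|\le|g(z)|+m2^{m-1}\theta\le\theta^s+s2^{s}\theta$. Hmm — I want this to force $|\Res(f,g)|=\prod_i|g(\zeta_i)|$ to be small, so I also need upper bounds on the other $|g(\zeta_i)|$: trivially $|g(\zeta_i)|=\prod_j|\zeta_i-\eta_j|\le(2r)^m\le 1$. Hence $|\Res(f,g)|\le|g(\zeta_{i_0})|\cdot 1^{k-1}\le\theta^s+s2^s\theta$. Choosing the target so that this is $\le\delta$ pins down the admissible $\theta$; tracking constants shows $\theta=(\delta/2)^s$ (or a comparable explicit choice) works after using $s2^s(\delta/2)^s\le\delta/2$ — wait, that needs checking and may instead dictate a different exponent.

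The main obstacle, therefore, is bookkeeping the constants so that the final bound comes out as exactly $(\delta/2)^s$ rather than, say, $(\delta/C)^{Cs}$; the structural inequalities (product formula, telescoping, "some factor is small", monic lower bounds for $|z|\ge 1$) are all routine. I expect the honest statement one proves this way is $\max(|f(z)|,|g(z)|)\ge c(\delta/s)^{Cs}$ or similar, and matching it to the clean form $(\delta/2)^s$ in the lemma will require either a sharper argument — e.g. directly estimating $\log|g(\zeta_i)|-\log|g(z)|$ via $\sum_j\log|\zeta_i-\eta_j|-\log|z-\eta_j|$ and noting each term is controlled because both $\zeta_i$ and $z$ sit in a bounded region while $\Res=\prod_i g(\zeta_i)$ — or simply absorbing the polynomial-in-$s$ losses into the base of the exponential, which is legitimate since $\delta<1$ and $s\ge1$ so $s\,C\,(\delta/2)^s$ can be re-expressed as $(\delta/2')^s$ for a slightly worse but still absolute constant $2'$; checking that $2'$ can be taken to be exactly $2$ (as stated) is the one genuinely delicate point and may require using $r\le1/2$ more carefully, e.g. exploiting that $|g(\zeta_i)|\le(2r)^m\le1$ with room to spare to kill the polynomial factor.
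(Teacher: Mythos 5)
Your proposal does not close, and you say so yourself: the route through $\Res(f,g)=\prod_i g(\zeta_i)$ together with a telescoping/Lipschitz comparison of $g(\zeta_{i_0})$ with $g(z)$ inevitably produces a factor of order $m\,C^{m}$ in front of $|z-\zeta_{i_0}|$, and there is no way to absorb $s\,2^{s}\cdot(\delta/2)$ into the clean bound $\delta$ needed for the contradiction -- indeed $m(3/2)^{m-1}>1$ already for $m\ge 2$, so the final inequality you would need is simply false as an absolute-constant statement. The "genuinely delicate point" you flag at the end is therefore a real gap, not a bookkeeping nuisance.

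The missing idea is to use your own key observation \emph{symmetrically}. You correctly note that $|f(z)|=\prod_i|z-\zeta_i|\le(\delta/2)^s$ forces some root $\zeta_{i_0}$ with $|z-\zeta_{i_0}|\le\delta/2$ (since $s\ge k$ and $\delta/2<1$). Apply the identical observation to $g$: from $|g(z)|\le(\delta/2)^s$ get some $\eta_{j_0}$ with $|z-\eta_{j_0}|\le\delta/2$. The triangle inequality then gives $|\zeta_{i_0}-\eta_{j_0}|\le\delta$. Now use the \emph{double} product form $\Res(f,g)=\prod_{i,j}(\zeta_i-\eta_j)$ from \cref{eq:resdef}: the hypothesis $r\le 1/2$ gives $|\zeta_i-\eta_j|\le 2r\le 1$ for every pair, so $|\Res(f,g)|\le|\zeta_{i_0}-\eta_{j_0}|\cdot 1\le\delta$, contradicting $|\Res(f,g)|>\delta$. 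This is exactly the paper's proof; it avoids any comparison of $g$ at two different points and hence all the lossy constants. (Your separate remark that the claim is trivial for $|z|\ge 1$ is correct but unnecessary once the argument is run this way.)
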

\begin{proof}
  We argue by contradiction.
  Suppose there exists $ z $ such that
  \begin{equation*}
    \max  (|f(z)|,|g(z)|)\le   \left( \frac{\delta}{2} \right)^s.
  \end{equation*}
  Then there exist $ \zeta_{i_0} $, $ \eta_{j_0} $ such that $ |z-\zeta_{i_0}|,|z-\eta_{j_0}|\le \delta/2 $.
  We have that $ |\zeta_{i_0}-\eta_{j_0}|\le \delta $ and by the assumption that $ r\le 1/2 $,
  $ |\zeta_i-\eta_j|\le 1 $. Therefore, $ |\Res(f,g)|\le \delta $ and we arrived at a contradiction.
\end{proof}

\subsection{Semialgebraic Sets}


Recall that a set $ \cS\subset\R^n $ is called semialgebraic if it is
a finite union of sets defined by a finite number of polynomial
equalities and inequalities. More precisely, a semialgebraic set
$ \cS\subset\R^n $ is given by an expression
\begin{equation*}
  \cS=\cup_j \cap_{\ell\in L_j} \{ P_\ell s_{j\ell} 0  \},
\end{equation*}
where $ \{P_1,\ldots,P_s\} $ is a collection of polynomials of $ n $
variables,
\begin{equation*}
  L_j\subset\{1,\ldots,s\}\text{ and }s_{j\ell}\in\{>,<,=\}.
\end{equation*}
If the degrees of the polynomials are
bounded by $ d $, then we say that the degree of $ \cS $ is bounded by
$ sd $. We refer to \cite[Ch.~9]{Bou05} for more information on
semialgebraic sets.

Semialgebraic sets will be introduced by approximating the potential $ V $ with a polynomial $ \tilde V $. More
precisely, given $ N\ge 1 $, by truncating $ V $'s Fourier series and the Taylor series of
the trigonometric functions, one can obtain a polynomial $ \tilde V $ of degree $ \lesssim N^4 $ such that
\begin{equation}\label{eq:V-tilde}
  \norm{V-\tilde V}_\infty\lesssim\exp(-N^2)
\end{equation}
(see \cite[Ch.~10]{Bou05} for some details; the sup norm is taken over $ \T^d $).
The precise bound on the degree will not be important, as long as the
bound is polynomial in $ N $. If we let $\tilde H_N(x, \omega)$ be the operator with
this truncated potential $\tilde V$ and $ \tilde E_j^{(N)}(x,\omega) $ its eigenvalues, then
\begin{equation}\label{eq:E-tilde}
  |E_j^{(N)}(x,\omega)-\tilde E_j^{(N)}(x,\omega)|
  \le\norm{H_N(x,\omega)-\tilde H_N(x,\omega)}\lesssim \norm{V-\tilde V}_\infty.
\end{equation}
This and \cref{lem:stability-in-V} will ensure that our estimates at scale $ N $
are stable under the change of potential.

For the purpose of semialgebraic approximation we also need to consider the set
\begin{equation*}
  \tor^d_N(a,b)= \left\{ \omega\in \tor^d: \norm{k\cdot\omega}\ge \frac{a}{|k|^b}, \text{ for all } k\in \Z^d,
    0<|k|\le N \right\}.
\end{equation*}

\begin{remark}\label{rem:scale-N-sa-refinement}
  All the scale $ N $ results presented so far
  that require the frequency $ \omega $ to be Diophantine, also work for
  $ \omega\in \T^d_N(a,b) $. Furthermore, whenever the positivity of the Lyapunov exponent is required,
  the positivity of $ L_N $ suffices. This is simply because the proofs of the results at scale $ N $ never
  depend on what happens at larger scales. In particular, this observation applies to (LDT), Wegner's estimate,
  and the uniform upper bound from \cref{prop:logupper}.
\end{remark}

The only remaining issue is having a semialgebraic approximation
for $ L_N(\omega,E) $. We deal with this in Lemma \ref{lem:averaging}, in the same way as in \cite{BouGol00},
though with a different proof.
The proof of Lemma~\ref{lem:averaging} will be based on the following result.
\begin{lemma}[{\cite[Cor.~9.7]{Bou05}}]\label{lem:sublinear-count}
    Let $ \cS\subset [0,1]^d $ be semialgebraic of degree $ B $ and
    $ \mes (\cS)<\eta $. Let  $ J $ be an integer
    such that
    \begin{equation*}
        \log B \ll \log J < \log \frac{1}{\eta}.
    \end{equation*}
    Then, for any $ x_0\in\tor^d $ and $ \omega\in\tor_J^d(a,b) $,
    \begin{equation*}
        \# \{ j=1,\ldots,J : x_0+j\omega\in \cS (\mod \Z^d)  \}<J^{1-\delta}
    \end{equation*}
    for some $ \delta=\delta(\omega) $.
\end{lemma}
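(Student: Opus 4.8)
\smallskip

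The plan is to decouple the statement into two essentially independent inputs — a \emph{covering property} of small semialgebraic sets (pure real algebraic geometry, where the hypothesis $\log B\ll\log\frac1\eta$ lives) and a \emph{discrepancy bound} for Diophantine orbits (pure number theory, where $\omega\in\tor_J^d(a,b)$ enters) — and then to combine them with an elementary Hölder estimate.

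The covering input I would use is: a semialgebraic $\cS\subset[0,1]^d$ of degree $B$ with $\mes(\cS)<\eta$ is contained in a union of at most $B^{C(d)}$ axis-parallel boxes $R_1,\dots,R_M$ with $\sum_i\mes(R_i)<\eta^{1/2}$; under $\log B\ll\log\frac1\eta$ the factor $B^{C(d)}$ is negligible against $\eta^{-1/2}$. This is the technical heart and is exactly the kind of statement the cylindrical-decomposition machinery of \cite[Ch.~9]{Bou05} provides; if I were to reprove it I would induct on $d$: a cylindrical algebraic decomposition adapted to $\cS$ breaks $[0,1]^d$ into $\le B^{C(d)}$ cells of degree $\le B^{C(d)}$, $\cS$ is a union of some of them, and each such cell is (after permuting coordinates) a band $\{(\xi',x_d):\xi'\in B',\ \phi_1(\xi')<x_d<\phi_2(\xi')\}$ over a lower-dimensional cell $B'$; splitting $B'$ into the ``thin'' part $\{\phi_2-\phi_1\le\eta^{c(d)}\}$ and the ``thick'' part $\{\phi_2-\phi_1>\eta^{c(d)}\}$, the thick part has base measure $<\eta^{1-c(d)}$ and is handled by the inductive covering in dimension $d-1$ (tensored with $[0,1]$ in $x_d$), while the thin part is covered by boxes of $x_d$-side $\lesssim\eta^{c(d)}$ once one splits $B'$ further so that the algebraic floor $\phi_1$ oscillates by at most $\eta^{c(d)}$ on each piece (the steep leftovers having small base measure, hence fed back into the thick branch); choosing the exponents so the resulting geometric series closes gives the claim.

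For the combination, note first that every coordinate $\omega_l$ of $\omega\in\tor_J^d(a,b)$ itself satisfies $\|k\omega_l\|\ge a|k|^{-b}$ for $0<|k|\le J$ (test the Diophantine inequality against multiples of the $l$-th basis vector), so by the Erd\H{o}s--Tur\'an inequality (with the cutoff a small power of $J$) the one-dimensional sequence $\{(x_0)_l+j\omega_l\bmod 1\}_{j=1}^J$ has discrepancy $\le J^{1-\delta_0}$ for some $\delta_0=\delta_0(\omega)>0$; hence any interval of length $\beta$ contains at most $J\beta+J^{1-\delta_0}$ of its terms. Now take the box cover above and fix $R_i$ with $\mes(R_i)=v_i$; its shortest side is $\le v_i^{1/d}$, so projecting to the corresponding axis and applying the discrepancy bound, $R_i$ contains at most $Jv_i^{1/d}+J^{1-\delta_0}$ points of the orbit $\{x_0+j\omega\}_{j=1}^J$. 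Summing over $i$ and using H\"older, $\sum_i v_i^{1/d}\le M^{1-1/d}\bigl(\sum_i v_i\bigr)^{1/d}<M^{1-1/d}\eta^{1/(2d)}$, whence, using $\eta<1/J$ and $M\le B^{C(d)}=J^{o(1)}$,
\[
\#\{1\le j\le J:x_0+j\omega\in\cS\}\le J\,M^{1-1/d}\eta^{1/(2d)}+M\,J^{1-\delta_0}
< J^{\,1-1/(2d)+o(1)}+J^{\,1-\delta_0+o(1)}<J^{1-\delta}
\]
for $J$ large, where $\delta=\tfrac13\min\{1/(2d),\delta_0\}>0$.

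The main obstacle is entirely in the covering input: tracking the degrees (hence the cell counts) through the cylindrical decomposition and the thick/thin recursion, and — the less routine point — passing from the algebraically-bounded cells of a CAD to genuine axis-parallel boxes without blowing up their number, which is precisely where semialgebraicity of $\cS$ (rather than mere smallness) is indispensable. One must also verify that the admissibility $\log B\ll\log J<\log\frac1\eta$ survives the recursion (each step replaces $\eta$ by a fixed power of itself and $B$ by $B^{C}$), which is why the constants $C(d)$ and the resulting $\delta$ are necessarily allowed to depend on $d$. Once the covering lemma is granted, the discrepancy estimate and the H\"older summation above are routine.
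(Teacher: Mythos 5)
The paper does not prove this lemma at all; it is quoted verbatim from \cite[Cor.~9.7]{Bou05}, so the only benchmark is Bourgain's argument. Your proposal has a genuine gap, and it sits exactly where you locate the ``technical heart'': the covering lemma is false as stated. A semialgebraic set of bounded degree such as the diagonal strip $\cS=\{(x,y)\in[0,1]^2:|y-x|<\eta/2\}$ cannot be covered by $B^{C(d)}$ axis-parallel boxes of total measure $\eta^{1/2}$. Indeed, any cover of the unit diagonal by $M$ axis-parallel boxes has total measure at least $c/M$: each box meets the diagonal in a segment whose length is at most the box's shortest side $L_i$, so $\sum_i L_i\ge c$ and $\sum_i\mes(R_i)\ge\sum_i L_i^2\ge c^2/M$ by Cauchy--Schwarz. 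Hence one needs $M\gtrsim\eta^{-1/2}$ boxes, and no cylindrical-decomposition argument can circumvent this. The correct covering statement (\cite[Cor.~9.6]{Bou05}) produces $B^{C}\epsilon^{1-d}$ cubes of side $\epsilon$ --- a count growing polynomially in $1/\epsilon$, not controlled by $B$ alone.

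The error is not repaired by substituting the correct covering, because your counting step degrades with it. With $M\simeq B^{C}\epsilon^{1-d}$ cubes of volume $v_i=\epsilon^d$, your per-box bound $Jv_i^{1/d}+J^{1-\delta_0}$ (projection onto one coordinate, then H\"older) yields a main term $J\sum_i v_i^{1/d}=JM\epsilon\simeq B^{C}J\epsilon^{2-d}\ge B^{C}J$ for $d\ge2$, which is useless. Projecting onto a single axis discards too much: one needs the genuinely $d$-dimensional equidistribution estimate $\#\{j\le J:\ x_0+j\omega\in Q\}\le J\mes(Q)+J^{1-\delta_1}$ for cubes $Q$, which follows from the $d$-dimensional Erd\H{o}s--Tur\'an--Koksma inequality together with the joint Diophantine condition $\|k\cdot\omega\|\ge a|k|^{-b}$ for all $0<|k|\le J$ (not merely the coordinate-wise condition you extract). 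With that input the total count becomes $B^{C}\epsilon^{1-d}\bigl(J\epsilon^d+J^{1-\delta_1}\bigr)=B^{C}\bigl(J\epsilon+\epsilon^{1-d}J^{1-\delta_1}\bigr)$, and choosing $\epsilon$ as a suitable small negative power of $J$ (the hypothesis $\eta<1/J$ guarantees $\mes(\cS)<\epsilon^d$ so the covering applies, and $\log B\ll\log J$ absorbs the $B^{C}$) gives $J^{1-\delta}$ with $\delta\simeq\delta_1/d$. This is Bourgain's proof; your overall architecture (covering plus equidistribution) is the right one, but both halves must be replaced by their sharper $d$-dimensional versions.
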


To apply Lemma~\ref{lem:sublinear-count} we need the following lemma.
\begin{lemma}\label{lem:semialgebraic-phase}
    Let $ N\ge 1 $, $ \omega\in \tor_N^d(a,b) $, $ E\in \R $ , $ \sigma,\tau $ as in (LDT), and
    \begin{equation*}
        \cB_N := \{ x\in \tor^d : |\log\norm{M_N(x,\omega,E)}- NL_N(\omega,E)|\ge 4N^{1-\tau}  \}.
    \end{equation*}
    There exists a semialgebraic set $ \cS_N $ such that $ \cB_N\subset \cS_N $, $ \deg (\cS_N) \le N^{C}$,
    and $ \mes (\cS_N)< \exp(-N^\sigma) $ provided $ N\ge N_0(V,a,b,|E|) $.
\end{lemma}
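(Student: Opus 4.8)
The plan is to build $\cS_N$ by literally writing down the definition of $\cB_N$ with $V$ replaced by its polynomial approximation $\tilde V$ and $L_N(\omega,E)$ replaced by a polynomial approximation of the Lyapunov exponent, then checking that the symmetric difference between the resulting semialgebraic set and $\cB_N$ is negligible. First I would fix the polynomial $\tilde V$ of degree $\lesssim N^{4}$ with $\norm{V-\tilde V}_\infty\lesssim\exp(-N^2)$ from \cref{eq:V-tilde}, and let $\tilde M_N$ be the corresponding transfer matrix; its entries are polynomials in $(x_1,\dots,x_d)$ (after the standard substitution replacing $\cos,\sin$ by new variables constrained by $u_j^2+v_j^2=1$, which only adds $O(1)$ to the degree and $2d$ extra variables) of degree $\lesssim N\cdot N^4=N^{5}$. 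Thus $\log\norm{\tilde M_N(x,\omega,E)}$, while not itself polynomial, is comparable to a polynomial expression: the condition $\norm{\tilde M_N(x,\omega,E)}^2\ge t$ for a parameter $t$ is semialgebraic of degree $\lesssim N^{C}$, since $\norm{\cdot}^2$ of a $2\times 2$ matrix is a polynomial in the entries (sum of squares of the entries, up to a bounded factor from the equivalence of matrix norms).

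Next I would handle $L_N(\omega,E)$. Here one uses \cref{lem:stability-in-V} (stability in $V$) together with \cref{cor:4.6} to see that $|L_N(\omega,E)-\tilde L_N(\omega,E)|$ and the pointwise discrepancies $|\log\norm{M_N(x,\omega,E)}-\log\norm{\tilde M_N(x,\omega,E)}|$ are all $\lesssim\exp(-N^2)\exp(NL_N+CN^{1-\tau})\lesssim\exp(-N^2/2)$, which is far smaller than $N^{1-\tau}$. This means: if $x\in\cB_N$, i.e. $|\log\norm{M_N(x,\omega,E)}-NL_N(\omega,E)|\ge 4N^{1-\tau}$, then $|\log\norm{\tilde M_N(x,\omega,E)}-N\tilde L_N(\omega,E)|\ge 3N^{1-\tau}$, say. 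So it suffices to take
\begin{equation*}
  \cS_N:=\{x\in\tor^d:|\log\norm{\tilde M_N(x,\omega,E)}-N\tilde L_N(\omega,E)|\ge 3N^{1-\tau}\},
\end{equation*}
\emph{provided} we can absorb the single real constant $N\tilde L_N(\omega,E)$ into the semialgebraic description — but a set of the form $\{x: |p(x)-c|\ge s\}$ with $p$ (essentially) polynomial, $c,s\in\R$ fixed reals, is semialgebraic of the same degree, since $c$ and $s$ are just coefficients. Concretely, $\norm{\tilde M_N}^2\ge e^{2(N\tilde L_N+3N^{1-\tau})}$ or $\norm{\tilde M_N}^2\le e^{2(N\tilde L_N-3N^{1-\tau})}$ defines $\cS_N$ as a union of two sets, each cut out by one polynomial inequality (after clearing the exponential constant onto the right-hand side), so $\deg(\cS_N)\lesssim N^C$. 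The measure bound $\mes(\cS_N)<\exp(-N^\sigma)$ then follows from (LDT) (\cref{thm:LDT}, valid for $\omega\in\tor_N^d(a,b)$ by \cref{rem:scale-N-sa-refinement}) applied with the slightly larger threshold $3N^{1-\tau}$ and the deviation estimate comparing $L_N$ with $\tilde L_N$, exactly as above, with room to spare; and the containment $\cB_N\subset\cS_N$ is the computation just described.

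The main obstacle I anticipate is purely bookkeeping: making precise that ``$\log\norm{\tilde M_N}$ is semialgebraic to the right degree''. The clean way is to avoid the logarithm entirely and work with $\norm{\tilde M_N}^2$, noting $\tfrac12\mnorm{\tilde M_N}_{\mathrm{HS}}^2\le\norm{\tilde M_N}^2\le\mnorm{\tilde M_N}_{\mathrm{HS}}^2$ so that replacing the operator norm by the Hilbert–Schmidt norm — which is a genuine polynomial in the matrix entries — costs only a harmless $\log 2$ shift that is swallowed by the $N^{1-\tau}$ slack. One also has to be a little careful that $\omega$ and $E$ are treated as \emph{fixed parameters} here (the lemma statement fixes them), so there is no issue of $\omega$ being irrational entering the degree count: the polynomials defining $\cS_N$ have real coefficients depending on $\omega,E,\tilde L_N$, and only their degree in $x$ (and in the auxiliary variables $u_j,v_j$) matters. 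Everything else — the degree arithmetic $N\cdot N^4$, the Schur-type norm bounds, and invoking (LDT) plus \cref{lem:stability-in-V}/\cref{cor:4.6} for the $V\to\tilde V$ and $L_N\to\tilde L_N$ comparisons — is routine.
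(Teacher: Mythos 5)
Your proposal is correct and follows essentially the same route as the paper: approximate $V$ by the polynomial $\tilde V$, define $\cS_N$ via the (polynomial) Hilbert--Schmidt norm of $\tilde M_N$ with an intermediate threshold ($2N^{1-\tau}$ in the paper, $3N^{1-\tau}$ in yours) between $4N^{1-\tau}$ and the LDT threshold $N^{1-\tau}$, and deduce the containment and measure bound from \cref{lem:stability-in-V} and (LDT). The only superfluous step is introducing $\tilde L_N(\omega,E)$ — as you yourself note, $NL_N(\omega,E)$ is a fixed real constant here and can be kept as is, which is what the paper does.
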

\begin{proof}
  Take $ \tilde V $ as in \cref{eq:V-tilde} and let
  \begin{equation*}
    \cS_N:= \left\{ x\in \tor^d : |\log\normhs{\tilde M_N(x,\omega,E)}- NL_N(\omega,E)|\ge 2N^{1-\tau}  \right\},
  \end{equation*}
  where $ \normhs{\cdot} $ denotes the Hilbert-Schmidt norm. Clearly $ \deg(\cS_N)\le N^C $, and
  $ \cB_N\subset\cS_N $ follows from \cref{lem:stability-in-V}. The measure estimate follows from (LDT) and the
  fact that, by \cref{lem:stability-in-V},
  \begin{equation*}
    \cS_N\subset \{ x\in \tor^d : |\log\norm{M_N(x,\omega,E)}- NL_N(\omega,E)|>N^{1-\tau}  \}.
  \end{equation*}
\end{proof}

We can now prove the result that we use to handle $ L_N(\omega,E) $.
\begin{lemma}\label{lem:averaging}
  Let  $\sigma, \tau $ as in (LDT).
  Then  for any $ N\ge N_0(V,a,b,|E|) $,
  \begin{equation*}
    C(a,b)\log N\le \log J <  N^\sigma,
  \end{equation*}
  $ x\in \T^d $,
  $ \omega\in \T_J^d(a,b) $, $ E\in \R $,
  we have
  \begin{equation*}
    \left| \frac{1}{J}\sum_{j=1}^J\log \norm{M_N(x+j\omega,\omega,E)}-NL_N(\omega,E) \right|
    \le 5 N^{1-\tau}.
  \end{equation*}
\end{lemma}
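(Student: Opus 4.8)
The plan is to combine the semialgebraic description of the bad set from \cref{lem:semialgebraic-phase} with the sublinear counting bound of \cref{lem:sublinear-count}. Since $\log J\ge C(a,b)\log N$ we have $J\ge N$, so $\omega\in\T^d_J(a,b)\subset\T^d_N(a,b)$ and \cref{lem:semialgebraic-phase} applies: there is a semialgebraic set $\cS_N\supset\cB_N$ with $\deg(\cS_N)\le N^{C}$ and $\mes(\cS_N)<\exp(-N^\sigma)$, where $\cB_N=\{y\in\T^d:|\log\norm{M_N(y,\omega,E)}-NL_N(\omega,E)|\ge 4N^{1-\tau}\}$. Identifying $\T^d$ with $[0,1]^d$, partition $\{1,\dots,J\}$ into the \emph{good} indices, those with $x+j\omega\notin\cS_N\pmod{\Z^d}$, and the \emph{bad} indices, those with $x+j\omega\in\cS_N\pmod{\Z^d}$.

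For a good index $j$ one has $x+j\omega\notin\cB_N$, hence $|\log\norm{M_N(x+j\omega,\omega,E)}-NL_N(\omega,E)|<4N^{1-\tau}$, so the good indices contribute at most $4N^{1-\tau}$ to $\bigl|\tfrac1J\sum_j(\log\norm{M_N(x+j\omega,\omega,E)}-NL_N(\omega,E))\bigr|$. For a bad index we use only the crude bounds $0\le\log\norm{M_N(x+j\omega,\omega,E)}\le C(V,|E|)N$ from \cref{eq:monodr1} and $0\le NL_N(\omega,E)\le C(V,|E|)N$ from \cref{eq:LEupperb1}, so each bad index contributes at most $C(V,|E|)N$. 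Observe that, unlike most results in this section, no positivity of the Lyapunov exponent is needed: the bad indices are dispatched by this $O(N)$ bound rather than by the uniform upper estimate of \cref{prop:logupper}.

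It remains to count the bad indices, which is exactly the content of \cref{lem:sublinear-count}. Taking $\cS=\cS_N\subset[0,1]^d$, $B=\deg(\cS_N)\lesssim N^{C}$, and $\eta=\exp(-N^\sigma)$, the hypothesis $\log B\ll\log J$ holds once $C(a,b)$ is large enough (since $\log B\lesssim\log N$ and $\log N\le\log J/C(a,b)$), while $\log J<N^\sigma=\log\tfrac1\eta$ is the standing assumption; hence the number of bad indices is $<J^{1-\delta}$ for some $\delta>0$. Therefore the bad indices contribute at most $C(V,|E|)\,N\,J^{-\delta}$, and since $\log J\ge C(a,b)\log N$ this is bounded by $C(V,|E|)\,N^{1-\delta C(a,b)}\le N^{1-\tau}$ once $C(a,b)$ is chosen large relative to $\tau/\delta$ and $N\ge N_0(V,a,b,|E|)$. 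Summing the good and bad contributions yields the asserted bound $5N^{1-\tau}$.

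The routine parts are the two elementary estimates on the individual terms and the verification of the hypotheses of \cref{lem:sublinear-count}. The one point that requires care is the final balancing of constants — making $J^{-\delta}$ beat $N^{-\tau}$ — which is precisely why the statement imposes $\log J\ge C(a,b)\log N$; that $C(a,b)$ may be chosen depending on $a,b$ alone uses that the exponent $\delta$ provided by \cref{lem:sublinear-count} can be taken to depend only on $d$ and the Diophantine parameters.
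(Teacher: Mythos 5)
Your proof is correct and follows essentially the same route as the paper: both invoke \cref{lem:semialgebraic-phase} to cover the deviation set by a semialgebraic set of polynomial degree, apply \cref{lem:sublinear-count} to bound the number of bad shifts by $J^{1-\delta}$, and balance the $O(N)$ contribution of the bad shifts against $N^{1-\tau}$ using $\log J\ge C(a,b)\log N$. Your write-up is, if anything, more explicit than the paper's about why the constant $C(a,b)$ in the hypothesis suffices.
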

\begin{proof}
    Let $ \cB_N $ and $ \cS_N $ be as in Lemma \ref{lem:semialgebraic-phase}.
    For any $ x\in \tor^d \setminus \cS_N $ we have
    \begin{equation*}
        NL_N(\omega,E)-4N^{1-\tau}\le \log \norm{M_N(x,\omega,E)} \le NL_N(\omega,E)+4N^{1-\tau},
    \end{equation*}
    whereas for $ x\in \cS_N $ we only have
    \begin{equation*}
      0\le \log \norm{M_N(x,\omega,E)} \le C(V,|E|)N.
    \end{equation*}
    Take $ J $ satisfying the assumptions.
    From the above and Lemma \ref{lem:sublinear-count} we get
    \begin{multline*}
        5N^{1-\tau}\ge \frac{J-J^{1-\delta}}{J}(4N^{1-\tau})+\frac{J^{1-\delta}}{J}(CN) \\
        \ge \frac{1}{J} \sum_{j=1}^J \log \norm{M_N(x+j\omega,\omega,E)}-NL_N(\omega,E) \\
        \ge \frac{J-J^{1-\delta}}{J}(-4N^{1-\tau})+\frac{J^{1-\delta}}{J}(-NL_N(\omega,E))
        \ge -5N^{1-\tau}.
    \end{multline*}
    This concludes the proof.
\end{proof}

Finally, we discuss a particular result on semialgebraic approximation needed for our applications.

\begin{lemma}\label{lem:semialgebraic-all}
  Let  $ N\ge 1 $, $ \gamma>0 $, and $ \sigma,\tau $ as in (LDT).
  Let $ \cB_N $ be the set of $ (x,\omega,E)\in \T^d\times\T^d(a,b)\times \R $ such
  that $ L(\omega,E)>\gamma $ and
  \begin{equation*}
    \log|f_N(x,\omega,E)|\le NL_N(\omega,E)- N^{1-\tau/2}.
  \end{equation*}
  Then there exists a semialgebraic set $ \cS_N $ such that $ \cB_N\subset \cS_N $,
  $ \deg (\cS_N) \le N^{C(a,b)}$, and
  $ \mes (\cS_N) < \exp(-N^\sigma/2) $, provided $ N\ge N_0(V,a,b,\gamma) $.
  Furthermore, $$ \mes(\cS_N(\omega,E))<\exp(-N^\sigma) ,
  \quad \cS_N(\omega,E) = \{ x : (x,\omega,E)\in \cS_N \}. $$
\end{lemma}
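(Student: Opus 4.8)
The plan is to follow the proof of \cref{lem:semialgebraic-phase}, the new difficulty being that here $\omega$ and $E$ are variables rather than fixed parameters: in \cref{lem:semialgebraic-phase} the quantity $NL_N(\omega,E)$ was a constant, so an inequality of the form $\normhs{\tilde M_N}^2\gtrless\mathrm{const}$ is automatically semialgebraic in $x$, whereas now the non-polynomial function $(\omega,E)\mapsto NL_N(\omega,E)$ must itself be replaced by a semialgebraic proxy. Such a proxy is exactly what \cref{lem:averaging} provides: along any orbit, $\frac1J\sum_{j=1}^{J}\log\norm{M_N(x+j\omega,\omega,E)}$ approximates $NL_N(\omega,E)$ to within $5N^{1-\tau}$ once $\omega\in\tor^d_J(a,b)$. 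I would first replace $V$ by its polynomial truncation $\tilde V$ from \cref{eq:V-tilde} (by \cref{lem:stability-in-V} this perturbs $\log\norm{M_N}$, $\log|f_N|$, and the analogous $\tilde{}$-quantities by at most $\exp(-N^2/2)$, negligibly), and substitute the usual unit-circle coordinates for $\cos(2\pi\cdot),\sin(2\pi\cdot)$ in each coordinate of $x$ and $\omega$; this turns $\tilde f_N(x,\omega,E)$ and the entries of $\tilde M_N(x,\omega,E)$ into genuine polynomials, of degree polynomial in $N$ in the phase and frequency variables and $\lesssim N$ in $E$.

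Next I would fix an integer $J=J(N)$ with $N\le J\le N^{C_1(a,b)}$, where $C_1(a,b)$ is large enough that $C(a,b)\log N\le\log J<N^\sigma$ with $C(a,b)$ as in \cref{lem:averaging}, and define $\cS_N\subset\tor^d\times\tor^d\times\R$ to be the set of $(x,\omega,E)$ satisfying: (i) $\omega\in\tor^d_J(a,b)$; (ii) $\prod_{j=1}^{J}\normhs{\tilde M_N(j\omega,\omega,E)}^2>\exp(\gamma JN/2)$; and (iii) $\tilde f_N(x,\omega,E)^{2J}\le\exp(-2JN^{1-\tau/2}/3)\prod_{j=1}^{J}\normhs{\tilde M_N(j\omega,\omega,E)}^2$. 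Since the Hilbert--Schmidt norms squared and $\tilde f_N^{2J}$ are polynomials while the exponentials are constants, (ii)--(iii) are polynomial inequalities; and $\omega\in\tor^d_J(a,b)$ is cut out by the finitely many polynomial conditions $\sin^2(\pi k\cdot\omega)\ge\sin^2(\pi a|k|^{-b})$, $0<|k|\le J$. Hence $\cS_N$ is semialgebraic, of degree polynomial in $N$ and $J$, and since $J\le N^{C_1(a,b)}$ this is $\lesssim N^{C(a,b)}$. Writing $\Lambda(\omega,E):=\frac1J\sum_{j=1}^{J}\log\normhs{\tilde M_N(j\omega,\omega,E)}$, \cref{lem:averaging} (applied at the point $x=0$), \cref{lem:stability-in-V}, and the comparability of $\normhs{\cdot}$ and $\norm{\cdot}$ on $2\times2$ matrices give $|\Lambda(\omega,E)-NL_N(\omega,E)|\le 6N^{1-\tau}$ for $\omega\in\tor^d_J(a,b)$ and $N\ge N_0$; in these terms (ii) says $\Lambda(\omega,E)>\gamma N/4$ and (iii) says $\log|\tilde f_N(x,\omega,E)|\le\Lambda(\omega,E)-N^{1-\tau/2}/3$.

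With $\cS_N$ so defined, the three claims should follow routinely. For $\cB_N\subset\cS_N$: if $(x,\omega,E)\in\cB_N$ then $\omega\in\tor^d(a,b)\subset\tor^d_J(a,b)$, and $L_N\ge L(\omega,E)>\gamma$ forces $\Lambda\ge NL_N-6N^{1-\tau}>\gamma N/4$, i.e.\ (ii); while $\log|\tilde f_N|\le\log|f_N|+1\le NL_N-N^{1-\tau/2}+1\le\Lambda-N^{1-\tau/2}/3$ gives (iii) (in the harmless case where $|f_N|$ is so small that $\log|\tilde f_N|\le\log|f_N|+1$ fails, one has $|\tilde f_N|<\exp(-N^2/4)$ and (iii) holds trivially since $\Lambda\ge0$). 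For the fiberwise bound: on a nonempty fiber $\cS_N(\omega,E)$ one has $\omega\in\tor^d_J(a,b)\subset\tor^d_N(a,b)$ and, from (ii), $L_N(\omega,E)>\gamma/8$; by \cref{rem:scale-N-sa-refinement} the (LDT) for determinants \cref{thm:DirLDT} applies at scale $N$ with $\gamma/8$ in place of $\gamma$ (with the same $\sigma,\tau$), while (iii) gives $\log|f_N|\le\Lambda-N^{1-\tau/2}/3+1\le NL_N-N^{1-\tau}$, so $\cS_N(\omega,E)$ lies in the exceptional set of the (LDT) and has measure $<\exp(-N^\sigma)$. For the total measure: a direct estimate shows $\cS_N(\omega,E)=\emptyset$ once $|E|>\max(10(1+\norm{V}_\infty),N^{\tau/2})$, since for such $E$ one has $\log|\tilde f_N(x,\omega,E)|=N\log|E|+O(N/|E|)$ and $\Lambda(\omega,E)\le N\log|E|+O(N/|E|)+O(1)$ uniformly in $x$, so (iii) would require $N^{1-\tau/2}/3\lesssim 1+N/|E|$, which fails; hence by Fubini $\mes(\cS_N)\lesssim N^{\tau/2}\exp(-N^\sigma)<\exp(-N^\sigma/2)$ for $N\ge N_0(V,a,b,\gamma)$, the dependence on $|E|\lesssim N^{\tau/2}$ being absorbed into $N_0$.

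The step I expect to demand the most care is precisely the reduction of the logarithmic inequality defining $\cB_N$ to a genuine polynomial inequality of degree polynomial in $N$: one must eliminate $NL_N(\omega,E)$ via \cref{lem:averaging}, exponentiate to clear all logarithms, and pass from operator norms to squared Hilbert--Schmidt norms, all while keeping $J$ only polynomial in $N$ — which is possible only because \cref{lem:averaging} allows $\log J$ as small as $C(a,b)\log N$. A secondary point needing attention is making the fiberwise measure bound unconditional: condition (ii) must be built into $\cS_N$ so that the (LDT) for determinants is available on every nonempty fiber, and the (a priori unbounded) range of $E$ must be controlled, here by the emptiness argument above (or, equivalently, by the paper's standing convention that $E$ ranges over a bounded set).
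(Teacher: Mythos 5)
Your proposal is essentially the paper's own proof: truncate $V$ to a polynomial $\tilde V$, use \cref{lem:averaging} to replace $NL_N(\omega,E)$ by the semialgebraic orbit average of $\log\normhs{\tilde M_N}$, add a positivity condition on that average so that (LDT) is available on every nonempty fiber, and conclude by the fiberwise (LDT) bound plus Fubini. The only real divergence is how the unbounded $E$-range is tamed: the paper restricts $\cS_N$ to $E\in[-C(V),C(V)]$ from the outset, justified by noting that the spectral form of (LDT) (\cref{cor:4.6zeros}) already confines $\cB_N$ to such an interval, whereas you show the fibers are empty for large $|E|$ by comparing the asymptotics of $\log|\tilde f_N|$ and of the orbit average; your version works, but the threshold $|E|>N^{\tau/2}$ is off by a constant (at $|E|=N^{\tau/2}$ one only gets $N^{1-\tau/2}/3\le CN^{1-\tau/2}+O(1)$, which need not fail), so you should take $|E|>C(V)N^{\tau/2}$, or simply $|E|>N$, which still yields $\mes(\cS_N)\lesssim N\exp(-N^\sigma)<\exp(-N^\sigma/2)$.
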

\begin{proof}
  Note that by the spectral form of (LDT) (see \cref{cor:4.6zeros}), the definition of
  $ \cB_N $ implicitly restricts $ E $ to be in a bounded interval $ [-C(V),C(V)] $ containing
  $ \cup_{x,\omega}\spec H_N(x,\omega) $.
  Take $ \tilde V $ as in \cref{eq:V-tilde} and let $ \cS_N $ be the set
  of
  \begin{equation*}
    (x,\omega,E)\in \T^d\times \T_{J}^d(a,b)\times [-C(V),C(V)],\ J=N^{C(a,b)}
  \end{equation*}
  such that
  \begin{gather*}
    \frac{1}{NJ}\sum_{j=1}^J\log \normhs{\tilde M_N(x+j\omega,\omega,E)}\ge \frac{\gamma}{2}\\
    \log|\tilde f_N(x,\omega,E)|\le \frac{1}{J}\sum_{j=1}^J\log \normhs{\tilde M_N(x+j\omega,\omega,E)}
    -N^{1-\tau/2}/2.
  \end{gather*}
  Clearly $ \deg(\cS_N)\le N^C $, and
  $ \cB_N\subset\cS_N $ follows from \cref{lem:stability-in-V}, \cref{lem:averaging}, and \cref{prop:uniform}.
  For the measure estimate note that for $ (x,\omega,E) \in \cS_N $ we have $ L_N(\omega,E)\ge \gamma/4 $ and
  \begin{equation*}
    \log|f_N(x,\omega,E)|\le NL_N(\omega,E)-N^{1-\tau/2}/4.
  \end{equation*}
  By (LDT) (recall \cref{rem:scale-N-sa-refinement}), $ \mes(\cS_N(\omega,E))<\exp(-N^\sigma) $
  and therefore
  \begin{equation*}
    \mes(\cS_N)<2C(V)\exp(-N^\sigma)<\exp(-N^\sigma/2).
  \end{equation*}
\end{proof}

\subsection{Perturbation Theory}

In this section we state some standard facts on interlacing eigenvalues and basic perturbation
theory. They will be needed in \cref{sec:NDRWegner}.

\begin{lemma}[{\cite[Thm. 4.3.15]{HorJoh85}}]\label{lem:interlacing}
  Let $ H $ be a $ n\times n $ Hermitian matrix, let $ r $ be an
  integer with $ 1\le r\le n $, and let $ H_r $ be any $ r\times r $
  principal submatrix of $ H $ (obtained by deleting $ n-r $ rows and
  the corresponding columns from $ H $). For each integer $ k $ such
  that $ 1\le k\le r $ we have
  \begin{equation*}
    E_k(H)\le E_k(H_r)\le E_{k+n-r}(H)
  \end{equation*}
  ($ E_k(H) $ denotes $ H $'s $ k $-th eigenvalue; the eigenvalues are
  arranged in increasing order).
\end{lemma}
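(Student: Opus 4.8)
The plan is to deduce the statement from the Courant--Fischer min-max characterization of the eigenvalues of a Hermitian matrix; this is the standard proof of the Cauchy interlacing theorem, so citing \cite[Thm.~4.3.15]{HorJoh85} already suffices, but here is the argument. Without loss of generality assume that $H_r$ is the principal submatrix obtained by keeping the first $r$ coordinates, and set $W=\operatorname{span}(e_1,\dots,e_r)\subset\C^n$, so that $\langle Hv,v\rangle=\langle H_rv',v'\rangle$ whenever $v\in W$ and $v'$ denotes the tuple of its first $r$ coordinates. Recall that for any $1\le k\le n$,
\[
  E_k(H)=\min_{\substack{S\subseteq\C^n\\ \dim S=k}}\ \max_{\substack{v\in S\\ \|v\|=1}}\langle Hv,v\rangle ,
\]
and that the same formula describes $E_k(H_r)$, with $S$ now ranging over $k$-dimensional subspaces of $W\cong\C^r$.

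For the lower bound $E_k(H)\le E_k(H_r)$ I would simply observe that the $k$-dimensional subspaces of $W$ form a subfamily of all $k$-dimensional subspaces of $\C^n$; taking the minimum of $\max_{v\in S,\ \|v\|=1}\langle Hv,v\rangle$ over this smaller family can only increase the value, and that is exactly the asserted inequality.

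For the upper bound $E_k(H_r)\le E_{k+n-r}(H)$ I would use a dimension count: given any $S\subseteq\C^n$ with $\dim S=k+n-r$, we have $\dim(S\cap W)\ge (k+n-r)+r-n=k$, so $S\cap W$ contains a $k$-dimensional subspace $T$. Then
\[
  \min_{\substack{T'\subseteq W\\ \dim T'=k}}\ \max_{\substack{v\in T'\\ \|v\|=1}}\langle Hv,v\rangle
  \ \le\ \max_{\substack{v\in T\\ \|v\|=1}}\langle Hv,v\rangle
  \ \le\ \max_{\substack{v\in S\\ \|v\|=1}}\langle Hv,v\rangle ,
\]
where the leftmost quantity equals $E_k(H_r)$. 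Taking the minimum over all such $S$ yields $E_k(H_r)\le E_{k+n-r}(H)$.

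There is essentially no serious obstacle; the only care needed is in choosing which min-max form to invoke and in the intersection-dimension inequality $\dim(S\cap W)\ge\dim S+\dim W-n$. If one prefers, one can instead reduce to the case $r=n-1$ (deleting a single row and the corresponding column) and iterate $n-r$ times, the single-deletion case being handled by the identical min-max argument with the bound $(k+1)+(n-1)-n=k$.
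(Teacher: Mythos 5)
Your proof is correct: both inequalities follow exactly as you describe from the Courant--Fischer min-max characterization, and the dimension count $\dim(S\cap W)\ge \dim S+\dim W-n$ is applied correctly. The paper gives no proof of this lemma at all---it simply cites \cite[Thm.~4.3.15]{HorJoh85}---and your argument is the standard one found there, so there is nothing to compare beyond noting that you have supplied the omitted details.
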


The following statements are standard facts from basic perturbation
theory.  We give the proofs for completeness.

\begin{lemma}\label{lem:P-Q}
  If $ P,Q $ are two arbitrary projectors on $ \C^n $ and
  $ \norm{P-Q}<1 $, then $ \rank P=\rank Q $.
\end{lemma}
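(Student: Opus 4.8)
The plan is to show that if $P$ and $Q$ are projections on $\C^n$ with $\norm{P-Q}<1$, then their ranges have the same dimension by constructing an isomorphism between them. First I would try the classical approach: consider the operator $T = QP + (I-Q)(I-P)$ and check that $T - I = (2Q-I)(P-Q)$, so $\norm{T-I} = \norm{(2Q-I)(P-Q)} \le \norm{2Q-I}\norm{P-Q}$. Here one must be careful: $P,Q$ are merely projectors (idempotents), not necessarily orthogonal projectors, so $\norm{2Q-I}$ need not equal $1$. If the paper intends orthogonal projectors, then $2Q-I$ is unitary and $\norm{T-I}\le\norm{P-Q}<1$, so $T$ is invertible; otherwise one would instead argue more carefully, for instance by first reducing to the orthogonal case or by a direct rank/nullity argument.

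Assuming $T$ is invertible, the key algebraic identities to verify are $TP = QP = QT$ and, symmetrically, $(I-Q)T = (I-Q)(I-P) = T(I-P)$, i.e.\ $QT = TP$. From $QT = TP$ and the invertibility of $T$ one gets $Q = TPT^{-1}$, so $Q$ is similar to $P$; similar idempotents have the same rank (since rank equals trace for idempotents, or simply because similarity preserves rank). That would give $\rank P = \rank Q$.

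Alternatively, and perhaps cleaner given the ``arbitrary projectors'' hypothesis, I would argue directly that $P$ restricted to $\operatorname{ran} Q$ (followed by projection, i.e.\ the map $v \mapsto Pv$ from $\operatorname{ran}Q$ to $\operatorname{ran}P$) is injective: if $v \in \operatorname{ran}Q$ and $Pv = 0$, then $(P-Q)v = -Qv = -v$, so $\norm{(P-Q)v} = \norm{v}$, forcing $v=0$ since $\norm{P-Q}<1$. Hence $\dim\operatorname{ran}Q \le \dim\operatorname{ran}P$, and by symmetry (swapping the roles of $P$ and $Q$) the reverse inequality holds, giving equality of ranks.

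The main obstacle is the subtlety about whether ``projectors'' means orthogonal projectors or general idempotents: in the general case the bound $\norm{2Q-I}\le 1$ fails, so the slick invertible-$T$ argument needs adjustment, whereas the direct injectivity argument in the last paragraph only uses that $Qv = v$ for $v\in\operatorname{ran}Q$ and $Pv=0$, which is valid for arbitrary idempotents — so I would favor that route. I expect the routine identity-checking to be straightforward; the only real care needed is making sure the injectivity argument is stated symmetrically so both inequalities follow.
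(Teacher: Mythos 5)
Your favored (second) argument is correct and is essentially the paper's proof in contrapositive form: the paper assumes $\rank P>\rank Q$, finds by a dimension count a unit vector $h\in \operatorname{ran}P\cap\ker Q$, and notes $\norm{(P-Q)h}=\norm{h}$ contradicts $\norm{P-Q}<1$, which is exactly the computation underlying your injectivity claim for $P|_{\operatorname{ran}Q}$. Both versions work for arbitrary idempotents, so your caution about the $T=QP+(I-Q)(I-P)$ route is well placed but moot.
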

\begin{proof}
  Without loss of generality, suppose to the contrary, that
  $ \rank P > \rank Q $.  Let $ \mathfrak{M}=P(\C^n) $,
  $ \mathfrak{N}=(I-Q)(\C^n) $.  Then
  \begin{equation*}
    \dim(\mathfrak M)+\dim(\mathfrak N)=\rank P+n-\rank Q>n
  \end{equation*}
  and therefore, there exists $ h\in \mathfrak M\cap \mathfrak N $
  with $ \norm{h}=1 $. Then we reach a contradiction because
  \begin{equation*}
    \norm{Ph-Qh}=\norm{h-0}=1.
  \end{equation*}
\end{proof}

\begin{lemma}
  \label{lem:evcount10}
  Let $H,H_0$ be $n\times n$ matrices, $H_0$ is Hermitian,
  $E_0\in\mathbb{R}$, $r_0>0$. Assume the number of
  eigenvalues of $H_0$ in $(E_0-r_0,E_0+r_0)$ is at most $ K $ and
  \[
    \|H-H_0\|\le \frac{r_0}{32(K+1)^2}.
  \]
  Then there exist $r_0/2< r< r_0$, which depends only on $H_0$,
  such that $ H $ and $ H_0 $ have the same number of eigenvalues in the disk
  $\cD(E_0,r)$.
  Moreover, neither $H$ nor $H_0$ have eigenvalues in  the region
  \begin{equation}\label{eq:region}
    r-\frac{r_0}{8(K+1)}\le |\zeta-E_0|\le r+\frac{r_0}{8(K+1)}.
  \end{equation}
\end{lemma}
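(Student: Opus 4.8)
The plan is to exploit the fact that $H_0$, having at most $K$ eigenvalues in $(E_0-r_0,E_0+r_0)$, must leave a reasonably wide annular gap in its spectrum somewhere inside that interval, and then use the stability of spectral projections under small perturbations. First I would partition the radial interval $(r_0/2,r_0)$ into $K+1$ disjoint subintervals, each of width $\frac{r_0}{2(K+1)}$. Since $H_0$ has at most $K$ eigenvalues in $(E_0-r_0,E_0+r_0)$, by pigeonhole at least one of these $K+1$ subintervals, say centered around a radius $r\in(r_0/2,r_0)$, contains no $|\zeta-E_0|$ for $\zeta$ an eigenvalue of $H_0$; moreover we can choose $r$ so that the full sub-band $r-\frac{r_0}{4(K+1)}\le|\zeta-E_0|\le r+\frac{r_0}{4(K+1)}$ is free of eigenvalues of $H_0$. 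This $r$ depends only on $H_0$. In particular $\dist(\{|\zeta-E_0|=r\},\spec H_0)\ge \frac{r_0}{4(K+1)}$, so on the circle $|\zeta-E_0|=r$ the resolvent is bounded: $\norm{(H_0-\zeta)^{-1}}\le \frac{4(K+1)}{r_0}$.

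Next I would compare the Riesz spectral projections
\[
  P=\frac{1}{2\pi i}\oint_{|\zeta-E_0|=r}(\zeta-H)^{-1}\,d\zeta,\qquad
  P_0=\frac{1}{2\pi i}\oint_{|\zeta-E_0|=r}(\zeta-H_0)^{-1}\,d\zeta,
\]
which count (with multiplicity) the eigenvalues of $H$, resp.\ $H_0$, in $\cD(E_0,r)$. Using the resolvent identity $(\zeta-H)^{-1}-(\zeta-H_0)^{-1}=(\zeta-H_0)^{-1}(H-H_0)(\zeta-H)^{-1}$, together with $\norm{H-H_0}\le \frac{r_0}{32(K+1)^2}$ and the resolvent bound on the contour, one checks that $\norm{(\zeta-H)^{-1}}\le \frac{8(K+1)}{r_0}$ on the circle (a Neumann-series argument: the perturbation times the unperturbed resolvent has norm $\le \frac14$), hence
\[
  \norm{P-P_0}\le \frac{1}{2\pi}\cdot 2\pi r\cdot \frac{4(K+1)}{r_0}\cdot\frac{r_0}{32(K+1)^2}\cdot\frac{8(K+1)}{r_0}
  < 1
\]
after bookkeeping the constants (using $r<r_0$). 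By \cref{lem:P-Q}, $\rank P=\rank P_0$, i.e.\ $H$ and $H_0$ have the same number of eigenvalues in $\cD(E_0,r)$.

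For the last assertion, I would run the same Neumann-series estimate for $\zeta$ in the thinner annulus $r-\frac{r_0}{8(K+1)}\le|\zeta-E_0|\le r+\frac{r_0}{8(K+1)}$, which still lies inside the $H_0$-eigenvalue-free band of half-width $\frac{r_0}{4(K+1)}$ chosen above; there $\norm{(\zeta-H_0)^{-1}}\le \frac{8(K+1)}{r_0}$, and since $\norm{H-H_0}\cdot\frac{8(K+1)}{r_0}\le \frac{1}{4}<1$, the operator $\zeta-H=(\zeta-H_0)\bigl(I-(\zeta-H_0)^{-1}(H_0-H)\bigr)$ is invertible, so $\zeta\notin\spec H$; and of course $\zeta\notin\spec H_0$ by construction. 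The main (though still routine) obstacle is simply tracking the numerical constants so that the bound on $\norm{P-P_0}$ comes out strictly below $1$ and the eigenvalue-free band has exactly the claimed width $\frac{r_0}{8(K+1)}$ on each side; the factor $32(K+1)^2$ in the hypothesis is calibrated precisely for this, with one factor $K+1$ absorbed by the pigeonhole spacing and one by the resolvent bound.
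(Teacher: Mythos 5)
Your proposal is correct and follows essentially the same route as the paper: pigeonhole an eigenvalue-free annular band of half-width $\frac{r_0}{4(K+1)}$ around some $r\in(r_0/2,r_0)$, bound the Hermitian resolvent on the circle, compare the Riesz projectors via the resolvent identity to get $\norm{P-P_0}<1$ and invoke \cref{lem:P-Q}, then repeat the Neumann-series invertibility argument on the thinner annulus. The constants all check out, so there is nothing to add.
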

\begin{proof}
  Clearly, there exists $r_0/2<r<r_0$ such that $H_0$ has no
  eigenvalues in the domain
  \begin{equation*}
    r-\frac{r_0}{4(K+1)}\le |\zeta-E_0|\le r+\frac{r_0}{4(K+1)}.
  \end{equation*}
  Since $H_0$ is
  Hermitian,
  \[
    \|(H_0-\zeta)^{-1}\|\le \frac{4(K+1)}{r_0},\quad \text{for any
      $|\zeta-E_0|=r$}.
  \]
  Recall the basic resolvent expansion estimate:
  \[
    \|(A+B)^{-1}-A^{-1}\|\le \|A^{-1}\|\frac{\norm{A^{-1}B}}{1-\norm{A^{-1}B}}
    \le 2\norm{A^{-1}}^2\norm{B},  \]
    provided $
    \|A^{-1}\|\|B\|<1/2.$
  This implies
  \[
    \|(H-\zeta)^{-1}-(H_0-\zeta)^{-1}\|\le 2\norm{(H_0-\zeta)^{-1}}^2\norm{H-H_0}\le \frac{1}{r_0}
    ,\quad
    \text{for any $|\zeta-E_0|=r$}.
  \]
  Consider the Riesz projectors
  \[
    P=\frac{1}{2\pi i}\oint_{|\zeta-E_0|=r}(H-\zeta)^{-1}d\zeta, \quad
    P_0=\frac{1}{2\pi i}\oint_{|\zeta-E_0|=r}(H_0-\zeta)^{-1}d\zeta.
  \]
  Then
  \[
    \|P-P_0\|\le
    \frac{r_0^{-1}}{2\pi}\oint_{|\zeta-E_0|=r}d|\zeta|=\frac{r}{r_0}<1.
  \]
  The previous lemma implies $ \rank P= \rank P_0 $. The first statement follows
  by recalling that $ P,P_0 $
  project onto the sum of the generalized eigenspaces of the eigenvalues in
  $ \cD(E_0,r) $.

  If $ \zeta $ is in the region \cref{eq:region}, then
  \begin{equation*}
    \norm{(H_0-\zeta)^{-1}}\norm{H-H_0}\le \frac{8(K+1)}{r_0}\frac{r_0}{32(K+1)^2}<1
  \end{equation*}
  and therefore $ H-\zeta $ is invertible and the second statement follows.
\end{proof}

\subsection{Stabilization of Eigenvalues and Eigenvectors}

In this section we present some basic results on the relation between the eigenvalues and eigenvectors at
different scales.

\begin{lemma}\label{lem:eigenvalue-stabilization}
	Let $ x,\omega\in \T^d $. For any intervals $ \Lambda_0=[a_0,b_0]\subset \Lambda\subset \Z $ and any
	$ j_0 $, we have
	\begin{equation*}
		\dist\left(E_{j_0}^{\Lambda_0}(x,\omega),\spec H_\Lambda(x,\omega)\right)
		\le \left| \psi_{j_0}^{\Lambda_0}(x,\omega;a_0) \right|
			+\left| \psi_{j_0}^{\Lambda_0}(x,\omega;b_0) \right|.
	\end{equation*}
\end{lemma}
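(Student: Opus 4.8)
The plan is to use the eigenvalue equation for $H_{\Lambda_0}$ together with the fact that $\psi_{j_0}^{\Lambda_0}$, viewed as a vector on all of $\Lambda$ by extending it with zeros, is an approximate eigenvector for $H_\Lambda$. Concretely, let $\psi=\psi_{j_0}^{\Lambda_0}(x,\omega)$ and $E=E_{j_0}^{\Lambda_0}(x,\omega)$, so $\norm{\psi}=1$ and $(H_{\Lambda_0}(x,\omega)-E)\psi=0$. Denote by $\tilde\psi\in\ell^2(\Lambda)$ the extension of $\psi$ by zero outside $\Lambda_0$. First I would compute the residual $(H_\Lambda(x,\omega)-E)\tilde\psi$. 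Since $H_\Lambda$ acts by the same difference expression as $H_{\Lambda_0}$ at every interior site and the potential is the same, the residual vanishes at every site of $\Lambda$ except possibly at the two sites adjacent to the endpoints $a_0,b_0$ (from ``inside'' $\Lambda\setminus\Lambda_0$, the hopping term picks up $\psi(a_0)$ or $\psi(b_0)$) — or, if $a_0$ or $b_0$ is itself an endpoint of $\Lambda$, there is no such contribution at that side. In all cases one gets
\begin{equation*}
	\norm{(H_\Lambda(x,\omega)-E)\tilde\psi}\le \left| \psi(a_0) \right|+\left| \psi(b_0) \right|,
\end{equation*}
using that each off-diagonal hopping entry of the discrete Laplacian has modulus $1$ and that the only surviving components of the residual are $-\psi(a_0)$ at site $a_0-1$ (if $a_0-1\in\Lambda$) and $-\psi(b_0)$ at site $b_0+1$ (if $b_0+1\in\Lambda$).

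Then I would invoke the standard spectral-theory fact that for a Hermitian (here real symmetric) operator $H_\Lambda$, a unit vector $\tilde\psi$ with $\norm{(H_\Lambda-E)\tilde\psi}\le\delta$ forces $\dist(E,\spec H_\Lambda)\le\delta$; indeed if $\dist(E,\spec H_\Lambda)=:\delta'>0$ then $(H_\Lambda-E)^{-1}$ exists with norm $1/\delta'$, so $1=\norm{\tilde\psi}=\norm{(H_\Lambda-E)^{-1}(H_\Lambda-E)\tilde\psi}\le \delta/\delta'$, giving $\delta'\le\delta$. Combining this with the residual bound yields exactly
\begin{equation*}
	\dist\left(E_{j_0}^{\Lambda_0}(x,\omega),\spec H_\Lambda(x,\omega)\right)\le \left| \psi_{j_0}^{\Lambda_0}(x,\omega;a_0) \right|+\left| \psi_{j_0}^{\Lambda_0}(x,\omega;b_0) \right|.
\end{equation*}

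There is essentially no obstacle here; the only point requiring a little care is the bookkeeping of which boundary sites contribute to the residual, i.e.\ correctly handling the cases where $a_0$ or $b_0$ coincides with an endpoint of $\Lambda$ (so that no ``leakage'' term appears on that side) versus the generic case where both neighbors $a_0-1$ and $b_0+1$ lie in $\Lambda$. Since we only need an upper bound, it is safe to always estimate by $|\psi(a_0)|+|\psi(b_0)|$ regardless. I would also note for completeness that $\norm{\tilde\psi}=\norm{\psi}=1$ because extension by zero is an isometry, which is what makes the resolvent argument clean.
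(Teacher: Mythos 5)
Your proof is correct and follows essentially the same route as the paper: extend $\psi_{j_0}^{\Lambda_0}$ by zero to $\Lambda$, observe that the residual $(H_\Lambda-E)\tilde\psi$ is supported at the sites $a_0-1$ and $b_0+1$ with entries $-\psi(a_0)$ and $-\psi(b_0)$, and conclude via the resolvent bound for Hermitian matrices. The paper's proof is a condensed version of exactly this argument, so nothing further is needed.
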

\begin{proof}
	Let $ \psi_0 $ be the extension, with zero entries, of $ \psi_{j_0}^{\Lambda_0}(x,\omega) $ to
	$ \Lambda $. Since $ \norm{\psi_0}=1 $, the conclusion follows from the fact that we have
	\begin{align*}
      \|(H_\Lambda(x,\omega)-E_{j_0}^{\Lambda_0}(x,\omega))^{-1}\|^{-1} &\le
		\norm{(H_\Lambda(x,\omega)-E_{j_0}^{\Lambda_0}(x,\omega))\psi_0} \\
		&\le \left| \psi_{j_0}^{\Lambda_0}(x,\omega;a_0) \right|
			+\left| \psi_{j_0}^{\Lambda_0}(x,\omega;b_0) \right|.
	\end{align*}
\end{proof}

Recall the following simple general statement on the finite interval approximation of the spectrum
\begin{lemma}\label{lem:elemspec1}
  If for some $ x,\omega\in \T^d $, $ E\in \R $, $ \rho>0 $, there exist sequences
  $N'_k\to -\infty$, $N''_k\to +\infty$ such that
  \[
    \dist(E,\spec H_{[N_k',N_k'']}(x,\omega))\ge \rho,
  \]
  then
  \begin{equation*}
    \dist(E,\cS_\omega)\ge \rho.
  \end{equation*}
\end{lemma}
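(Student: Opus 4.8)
Lemma \ref{lem:elemspec1} is the standard statement that finite-volume spectral exclusions propagate to the whole-line operator. The plan is to prove the contrapositive: if $E$ lies within distance $\rho$ of $\cS_\omega = \spec H(x,\omega)$, then some finite truncation $H_{[N_k',N_k'']}(x,\omega)$ must also have spectrum within distance $\rho$ of $E$, contradicting the hypothesis for large $k$. Since $H(x,\omega)$ is a bounded self-adjoint operator on $\ell^2(\Z)$, the statement $\dist(E,\cS_\omega) < \rho$ means there is some $E_0 \in \cS_\omega$ with $|E - E_0| < \rho$, hence (by self-adjointness) $\norm{(H(x,\omega) - E)^{-1}} > \rho^{-1}$ (interpreting this as $+\infty$ if $E \in \cS_\omega$). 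Therefore there exists a unit vector $\varphi \in \ell^2(\Z)$ with $\norm{(H(x,\omega) - E)\varphi} < \rho$.

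The key step is a truncation argument. Given such a $\varphi$, choose a finitely supported vector $\varphi'$ with $\norm{\varphi - \varphi'} $ small and $\norm{(H(x,\omega)-E)\varphi'} < \rho$ still (possible since $H(x,\omega)$ is bounded, so $(H(x,\omega)-E)$ is continuous). For $k$ large enough that $\supp \varphi' \subset [N_k'+1, N_k''-1]$, the action of $H_{[N_k',N_k'']}(x,\omega)$ on $\varphi'$ agrees with that of $H(x,\omega)$: indeed $H(x,\omega)$ and its Dirichlet truncation differ only in how they treat sites at the boundary and outside the window, and $\varphi'$ together with its nearest neighbors stays inside the window. Hence $\norm{(H_{[N_k',N_k'']}(x,\omega)-E)\varphi'} = \norm{(H(x,\omega)-E)\varphi'} < \rho$, which forces $\dist(E, \spec H_{[N_k',N_k'']}(x,\omega)) \le \norm{(H_{[N_k',N_k'']}(x,\omega)-E)\varphi'} / \norm{\varphi'} < \rho / \norm{\varphi'}$. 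Choosing $\varphi'$ close enough to the unit vector $\varphi$ that $\norm{\varphi'} $ is close to $1$, we can arrange the right side to be $< \rho$, contradicting the assumption that $\dist(E, \spec H_{[N_k',N_k'']}(x,\omega)) \ge \rho$ for all $k$.

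The only mild subtlety — and the main thing to be careful about — is the quantitative bookkeeping: we need $\norm{(H(x,\omega)-E)\varphi'} < \rho$ and $\norm{\varphi'} > 1 - \text{(something)}$ simultaneously, so that their ratio is genuinely below $\rho$ rather than merely at most $\rho$. This is handled by first picking $\varphi$ with $\norm{(H(x,\omega)-E)\varphi} \le \rho - 2\eta$ for a small $\eta > 0$ (possible since we assumed strict inequality $\dist(E,\cS_\omega) < \rho$, hence $\norm{(H-E)^{-1}}^{-1} < \rho$), then approximating by $\varphi'$ so that both $\norm{(H(x,\omega)-E)\varphi'} \le \rho - \eta$ and $\norm{\varphi'} \ge 1 - \eta'$, and finally choosing $k$ large. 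Everything else is routine: the fact that for a bounded self-adjoint operator $A$ one has $\dist(E,\spec A) = \norm{(A-E)^{-1}}^{-1} = \inf_{\norm{\phi}=1}\norm{(A-E)\phi}$, and the elementary observation that truncation does not alter the local action on a vector supported well inside the window.
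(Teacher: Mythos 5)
Your argument is correct and rests on exactly the same two ingredients as the paper's proof: the locality of $H(x,\omega)$, so that its action on a finitely supported vector agrees with that of the Dirichlet truncation once the window contains the support (and its neighbors), together with the variational characterization $\dist(E,\spec A)=\inf_{\|\phi\|=1}\|(A-E)\phi\|$ for self-adjoint $A$. The paper runs the argument in the direct direction (every finitely supported $\phi$ satisfies $\|(H(x,\omega)-E)\phi\|\ge\rho\|\phi\|$, then extends by density), whereas you argue by contradiction and therefore need the small-$\eta$ bookkeeping; both are fine.
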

\begin{proof}
  Take arbitrary $ \phi\in \ell^2(\Z) $ with finite support.
  For any $k$ large enough so that $\supp \phi\subset (N'_k,N_k'')$
  we have
  \[
    (H(x,\omega)-E)\phi(n)=(H_{[N'_k,N''_k]}(x,\omega)-E)\phi(n),\quad n\in [N'_k,N''_k].
  \]
  Due to the hypothesis,
  \begin{align*}
    \|(H(x,\omega)-E)\phi\|&=\|(H_{[N'_k,N''_k]}(x,\omega)-E)\phi\|\\
&    \ge \|(H_{[N'_k,N''_k]}(x,\omega)-E)^{-1}\|^{-1}\|\phi\|\ge \rho\|\phi\|.
  \end{align*}
  Since this holds for any finite support $\phi$ it also holds for any $\phi \in \ell^2(\mathbb{Z})$,
  and the conclusion follows.
\end{proof}

The following standard result is the basis for the stabilization of eigenvectors.

\begin{lemma}\label{lem:eigenvector-stability} Let
  $A $ be a $ N\times N $ Hermitian matrix.  Let
  $E,\epsilon \in \mathbb{R}$, $ \epsilon>0 $ and suppose there exists
  $\phi\in \mathbb{\R}^N$, $\|\phi\|=1$, such that
  \begin{equation}\label{eq:2contraction1}
    \begin{split}
      \|(A-E)\phi\|< \varepsilon.
    \end{split}
  \end{equation}
  Then the following statements hold.

  \vspace{0.5em}
  \noindent {\normalfont (a)}
  There exists a normalized eigenvector $\psi$ of $A$ with an eigenvalue $E_0$
   such that
  \begin{equation}\label{eq:2contraction1aaaM}
    \begin{split}
    E_0\in (E-\varepsilon\sqrt2,E+\varepsilon\sqrt 2),\\
      |\langle \phi,\psi\rangle|\ge (2N)^{-1/2}.
    \end{split}
  \end{equation}

  \vspace{0.5em}
  \noindent {\normalfont (b)}  If in addition there exists $ \eta>\epsilon $ such that the subspace of the
  eigenvectors of $A$ with eigenvalues falling into the interval
  $(E-\eta,E+\eta)$ is at most of dimension one,
  then there exists a normalized eigenvector $\psi$ of $A$ with an eigenvalue
  $E_0\in (E-\varepsilon,E+\varepsilon)$, such that
  \begin{equation}\label{eq:2contraction1aaa}
    \begin{split}
      \|\phi-\psi\|<\sqrt{2}\eta^{-1}\varepsilon.
    \end{split}
  \end{equation}
\end{lemma}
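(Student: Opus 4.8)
The plan is to reduce everything to the spectral decomposition of $A$. I would fix an orthonormal eigenbasis $\{\psi_j\}_{j=1}^N$ of $A$ with $A\psi_j = E_j\psi_j$, write $\phi = \sum_j c_j\psi_j$, and record the two facts $\sum_j|c_j|^2 = 1$ and, from \eqref{eq:2contraction1} and Parseval,
\begin{equation*}
  \sum_{j=1}^N |c_j|^2 (E_j-E)^2 = \|(A-E)\phi\|^2 < \epsilon^2 .
\end{equation*}
Both (a) and (b) will be squeezed out of this single weighted $\ell^2$ bound.

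For part (a) I would use a pigeonhole argument on the ``far'' indices. Setting $S = \{\, j : |E_j - E| \ge \sqrt2\,\epsilon \,\}$, the displayed inequality gives $2\epsilon^2\sum_{j\in S}|c_j|^2 \le \sum_{j\in S}|c_j|^2(E_j-E)^2 < \epsilon^2$, hence $\sum_{j\notin S}|c_j|^2 > 1/2$; since $S^c$ has at most $N$ elements, some $j_0\notin S$ satisfies $|c_{j_0}|^2 > (2N)^{-1}$. Then $\psi := \psi_{j_0}$ and $E_0 := E_{j_0}$ do the job: $|E_0-E| < \sqrt2\,\epsilon$ and $|\langle\phi,\psi\rangle| = |c_{j_0}| > (2N)^{-1/2}$.

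For part (b), the hypothesis guarantees there is exactly one index $j_0$ with $E_{j_0}\in(E-\eta,E+\eta)$ (there is at least one, since otherwise the displayed inequality would force $\epsilon^2 > \eta^2\sum_j|c_j|^2 = \eta^2$, against $\eta>\epsilon$, and at most one by the dimension assumption). I would set $\psi := \pm\psi_{j_0}$ with the sign chosen so that $c := \langle\phi,\psi\rangle \ge 0$, $E_0 := E_{j_0}$, and split $\phi = c\psi + \phi^\perp$ with $\phi^\perp \perp \psi$; every eigenvalue appearing in $\phi^\perp$ has $|E_j - E|\ge\eta$. From $\eta^2\|\phi^\perp\|^2 \le \|(A-E)\phi^\perp\|^2 \le \|(A-E)\phi\|^2 < \epsilon^2$ I get $\|\phi^\perp\| < \epsilon/\eta < 1$ and $c = \sqrt{1-\|\phi^\perp\|^2}\in(0,1]$. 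To locate $E_0$ I would argue by contradiction: if $|E_0-E|\ge\epsilon$, then $\epsilon^2 > c^2(E_0-E)^2 + \eta^2\|\phi^\perp\|^2 \ge c^2\epsilon^2 + \eta^2(1-c^2)$, i.e.\ $\epsilon^2(1-c^2) > \eta^2(1-c^2)$, which is impossible when $c<1$ (divide by $1-c^2>0$ to get $\epsilon>\eta$) and when $c=1$ (then $\phi=\pm\psi$ and $\|(A-E)\phi\| = |E_0-E|\ge\epsilon$); hence $E_0\in(E-\epsilon,E+\epsilon)$. Finally, using $\psi\perp\phi^\perp$ together with $1-c = 1-\sqrt{1-\|\phi^\perp\|^2}\le\|\phi^\perp\|^2\le\|\phi^\perp\|$,
\begin{equation*}
  \|\phi-\psi\|^2 = (1-c)^2 + \|\phi^\perp\|^2 \le 2\|\phi^\perp\|^2 < 2\epsilon^2/\eta^2 ,
\end{equation*}
which is the asserted bound.

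I do not expect a genuine obstacle here: the whole lemma is elementary once the weighted $\ell^2$ inequality is written down. The only place that needs a little care is extracting the \emph{strict, clean} constants in part (b) --- in particular getting $E_0\in(E-\epsilon,E+\epsilon)$ rather than a weaker radius like $\epsilon\eta/\sqrt{\eta^2-\epsilon^2}$ --- which is exactly why the $c<1$ versus $c=1$ case distinction and the elementary estimate $1-\sqrt{1-t}\le t$ for $t\in[0,1]$ are needed.
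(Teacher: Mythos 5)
Your proof is correct and follows essentially the same route as the paper: expand $\phi$ in an orthonormal eigenbasis, use Parseval to turn \eqref{eq:2contraction1} into the weighted bound $\sum_j|c_j|^2(E_j-E)^2<\epsilon^2$, pigeonhole for (a), and isolate the unique nearby eigenvalue for (b). The only caveat is that for a genuinely complex Hermitian $A$ the inner product $\langle\phi,\psi_{j_0}\rangle$ need not be real, so in (b) you should normalize by a phase $e^{i\theta}$ rather than a sign $\pm$ (exactly as the paper does); with that trivial adjustment the constants and the case analysis all check out.
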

\begin{proof} (a) Let $\psi_j$, $j=1,\dots, N$, be an
  orthonormal basis of eigenvectors of $A$, $A\psi_j=E_j\psi_j$.  Then
  \begin{equation*}
    \sum_{|\langle\phi,\psi_j\rangle|\ge (2N)^{-1/2}}|\langle \phi,\psi_j\rangle|^2=
    \norm{\phi}^2-\sum_{|\langle\phi,\psi_j\rangle|<(2N)^{-1/2}}|\langle \phi,\psi_j\rangle|^2> \frac{1}{2}
  \end{equation*}
  and
  \begin{multline*}
    \varepsilon^2>\norm{(A-E)\phi}^2=\sum_j|\langle \phi,\psi_j\rangle |^2(E_j-E)^2\ge
    \sum_{|\langle\phi,\psi_j\rangle|\ge (2N)^{-1/2}}|\langle \phi,\psi_j\rangle |^2(E_j-E)^2\\
    \ge\min_{|\langle \phi,\psi_j\rangle |\ge (2N)^{-1/2}}
    (E_j-E)^2\sum_{|\langle \phi,\psi_j\rangle |\ge (2N)^{-1/2}}|\langle \phi,\psi_j\rangle |^2
       >\frac{1}{2}\min_{|\langle \phi,\psi_j\rangle |\ge (2N)^{-1/2}} (E_j-E)^2.	
  \end{multline*}
 This finishes the proof of (a). To prove (b) note that
  \begin{equation*}
    \begin{split}
      \varepsilon^2>\sum_{j}|\langle \phi,\psi_j\rangle |^2(E_j-E)^2\ge \min_j (E_j-E)^2.
    \end{split}
  \end{equation*}
  This implies that there is $E_k\in (E-\varepsilon,E+\varepsilon)$.
  Due to our assumptions, for any $j\neq k$ one has
  $E_j\notin (E-\eta,E+\eta)$.  So
  \begin{equation}\label{eq:3resolventcontr1}
    \begin{split}
      \varepsilon^2>\sum_{j\neq k}|\langle \phi,\psi_j\rangle |^2(E_j-E)^2
      \ge \eta^2\sum_{j\neq k}|\langle \phi,\psi_j\rangle |^2\\
    \end{split}
  \end{equation}
  Thus,
  \begin{equation*}
    1-|\langle \phi,\psi_k \rangle|^2=
    \|\phi-\langle \phi,\psi_k\rangle \psi_k\|^2=\sum_{j\neq k}|\langle \phi,\psi_j\rangle|^2
    \le \eta^{-2}\varepsilon^2.
  \end{equation*}
  The conclusion now follows from the  fact that $ \norm{\phi-\psi_k}^2=2(1-\Re \langle \phi,\psi_k \rangle) $.
  Note that we can replace $ \psi_k $ by $ e^{i\theta}\psi_k $ to ensure
  $ \Re \langle \phi,\psi_k\rangle= |\langle \phi,\psi_k\rangle| $.
\end{proof}

We will need the following corollary of the first part in \cref{lem:eigenvector-stability}.
\begin{cor}\label{cor:localglob1}
  Let $ x,\omega\in \T^d $ and
  $ [a,b]\subset [c,d]$. Let $\varphi$ be a normalized eigenvector
  of $H_{[a,b]}(x,\omega)$, with $H_{[a,b]}(x,\omega)\varphi=E\varphi$. Set
  \[
    \varepsilon^2=|\varphi(a)|^2+|\varphi(b)|^2, N=d-c+1, \underline{N}=b-a+1.
  \]
  There exists a normalized eigenvector $\psi$ of $H_{[c,d]}(x,\omega)$ with an eigenvalue $E_0$
  such that
  \begin{equation}\label{eq:2contraction1aaaM5}
    \begin{split}
      E_0\in (E-\varepsilon\sqrt2,E+\varepsilon\sqrt 2),\\
      \max_{n\in [a,b]} |\psi(n)|\ge (2N\underline{N})^{-1/2}.
    \end{split}
  \end{equation}
\end{cor}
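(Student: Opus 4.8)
The plan is to reduce the statement to part (a) of Lemma~\ref{lem:eigenvector-stability} applied to the matrix $A = H_{[c,d]}(x,\omega)$ and a suitable test vector $\phi$. First I would take $\phi$ to be the extension of $\varphi$ to $[c,d]$ by zero entries; then $\norm{\phi}=1$ since $\norm{\varphi}=1$. The key computation is to estimate $\norm{(H_{[c,d]}(x,\omega)-E)\phi}$. Because $H_{[a,b]}(x,\omega)\varphi = E\varphi$ and the Schr\"odinger operator is a tridiagonal (nearest-neighbor) operator, the vector $(H_{[c,d]}(x,\omega)-E)\phi$ can only be nonzero at the sites of $[c,d]$ that interact with the boundary of $[a,b]$: the discrepancy between the Dirichlet restriction to $[a,b]$ and the ambient restriction to $[c,d]$ comes precisely from the ``missing'' off-diagonal couplings $\psi(a-1)\leftrightarrow\psi(a)$ and $\psi(b)\leftrightarrow\psi(b+1)$. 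A direct inspection of the difference equation shows that $(H_{[c,d]}(x,\omega)-E)\phi$ is supported on at most the two sites $a-1$ and $b+1$ (those inside $[c,d]$), with entries $-\varphi(a)$ and $-\varphi(b)$ respectively (and fewer such sites if $a=c$ or $b=d$, in which case the bound only improves). Hence
\begin{equation*}
  \norm{(H_{[c,d]}(x,\omega)-E)\phi} \le \left(|\varphi(a)|^2+|\varphi(b)|^2\right)^{1/2} = \varepsilon.
\end{equation*}

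With this bound in hand, part (a) of Lemma~\ref{lem:eigenvector-stability}, applied with $N = d-c+1$, immediately produces a normalized eigenvector $\psi$ of $H_{[c,d]}(x,\omega)$ with eigenvalue $E_0 \in (E-\varepsilon\sqrt2, E+\varepsilon\sqrt2)$ and $|\langle \phi,\psi\rangle| \ge (2N)^{-1/2}$. It remains to convert the inner-product lower bound into the pointwise lower bound $\max_{n\in[a,b]}|\psi(n)| \ge (2N\underline N)^{-1/2}$. For this I would use that $\phi$ is supported on $[a,b]$ and has norm $1$: by Cauchy--Schwarz,
\begin{equation*}
  (2N)^{-1/2} \le |\langle \phi,\psi\rangle|
  = \Bigl| \sum_{n\in[a,b]} \overline{\phi(n)}\,\psi(n) \Bigr|
  \le \Bigl(\sum_{n\in[a,b]} |\phi(n)|^2\Bigr)^{1/2} \Bigl(\sum_{n\in[a,b]} |\psi(n)|^2\Bigr)^{1/2}
  \le \underline N^{1/2} \max_{n\in[a,b]} |\psi(n)|,
\end{equation*}
using $\norm{\phi}=1$ and that the sum over $[a,b]$ has $\underline N = b-a+1$ terms. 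Rearranging gives $\max_{n\in[a,b]}|\psi(n)| \ge (2N\underline N)^{-1/2}$, as claimed.

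The only step that requires a little care is the support/entry identification for $(H_{[c,d]}(x,\omega)-E)\phi$; everything else is Cauchy--Schwarz bookkeeping and a black-box invocation of Lemma~\ref{lem:eigenvector-stability}(a). I expect the main (minor) obstacle to be stating cleanly the boundary-term computation, including the edge cases $a=c$ and $b=d$ where one or both boundary discrepancy sites fall outside $[c,d]$ and drop out of the estimate; in all cases the bound $\norm{(H_{[c,d]}(x,\omega)-E)\phi}\le\varepsilon$ holds, so the conclusion is uniform.
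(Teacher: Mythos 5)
Your proposal is correct and follows essentially the same route as the paper: extend $\varphi$ by zero to get $\phi$, bound $\norm{(H_{[c,d]}(x,\omega)-E)\phi}\le\varepsilon$ via the boundary terms at $a-1$ and $b+1$, invoke Lemma~\ref{lem:eigenvector-stability}(a), and convert $|\langle\phi,\psi\rangle|\ge(2N)^{-1/2}$ into the pointwise bound by Cauchy--Schwarz on $[a,b]$. Your explicit treatment of the support of $(H_{[c,d]}(x,\omega)-E)\phi$ and of the edge cases $a=c$, $b=d$ is slightly more careful than the paper's one-line norm identity, but the argument is the same.
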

\begin{proof} Set
\begin{equation*}
		\phi(n) =\begin{cases}
			\varphi(n) &, n\in[a,b]\\
			0 &, n\in [c,d]\setminus [a,b].
		\end{cases}
	\end{equation*}
Note that
\begin{equation*}
		\|(H_{[c,d]}(x,\omega)-E)\phi\|^2=|\varphi(a)|^2+|\varphi(b)|^2=\varepsilon^2.
	\end{equation*}
    Due to the first part in \cref{lem:eigenvalue-stabilization} there exists an eigenvector $\psi_0$ of
    $H_{[c,d]}(x,\omega)$ with an eigenvalue $E_0$
   such that
  \begin{equation}\label{eq:2contraction1aaaM6}
    \begin{split}
    E_0\in (E-\varepsilon\sqrt2,E+\varepsilon\sqrt 2),\\
      |\langle \phi,\psi\rangle |\ge (2N)^{-1/2}.
    \end{split}
  \end{equation}
  Since  $|\langle \phi,\psi\rangle |^2\le \|\psi\vert_{[a,b]}\|^2$, \cref{eq:2contraction1aaaM6} implies
  \eqref{eq:2contraction1aaaM5}.
\end{proof}

\section{Localization and Separation of Eigenvalues  on a Finite Interval}
\label{sec:localization-separation}

In this section we discuss finite interval localization and separation of eigenvalues
independently of the details of elimination of resonances. Even in this setting we have
to deal with issues,  absent in \cite{GolSch08}, stemming from the largeness of the deviation in (LDT).
We start by deriving the localization of
Dirichlet eigenfunctions at a {\em given scale} $ N $ assuming ``no
long range double resonances'' at a much {\em smaller scale}
$\ell$. The precise meaning of ``no long range double resonances'' is
given by the condition \cref{eq:fellN1N2nores} from the next
proposition.

\begin{prop}\label{prop:NDRloc1}
  Assume $ x_0\in \tor^d $, $ \omega_0\in \tor^d(a,b) $,
  $ E_0\in \R $, and $ L(\omega_0,E_0)> \gamma >0 $.  Let
  $ \tau,\sigma $ be as in (LDT) and $ \ell,N $ be integers such that
  $\ell_0(V,a,b,|E_0|,\gamma)\le \ell \le N^{\sigma/2} $.  Assume that
  there exists an interval $ I=[N',N'']\subset [1,N] $ such that
  \begin{equation}\label{eq:fellN1N2nores}
    \log \big | f_\ell(x_0+(m-1)\omega_0, \omega_0,E_0) \big |>  \ell L_\ell(\omega_0,E_0) - \ell^{1-\tau/4}
    \text{ for  any $m\in [1,N-\ell+1]\setminus I$}.
  \end{equation}
  Then for any $ (x,\omega)\in \T^d\times \T^d(a,b) $, $|x - x_0|,\ |\omega - \omega_0|< \exp(-\ell)$ and
  any eigenvalue
  $|E_j^{(N)} (x, \omega)-E_0|< \exp\left(-\ell\right)$, the
  corresponding eigenfunction obeys
  \begin{equation}\label{eq:NDRlocalization1}
    \big | \psi_j^{(N)} (x, \omega;n) \big | < \exp\left(- \frac{\gamma}{4} \dist(n,I)\right),
  \end{equation}
  provided $ \dist(n,I)\ge \ell^{2/\sigma} $.
\end{prop}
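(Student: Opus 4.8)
The plan is to turn the ``no long range double resonance'' hypothesis \eqref{eq:fellN1N2nores} into resolvent decay on length-$\ell$ windows by means of \cref{lem:Green}, and then to propagate the smallness of the eigenfunction outward from $I$ via an Agmon-type weighted iteration of Poisson's formula \eqref{eq:poissonC}.

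\emph{Step 1 (local resolvent decay).} Fix $E=E_j^{(N)}(x,\omega)$, $\psi=\psi_j^{(N)}(x,\omega)$ with $|E-E_0|<e^{-\ell}$; extend $\psi$ by zero to $\Z$, so it solves \eqref{eq:hamiltonC} at every $n\in[1,N]$ and vanishes at $0$ and $N+1$. For $m\in[1,N-\ell+1]\setminus I$ put $J_m=[m,m+\ell-1]$. Since $f_{J_m}(y,\omega,E)=f_\ell(y+(m-1)\omega,\omega,E)$, hypothesis \eqref{eq:fellN1N2nores} is exactly the input of \cref{lem:Green} applied at scale $\ell$ on each $J_m$ with $K=\ell^{1-\tau/4}$. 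This gives, for all such $m$ simultaneously, $E\notin\spec H_{J_m}(x,\omega)$ and
\[
  \bigl|\cG_{J_m}(x,\omega,E;p,q)\bigr|\le\exp\!\Bigl(-\tfrac{\gamma}{2}|p-q|+2\ell^{1-\tau/4}\Bigr),\qquad p,q\in J_m .
\]
The subtle point here is that the base point of $J_m$ is the orbit point $x+(m-1)\omega$, so transporting \eqref{eq:fellN1N2nores} from $\omega_0$ to $\omega$ via \cref{cor:4.6} costs a factor $m-1\lesssim N$ in front of $|\omega-\omega_0|$; this is where the quantitative hypotheses $|x-x_0|,|\omega-\omega_0|<e^{-\ell}$ and $\ell\le N^{\sigma/2}$ come in, and it is the main bookkeeping difficulty of the proof.

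\emph{Step 2 (weighted Poisson iteration).} It suffices to prove \eqref{eq:NDRlocalization1} on each of the (at most two) components of $[1,N]\setminus I$; by symmetry take $R=[N''+1,N]$, $I=[N',N'']$, and write $d(n)=\dist(n,I)=n-N''$ on $R$. Choose $\lambda\in(\gamma/4,\gamma/2)$ and set $v(n)=e^{\lambda d(n)}|\psi(n)|$ for $n\in[N'',N]$, so $v(N'')\le1$. For $n\in R$ at distance $\ge\lceil\ell/2\rceil$ from both ends of $R$, apply \eqref{eq:poissonC} to $\psi$ on the length-$\ell$ window centered at $n$ (a $J_m$, since it lies in $R$) and use Step~1; for $n$ within $\lceil\ell/2\rceil$ of $N$, apply \eqref{eq:poissonC} on $[N-\ell+1,N]$, whose term at $N$ vanishes because $\psi(N+1)=0$. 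In each case the gain $e^{-\frac{\gamma}{2}\lfloor\ell/2\rfloor+2\ell^{1-\tau/4}}$ beats the weight growth $e^{\lambda\lceil\ell/2\rceil}$ once $\ell\ge\ell_0$ (using $\lambda<\gamma/2$ and $\ell^{1-\tau/4}=o(\ell)$), so $v(n)\le\tfrac14\|v\|_{L^\infty(R)}$. Hence the maximum of $v$ on $R$ sits on the $\lceil\ell/2\rceil$-neighbourhood of $N''+1$, where $v\le e^{\lambda\lceil\ell/2\rceil}$; thus $\|v\|_{L^\infty(R)}\le e^{\lambda\ell/2+O(1)}$ and $|\psi(n)|\le e^{-\lambda d(n)+\lambda\ell/2+O(1)}$ on $R$. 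For $d(n)\ge\ell^{2/\sigma}$ the surplus $(\lambda-\tfrac{\gamma}{4})d(n)\gg\lambda\ell/2+O(1)$ (here $\ell^{2/\sigma}\gg\ell$ is used) turns this into $|\psi(n)|<e^{-\frac{\gamma}{4}d(n)}$. The component $[1,N'-1]$ is treated identically with $\psi(0)=0$.

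I expect the crux to be the bidirectional nature of the Poisson recursion in Step~2: the resolvent on a window $J$ controls $\psi(n)$ through the values of $\psi$ at \emph{both} endpoints of $J$, and one of those endpoints is \emph{farther} from $I$ than $n$, so a plain iteration would propagate toward $N$ instead of decaying. The weight $e^{\lambda d(n)}$ together with the Dirichlet condition at the ends of $[1,N]$ forces the iteration to contract; the window of choices $\gamma/4<\lambda<\gamma/2$ simultaneously makes the recursion contractive and produces the exponent $\gamma/4$; and the threshold $\dist(n,I)\ge\ell^{2/\sigma}$ is exactly what absorbs the $e^{O(\ell)}$ prefactor coming from the boundary values of $\psi$ on $I$. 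The remaining difficulty, as noted in Step~1, is propagating the scale-$\ell$ no-resonance bound along the whole orbit $\{x_0+(m-1)\omega_0\}_{m\le N}$ of $x_0$, which is why $|\omega-\omega_0|$ is taken exponentially small in $\ell$.
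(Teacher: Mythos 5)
Your proof is correct, but it takes a genuinely different route from the paper's. The paper proves \eqref{eq:NDRlocalization1} with a \emph{single} application of Poisson's formula on the one large window $J=[n-d,n+d]\cap[1,N]$, $d=\dist(n,I)$, which is centered at $n$ and touches $I$ only at its boundary: the scale-$\ell$ hypothesis \eqref{eq:fellN1N2nores} is first upgraded to a large-deviation estimate for $f_J$ via the covering form of (LDT) (\cref{lem:Greencoverap1}), and then \cref{lem:Green} plus the trivial bound $|\psi|\le 1$ at the endpoints of $J$ gives $|\psi(n)|\le 2\exp(-\frac{\gamma}{2}d+C|J|^{1-\tau/2})$. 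In that argument the threshold $\dist(n,I)\ge\ell^{2/\sigma}$ is precisely what makes the covering lemma applicable ($\ell\le|J|^{\sigma/2}$). You instead never leave scale $\ell$: you convert \eqref{eq:fellN1N2nores} into Green's function decay on the length-$\ell$ windows only, and run an Agmon-type weighted maximum-principle iteration anchored at $\partial I$ and at the Dirichlet boundary. This is longer but more elementary (it bypasses \cref{lem:Greencoverap1} and \cref{cor:4.6zeros} at the large scale), and it actually yields decay already for $\dist(n,I)\gtrsim\ell$; in your scheme $\ell^{2/\sigma}$ only serves to absorb the $e^{O(\ell)}$ prefactor. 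One caveat applies equally to both arguments: transporting \eqref{eq:fellN1N2nores} from $(x_0,\omega_0,E_0)$ to the shifted base points $x+(m-1)\omega$ costs a factor of the shift, so what is really needed is $N|\omega-\omega_0|\lesssim\exp(-C\ell^{1-\tau/4})$, which the stated hypothesis $|\omega-\omega_0|<e^{-\ell}$ guarantees only when $N\lesssim e^{\ell}$ (as it always is in the applications, e.g.\ \cref{prop:A}); you flag this correctly as the main bookkeeping point, though your claim that $\ell\le N^{\sigma/2}$ resolves it is not quite right, since that inequality bounds $N$ from below rather than above.
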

\begin{proof}
  Take $ x,\omega,E=E_j^{(N)}(x,\omega) $ satisfying the assumptions, and $ n\in [1,N]\setminus I $ such that
  $ d=\dist(n,I)\ge \ell^{2/\sigma} $. Assume $ n\in [1,N'] $. Let
  $ J=[n-d,n+d]\cap [1,N]=[a,N'] $. Note that
  $ |J|\ge \ell^{2/\sigma} $. By the covering form of (LDT) (see \cref{lem:Greencoverap1}) we have
  \begin{equation}\label{eq:LDE-J}
    \log|f_J(x,\omega,E)|\ge |J|L_{|J|}(\omega,E)-|J|^{1-\tau/2}.
  \end{equation}
  Let $ \psi=\psi_j^{(N)}(x,\omega) $. By Poisson's formula,
  \begin{equation*}
    \psi(n)= \begin{cases}
      \cG_{J}(x,\omega,E;n,a)\psi(a-1)+\cG_{J}(x,\omega,E;n,N')\psi(N'+1) &, a>1\\
      \cG_{J}(x,\omega,E;n,N')\psi(N'+1) &, a=1
    \end{cases}
  \end{equation*}
  (recall the Dirichlet boundary condition $ \psi(0)=0 $).  Now,
  \cref{eq:LDE-J} and \cref{lem:Green} imply
  \begin{equation*}
    \left| \psi(n) \right|\le 2 \exp \left( -\frac{\gamma}{2} d+C|J|^{1-\tau/2} \right)
    < \exp \left( -\frac{\gamma}{4} d \right)
  \end{equation*}
  (recall that $ \psi $ is normalized and therefore $ |\psi(k)|\le 1 $
  for any $ k\in [1,N] $).  The case $ n\in[N'',N] $ is completely
  analogous.
\end{proof}

The second issue we study in this section is a quantitative
estimate for the {\em separation of the eigenvalues}. More precisely, we consider the separation of a localized
eigenvalue $ E_j^{(N)} (x, \omega) $ (as in \cref{prop:NDRloc1}) from all the other eigenvalues of
$ H_N(x,\omega) $.
We recall the following basic observation regarding the relation between the Dirichlet
determinants and the solutions to the difference equation \cref{eq:hamiltonC}. If $ \psi $ is a solution of
\cref{eq:hamiltonC} that satisfies $ \psi(0)=0 $, $ \psi(1)=1 $, then by \cref{eq:transfer} and
\cref{eq:Mndetbasic} we have $ \psi(n)=f_{[1,n-1]}(x,\omega,E) $, $ n\ge 1 $ (we convene that $ f_{[1,0]}=1 $).
In particular, if $ E\in \spec H_N(x,\omega) $, then $ (f_{[1,n-1]}(x,\omega,E))_{n\in [1,N]} $
is a corresponding eigenfunction. The idea of our method is as follows. The eigenfunctions
corresponding to different eigenvalues are orthogonal. As we just saw,
each eigenfunction can be expressed in terms of  Dirichlet determinants,
evaluated at the corresponding eigenvalue. We show
that the determinants evaluated at close energies are close
themselves. That puts a limitation on how close  two different
eigenvalues can be. We {\em cannot} use the estimate from
Corollary~\ref{cor:4.6} because it is too imprecise and would only give separation of eigenvalues
by $ \exp(-CN^{1-\tau}) $. Instead, we use Harnack's inequality for harmonic functions.

\begin{lemma}\label{lem:Harnack-app}
  Assume $ x_0\in \tor^d $, $ \omega_0\in \tor^d(a,b) $,
  $ E_0\in \R $, and $ L(\omega_0,E_0)> \gamma >0 $.  Let
  $ \tau,\sigma $ be as in (LDT) and $ \ell,N $ be integers such that
  $\ell_0(V,a,b,|E_0|,\gamma)\le \ell \le N^{\sigma} $.  Assume that
  there exists an interval $ I=[N',N'']\subset [1,N] $ such that \cref{eq:fellN1N2nores} holds.
  Then for any $ (x,\omega)\in \T^d\times \T^d(a,b) $, $|x - x_0|,\ |\omega - \omega_0|< \exp(-\ell)$ and
  any $ E_i\in \R $,  $|E_i-E_0|< \exp\left(-\ell\right)$, $ i=1,2 $, we have
  \begin{equation*}
    |f_n(x,\omega,E_1)-f_n(x,\omega,E_2)|\le n\exp(\ell)|E_1-E_2|\max(|f_n(x,\omega,E_1)|,|f_n(x,\omega,E_2)|)
  \end{equation*}
  for any $ \ell^{2/\sigma}\le n\le N' $.
\end{lemma}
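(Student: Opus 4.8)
The plan is to regard $f_n(x,\omega,\cdot)$ as a polynomial of degree $n$ in $E$ whose zeros are the eigenvalues of $H_n(x,\omega)$, to use the non-resonance hypothesis \cref{eq:fellN1N2nores} to rule out any eigenvalue of $H_n(x,\omega)$ in a disk $\cD(E_0,\delta)$ with $\delta:=\exp(-2\ell^{1-\tau/4})$ — so that $\log|f_n(x,\omega,\cdot)|$ is harmonic there — and then to control its oscillation on the much smaller disk $\cD(E_0,\exp(-\ell))$ by Harnack's inequality, finally transferring the resulting bound from $\log|f_n|$ to $f_n$ itself by a sign argument.

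First I would feed \cref{eq:fellN1N2nores} into the covering form of (LDT). Since $n\le N'$, every $s\in[1,n-\ell+1]$ satisfies $s<N'$, hence $s\in[1,N-\ell+1]\setminus I$; by translation covariance $f_{[s,s+\ell-1]}(x_0,\omega_0,E_0)=f_\ell(x_0+(s-1)\omega_0,\omega_0,E_0)$, so \cref{eq:fellN1N2nores} supplies, for each $m\in[1,n]$, an interval $I_m\subset[1,n]$ of length $\ell$ (centered at $m$, or abutting an endpoint of $[1,n]$ when $m$ lies within $\ell$ of it) meeting hypotheses (i)--(iii) of \cref{lem:Greencoverap1}; its ``furthermore'' clause applies because $\ell\le n^{\sigma/2}$, which is exactly the assumption $n\ge\ell^{2/\sigma}$. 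Applying \cref{lem:Greencoverap1} to $[1,n]$ then gives, for the $(x,\omega)$ in the statement,
\begin{equation*}
  \dist(E_0,\spec H_n(x,\omega))\ge\delta,\qquad
  \log|f_n(x,\omega,E_0)|\ge nL_n(\omega,E_0)-n^{1-\tau/2}\ge\tfrac{\gamma}{4}n,
\end{equation*}
the last bound holding for $n$ large because $L_n(\omega,E_0)\ge L(\omega,E_0)>\gamma/2$ by \cref{prop:log-Hoelder}.

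Now $f_n(x,\omega,\cdot)$ is entire and zero-free on $\cD(E_0,\delta)$, so $u(E):=\log|f_n(x,\omega,E)|$ is harmonic there, and by \cref{eq:Mndetbasic} and \cref{eq:monodr1} one has $u(E)\le\log\|M_n(x,\omega,E)\|\le C(V,|E_0|)\,n$ on that disk, hence $M:=\sup_{\cD(E_0,\delta)}u\le C(V,|E_0|)\,n$ and, using the lower bound above, $0\le M-u(E_0)\le C(V,|E_0|)\,n$. Applying the Harnack gradient estimate to the nonnegative harmonic function $M-u$ on $\cD(E_0,\delta)$ yields $|\nabla u(E)|\le C\delta^{-1}(M-u(E_0))\le C(V,|E_0|)\,n\,\delta^{-1}$ for $E\in\cD(E_0,\delta/2)$. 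Since $|E_i-E_0|<\exp(-\ell)<\delta/2$ for $\ell\ge\ell_0$, the real segment $[E_1,E_2]$ lies in $\cD(E_0,\delta/2)$, so the mean value theorem gives $|u(E_1)-u(E_2)|\le C(V,|E_0|)\,n\,\delta^{-1}|E_1-E_2|\le n\exp(\ell)\,|E_1-E_2|$, the last step because $C(V,|E_0|)\,\delta^{-1}=C(V,|E_0|)\exp(2\ell^{1-\tau/4})\le\exp(\ell)$ for $\ell\ge\ell_0(V,|E_0|)$. Finally, as $V$ is real-valued, $H_n(x,\omega)$ is real symmetric and, $E_1,E_2$ being real, $f_n(x,\omega,E_1)$ and $f_n(x,\omega,E_2)$ are real; since $f_n(x,\omega,\cdot)$ has no zero on $(E_0-\delta,E_0+\delta)\ni E_1,E_2$, they have the same sign, so $|f_n(x,\omega,E_1)-f_n(x,\omega,E_2)|=\bigl||f_n(x,\omega,E_1)|-|f_n(x,\omega,E_2)|\bigr|$, and the elementary inequality $|a-b|\le\max(a,b)\,|\log(a/b)|$ for $a,b>0$ combined with the preceding estimate yields the claim.

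The main obstacle is the first step: converting the single scalar inequality \cref{eq:fellN1N2nores} at scale $\ell$ into a genuine separation of $E_0$ from $\spec H_n(x,\omega)$ at the full scale $n$, stably under the perturbation of $(x_0,\omega_0)$. This is precisely the role of the covering form of (LDT), and it is crucial that all the covering intervals $I_m$ have the small length $\ell$, so the loss in the exponent is only of order $\ell^{1-\tau/4}$ — which is exactly what makes $\delta^{-1}\le\exp(\ell)$ and hence the final factor $n\exp(\ell)$ come out right.
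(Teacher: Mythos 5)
Your proof is correct and follows essentially the same route as the paper's: the covering form of (LDT) applied on $[1,n]$ with length-$\ell$ windows gives non-vanishing of $f_n(x,\omega,\cdot)$ on a complex disk of radius $\exp(-C\ell^{1-\tau/4})$ about $E_0$, Harnack's inequality for the positive harmonic function $C(V,|E_0|)n-\log|f_n|$ yields the Lipschitz bound $Cn\delta^{-1}|E_1-E_2|$ on $\log|f_n|$, and the constant-sign observation transfers this to $f_n$ itself. The only cosmetic difference is that you use the gradient form of Harnack while the paper uses the two-sided ratio form; both give the same estimate.
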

\begin{proof}
  Let $ n\in [\ell^{2/\sigma},N'] $. Fix $ x,\omega $ satisfying the assumptions.
  From  \cref{eq:fellN1N2nores} and the covering form of (LDT)
  (see \cref{lem:Greencoverap1}) it follows that
  \begin{equation}\label{eq:non-vanish}
    \log|f_n(x,\omega,E)|\ge nL_n(\omega,E)-n^{1-\tau/2}\text{ for any } E\in \cD(E_0,\exp(-C\ell^{1-\tau})).
  \end{equation}
  It follows that
  \begin{equation*}
    u(E)= C(V,|E_0|)n-\log|f_n(x,\omega,E)|
  \end{equation*}
  is harmonic and positive on $ \cD(E_0,\exp(-C\ell^{1-\tau})) $ (recall \cref{eq:monodr1}).
  We take $ r=|E_1-E_2| $, $ R= \exp(-C\ell^{1-\tau})$ (note that $ r/R\ll 1/2 $) and using
  Harnack's inequality we get
  \begin{equation*}
    \left( 1-2 \frac{r}{R} \right)u(E_2)\le \frac{R-r}{R+r}u(E_2)\le u(E_1)
    \le \frac{R+r}{R-r} u(E_2)\le \left( 1+4\frac{r}{R} \right)u(E_2).
  \end{equation*}
  It follows that
  \begin{equation*}
    |\log|f_n(x,\omega,E_1)|-\log|f_n(x,\omega,E_2)||\le \frac{Cn}{R}|E_1-E_2|.
  \end{equation*}
  By the Mean Value Theorem,
  \begin{equation*}
    ||f_n(x,\omega,E_1)|-|f_n(x,\omega,E_2)||\le \frac{Cn}{R}|E_1-E_2|\max(|f_n(x,\omega,E_1)|,|f_n(x,\omega,E_2)|)
  \end{equation*}
  and the conclusion follows from the fact that $ f_n(x,\omega,E) $ does not vanish (due to \cref{eq:non-vanish})
  and hence it has constant sign for $ E\in \R $.
\end{proof}

\begin{prop}\label{prop:Ej_NDRsep}
  Assume $ x_0\in \tor^d $, $ \omega_0\in \tor^d(a,b) $,
  $ E_0\in \R $, and $ L(\omega_0,E_0)> \gamma >0 $.  Let
  $ \tau,\sigma $ be as in (LDT) and $ \ell,N $ be integers such that
  $\ell_0(V,a,b,|E_0|,\gamma)\le \ell \le N^{\sigma} $.  Assume that
  there exists an interval $ I=[N',N'']\subset [1,N] $ such that \cref{eq:fellN1N2nores} holds
  and $ |I|\ge \ell^{2/\sigma}+\log N $.
  Then for any $ (x,\omega)\in \T^d\times \T^d(a,b) $, $|x - x_0|,\ |\omega - \omega_0|< \exp(-\ell)$ and
  any eigenvalue such that $|E_j^{(N)} (x, \omega)-E_0|< \exp\left(-\ell\right)/2$, we have
  \begin{equation}\label{eq:separation}
    |E_j^{(N)}(x,\omega)-E_k^{(N)}(x,\omega)|\ge \exp(-C|I|) \text{ for any }k\neq j,
  \end{equation}
  with $ C=C(V,|E_0|) $.
\end{prop}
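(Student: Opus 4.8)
The plan is to exploit the orthogonality of the Dirichlet eigenfunctions together with the determinant representation $\psi_j^{(N)}(x,\omega;m)=\psi_j^{(N)}(x,\omega;1)\,f_{[1,m-1]}(x,\omega,E_j^{(N)}(x,\omega))$ (valid for all $m\in[1,N]$ by \cref{eq:Mndetbasic}, with $\psi_j^{(N)}(x,\omega;1)\neq 0$ since otherwise the Dirichlet recursion \cref{eq:hamiltonC} would force $\psi_j^{(N)}\equiv 0$) and the energy‑stability of the determinants. Write $E_j:=E_j^{(N)}(x,\omega)$, $E_k:=E_k^{(N)}(x,\omega)$, $\psi_j:=\psi_j^{(N)}(x,\omega)$, $\psi_k:=\psi_k^{(N)}(x,\omega)$. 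Suppose toward a contradiction that $|E_j-E_k|<\exp(-C|I|)$ with $C=C(V,|E_0|)$ large, to be fixed. Then $|E_k-E_0|\le|E_k-E_j|+|E_j-E_0|<\exp(-\ell)$ (using $|I|\ge\ell^{2/\sigma}>\ell$ and $C$ large), so \cref{prop:NDRloc1} applies to both $E_j$ and $E_k$: $|\psi_\bullet(m)|<\exp(-\tfrac{\gamma}{4}\dist(m,I))$ whenever $\dist(m,I)\ge\ell^{2/\sigma}$. Since $H_N(x,\omega)$ has simple spectrum, $\langle\psi_j,\psi_k\rangle=0$.

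Set $F(m):=f_{[1,m-1]}(x,\omega,E_j)=\psi_j(m)/\psi_j(1)$, $G(m):=f_{[1,m-1]}(x,\omega,E_k)=\psi_k(m)/\psi_k(1)$, $D:=F-G$ and $c:=\psi_j(1)/\psi_k(1)$, so that $\psi_j-c\psi_k=\psi_j(1)D$ and hence $R:=\|\psi_j-c\psi_k\|^2=\big(\textstyle\sum_m D(m)^2\big)\big/\big(\sum_m F(m)^2\big)$. I would bound $\sum_m D(m)^2$ by splitting the index set according to the position of $m-1$ relative to $I=[N',N'']$: for $m-1\in[0,\ell^{2/\sigma})$ the crude stability \cref{eq:modcont1} gives $|D(m)|\le (C(V)+|E_0|+1)^{\ell^{2/\sigma}}|E_j-E_k|$; for $m-1\in[\ell^{2/\sigma},N'-1]$, which lies to the left of $I$, \cref{lem:Harnack-app} gives $|D(m)|\lesssim N\exp(\ell)\,|E_j-E_k|\,|F(m)|$; for $m-1\in(N''+\ell^{2/\sigma},N-1]$, which lies far to the right of $I$, the localization gives $|F(m)|\le\exp(-\tfrac{\gamma}{4}(m-N''))\,\big(\sum_n F(n)^2\big)^{1/2}$ and likewise for $G$, so the contribution of that range is $\lesssim \exp(-\tfrac{\gamma}{2}\ell^{2/\sigma})\big(\sum F^2+\sum G^2\big)$.

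The crux is the remaining range $m-1\in[N',N''+\ell^{2/\sigma}]$, which contains all of $I$ and is therefore out of reach of \cref{lem:Harnack-app}. Here I would use the determinant concatenation identity, a consequence of $M_{[1,m-1]}=M_{[N',m-1]}M_{[1,N'-1]}$ and \cref{eq:Mndetbasic},
\[
f_{[1,m-1]}(x,\omega,E)=f_{[N',m-1]}(x,\omega,E)\,f_{[1,N'-1]}(x,\omega,E)-f_{[N'+1,m-1]}(x,\omega,E)\,f_{[1,N'-2]}(x,\omega,E)
\]
(if $N'$ is very small one concatenates through $N''$ instead). The inner factors $f_{[N',m-1]},f_{[N'+1,m-1]}$ live on intervals of length $\le|I|+\ell^{2/\sigma}\le 2|I|$, so by \cref{eq:monodr1} and \cref{eq:modcont1} they and their $E_j$‑versus‑$E_k$ differences are bounded by $(C(V)+|E_0|+1)^{2|I|}$ and $(C(V)+|E_0|+1)^{2|I|}|E_j-E_k|$ respectively; the outer factors $f_{[1,N'-1]},f_{[1,N'-2]}$ are the entries $F(N'),F(N'-1)$ (resp.\ $G(N'),G(N'-1)$), whose $E_j$‑versus‑$E_k$ differences are controlled by \cref{lem:Harnack-app} (or by \cref{eq:modcont1} on an interval shorter than $\ell^{2/\sigma}$ when $N'$ is small). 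Subtracting the identity at $E_k$ from the one at $E_j$ and using $\max(|F(N')|,|F(N'-1)|,|G(N')|,|G(N'-1)|)^2\le 2\big(\sum F^2+\sum G^2\big)$, this range contributes at most $C|I|\,(C(V)+|E_0|+1)^{4|I|}N^2\exp(2\ell)\,|E_j-E_k|^2\big(\sum F^2+\sum G^2\big)$. The essential gain is that the crude constant appears only to the power $O(|I|)$, not $O(N)$, because the "new'' factors are confined to $I$ while the "old'' factors are never bounded by their (exponential in $N$) worst‑case size but kept equal to the actual entries $F(N'),F(N'-1)$.

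Collecting the four contributions and dividing by $\sum_m F(m)^2\ge F(1)^2=1$ gives $R\le a+b(1+Q)$ with $Q:=\sum G^2/\sum F^2$, where $a\to0$ as $|E_j-E_k|\to0$ (so $a<\tfrac1{100}$ once $C$ is large), and $b=C|I|(C(V)+|E_0|+1)^{4|I|}N^2\exp(2\ell)|E_j-E_k|^2+C\exp(-\tfrac{\gamma}{2}\ell^{2/\sigma})$; using $\log N\le|I|$ and $\ell\le|I|$ to absorb $N$, $\exp(\ell)$ and $(C(V)+|E_0|+1)^{4|I|}$ into $\exp(C|I|)$, the first summand is tiny once $C$ is large, and the second is $<\tfrac1{100}$ once $\ell_0$ is large, so $b<\tfrac14$. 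Since $\sum G^2=\sum(F-D)^2\le 2\sum F^2+2\sum D^2$, i.e.\ $Q\le 2+2R$, we get $R\le a+3b+2bR$, hence $R\le 2a+6b<\tfrac14$. Finally, from $\psi_j=c\psi_k+(\psi_j-c\psi_k)$ and $\langle\psi_j,\psi_k\rangle=0$ we obtain $|c|=|\langle\psi_j-c\psi_k,\psi_k\rangle|\le\|\psi_j-c\psi_k\|=\sqrt R$, so $1=\|\psi_j\|\le|c|+\|\psi_j-c\psi_k\|\le2\sqrt R<1$, a contradiction; therefore $|E_j-E_k|\ge\exp(-C|I|)$. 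The main obstacle is exactly the range $m-1\in I$: away from $I$ one has either \cref{lem:Harnack-app} or the localization, but inside $I$ neither is available, and it is the concatenation identity that keeps the loss there at the acceptable level $\exp(C|I|)$.
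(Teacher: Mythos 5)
Your proof is correct and follows essentially the same route as the paper's: a contradiction argument based on orthogonality of the determinant-valued eigenfunctions, with \cref{lem:Harnack-app} controlling the difference of the determinants up to $N'$ and localization controlling the far range; your "concatenation identity" is exactly the $(1,1)$ entry of the paper's transfer-matrix propagation $M_{[m,n]}$ through the interval $I$. The only cosmetic differences are that the paper anchors the propagation at $m=N'-\ell^{2/\sigma}$ and handles everything outside $\tilde I$ by localization alone, and works with the unnormalized functions $\psi_i(n)=f_{n-1}(x,\omega,E_i)$ rather than your rescaling constant $c$.
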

\begin{proof}
  We argue by contradiction. Let $ E_1=E_j^{(N)}(x,\omega) $, $ E_2=E_k^{(N)}(x,\omega) $ and assume
  \begin{equation*}
    |E_1-E_2|<\exp(-C|I|).
  \end{equation*}
  Note that we therefore have $ |E_2-E_0|< \exp(-\ell) $, so \cref{prop:NDRloc1} can be applied to both
  eigenvalues.
  Let $ \psi_i(n)=f_{n-1}(x,\omega,E_i) $. Since $ \psi_1 $, $ \psi_2 $ are eigenfunctions corresponding to
  different eigenvalues we have that they are orthogonal and therefore
  \begin{equation*}
    \norm{\psi_1-\psi_2}^2=\norm{\psi_1}^2+\norm{\psi_2}^2.
  \end{equation*}
  Let $ \tilde I= \{ n\in  [1,N] : \dist(n,I)<\ell^{2/\sigma} \} $. By \cref{prop:NDRloc1},
  \begin{equation*}
    \sum_{n\notin \tilde I} |\psi_i(n)|^2
    \le \norm{\psi_i}^2 \sum_{n\notin \tilde I} \exp \left( -\frac{\gamma}{2}\dist(n,I) \right)
    \le \norm{\psi_i}^2 \exp \left( -\frac{\gamma}{4} \ell^{2/\sigma} \right)
  \end{equation*}
  (we used the fact that $ \ell $ is taken to be large enough).
  Therefore
  \begin{equation}\label{eq:sum-I}
    \sum_{n\notin \tilde I} |\psi_1(n)-\psi_2(n)|^2
    \le 2 \exp \left( -\frac{\gamma}{4} \ell^{2/\sigma} \right)(\norm{\psi_1}^2+\norm{\psi_2}^2).
  \end{equation}

  Let
  \begin{equation*}
    m= \begin{cases}
      N'-\ell^{2/\sigma} &, N'\ge 2\ell^{2/\sigma}+1\\
      1 &, N'<2\ell^{2/\sigma}
    \end{cases}.
  \end{equation*}
  By \cref{lem:Harnack-app} we have
  \begin{equation}\label{eq:psi1-psi2}
    |\psi_1(k)-\psi_2(k)|\le N\exp(\ell)|E_1-E_2|\max(|\psi_1(k)|,|\psi_2(k)|) \text{ for }k\in \{ m-1,m \}
  \end{equation}
  (note that the estimates hold trivially when $ m=1 $). For any $ n\in \tilde I $ we have
  \begin{multline*}
	|\psi_1(n)-\psi_2(n)|\le
    \mnorm{
      \begin{bmatrix}
        \psi_1(n+1)\\ \psi_1(n)
      \end{bmatrix}-
      \begin{bmatrix}
        \psi_2(n+1)\\ \psi_2(n)
      \end{bmatrix}
    } \\= \mnorm{
      M_{[m,n]}(x,\omega,E_1)\begin{bmatrix}
        \psi_1(m)\\ \psi_1(m-1)
      \end{bmatrix}-
      M_{[m,n]}(x,\omega,E_2)
      \begin{bmatrix}
        \psi_2(m)\\ \psi_2(m-1)
      \end{bmatrix}
    }\\ \le
    \mnorm{
      M_{[m,n]}(x,\omega,E_1)\begin{bmatrix}
          \psi_1(m)-\psi_2(m)\\ \psi_1(m-1)-\psi_2(m-1)
        \end{bmatrix}
    }\\+\mnorm{
      (M_{[m,n]}(x,\omega,E_1)-M_{[m,n]}(x,\omega,E_2))\begin{bmatrix}
        \psi_2(m)\\ \psi_2(m-1)
      \end{bmatrix}
    }.
  \end{multline*}
  Therefore, using \cref{eq:psi1-psi2} and \cref{lem:Eomdiff} we get
  \begin{equation*}
    |\psi_1(n)-\psi_2(n)|\le N\exp(C(V,|E_0|)(|I|+\ell^{2/\sigma}))|E_1-E_2|\max(\norm{\psi_1},\norm{\psi_2})
    \text{ for }n\in \tilde I
  \end{equation*}
  and
  \begin{equation}\label{eq:sum-non-I}
    \sum_{n\in \tilde I}|\psi_1(n)-\psi_2(n)|^2\le \exp(-C|I|)
    (\norm{\psi_1}^2+\norm{\psi_2}^2),
  \end{equation}
  provided the constant from \cref{eq:separation} is chosen large enough
  (recall that $ |I|\ge \ell^{2/\sigma}+\log N $). By \cref{eq:sum-I} and \cref{eq:sum-non-I},
  \begin{equation*}
    \norm{\psi_1-\psi_2}^2\ll \norm{\psi_1}^2+\norm{\psi_2}^2
  \end{equation*}
  contradicting the fact that $ \psi_1,\psi_2 $ are orthogonal.
\end{proof}

\section{(NDR) Condition}\label{sec:NDR-condition}

In this section we introduce the main new ingredient for the proof of the finite scale localization: the existence
of intervals satisfying the following ``no double resonances'' (NDR) condition.

\begin{defi}\label{def:NDR}
  Let $ \sigma,\tau $ be as in (LDT). We say that an interval
  $ \Lambda\subset \Z $ is $ (K,\ell,C) $-(NDR) with respect to
  $ x_0,\omega_0,E_0 $, if there exists $ \underline \Lambda\subset \Lambda $,
  $ |\underline \Lambda|\le K $, such that
  \begin{equation*}
    \log|f_\ell(x_0+(n-1)\omega_0,\omega_0,E_0)|> \ell L_\ell(\omega_0,E_0)-C\ell^{1-\tau/3}
    \text{ for all }n\in \Lambda\setminus \underline \Lambda.
  \end{equation*}
  Furthermore, we require that the connected components of
  $ \Lambda\setminus \underline \Lambda $ have length greater than
  $ \ell^{2/\sigma} $. If $ C=1 $ we say that $ \Lambda $ is $ (K,\ell) $-(NDR).
\end{defi}

\begin{remark}
  We shift by $ (n-1)\omega_0 $, instead of $ n\omega_0 $, to make sure that the intervals on which the estimate
  holds cover $ \Lambda\setminus \underline \Lambda $. Note that
  \begin{equation*}
    f_\ell(x_0+(n-1)\omega_0,\omega_0,E_0)=
  f_{[n,n+\ell-1]}(x_0,\omega_0,E_0).
  \end{equation*}
  The assumption on the connected components of
  $ \Lambda\setminus \underline \Lambda $ is just a matter of convenience (it facilitates the application of the
  covering form of (LDT)).  When this condition is not satisfied, one can simply choose a larger
  $ \underline \Lambda $. It goes without saying that for applications we will want $ K $ to be small relative
  to $ |\Lambda| $ (to be more precise, we will need that $ K\le |\Lambda|^\epsilon $ with some
  sufficiently small  $ \epsilon >0 $).
\end{remark}

\begin{remark}\label{rem:NDR-stability}
  The only reason we consider (NDR) intervals with $ C\neq 1 $ is the following stability property, that will be
  used in \cref{prop:NDR-elimination}.
  If $ C<1 $, $ \Lambda $ is $ (K,\ell,C) $-(NDR) with respect to $ x_0,\omega_0,E_0 $
  and $ L(\omega_0,E_0)>\gamma>0 $, then,
  by \cref{cor:4.6},  $ \Lambda $ is $ (K,\ell) $-(NDR) with respect to $ x,\omega,E $, provided
  \begin{equation*}
    |x-x_0|,\ |\omega-\omega_0|,\ |E-E_0|<\exp(-C'\ell^{1-\tau/3}).
  \end{equation*}
  Of course, $ \ell $ also needs to be large enough.
\end{remark}

The existence of (NDR) intervals (with small enough $ K $) follows from the work of Bourgain about localization
on $ \Z^d $ \cite{Bou07}. More specifically, we make use of his result on elimination of multiple resonances. We
recall the relevant abstract lemmas from \cite{Bou07} and then we apply them to our concrete situation.

\begin{lemma}[{\cite[Lem. 1.18]{Bou07}}]\label{lem:Bourgain-elimination}
  Let $ \cA\subset [0,1]^{q+r} $ be semialgebraic of degree $ B $
  and such that for each $ t\in [0,1]^r $, $ \mes_q(\cA(t))<\eta
  $. Then
  \begin{equation*}
    \{ (x_1,\ldots,x_{2^r}) : \cA(x_1)\cap \ldots \cap \cA(x_{2^r})\neq \emptyset  \}\subset [0,1]^{q2^r}
  \end{equation*}
  is semialgebraic of degree at most $ B^C $ and measure at most
  \begin{equation*}
    \eta_r=B^C \eta^{q^{-r}2^{-r(r-1)/2}}\text{ with } C=C(r).
  \end{equation*}
\end{lemma}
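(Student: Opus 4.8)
This is \cite[Lem.~1.18]{Bou07}; as we only use it as a black box, the plan is to outline Bourgain's argument rather than reproduce it. The overall scheme is an induction on $r$, with $r=0$ being the hypothesis itself. For the inductive step I would pass from the $2^{r-1}$-fold statement to the $2^r$-fold one: split $(x_1,\dots,x_{2^r})$ into two blocks of length $2^{r-1}$ and note that $\bigcap_{i=1}^{2^r}\cA(x_i)\neq\emptyset$ is equivalent to $\mathcal{C}(x_1,\dots,x_{2^{r-1}})\cap\mathcal{C}(x_{2^{r-1}+1},\dots,x_{2^r})\neq\emptyset$, where $\mathcal{C}(\vec x):=\bigcap_{i=1}^{2^{r-1}}\cA(x_i)\subset[0,1]^r$. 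One checks that $\mathcal{C}$, viewed as a semialgebraic subset of $[0,1]^{r+q2^{r-1}}$, has degree $\le B^{C(r)}$ (effective Tarski--Seidenberg, see \cite[Ch.~9]{Bou05}), that $\mes_r(\mathcal{C}(\vec x))<\eta$ always, and --- after discarding a small-measure set of $\vec x$, obtained from the $2^{r-1}$-fold inductive bound together with the Fubini identity $\int\mes_r(\cA(x_1))\,dx_1=\mes_{q+r}(\cA)<\eta$ --- that the surviving fibers $\mathcal{C}(\vec x)$ lie below a sharper threshold $\eta'$. Feeding this into the base case $r=1$ applied to $\mathcal{C}$, and bookkeeping the losses, produces the $2^r$-fold bound, the geometric decay of the exponent $q^{-r}2^{-r(r-1)/2}$ being exactly what the bookkeeping yields.

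The substance is therefore the base case $r=1$: a bound on $\mes_{2q}(\tilde\cA)$ for $\tilde\cA=\{(x,y)\in[0,1]^{2q}:\cA(x)\cap\cA(y)\neq\emptyset\}$. That $\tilde\cA$ is semialgebraic of degree $\le B^{C}$ is immediate, since it is the image of $\{(x,y,t):(x,t)\in\cA,\ (y,t)\in\cA\}\subset[0,1]^{2q+1}$ under the projection forgetting $t$, and projecting out boundedly many coordinates costs only a polynomial increase of degree. For the measure I would \emph{not} try to use that the projected set has measure $<\eta^2$, because projections do not control Lebesgue measure even for bounded-degree sets --- a thin tube can project onto a full cube. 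Instead, the uniform fiber hypothesis $\mes_q(\cA(t))<\eta$ must enter essentially: decompose $\cA$ into $\le B^{C}$ semialgebraic cells; on each cell $\cA(t)$ is essentially box-shaped of measure $<\eta$, hence of diameter $\lesssim\eta^{1/q}$ in at least one direction, so that the relation $\cA(x)\cap\cA(y)\neq\emptyset$ becomes, cell by cell, a near-collision condition whose measure in $(x,y)$ is controlled by $B^{C}\eta^{1/q}$; summing over cells gives the $r=1$ bound.

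The step I expect to be the main obstacle is precisely this quantitative semialgebraic input: extracting from ``degree $\le B^{C}$ and all fibers of measure $<\eta$'' a genuine thinness statement for the family $\{\cA(t)\}_t$, and converting thinness into a measure bound for $\tilde\cA$. It is the degree bound that makes the cell decomposition of bounded complexity --- hence the thinness extraction --- and the iteration itself go through, so every estimate must be carried out keeping all degrees polynomially bounded in $N$; and the explicit exponent $q^{-r}2^{-r(r-1)/2}$ emerges only after carefully tracking, through the $r$ nested doublings, both the $q$-th-root loss incurred when turning a measure bound on a fiber in $[0,1]^q$ into a thinness bound and the additional halving incurred each time the block length is doubled. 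As Bourgain's argument will not be used again, I would in the end simply invoke \cite[Lem.~1.18]{Bou07}, referring to \cite[Ch.~9]{Bou05} for the semialgebraic background.
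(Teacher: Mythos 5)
The paper gives no proof of this lemma at all---it is imported verbatim as a black box from \cite[Lem.~1.18]{Bou07}---and since you likewise conclude by simply invoking that citation, your disposition matches the paper's exactly. Your interpolated sketch of Bourgain's induction is not load-bearing, though note one slip in it: the hypothesis bounds $\mes_q(\cA(t))$ for the fibers over $t$, not $\mes_r(\cA(x))$ for the fibers over $x$, so the claim that ``$\mes_r(\mathcal{C}(\vec x))<\eta$ always'' does not hold outright and would have to come from the Fubini--Chebyshev step you mention only for discarding a small set of $\vec x$.
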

\begin{remark}
  Note that in the previous lemma $ \cA(t)= \{ x\in[0,1]^q : (x,t)\in \cA \} $
  and $ \cA(x)= \{ t\in [0,1]^r: (x,t)\in \cA \} $.
\end{remark}
\begin{lemma}[{\cite[Lem. 1.20]{Bou07}}]\label{lem:Bourgain-lacunary}
  Let $ \cA\subset [0,1]^{rq} $ be a semialgebraic set of degree
  $ B $ and $ \mes_{rq} (\cA)<\eta $.
  Let $ \cN_1,\ldots,\cN_{q-1}\subset \Z $ be finite sets with the
  property that
  \begin{equation*}
    |n_i|>(B|n_{i-1}|)^C,\text{ if } n_i\in \cN_i\text{ and }n_{i-1}\in\cN_{i-1},\ 2\le i\le q-1,
  \end{equation*}
  where $ C=C(q,r) $. Assume also
  \begin{equation*}
    \max_{n\in \cN_{q-1}} |n|^C<\frac{1}{\eta}.
  \end{equation*}
  Then
  \begin{equation*}
    \mes \{ \omega\in [0,1]^r : (\omega,n_1\omega,\ldots,n_{q-1}\omega)\in \cA
    \text{ for some } n_i\in \cN_i\}<B^C \left(\min_{n\in\cN_1} |n|\right)^{-1}.
  \end{equation*}
\end{lemma}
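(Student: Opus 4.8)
The plan is to follow Bourgain's argument in \cite{Bou07}, which proceeds by induction on the number $q-1$ of resonances, combining two ingredients: a ``steep planes'' measure estimate that disposes of a single resonance, and the semialgebraic elimination of \cref{lem:Bourgain-elimination}, whose purpose is to keep the complexity of the auxiliary sets polynomially bounded in $B$; a naive treatment would instead inflate the degree by a power of $\max|n|$, which, iterated over $q-1$ steps, would be worthless. The basic technical device is a discrepancy estimate in the spirit of \cref{lem:sublinear-count}: a semialgebraic $\cS\subset[0,1]^{m}$ of degree $B$ splits into $\lesssim B^{C_m}$ graph-like pieces, so the number of points of a $\tfrac{1}{n}$-grid lying in $\cS$ differs from $n^{m}\mes(\cS)$ by at most $\lesssim B^{C_m}n^{m-1}$.

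For the base case $q=2$ one has $\cA\subset[0,1]^{2r}$ and must bound $\mes\{\omega:\exists\,n\in\cN_{1},\ (\omega,\{n\omega\})\in\cA\}$. Fixing $n$ and partitioning $[0,1]^{r}$ into the $|n|^{r}$ cubes of side $1/|n|$, on each of which $\omega\mapsto\{n\omega\}$ is an affine bijection onto $[0,1)^{r}$, the change of variables $u=\{n\omega\}$ turns the resonant $\omega$'s in a cube into the $\tfrac{1}{|n|}$-grid points lying in the slice $\cA_{u}=\{v:(v,u)\in\cA\}$; integrating the discrepancy estimate in $u$ gives $\mes\{\omega:(\omega,\{n\omega\})\in\cA\}\le\mes(\cA)+B^{C}|n|^{-1}$. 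Peeling off first the $u$'s with $\mes(\cA_{u})\gtrsim\eta^{1/2}$ (a set of measure $\lesssim\eta^{1/2}$, whose contribution is the same after the measure-preserving map $\omega\mapsto\{n\omega\}$) and using $\eta<(\max_{\cN_{1}}|n|)^{-C}$, the union over $n\in\cN_{1}$ is $\lesssim B^{C}(\min_{\cN_{1}}|n|)^{-1}$.

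For the inductive step I would eliminate the resonances other than the first one at a time, each time passing to an auxiliary semialgebraic set of lower dimension. Eliminating the $(q-1)$st resonance, put
\[
  \widetilde\cA=\bigl\{(\omega,z_{1},\dots,z_{q-2}):\exists\,n\in\cN_{q-1},\ (\omega,z_{1},\dots,z_{q-2},\{n\omega\})\in\cA\bigr\},
\]
so that the target set is contained in $\{\omega:\exists\,n_{1},\dots,n_{q-2},\ (\omega,n_{1}\omega,\dots,n_{q-2}\omega)\in\widetilde\cA\}$. Running the $q=2$ estimate on the $(\omega,z_{q-1})$-slices and integrating out $z_{1},\dots,z_{q-2}$ bounds $\widetilde\eta:=\mes(\widetilde\cA)\lesssim B^{C}(\min_{\cN_{q-1}}|n|)^{-1}$, and the lacunarity $|n_{q-1}|>(B|n_{q-2}|)^{C}$, with $C=C(q,r)$ chosen large enough, then gives the size requirement $\max_{\cN_{q-2}}|n|^{C}<1/\widetilde\eta$ of the reduced statement; once the degree of $\widetilde\cA$ is controlled (as discussed next), the remaining lacunarity relations are met as well, and the inductive hypothesis applies to $\widetilde\cA$. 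The degree of $\widetilde\cA$ is where \cref{lem:Bourgain-elimination} enters: instead of literally substituting the $|n|^{r}$-branch map $\omega\mapsto\{n\omega\}$ (which multiplies the degree by a power of $\max_{\cN_{q-1}}|n|$), one arranges that every $\omega$-slice of $\cA$ has measure $\lesssim\eta^{1/2}$ and invokes \cref{lem:Bourgain-elimination}; its output, a semialgebraic set of $2^{r}$-tuples of degree only $B^{C}$ and measure gaining a fixed positive power of $\eta$, is what lets the complexity accumulated along the induction be re-absorbed, and with it a whole block $\cN_{q-1}$---not merely a single shift---be handled at the same bound. After finitely many such reductions one is back at the base case with $\cN_{1}$ and a semialgebraic set of degree $\le B^{C(q,r)}$, which yields $B^{C}(\min_{\cN_{1}}|n|)^{-1}$.

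The main obstacle is exactly this bookkeeping of degrees and measures through the $q-1$ eliminations: each step pulls $\cA$ back along a many-branched map, and without the lacunarity $|n_{i}|>(B|n_{i-1}|)^{C}$ the degrees, and with them the exponents in the measure gains, would compound out of control. Tying the separation of the $\cN_{i}$ to the current degree $B$, and using that \cref{lem:Bourgain-elimination} outputs a set of degree only polynomial in $B$, is the device that closes the induction; everything else is the change of variables and discrepancy estimate of the base case.
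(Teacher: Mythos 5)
First, a structural point: the paper does not prove this lemma at all --- it is quoted verbatim from \cite[Lem.~1.20]{Bou07} --- so your proposal must be judged as a reconstruction of Bourgain's argument. As such, its base case contains a fatal gap. You bound, for each fixed $n$, $\mes\{\omega:(\omega,\{n\omega\})\in\cA\}\le\mes(\cA)+B^{C}|n|^{-1}$ by the change-of-variables/discrepancy count, and then take a union bound over $n\in\cN_{1}$. But $\sum_{n\in\cN_{1}}|n|^{-1}$ is \emph{not} $\lesssim(\min_{\cN_{1}}|n|)^{-1}$: already for $\cN_{1}=\{m,\dots,2m\}$ the sum is $\asymp\log 2$, a constant, while the target bound $B^{C}m^{-1}$ tends to zero; and in the applications (\cref{cor:same-scale-NDR}, \cref{lem:elimination}) each block $\cN_{i}$ is a full range $\{n:\underline\ell_i\le|n|\le\overline\ell_i\}$ with $\overline\ell_i\gg\underline\ell_i$, so the loss is unaffordable. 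Peeling off the fibers with $\mes(\cA_{u})\gtrsim\eta^{1/2}$ only improves the volume term, not the discrepancy term, which is sharp per $n$. The entire point of the steep-planes decomposition (\cref{lem:Bourgain-slopes}, which you name in your preamble but never actually deploy) is to avoid this summation: one writes $\cA=\cA_{1}\cup\cA_{2}$ \emph{once}, with $\epsilon\sim(\min_{\cN_{1}}|n|)^{-1}$; the non-transversal part contributes $\mes(\Proj_{\omega}\cA_{1})<B^{C}\epsilon$ \emph{uniformly in $n$}, and only the transversal part is summed over $n$, each term now carrying a positive power of $\eta$ (not of $|n|^{-1}$), which the hypothesis $\max|n|^{C}<1/\eta$ makes negligible.

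Second, the inductive step's degree control is asserted rather than proved, and the tool you invoke for it is the wrong one. The set $\widetilde\cA$ you define really does have degree $\gtrsim B\cdot(\max_{\cN_{q-1}}|n|)^{r}$ because of the branches of $\omega\mapsto\{n\omega\}$ and the union over $n\in\cN_{q-1}$, and \cref{lem:Bourgain-elimination} does not repair this: its output is a completely different object (the set of $2^{r}$-tuples of phases with intersecting fibers), and in Bourgain's scheme it is used to \emph{construct the input set} $\cA$ to which the present lemma is applied (exactly as in \cref{prop:NDR}), not inside the lemma's proof. The actual mechanism is that the last block is eliminated by applying the steep-planes decomposition to $\cA$ in the splitting $x=x_{q-1}$, $\omega=(\omega,x_{1},\dots,x_{q-2})$, and then \emph{projecting} the non-transversal piece onto the first $r(q-1)$ coordinates: by Tarski--Seidenberg this projection is semialgebraic of degree $B^{C}$, its measure $<B^{C}(\min_{\cN_{q-1}}|n|)^{-1}$ becomes the new $\eta$ for the inductive hypothesis, and the lacunarity $|n_{q-1}|>(B|n_{q-2}|)^{C}$ is precisely the condition $\max_{\cN_{q-2}}|n|^{C}<1/\eta'$ needed to iterate. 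No substitution $x_{q-1}=\{n\omega\}$ is ever performed, so no degree inflation occurs.
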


Combining Lemma~\ref{lem:Bourgain-elimination} and
Lemma~\ref{lem:Bourgain-lacunary} we obtain the following result
about the structure of the scale $\ell$ resonant shifts on the orbit of
sub-exponential length  for an arbitrary $x\in\tor^d$. The result is similar to the
Claim from \cite[p. 694]{Bou07}.
\begin{prop}\label{prop:NDR}
  Let $  \ell\ge 1  $, $ \gamma>0 $, and $ \sigma,\tau $ as in (LDT).
  Let  $ \cN_1,\ldots,\cN_{q-1}\subset \Z $, $ q=2^{2d+1} $, be finite sets with the
  property that
  \begin{equation*}
    |n_i|>(\ell^{C(d)} |n_{i-1}|)^{C'(d)},\text{ if } n_i\in \cN_i
    \text{ and }n_{i-1}\in\cN_{i-1},\ 2\le i\le q-1
  \end{equation*}
  and
  \begin{equation*}
    \max_{n\in \cN_{q-1}} |n| < \exp(c\ell^\sigma),\ c=c(d).
  \end{equation*}
  For any $ \ell\ge \ell_0(V,a,b,\gamma)$  there exists a set  $ \Omega_\ell $, depending on the choice of
  finite sets $ \cN_i $, such that
  \begin{equation*}
    \mes (\Omega_\ell) \le \ell^{C(a,b)} \left( \min_{n\in \cN_1}|n| \right)^{-1}
  \end{equation*}
  and the following statement holds.  For any $ x\in\tor^d $,
  $ \omega\in \tor^d(a,b)\setminus\Omega_\ell $, $ E\in \R $,
  if $ L(\omega,E)>\gamma $ and
  \begin{equation*}
    \log|f_\ell(x,\omega,E)|\le \ell L_\ell(\omega,E)-\ell^{1-\tau/2},
  \end{equation*}
  then there exists $ i\in \{ 1,\ldots,q-1 \} $, depending on
  $ x,\omega,E $, such that
  \begin{equation}\label{eq:i}
    \log|f_\ell(x+(n-1)\omega,\omega,E)|>\ell L_\ell(\omega,E)-\ell^{1-\tau/2},
    \text{ for all $ n\in \cN_i $.}
  \end{equation}
\end{prop}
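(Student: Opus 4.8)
The plan is to pass to a semialgebraic model of the resonant set, reduce the assertion to a measure estimate for one explicit set of bad frequencies, and then deduce that estimate by combining Bourgain's two elimination lemmas exactly as in the Claim on p.~694 of \cite{Bou07}.

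\smallskip

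\emph{Reduction.} Apply \cref{lem:semialgebraic-all} with $N=\ell$: writing $\cB_\ell$ for the set of $(x,\omega,E)$ with $L(\omega,E)>\gamma$ and $\log|f_\ell(x,\omega,E)|\le\ell L_\ell(\omega,E)-\ell^{1-\tau/2}$, there is a semialgebraic $\cS_\ell\supseteq\cB_\ell$ with $\deg\cS_\ell\le\ell^{C(a,b)}$, $\mes\cS_\ell<\exp(-\ell^\sigma/2)$ and, crucially, $\mes\bigl(\cS_\ell(\omega,E)\bigr)<\exp(-\ell^\sigma)$ for \emph{every} pair $(\omega,E)$; moreover $E$ is confined to a fixed bounded interval. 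Put $q=2^{2d+1}$ and define
\begin{multline*}
  \Omega_\ell:=\bigl\{\omega\in\tor^d(a,b)\ :\ \exists\,x\in\tor^d,\ \exists\,E,\ \exists\,n_i\in\cN_i\ (1\le i\le q-1),\\
  (x,\omega,E)\in\cS_\ell\ \text{ and }\ (x+(n_i-1)\omega,\omega,E)\in\cS_\ell\ \text{ for all }i\bigr\}.
\end{multline*}
I claim the Proposition follows once $\mes(\Omega_\ell)\le\ell^{C(a,b)}\bigl(\min_{n\in\cN_1}|n|\bigr)^{-1}$ is established. Indeed, fix $\omega\in\tor^d(a,b)\setminus\Omega_\ell$ and $x,E$ as in the hypothesis; then $(x,\omega,E)\in\cB_\ell\subseteq\cS_\ell$, so by the definition of $\Omega_\ell$ it is impossible that every $\cN_i$ contains an $n_i$ with $(x+(n_i-1)\omega,\omega,E)\in\cS_\ell$. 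Hence some $i$ satisfies $(x+(n-1)\omega,\omega,E)\notin\cS_\ell\supseteq\cB_\ell$ for all $n\in\cN_i$, and since $L(\omega,E)>\gamma$ this is precisely \eqref{eq:i}.

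\smallskip

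\emph{The measure estimate.} This is Bourgain's elimination of multiple resonances. Writing $n_0=1$ and $y_j=x+(n_j-1)\omega$, the condition defining $\Omega_\ell$ says that the phases $y_0,\dots,y_{q-1}$ are simultaneously $\cS_\ell$-resonant for a common choice of the remaining data. One forms the semialgebraic set $\cA\subset(\tor^d)^{q}$ encoding that the tuple $(\omega,w_1,\dots,w_{q-1})$ is realizable as such a resonant configuration for some $x,E$ (i.e.\ $(x,\omega,E)\in\cS_\ell$ and $(x+w_i,\omega,E)\in\cS_\ell$ for all $i$); by Tarski--Seidenberg $\deg\cA\le\ell^{C(a,b)}$. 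Although $\cA$ is a projection, Bourgain's \cref{lem:Bourgain-elimination} — applied with the phase and energy as the coincidence data over which one intersects, using the uniform fibre bound $\mes(\cS_\ell(\omega,E))<\exp(-\ell^\sigma)$ together with the boundedness of $E$ — forces $\mes(\cA)$ to be subexponentially small in $\ell$; this is exactly where the specific value $q=2^{2d+1}$ is dictated (it must exceed $2$ raised to the dimension of the eliminated data). With $\cA$ of low degree and small measure, \cref{lem:Bourgain-lacunary} applies with the sets $\cN_1,\dots,\cN_{q-1}$ — the growth hypothesis $|n_i|>(\ell^{C(d)}|n_{i-1}|)^{C'(d)}$ and the bound $\max_{n\in\cN_{q-1}}|n|<\exp(c\ell^\sigma)$ are exactly what that lemma needs once $c=c(d)$ is small and $\ell\ge\ell_0(V,a,b,\gamma)$ (so that the $\ell^{C}$-factors are absorbed) — and gives $\mes\{\omega:(\omega,n_1\omega,\dots,n_{q-1}\omega)\in\cA\ \text{for some}\ n_i\in\cN_i\}<\ell^{C(a,b)}(\min_{n\in\cN_1}|n|)^{-1}$. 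Since $\Omega_\ell$ is contained in this set (take $w_i=(n_i-1)\omega$), the desired bound on $\mes(\Omega_\ell)$ follows.

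\smallskip

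\emph{Main obstacle.} The delicate point is the bookkeeping in invoking \cref{lem:Bourgain-elimination,lem:Bourgain-lacunary}: deciding which of $x,\omega,E$ plays the role of the coincidence variable and which the role of the parameter, checking that the fibre-measure bound holds \emph{uniformly} in that parameter (this forces the use of $\mes(\cS_\ell(\omega,E))<\exp(-\ell^\sigma)$ for all $(\omega,E)$, not merely the global bound on $\mes\cS_\ell$), converting the affine resonant configuration $x+(n_i-1)\omega$ into the multiplicative form $(\omega,n_1\omega,\dots,n_{q-1}\omega)$ demanded by \cref{lem:Bourgain-lacunary}, and propagating the degree bounds. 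None of this is new — it is the content of \cite[pp.~690--695]{Bou07} — but it is the technical heart of the argument, everything else being the routine reduction above.
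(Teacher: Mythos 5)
Your overall architecture --- approximate the resonant set by the semialgebraic set $\cS_\ell$ of \cref{lem:semialgebraic-all}, define $\Omega_\ell$ as the set of frequencies admitting a simultaneous resonance along all the $\cN_i$, and bound its measure by combining \cref{lem:Bourgain-elimination} with \cref{lem:Bourgain-lacunary} --- is exactly the paper's, and your reduction of the Proposition to the measure bound on $\Omega_\ell$ is correct.

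However, the step you defer to ``bookkeeping'' is the only nontrivial device in the proof, and as you have sketched it the application of \cref{lem:Bourgain-elimination} does not go through. You define $\cA$ as the set of tuples $(\omega,w_1,\dots,w_{q-1})$ realizable for some $x,E$: here $\omega$ is an explicit tuple coordinate while only $(x,E)$ is existentially quantified, so the eliminated parameter has dimension $d+1$ and the lemma would produce $2^{d+1}$ intersected fibers, not $2^{2d+1}$; worse, the $q$ conditions $(x,\omega,E)\in\cS_\ell$ and $(x+w_i,\omega,E)\in\cS_\ell$ are not fibers $\cA_0(y_1),\dots,\cA_0(y_q)$ of a \emph{single} semialgebraic set over one $d$-dimensional coincidence variable, which is the format \cref{lem:Bourgain-elimination} requires. (Your own justification of $q=2^{2d+1}$ as $2$ raised to the dimension of the eliminated data presumes that $(x,\omega,E)$, of dimension $2d+1$, is eliminated --- contradicting the definition of $\cA$ you wrote down.) The paper resolves this by introducing an auxiliary phase $y$ and setting $\cT_\ell=\{(y,x,\omega,E):(x+y-\omega,\omega,E)\in\cS_\ell\}$: the parameter eliminated by \cref{lem:Bourgain-elimination} is then the full triple $(x,\omega,E)$, so $r=2d+1$ and $q=2^r$; the uniform fiber bound $\mes_d(\cT_\ell(x,\omega,E))=\mes_d(\cS_\ell(\omega,E))<\exp(-\ell^\sigma)$ holds by translation invariance; the unshifted condition $(x,\omega,E)\in\cS_\ell$ is recovered as the fiber at $y_1=\omega$ and the shifted ones at $y_{i+1}=n_i\omega$ --- which is precisely the multiplicative form $(\omega,n_1\omega,\dots,n_{q-1}\omega)$ demanded by \cref{lem:Bourgain-lacunary}, with no need to pass to the sets $\cN_i-1$. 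With this substitution your argument becomes the paper's proof.
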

\begin{proof}
  Let $ \cB_{\ell}, \cS_{\ell} $ be the sets from Lemma \ref{lem:semialgebraic-all}.
  We have that $ \cS_\ell $ is semialgebraic, $ \cB_\ell\subset \cS_\ell $, $ \deg(\cS_\ell)\le \ell^{C(a,b)}  $,
  $ \mes(\cS_\ell(\omega,E))<\exp(-\ell^\sigma) $.
  Let $ \cT_\ell $ be the set of
  \begin{equation*}
    (y,x,\omega,E)\in \T^d\times\T^d\times \T_{\ell^C}^d(a,b)
    \times \R
  \end{equation*}
  such that $ (x+y-\omega,\omega,E)\in \cS_\ell $. Clearly, $ \cT_\ell $ is a
  semialgebraic set and
  \begin{equation*}
    \deg (\cT_\ell)\le \ell^C,\ \mes (\cT_\ell(x,\omega,E))< \exp(-\ell^\sigma).
  \end{equation*}
  By Lemma \ref{lem:Bourgain-elimination} (applied with $ \cA=\cT_\ell $, $ t=(x,\omega,E) $, $ x=y $,
  $ q=d $, $ r=2d+1 $, $ B=\ell^C $, $ \eta=\exp(-\ell^{\sigma}) $; note that $ \cA $ and $ q $ are different in our
  proof), the set
  \begin{equation*}
    \cA:= \{ (y_1,\ldots,y_q) : \cT_\ell(y_1)\cap \ldots\cap \cT_\ell(y_q)\neq \emptyset  \}
  \end{equation*}
  is semialgebraic and satisfies $ \deg (\cA)\le \ell^C $, $ \mes (\cA)< \exp(-c\ell^\sigma) $, $ c=c(d) $.
  The conclusion follows from Lemma~\ref{lem:Bourgain-lacunary} (applied with $ \cA=\cA $, $ q=q $, $ r=d $,
  $ B=\ell^C $, $ \eta=\exp(-c\ell^\sigma) $, $ \cN_i=\cN_i $)
  by letting
  \begin{equation*}
    \Omega_\ell= \{ \omega : (\omega,n_1\omega,\ldots,n_{q-1}\omega)\in \cA \text{ for some } n_i\in \cN_i \}.
  \end{equation*}
\end{proof}

A typical example of how the previous proposition leads to (NDR) intervals is obtained by considering  the sets
\begin{equation*}
  \cN_i = \{ n\in \Z : \ell^{\underline C_i}\le |n|\le \ell^{\overline C_i} \},\ i=1,\ldots,q-1,\ q=2^{2d+1}.
\end{equation*}
We can choose the constants
\begin{equation*}
  1\ll \underline C_1\ll \overline C_1 \ll \ldots \ll \underline C_{q-1}\ll \overline C_{q-1}
\end{equation*}
such that Proposition~\ref{prop:NDR} applies. Then (provided $ \omega\notin \Omega_\ell $ and $ L(\omega,E)>0 $)
we have that for each $ x\in \tor^d $, either
\begin{equation*}
  \log|f_\ell(x,\omega,E)|>\ell L_\ell(\omega,E)-\ell^{1-\tau/2},
\end{equation*}
or there exists $ i\in \{ 1,\ldots,q-1 \} $, depending on $ x $,
such that the interval
$ [-\ell^{\overline C_i},\ell^{\overline C_i}] $ is $ (\ell^{\underline C_i},\ell) $-(NDR) with
respect to $ x,\omega,E $.
The fact that $ i $ depends on $ x $
poses the following problem.  In
Proposition~\ref{prop:NDR-elimination} we will eliminate resonances
between (NDR) intervals of the same size, but at this point we
cannot guarantee that we can get (NDR) intervals of the same size
for any pair of distinct phases, say $ x+n\omega $ and
$ x+m\omega $. We solve this issue in the following corollary.

\begin{cor}\label{cor:same-scale-NDR}
  Let $  \ell\ge 1  $, $ \gamma>0 $, and $ \sigma,\tau $ as in (LDT).
  Let $ \cN_1,\ldots,\cN_{(q-1)^2}\subset \Z $, $ q=2^{2d+1} $, be finite sets with the
  property that
  \begin{equation*}
    |n_i|>(\ell^{C(d)} |n_{i-1}|)^{C'(d)},\text{ if } n_i\in \cN_i
    \text{ and }n_{i-1}\in\cN_{i-1},\ 2\le i\le (q-1)^2
  \end{equation*}
  and
  \begin{equation*}
    \max_{n\in \cN_{(q-1)^2}} |n| < \exp(c\ell^\sigma),\ c=c(d).
  \end{equation*}
  For any $ \ell\ge \ell_0(V,a,b,\gamma)$  there exists a set  $ \Omega_\ell $, depending on the choice of
  finite sets $ \cN_i $, such that
  \begin{equation*}
    \mes (\Omega_\ell) \le \ell^{C(a,b)} \left( \min_{n\in \cN_1}|n| \right)^{-1}
  \end{equation*}
  and the following statement holds.  For any $ x_1,x_2\in\tor^d $,
  $ \omega\in \tor^d(a,b)\setminus\Omega_\ell $, $ E\in \R $,
  if $ L(\omega,E)>\gamma $ and
  \begin{equation*}
    \log|f_\ell(x_j,\omega,E)|\le \ell L_\ell(\omega,E)-\ell^{1-\tau/2},\ j=1,2
  \end{equation*}
  then there exists $ i\in \{ 1,\ldots,(q-1)^2 \} $, depending on
  $ x_1,x_2,\omega,E $, such that
  \begin{equation*}
    \log|f_\ell(x_j+(n-1)\omega,\omega,E)|>\ell L_\ell(\omega,E)-\ell^{1-\tau/2},\ j=1,2,
    \text{ for all $ n\in \cN_i $.}
  \end{equation*}
\end{cor}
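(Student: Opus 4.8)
The plan is to deduce this corollary from \cref{prop:NDR} applied to many sub-collections of the $\cN_i$. First I would record that for any indices $i_1<i_2<\dots<i_{q-1}$ in $\{1,\dots,(q-1)^2\}$ the sub-collection $\cN_{i_1},\dots,\cN_{i_{q-1}}$ again satisfies the lacunarity hypotheses of \cref{prop:NDR}. Indeed, taking $n_i=\min_{n\in\cN_i}|n|$ and $n_{i-1}=\max_{n\in\cN_{i-1}}|n|$ in the given inequality shows $\min_{n\in\cN_i}|n|>(\ell^{C(d)}\max_{n\in\cN_{i-1}}|n|)^{C'(d)}\ge\max_{n\in\cN_{i-1}}|n|$, so both $\min_{n\in\cN_i}|n|$ and $\max_{n\in\cN_i}|n|$ are increasing in $i$; hence for $i_{k-1}<i_k$ and any $n_{i_k}\in\cN_{i_k}$, $n_{i_{k-1}}\in\cN_{i_{k-1}}$ one has $|n_{i_k}|\ge\min_{n\in\cN_{i_k}}|n|>(\ell^{C(d)}\max_{n\in\cN_{i_k-1}}|n|)^{C'(d)}\ge(\ell^{C(d)}|n_{i_{k-1}}|)^{C'(d)}$, and $\max_{n\in\cN_{i_{q-1}}}|n|\le\max_{n\in\cN_{(q-1)^2}}|n|<\exp(c\ell^\sigma)$. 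Thus \cref{prop:NDR} applies to each such sub-collection $\mathcal F$ and yields an exceptional set $\Omega_\ell(\mathcal F)$ with $\mes(\Omega_\ell(\mathcal F))\le\ell^{C(a,b)}(\min_{n\in\cN_{i_1}}|n|)^{-1}\le\ell^{C(a,b)}(\min_{n\in\cN_1}|n|)^{-1}$, the last step again using monotonicity in the index. I would then \emph{define} $\Omega_\ell:=\bigcup_{\mathcal F}\Omega_\ell(\mathcal F)$, the union over all $\binom{(q-1)^2}{q-1}$ size-$(q-1)$ subsets of $\{1,\dots,(q-1)^2\}$; this $\Omega_\ell$ depends only on the sets $\cN_i$ (not on $x_1,x_2,E$).

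Next I would fix $\omega\in\tor^d(a,b)\setminus\Omega_\ell$ and $E\in\R$ with $L(\omega,E)>\gamma$, and for a phase $x$ with $\log|f_\ell(x,\omega,E)|\le\ell L_\ell(\omega,E)-\ell^{1-\tau/2}$ call an index $i$ \emph{resonant for $x$} if $\log|f_\ell(x+(n-1)\omega,\omega,E)|\le\ell L_\ell(\omega,E)-\ell^{1-\tau/2}$ for some $n\in\cN_i$. I claim at most $q-2$ indices are resonant for $x$. Otherwise there are $i_1<\dots<i_{q-1}$ all resonant for $x$; applying \cref{prop:NDR} to the (legitimate, by the previous paragraph) sub-collection $\cN_{i_1},\dots,\cN_{i_{q-1}}$, whose hypothesis $\log|f_\ell(x,\omega,E)|\le\ell L_\ell(\omega,E)-\ell^{1-\tau/2}$ holds, the conclusion produces some $i_k$ with $\log|f_\ell(x+(n-1)\omega,\omega,E)|>\ell L_\ell(\omega,E)-\ell^{1-\tau/2}$ for all $n\in\cN_{i_k}$, contradicting that $i_k$ is resonant for $x$ — and this contradiction is available precisely because $\omega\notin\Omega_\ell(\mathcal F)$. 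Hence the resonant indices for $x$ number at most $q-2$.

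Finally I would apply this with $x=x_1$ and with $x=x_2$; both invocations are permitted since by hypothesis $\log|f_\ell(x_j,\omega,E)|\le\ell L_\ell(\omega,E)-\ell^{1-\tau/2}$ for $j=1,2$. The indices resonant for $x_1$ together with those resonant for $x_2$ number at most $2(q-2)$, and $(q-1)^2-2(q-2)=(q-2)^2+1>0$, so there is an index $i\in\{1,\dots,(q-1)^2\}$ resonant for neither phase. For that $i$ we get $\log|f_\ell(x_j+(n-1)\omega,\omega,E)|>\ell L_\ell(\omega,E)-\ell^{1-\tau/2}$ for $j=1,2$ and all $n\in\cN_i$, which is the assertion.

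The only step that needs genuine care — and the main obstacle — is verifying that forming the union $\bigcup_{\mathcal F}\Omega_\ell(\mathcal F)$ does not spoil the measure bound. This works because the number of sub-collections $\binom{(q-1)^2}{q-1}$ depends on $d$ alone (hence is absorbed into the constant $C(a,b)$, which is permitted to depend on $d$), and because the first set $\cN_{i_1}$ of every sub-collection satisfies $\min_{n\in\cN_{i_1}}|n|\ge\min_{n\in\cN_1}|n|$, so each $\Omega_\ell(\mathcal F)$ already obeys the target estimate $\ell^{C(a,b)}(\min_{n\in\cN_1}|n|)^{-1}$. Everything else — the inheritance of lacunarity and the inequality $2(q-2)<(q-1)^2$ — is routine.
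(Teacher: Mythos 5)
Your proof is correct, but it takes a genuinely different combinatorial route from the paper's. The paper makes only $q$ applications of \cref{prop:NDR}: $q-1$ of them to the consecutive blocks $\cN_{k(q-1)+1},\ldots,\cN_{(k+1)(q-1)}$, and one to the collection whose members are the \emph{unions} $\bigcup_{j=1}^{q-1}\cN_{k(q-1)+j}$ of entire blocks; the argument is then sequential — the union application at $x_1$ produces a block every one of whose sets is good for $x_1$, and the application to that block at $x_2$ produces a set within it that is also good for $x_2$. You instead apply \cref{prop:NDR} to every $(q-1)$-element sub-collection of the $\cN_i$ and extract the conclusion by a pigeonhole count: each resonant phase can have at most $q-2$ ``bad'' indices, and $2(q-2)<(q-1)^2$. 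Your preliminary verification that an arbitrary sub-collection inherits the lacunarity hypothesis (via monotonicity of $\min$ and $\max$ over the $\cN_i$) is exactly the point that makes this legitimate, and the union of $\binom{(q-1)^2}{q-1}$ exceptional sets is harmless since that count depends only on $d$. The trade-offs: the paper's argument is more economical (far fewer applications, hence smaller implied constants) and exploits the freedom to feed unions of the $\cN_i$ into \cref{prop:NDR}, which is why the block structure and the exponent $(q-1)^2$ appear; your counting argument is more symmetric and flexible — it shows that $2(q-2)+1$ sets would already suffice for two phases and generalizes immediately to any bounded number of phases — at the cost of a much larger (but still $d$-dependent) number of invocations. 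Both establish the corollary as stated.
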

\begin{proof}
  Let $
  \Omega_\ell  $ be the union of the sets of exceptional phases obtained by
  applying Proposition~\ref{prop:NDR} with the following choices of
  finite sets:
  \begin{gather}
    \cN_{k(q-1)+1},\ldots,\cN_{(k+1)(q-1)},\ k=0,\ldots,q-2,\label{eq:lacunary-sets}\\
    \bigcup_{j=1}^{q-1} \cN_j,\bigcup_{j=1}^{q-1}
    \cN_{(q-1)+j},\ldots,\bigcup_{j=1}^{q-1}\cN_{(q-2)(q-1)+j}.
    \label{eq:lacunary-sets-union}
  \end{gather}
  The set $
  \Omega_\ell $ clearly satisfies the stated measure bound. Let
  $ \omega\in \tor^d(a,b)\setminus\Omega_\ell
  $. By Proposition~\ref{prop:NDR}, with
  \eqref{eq:lacunary-sets-union}, there exists $
  k\in\{0,\ldots,q-2\} $ such that
  \begin{equation*}
    \log|f_\ell(x_1+(n-1)\omega,\omega,E)|>\ell L_\ell(\omega,E)-\ell^{1-\tau/2}
  \end{equation*}
  for all $  n\in \bigcup_{j=1}^{q-1} \cN_{k(q-1)+j}
  $. By Proposition~\ref{prop:NDR}, with \eqref{eq:lacunary-sets},
  there exists $ j\in \{ 1,\ldots, q-1 \} $ such that
  \begin{equation*}
    \log|f_\ell(x_2+n\omega,\omega,E)|>\ell L_\ell(\omega,E)-\ell^{1-\tau/2}
  \end{equation*}
  for all $  n\in \cN_{k(q-1)+j} $. The conclusion holds with $ i=k(q-1)+j $.
\end{proof}

\section{Factorization under (NDR) Condition}\label{sec:NDRWegner}

In this section we obtain a local factorization for the
Dirichlet determinants with respect to the spectral variable,
via Weierstrass' Preparation Theorem. It is important that the size of the
polydisk on which the factorization holds
is not too small and that the degree of the polynomial is not too large.
With the aid of the  basic tools from \cref{sec:basic-tools}  the factorization at scale $ N $ can only
be obtained on a polydisk of radius $ \exp(-CN^{1-\tau}) $ and with
a polynomial of degree less than $ CN^{1-\tau} $. This is too weak for
our purposes. The main reason for considering (NDR) intervals is that
for them we can get a much better factorization, as is shown in \cref{prop:factorNDREvar}.

First, we show that if $ \Lambda $ is an (NDR) interval, then we have
good control on the number of zeroes of
$ f_\Lambda(z,w,\cdot) $ in a small disk around $ E_0 $.

\begin{lemma}\label{lem:evcount}
  Assume $ x_0\in \tor^d $, $ \omega_0\in \tor^d(a,b) $,
  $ E_0\in \R $, and $ L(\omega_0,E_0)> \gamma >0 $. Let $ K\ge 1 $,
  $ \ell\ge \ell_0(V,a,b,|E_0|,\gamma) $, $ r_0=\exp(-\ell) $.  Assume $ \Lambda $ is
  $ (K,\ell) $-(NDR) with respect to $ x_0,\omega_0,E_0 $.
  There exists $ r_0/2<r<r_0 $
  such that for any $ (z,w)\in \C^d\times \C^d $,
  \begin{equation}\label{eq:evcount-condition}
    |z-x_0|<\frac{c(V)r_0}{(K+1)^2},\ |w-\omega_0|<\frac{c(V)r_0}{|\Lambda|(K+1)^2},
  \end{equation}
  we have
  \begin{gather*}
	\#\{E\in \cD(E_0,r): f_\Lambda(z,w,E)=0\}\le K,\\
    \dist \left( \{E\in \C: f_\Lambda(z,w,E)=0\}, \{|E-E_0|=r\}\right)\ge \frac{r_0}{8(K+1)}.
  \end{gather*}
\end{lemma}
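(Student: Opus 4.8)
The plan is to deduce the statement from the perturbation estimate \cref{lem:evcount10}, applied with $H_0=H_\Lambda(x_0,\omega_0)$ (Hermitian, since $x_0,\omega_0$ are real and $V$ is real on $\T^d$) and $H=H_\Lambda(z,w)$, after recalling that the zeros of $f_\Lambda(z,w,\cdot)=\det(H_\Lambda(z,w)-E)$ are exactly the eigenvalues of the matrix $H_\Lambda(z,w)$, counted with algebraic multiplicity. For this I need (a) a bound of $K$ on the number of eigenvalues of $H_0$ in $(E_0-r_0,E_0+r_0)$, and (b) the bound $\|H-H_0\|\le r_0/(32(K+1)^2)$. Given these, \cref{lem:evcount10} produces $r\in(r_0/2,r_0)$, depending only on $H_0$ and hence not on $(z,w)$, such that $H$ and $H_0$ have equally many eigenvalues in $\cD(E_0,r)$ and neither has an eigenvalue in the annulus $r-\frac{r_0}{8(K+1)}\le|\zeta-E_0|\le r+\frac{r_0}{8(K+1)}$. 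Since the Hermitian $H_0$ has real spectrum, its eigenvalues in $\cD(E_0,r)$ lie in $(E_0-r,E_0+r)\subset(E_0-r_0,E_0+r_0)$ and so number at most $K$ by (a); hence $f_\Lambda(z,w,\cdot)$ has at most $K$ zeros in $\cD(E_0,r)$, and the annulus statement gives the required distance bound.

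For (a) I use the (NDR) structure together with interlacing. Take $\underline\Lambda\subset\Lambda$, $|\underline\Lambda|=K'\le K$, as in \cref{def:NDR}; deleting the rows and columns indexed by $\underline\Lambda$ turns $H_0$ into the block-diagonal principal submatrix $\bigoplus_j H_{\Lambda_j}(x_0,\omega_0)$, where $\Lambda_1,\dots,\Lambda_s$ are the connected components of $\Lambda\setminus\underline\Lambda$, each of length $>\ell^{2/\sigma}$. I will show that each block has no eigenvalue in $(E_0-r_0,E_0+r_0)$. Granting this, \cref{lem:interlacing} (applied with $n-r=K'$) forces $H_0$ to have at most $K'\le K$ eigenvalues there: the eigenvalues of $H_0$ that can fall inside an interval disjoint from the spectrum of the submatrix are confined to a block of $K'$ consecutive ones.

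To see that $H_{\Lambda_j}(x_0,\omega_0)$ has no spectrum near $E_0$, I apply the covering form of (LDT), \cref{lem:Greencoverap1}, to it; after the translation identifying $\Lambda_j=[p,q]$ with $[1,|\Lambda_j|]$ through the base point $x_0+(p-1)\omega_0$ — which leaves all the relevant Dirichlet determinants unchanged — this is the operator $H_N$ with $N=|\Lambda_j|$ for which the lemma is stated. For each $m\in\Lambda_j$ I choose a length-$\ell$ interval $I_m\subset\Lambda_j$ containing $m$ with $\dist(m,\Lambda_j\setminus I_m)\ge\ell/100$: if $m$ is within $\ell/2$ of an endpoint of $\Lambda_j$, let $I_m$ be the extreme $\ell$ sites of $\Lambda_j$ on that side, otherwise center $I_m$ at $m$; this is possible because $|\Lambda_j|>\ell^{2/\sigma}>\ell$ for $\ell\ge\ell_0$. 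Writing $I_m=[n_0,n_0+\ell-1]$ with $n_0\in\Lambda_j\subset\Lambda\setminus\underline\Lambda$, one has $f_{I_m}(x_0,\omega_0,E_0)=f_\ell(x_0+(n_0-1)\omega_0,\omega_0,E_0)$, so the (NDR) estimate together with $\ell^{1-\tau/3}\le\ell^{1-\tau/4}$ supplies hypothesis (iii) of \cref{lem:Greencoverap1}, while (i) and (ii) hold by construction and for $\ell\ge\ell_0$. Its first conclusion, taken at $x=x_0$, $\omega=\omega_0$, $E=E_0$, yields $\dist(E_0,\spec H_{\Lambda_j}(x_0,\omega_0))\ge\exp(-2\ell^{1-\tau/4})>\exp(-\ell)=r_0$, the last inequality holding once $\ell^{\tau/4}>2$, i.e.\ for $\ell\ge\ell_0(V,a,b,|E_0|,\gamma)$; in particular $\spec H_{\Lambda_j}(x_0,\omega_0)\cap(E_0-r_0,E_0+r_0)=\emptyset$, as needed.

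Finally, (b) is a routine stability bound: $H_\Lambda(z,w)-H_\Lambda(x_0,\omega_0)$ is diagonal with entries $V(z+nw)-V(x_0+n\omega_0)$, $n\in\Lambda$, so the mean value inequality for the analytic function $V$ on $\cA_\rho$ (cf.\ the proof of \cref{lem:Eomdiff}) together with $|n|\le|\Lambda|$ give $\|H_\Lambda(z,w)-H_\Lambda(x_0,\omega_0)\|\le C(V)\bigl(|z-x_0|+|\Lambda|\,|w-\omega_0|\bigr)$, which is $<r_0/(32(K+1)^2)$ once the constant $c(V)$ in \cref{eq:evcount-condition} is taken small enough depending on $V$ alone. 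I expect the main work to be the third paragraph — faithfully transporting the (NDR) hypothesis into the assumptions of the covering form of (LDT) on each component $\Lambda_j$, including the bookkeeping of the translation and the verification that the exclusion radius $\exp(-2\ell^{1-\tau/4})$ dominates $r_0=\exp(-\ell)$; interlacing and perturbation theory then finish the job.
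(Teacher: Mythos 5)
Your proposal is correct and takes essentially the same route as the paper's proof: (NDR) plus the covering form of (LDT) to show the principal submatrix $H_{\Lambda\setminus\underline\Lambda}(x_0,\omega_0)$ has no spectrum in $(E_0-r_0,E_0+r_0)$, interlacing (\cref{lem:interlacing}) to bound the eigenvalue count of $H_\Lambda(x_0,\omega_0)$ by $|\underline\Lambda|\le K$, the stability bound $\norm{H_\Lambda(z,w)-H_\Lambda(x_0,\omega_0)}\le C(V)(|z-x_0|+|\Lambda||w-\omega_0|)$, and then \cref{lem:evcount10}. Your third paragraph merely spells out the verification of the hypotheses of \cref{lem:Greencoverap1} that the paper leaves implicit.
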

\begin{proof}
  Let $ \underline \Lambda $ be as in \cref{def:NDR} and $ \Lambda':=\Lambda\setminus\underline \Lambda $.
  Let $ I $
  be any of the connected components of $ \Lambda' $. By the definition of (NDR) intervals and the
  covering form of (LDT) (see \cref{lem:Greencoverap1}) we have
  \begin{equation*}
    \dist(E,\spec H_{I}(x_0,\omega_0))>\exp(-\ell)>0\text{ provided }|E-E_0|<\exp(-\ell).
  \end{equation*}
  Therefore, $ H_{\Lambda'}(x_0,\omega_0) $ has no eigenvalues in
  $ (E_0-r_0,E_0+r_0) $. Using
  \cref{lem:interlacing}, it follows that $ H_\Lambda(x_0,\omega_0) $ has at most $ |\underline \Lambda| $
  eigenvalues in
  $ (E_0-r_0,E_0+r_0) $. The condition \cref{eq:evcount-condition}  is such that
  \begin{equation*}
    \norm{H_{\Lambda}(z,w)-H_\Lambda(x_0,\omega_0)}\le C(V)(|z-x_0|+|\Lambda||w-\omega_0|)
    \le \frac{r_0}{32(K+1)^2}.
  \end{equation*}
  So, the conclusion follows by \cref{lem:evcount10}.
\end{proof}

Now we can obtain the main result of this section. Note that when we define polydisks, $ |\cdot| $ will stand
for the maximum norm.
\begin{prop}\label{prop:factorNDREvar}
  Assume $ x_0\in \tor^d $, $ \omega_0\in \tor^d(a,b) $,
  $ E_0\in \R $, and $ L(\omega_0,E_0)> \gamma >0 $. Let $ K\ge 1 $,
  $ \ell\ge \ell_0(V,a,b,|E_0|,\gamma) $, $ r_0=\exp(-\ell) $.  Assume $ \Lambda $ is
  $ (K,\ell) $-(NDR) with respect to $ x_0,\omega_0,E_0 $.
  There exist
  \[
    P(z,w,E) = E^k +a_{k-1} (z,w) E^{k-1} + \cdots + a_0
    (z,w)
  \]
  with $a_j$ analytic in the polydisk
  \[
    \cP:= \{|z-x_0|<c(V)r_0 (K+1)^{-2},
    |w-\omega_0|<c(V)r_0 |\Lambda|^{-1}(K+1)^{-2} \},
  \]
  and an analytic function $g(z,w,E)$, on $ \cP_1 \times \cD(E_0,r) $, $ r_0/2<r<r_0 $, such that:
  \begin{enumerate}
  \item[(a)] $f_\La(z, w,E) = P(z,w,E) g(z,w,E)$ for
    any $(z,w,E)\in \cP \times \cD(E_0,r)$,

  \item[(b)] $g(z,w,E) \ne 0$ for any
    $(z,w,E) \in \cP \times \cD(E_0,r)$,

  \item[(c)] for any $(z,w) \in \cP$, $P(z,w,\cdot)$ has
    no zeros in $\IC \setminus \cD(E_0, r)$,

  \item[(d)] $k\le K$,

  \item[(e)] if $ (x,\omega)\in \cP\cap (\T^{d}\times \T^d(a,b)) $, $ E\in \cD(E_0,r) $, and
    $ \frac{r_0}{16(K+1)}\ge \exp(-|\Lambda|^{\sigma/2}) $,
    then
    \begin{equation}\label{eq:g-estimate}
      \log \bigl | g(x,\omega,E)\bigr|> |\Lambda|L_{|\Lambda|}(\omega,E)- |\Lambda|^{1-\tau/2}.
    \end{equation}
  \end{enumerate}
\end{prop}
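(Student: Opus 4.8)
The plan is: parts (a)--(d) follow by feeding the zero count of \cref{lem:evcount} into Weierstrass preparation (\cref{lem:weier}), while (e) follows from the resolvent estimate of \cref{lem:evcount}, the spectral form of (LDT) (\cref{cor:4.6zeros}), and a superharmonicity argument in the energy variable.

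\emph{Factorization.} \cref{lem:evcount} supplies a radius $r\in(r_0/2,r_0)$ so that for every $(z,w)$ in the polydisk $\cP$ of the statement, $f_\La(z,w,\cdot)$ has at most $K$ zeros in $\cD(E_0,r)$, none on the circle $\{|E-E_0|=r\}$, and all of them at distance $\ge r_0/(8(K+1))$ from that circle. Since $V$ is analytic on $\cA_\rho$ and the $\cP$-radii (and $|\Lambda|$ times the $w$-radius) are $\lesssim r_0=\exp(-\ell)$, the function $(z,w,E)\mapsto f_\La(z,w,E)$ is analytic on $\cP\times\C$. Applying \cref{lem:weier} in the variable $E$ with $(z,w)$ as parameters on $\cP$ yields $f_\La=Pg$ on $\cP\times\cD(E_0,r)$ with $P(z,w,E)=E^k+a_{k-1}(z,w)E^{k-1}+\cdots+a_0(z,w)$, $a_j$ analytic on $\cP$, $g$ analytic and nowhere vanishing on $\cP\times\cD(E_0,r)$, and $P(z,w,\cdot)$ with all zeros in $\cD(E_0,r)$; this gives (a)--(c). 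Finally $k=\tfrac{1}{2\pi i}\oint_{|E-E_0|=r}\partial_E f_\La/f_\La\,dE$ is a continuous integer-valued function of $(z,w)$ on the connected set $\cP$, hence constant and, by \cref{lem:evcount}, $\le K$, which is (d).

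\emph{Reduction of (e) to the bounding circle.} Fix $(x,\omega)\in\cP\cap(\T^d\times\T^d(a,b))$. All zeros of $P(x,\omega,\cdot)$ lie in $\cD(E_0,r)$ and $2r<1$, so $|P(x,\omega,E)|=\prod_j|E-\zeta_j(x,\omega)|\le1$ for $E\in\overline{\cD(E_0,r)}$, whence $\log|g(x,\omega,E)|\ge\log|f_\La(x,\omega,E)|$ there. Moreover $g(x,\omega,\cdot)=f_\La(x,\omega,\cdot)/P(x,\omega,\cdot)$ is analytic and nowhere vanishing on a neighbourhood of $\overline{\cD(E_0,r)}$ (neither $P(x,\omega,\cdot)$ nor $f_\La(x,\omega,\cdot)$ vanishes on the circle), so $E\mapsto\log|g(x,\omega,E)|$ is harmonic on $\cD(E_0,r)$ and continuous on its closure, while $E\mapsto|\Lambda|L_{|\Lambda|}(\omega,E)=\int_{\T^d}\log\|M_{|\Lambda|}(y,\omega,E)\|\,dy$ is subharmonic. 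Hence $E\mapsto\log|g(x,\omega,E)|-|\Lambda|L_{|\Lambda|}(\omega,E)$ is superharmonic on $\cD(E_0,r)$, and the minimum principle gives, for all $E\in\cD(E_0,r)$,
\[
\log|g(x,\omega,E)|-|\Lambda|L_{|\Lambda|}(\omega,E)\ \ge\ \min_{|E'-E_0|=r}\Big(\log|g(x,\omega,E')|-|\Lambda|L_{|\Lambda|}(\omega,E')\Big).
\]
So it suffices to prove \eqref{eq:g-estimate} when $|E-E_0|=r$.

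\emph{The bound on the circle, and the main obstacle.} Fix $E$ with $|E-E_0|=r$. Since $H_\La(x,\omega)$ is self-adjoint and, by \cref{lem:evcount}, its eigenvalues (the zeros of $f_\La(x,\omega,\cdot)$) lie at distance $\ge r_0/(8(K+1))\ge\exp(-|\Lambda|^{\sigma/2})$ from $E$ — here we use the hypothesis $r_0/(16(K+1))\ge\exp(-|\Lambda|^{\sigma/2})$ — we get $\|(H_\La(x,\omega)-E)^{-1}\|\le\exp(|\Lambda|^{\sigma/2})$. Two applications of \cref{prop:log-Hoelder} (first perturbing $\omega$ off $\omega_0$, then $E$ off $E_0$, both by amounts $\lesssim\exp(-\ell)$) give $L(\omega,E)>\gamma/4>0$ provided $\ell\ge\ell_0(V,a,b,|E_0|,\gamma)$, and the same size hypothesis forces $|\Lambda|^{\sigma/2}\ge\ell$, hence $|\Lambda|\ge N_0(V,a,b,|E_0|,\gamma)$. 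Therefore \cref{cor:4.6zeros} applies at scale $|\Lambda|$ and yields $\log|f_\La(x,\omega,E)|>|\Lambda|L_{|\Lambda|}(\omega,E)-|\Lambda|^{1-\tau/2}$; together with $\log|g|\ge\log|f_\La|$ this proves \eqref{eq:g-estimate} on the circle, hence everywhere on $\cD(E_0,r)$ by the reduction above. The one genuinely delicate ingredient is the one already packaged inside \cref{lem:evcount}: passing from the (NDR) hypothesis to the \emph{uniform} resolvent bound over the comparatively large polydisk $\cP$ (via eigenvalue interlacing and the covering form of (LDT)), which is exactly what dictates the size condition $r_0/(16(K+1))\ge\exp(-|\Lambda|^{\sigma/2})$; the remaining steps — Weierstrass preparation, the passage from $f_\La$ to $g$ using $|P|\le1$, and the circle-to-disk extension by superharmonicity — are routine.
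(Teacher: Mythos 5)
Your proof is correct and follows essentially the same route as the paper: Weierstrass preparation fed by the zero count of \cref{lem:evcount} for (a)--(d), then for (e) the bound $|P|\le 1$, the resolvent estimate on the bounding circle combined with \cref{cor:4.6zeros}, and a harmonic-minus-subharmonic comparison to propagate the estimate into the disk. The only (harmless) deviation is that you work directly on the circle $|E-E_0|=r$ after extending $g$ past it, whereas the paper argues on circles of radius $r'<r$ and lets $r'\to r$; your added verification of $L(\omega,E)>0$ via \cref{prop:log-Hoelder} and of the constancy of $k$ on $\cP$ only makes the argument more complete.
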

\begin{proof} Due to Lemma~\ref{lem:evcount} all conditions needed
  for Lemma~\ref{lem:weier} hold.  This implies the statements (a)-(d).  We just need to verify
  (e).   Note that $ P(x,\omega,E) $ is a product of factors $ E-E_j^{\Lambda}(x,\omega) $, with
  $ E_j^{\Lambda}(x,\omega)\in (E_0-r,E_0+r) $. So, for $ E\in \cD(E_0,r) $ we have
  $ |P(x,\omega,E)|\le (2r)^k<1 $. It follows that
  \begin{equation*}
    \log|g(x,\omega,E)|>\log|f_\Lambda(x,\omega,E)|
  \end{equation*}
  for any $ E\in \cD(E_0,r) $.
  Let $ r-\frac{r_0}{16(K+1)}<r'<r $. By \cref{lem:evcount}, for any $ |E-E_0|=r' $ we have
  \begin{equation*}
    \dist(E,\spec H_N(x,\omega))\ge \frac{r_0}{16(K+1)}\ge \exp(-|\Lambda|^{\sigma/2})
  \end{equation*}
  and hence, using \cref{cor:4.6zeros},
  \begin{equation*}
    \log|f_\Lambda(x,\omega,E)|> |\Lambda|L_{|\Lambda|}(\omega,E)-|\Lambda|^{1-\tau/2}
  \end{equation*}
  and \cref{eq:g-estimate} holds.
  Since the left-hand side of \cref{eq:g-estimate} is harmonic (in $ E $) and the right-hand side is subharmonic,
  it follows that the estimate holds for all $ E\in \cD(E_0,r') $.
  Since this is true for $ r' $ arbitrarily close to $ r $, the conclusion follows.

\end{proof}


\section{Elimination of Double Resonances Using Semialgebraic Sets}\label{sec:elim-SA}


In this section we obtain a result on elimination of double resonances using semialgebraic sets. This
result cannot yield the finite scale localization by itself. Instead, it will only serve as a catalyst for
the sharper result on elimination of double resonances between (NDR) intervals from \cref{sec:elim-NDR}.
The basis for elimination via semialgebraic sets is the following result, see \cite[Prop. 5.1]{BouGolSch02},
\cite[Lem. 9.9]{Bou05}, \cite[(1.5)]{Bou07}.
\begin{lemma}\label{lem:Bourgain-slopes}
  Let $ \cS \subset [0,1]^{d=d_1+d_2} $ be a semialgebraic set of
  degree $ B $ and $ \mes_{d} \cS <\eta $,
  $ \log B \ll \log \frac{1}{\eta} $. We denote
  $ (x,\omega)\in [0,1]^{d_1}\times [0,1]^{d_2} $ the product
  variable.  Fix $ \epsilon > \eta^{\frac{1}{d}} $. Then there is a
  decomposition
  \begin{equation*}
    \cS= \cS_1 \cup \cS_2
  \end{equation*}
  $ \cS_1 $ satisfying
  \begin{equation*}
    \mes_{d_2} (\Proj_\omega \cS_1)<B^C\epsilon
  \end{equation*}
  and $ \cS_2 $ satisfying the transversality property
  \begin{equation*}
    \mes_{d_1}(\cS_2\cap L)<B^C\epsilon^{-1}\eta^{\frac{1}{d}}
  \end{equation*}
  for any $ d_1 $-dimensional hyperplane $ L $ s.t.
  $ \max_{1\le j\le d_2 } |\Proj_L(e_j)|<\frac{1}{100}\epsilon $ (we
  denote by $ e_1,\ldots,e_{d_2} $ the $ \omega $-coordinate
  vectors).
\end{lemma}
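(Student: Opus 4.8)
The plan is to reduce the statement to two inputs — the quantitative structure theory of semialgebraic sets of bounded degree and an elementary one–variable sublevel–set bound for polynomials — and to keep track of all exponents by means of a cell decomposition respecting the product splitting $[0,1]^{d}=[0,1]^{d_1}_x\times[0,1]^{d_2}_\omega$. From \cite[Ch.~9]{Bou05} I would take: first, that a semialgebraic set of degree $B$ in $[0,1]^{d}$ can be partitioned into at most $B^{C}$ semialgebraic cells, each of degree $\le B^{C}$, and that this may be done eliminating the $x$–coordinates first, so every cell $C$ is a ``band'' over a cell $W=\Proj_\omega C\subset[0,1]^{d_2}$ whose $x$–fiber $C(\omega)\subset[0,1]^{d_1}$ is a single cell depending semialgebraically on $\omega$; second, that for a polynomial $p$ of degree $\le D$ in one real variable with $\sup_{[0,1]}|p|\ge t$ one has $\mes\{s:|p(s)|<\theta\}\le CD(\theta/t)^{1/D}$, and, iterating this over coordinates, that $\{|p|<\theta\}$ in several variables has measure polynomially controlled once $p$ is not uniformly small.

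Next I would set up the decomposition. Write $\cS=\bigsqcup_{\nu}C_\nu$ with $\#\{\nu\}\le B^{C}$; it suffices to split each $C_\nu$ and sum the $\le B^{C}$ contributions. The function $\omega\mapsto\mes_{d_1}(C_\nu(\omega))$ on $W_\nu$ is semialgebraic of degree $\le B^{C}$, so for any $\theta>0$ the set $\{\omega\in W_\nu:\mes_{d_1}(C_\nu(\omega))>\theta\}$ is semialgebraic and, by Fubini, has $\mes_{d_2}\le\mes_d(C_\nu)/\theta$. Put into $\cS_1$ the part of each full–dimensional $C_\nu$ lying over $\{\mes_{d_1}(C_\nu(\omega))>\theta\}$, together with (after a short separate inspection) the parts of the $\le B^{C}$ lower–dimensional cells whose $\omega$–projection is already $\lesssim\epsilon^{d_2}$, and let $\cS_2=\cS\setminus\cS_1$. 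Summing and using $\mes_d(\cS)<\eta$, one gets $\mes_{d_2}(\Proj_\omega\cS_1)\le B^{C}(\eta/\theta+\epsilon^{d_2})$, which is $<B^{C}\epsilon$ for $\theta\gtrsim\eta/\epsilon$ (here $\epsilon>\eta^{1/d}$ gives $\eta/\epsilon<\epsilon$ when $d\ge2$; when $d=1$ one of $d_1,d_2$ is $0$ and the lemma is vacuous). By construction every surviving $x$–fiber $C_\nu(\omega)\cap\cS_2$ has $\mes_{d_1}\le\theta$.

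It remains to prove the transversality bound on $\cS_2$. A $d_1$–plane $L$ with $\max_j|\Proj_L(e_j)|<\epsilon/100$ is, up to a linear map of Jacobian $1+O(\epsilon)$, the graph $\{(x,a+Tx):x\in\R^{d_1}\}$ with operator norm $\|T\|\lesssim\epsilon$, so $\mes_{d_1}(\cS_2\cap L)\simeq\mes_{d_1}\{x\in[0,1]^{d_1}:(x,a+Tx)\in\cS_2\}$, and I would bound the contribution of each $C_\nu$. Freezing all $x$–coordinates but one, the condition $(x,a+Tx)\in C_\nu$ becomes a sublevel/level condition for a one–variable polynomial of degree $\le B^{C}$; applying the sublevel estimate in that variable, integrating over the remaining $d_1-1$ coordinates, and inducting on $d_1$ — feeding in both the smallness $\mes_{d_1}(C_\nu(\omega)\cap\cS_2)\le\theta$ of the horizontal fibers and the flatness $\|T\|\lesssim\epsilon$ — yields $\mes_{d_1}\{x:(x,a+Tx)\in C_\nu\cap\cS_2\}\le B^{C}\epsilon^{-1}\eta^{1/d}$. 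Summing over the $\le B^{C}$ cells gives $\mes_{d_1}(\cS_2\cap L)<B^{C}\epsilon^{-1}\eta^{1/d}$, uniformly in $L$.

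The main obstacle is precisely this last step. A naive Fubini does not suffice, because a thin but tilted semialgebraic slab can have tiny horizontal fibers yet a large slice along a near–horizontal plane; what rescues the argument is that the slab has degree $\le B^{C}$, so it meets any line in at most $B^{C}$ arcs and the polynomial sublevel estimate applies, while the admissible planes are quantitatively flat, which confines all the ``bad slopes'' to the piece already absorbed into $\cS_1$. Obtaining the exponent $\eta^{1/d}$ rather than something like $\eta/\epsilon$ forces one to run the fiber–measure cut at the scale $\theta$ that balances the $\cS_1$ loss $\eta/\theta$ against the $\cS_2$ slice bound, and to verify that every projection and level set occurring on the way remains semialgebraic of degree polynomial in $B$ via Tarski–Seidenberg; this book–keeping, more than any single inequality, is where the bulk of the work lies.
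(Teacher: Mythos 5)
The paper itself offers no proof of this lemma: it is quoted from \cite[Prop.~5.1]{BouGolSch02}, \cite[Lem.~9.9]{Bou05}, where the argument runs through the Yomdin--Gromov algebraic parametrization and a classification of the resulting cells by the \emph{orientation of their tangent planes}. Your proposal replaces this by a cylindrical cell decomposition together with a Chebyshev dichotomy on the measure of the horizontal fibers, and this substitution does not work: the fiber-measure criterion is blind to slopes, which is exactly the quantity the lemma is about.

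Concretely, take $d_1=d_2=1$ and let $\cS$ be the tilted slab
\[
\cS=\left\{(x,\omega)\in[0,1]^2:\ \left|\omega-\tfrac{\epsilon}{200}x\right|<\delta\right\},\qquad \delta=\eta^{100}.
\]
This is semialgebraic of degree $O(1)$ with $\mes_2(\cS)\le 2\delta<\eta$. Every horizontal fiber has measure at most $400\delta/\epsilon$, which is far below any cut $\theta\gtrsim\eta/\epsilon$, so your construction places \emph{all} of $\cS$ into $\cS_2$. Yet the line $L=\{\omega=\tfrac{\epsilon}{200}x\}$ satisfies $|\Proj_L(e_1)|\le\epsilon/200<\epsilon/100$ and lies entirely inside the slab for $x\in[0,1]$, so $\mes_1(\cS_2\cap L)\ge 1$, contradicting the required bound $B^C\epsilon^{-1}\eta^{1/2}$ as soon as $\eta\ll\epsilon^2B^{-2C}$ (compatible with $\epsilon>\eta^{1/2}$). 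The lemma itself is not contradicted: this slab has $\mes_1(\Proj_\omega\cS)\le\epsilon/200+2\delta<\epsilon$, so the correct decomposition simply assigns it to $\cS_1$ --- but what detects that is the near-horizontality of its tangent lines, not the smallness of its fibers. In the actual proof one triangulates $\cS$ into $B^C$ pieces with controlled $C^1$ parametrizations, puts into $\cS_1$ the pieces whose tangent planes are $\epsilon$-close to an admissible (flat) plane $L$ --- for these, flatness combined with $\mes_d(\cS)<\eta$ forces the $\omega$-projection to be small --- and shows that the remaining steep pieces meet every admissible $L$ transversally. Your closing claim that the flatness of the test planes ``confines all the bad slopes to the piece already absorbed into $\cS_1$'' is precisely what your construction fails to arrange; relatedly, the final sublevel-set step is asserted rather than derived, and in the example above the defining polynomial is identically small along $L$, so the one-variable sublevel estimate yields nothing there.
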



We can now prove our result on semialgebraic elimination. In the sections to follow we will use $ \oell $ to denote
the size of a $ (K,\ell) $-(NDR) interval. We also use the $ \oell $ notation in this section because we will only
apply its results to (NDR) intervals.
\begin{prop}\label{prop:double-resonance-elimination}
  Let $ \alpha\in(0,1) $, $ \gamma>0 $, $ \overline \ell \ge 1 $, and $ \sigma $ as in
  (LDT). Let $ \Lambda_0,\Lambda_1 $ be
  intervals in $ \Z $ such that $ \overline \ell/10 \le |\Lambda_0|,|\Lambda_1|\le 10\overline \ell $ and
  $ 0\in \Lambda_0,\Lambda_1 $. For any $ x_0\in \T^d $,
  $ \overline \ell\ge \overline \ell_0(V,a,b,\alpha,\gamma)$,
  $ 1\le t_0\le \exp(c_0\overline \ell^{\alpha\sigma}) $,
  there exists a set
  $ \Omega_{\overline \ell,t_0,x_0}  $, $ \mes(\Omega_{\overline \ell,t_0,x_0})< \overline\ell^{C(a,b)}/t_0 $
  such that the following holds.
  For all $ \omega\in \tor^d(a,b)\setminus \Omega_{\overline \ell,t_0,x_0} $,
  $ E\in \R $, such that $ L(\omega,E)>\gamma $,
  we have
  \begin{equation*}
    \min \left( \norm{(H_{\Lambda_0}(x_0,\omega)-E)^{-1}}, \norm{(H_{\Lambda_1}(x_0+t\omega,\omega)-E)^{-1}}\right)
    \le \exp(\,\overline \ell^\alpha)
  \end{equation*}
  for all $ t_0\le |t|\le \exp(c_0\overline \ell^{\alpha\sigma}) $.
\end{prop}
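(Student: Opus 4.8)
The plan is to encode the bad event
$\norm{(H_{\Lambda_0}(x_0,\omega)-E)^{-1}}>\exp(\,\overline\ell^{\,\alpha})$
and
$\norm{(H_{\Lambda_1}(x_0+t\omega,\omega)-E)^{-1}}>\exp(\,\overline\ell^{\,\alpha})$
as membership of $(x_0,x_0+t\omega,\omega,E)$ in a small semialgebraic set, and then apply the steep–planes dichotomy of \cref{lem:Bourgain-slopes} to rule out that two points $x_0$ and $x_0+t\omega$ of a single orbit both land in the ``resolvent-large'' fiber, for most $\omega$. First I would invoke the spectral form of (LDT) (\cref{cor:4.6zeros}, \cref{lem:LDT-fail}) together with the semialgebraic approximation of \cref{lem:semialgebraic-all}: the set
\[
  \cB_{\overline\ell}=\{(x,\omega,E):L(\omega,E)>\gamma,\ \log|f_{\overline\ell}(x,\omega,E)|\le \overline\ell\,L_{\overline\ell}(\omega,E)-\overline\ell^{\,1-\tau/2}\}
\]
is contained in a semialgebraic set $\cS_{\overline\ell}$ of degree $\le \overline\ell^{\,C(a,b)}$ with $\mes(\cS_{\overline\ell})<\exp(-\overline\ell^{\,\sigma}/2)$ and $\mes(\cS_{\overline\ell}(\omega,E))<\exp(-\overline\ell^{\,\sigma})$. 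Since $\Lambda_0,\Lambda_1$ have comparable size $\simeq\overline\ell$ and contain $0$, having $\norm{(H_{\Lambda_i}-E)^{-1}}$ large forces (via \cref{lem:LDT-fail}, applied on $\Lambda_i$, after noting $r_0/2<r$-type covering) that the relevant shifted determinant is small, i.e. that the corresponding phase lies in (a slight enlargement of) $\cS_{\overline\ell}$; conversely, if both resolvents are large we get a point $(x_0,x_0+t\omega,\omega,E)$ lying in the product-type set
\[
  \cT:=\{(x,y,\omega,E):(x,\omega,E)\in \cS'_{\overline\ell},\ (y,\omega,E)\in \cS'_{\overline\ell}\},
\]
a semialgebraic set of degree $\le\overline\ell^{\,C}$ whose $\omega,E$-fibers have measure $<\exp(-\overline\ell^{\,\sigma})$ (two constraints each of fiber-measure $\exp(-\overline\ell^{\,\sigma})$), hence total measure $<\exp(-c\overline\ell^{\,\sigma})$.

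Next I would set up the elimination over the variable $t$. Fix $x_0$. For each $n$ in the dyadic range $2^{-1}T\le |t|\le T$ (with $t_0\le T\le\exp(c_0\overline\ell^{\,\alpha\sigma})$), the event that $(x_0,x_0+t\omega,\omega,E)\in\cT$ for some such $t$ and some $E$ defines a set of $\omega$; projecting out $E$ (a bounded variable, by the remark preceding \cref{lem:semialgebraic-all}) and applying \cref{lem:Bourgain-slopes} to $\cT$ with $d_1=1$ the $x_0$-slot ``used up'' and $d_2$ the $\omega$-coordinates, with the choice $\epsilon=\overline\ell^{\,C}/|t|$: the piece $\cS_1$ has $\omega$-projection of measure $<\overline\ell^{\,C}\epsilon\simeq \overline\ell^{\,C}/|t|$, while the transversal piece $\cS_2$ meets the line $\{(x_0+t\omega)\ :\ \omega\}$ — which is steep once $|t|$ exceeds $\epsilon^{-1}\simeq |t|/\overline\ell^{\,C}$, automatically satisfied — in a set of $x_0$-measure $<\overline\ell^{\,C}\epsilon^{-1}\exp(-c\overline\ell^{\,\sigma}/d)$, and translating back this bounds the set of $\omega$ giving a resonance at that particular $t$ by $\overline\ell^{\,C}|t|^{-1}\exp(-c\overline\ell^{\,\sigma}/d)$ (here the key gain over a naive estimate is that the transversality exponent $\eta^{1/d}$ is still subexponentially small). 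Summing over $t$ in the dyadic block and over $O(\overline\ell^{\,\alpha\sigma})$ dyadic blocks up to $\exp(c_0\overline\ell^{\,\alpha\sigma})$, the total measure is $\lesssim \overline\ell^{\,C}/t_0+\exp(c_0\overline\ell^{\,\alpha\sigma})\exp(-c\overline\ell^{\,\sigma}/d)\le \overline\ell^{\,C(a,b)}/t_0$, provided $c_0=c_0(d)$ is chosen small enough relative to $c/d$ — this is exactly where the constraint $t_0\le\exp(c_0\overline\ell^{\,\alpha\sigma})$ enters. Call the resulting union $\Omega_{\overline\ell,t_0,x_0}$.

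The main obstacle, and the step needing care, is the passage between the analytic statement (resolvent norm $>\exp(\overline\ell^{\,\alpha})$) and the semialgebraic statement (membership in $\cS_{\overline\ell}$), done at \emph{the correct threshold}: one must ensure that $\norm{(H_{\Lambda_i}-E)^{-1}}>\exp(\overline\ell^{\,\alpha})$ really does force $\log|f_{|\Lambda_i|}(\cdot)|\le |\Lambda_i|L_{|\Lambda_i|}-|\Lambda_i|^{1-\tau/2}$ — this follows from Cramer's rule plus the uniform upper bound \cref{prop:logupper}/\cref{cor:logupperc} (the numerator in $\cG(j,k)$ is bounded by $\exp(|\Lambda_i|L_{|\Lambda_i|}+C|\Lambda_i|^{1-\tau})$, so a small denominator is forced once $\overline\ell^{\,\alpha}\gg \overline\ell^{\,1-\tau}$, which holds for $\alpha$ close to $1$ — and for the stated range $\alpha\in(0,1)$ one absorbs this by choosing $\overline\ell_0$ large and, if necessary, slightly shrinking $\tau$ as the paper allows). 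Conversely, one must also check that the elimination-exceptional set is stated \emph{uniformly in $E$}: since $E$ ranges over a bounded interval and enters $\cS_{\overline\ell}$ semialgebraically with controlled degree, projecting $E$ away costs only a degree factor $\overline\ell^{\,C}$ in \cref{lem:Bourgain-slopes}, which is harmless. The rest is bookkeeping: the transversality hyperplane condition $\max_j|\Proj_L(e_j)|<\tfrac1{100}\epsilon$ for the line $\omega\mapsto x_0+t\omega$ is equivalent to $|t|^{-1}<\tfrac1{100}\epsilon$, i.e. $|t|>100\overline\ell^{\,C}$, so for the dyadic blocks with $|t|$ small one simply absorbs them into the $\overline\ell^{\,C}/t_0$ term directly (those are $O(\log\overline\ell^{\,C})$ values of $t$, each contributing a set of $\omega$ of measure $\le \overline\ell^{\,C}\exp(-c\overline\ell^{\,\sigma})$ via the fiber bound on $\cT$ alone).
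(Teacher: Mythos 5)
Your overall architecture --- replace the potential by a polynomial, enclose the bad event in a semialgebraic set of degree $\overline\ell^{\,C}$, apply \cref{lem:Bourgain-slopes} with $\epsilon\simeq 1/t_0$, and control the transversal piece on the $\le|t|^d$ steep hyperplanes carrying the orbit $\omega\mapsto x_0+t\omega$ --- is exactly the paper's. The gap is in the measure input to \cref{lem:Bourgain-slopes}, i.e.\ in how you encode ``$\norm{(H_{\Lambda_i}-E)^{-1}}>\exp(\overline\ell^{\,\alpha})$'' semialgebraically with a small fiber. You propose to deduce from a large resolvent that the scale-$\overline\ell$ determinant fails the (LDT), so that the phase lands in $\cS_{\overline\ell}$. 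But Cramer's rule only gives $\log|f_{\Lambda_i}|\le|\Lambda_i|L_{|\Lambda_i|}+C|\Lambda_i|^{1-\tau}-\overline\ell^{\,\alpha}$, which is \emph{no} deviation at all unless $\overline\ell^{\,\alpha}\gg\overline\ell^{\,1-\tau}$; for $\alpha<1-\tau$ a resolvent of size $\exp(\overline\ell^{\,\alpha})$ is entirely consistent with the (LDT) holding at scale $\overline\ell$. The statement must cover all $\alpha\in(0,1)$, and in the actual application (\cref{lem:eliminationa}) one takes $\alpha=(1-\tau)/3<1-\tau/2$, so this is not a corner case. Your proposed repairs do not work: ``slightly shrinking $\tau$'' makes the target deviation $\overline\ell^{\,1-\tau/2}$ \emph{larger}, and taking $\overline\ell_0$ large does not change the exponents. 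The paper's proof instead keeps the resolvent inequalities themselves (in Hilbert--Schmidt norm, for $\tilde V$) as the defining conditions of the semialgebraic set, and obtains the measure of its $(x,\omega)$-projection from \emph{Wegner's estimate} (\cref{prop:Wegner}) at the small scale $\ell\simeq\overline\ell^{\,\alpha}$: a large resolvent at $x_0$ forces $E$ to lie within $\exp(-\overline\ell^{\,\alpha})$ of one of the $O(\overline\ell)$ eigenvalues $E_0$ of $H_{\Lambda_0}(x_0,\omega)$, and Wegner then bounds the measure of $x$ for which $H_{\Lambda_1}(x,\omega)$ also has spectrum that close to $E_0$ by $\exp(-c\,\overline\ell^{\,\alpha\sigma})$. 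That exponent $\alpha\sigma$ is precisely what must beat $|t|^d\le\exp(dc_0\overline\ell^{\,\alpha\sigma})$ in the transversality estimate, i.e.\ it is the source of the admissible range of $t$ in the statement; without it the bookkeeping in your last paragraph does not close.

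A second, related defect: you form the symmetric product set $\cT$ in two free phases $(x,y)$ and then evaluate at $(x_0,x_0+t\omega)$. A bound on $\mes(\cT)$, or on its $(\omega,E)$-fibers, says nothing about the slice of $\cT$ at the \emph{specific} point $x=x_0$ (Fubini controls only almost every slice), and the assertion that ``two constraints each of fiber measure $\eta$'' give total measure $\eta$ after projecting out the shared variable $E$ is exactly the nontrivial point --- the two conditions are coupled through $E$. The paper sidesteps both problems by hardwiring $x_0$ into the definition of the set (the condition on $H_{\Lambda_0}(x_0,\omega)$ involves only $(\omega,E)$) and bounding $\mes(\Proj_{(x,\omega)}\tilde\cS)$ directly by the Wegner argument above; \cref{lem:Bourgain-slopes} is then applied in the clean $(x,\omega)\in\T^d\times\T^d$ splitting with $d_1=d_2=d$. (Your worry about small dyadic blocks is also unnecessary: with $\epsilon=200/t_0$ the steepness condition $|\Proj_L e_j|\le 1/|t|<\epsilon/100$ holds for every $|t|\ge t_0$.)
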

\begin{proof}
  Fix $ x_0\in \T^d $. Let $ \cB $ be the set of
  $ (x,\omega,E)\in \tor^d\times \tor^d(a,b)\times \R $,
  such that $ L(\omega,E)>\gamma $ and
  \begin{equation*}
    \norm{(H_{\Lambda_0}(x_0,\omega)-E)^{-1}} > \exp(\, \overline \ell^{\alpha}),\text{ and }
    \norm{(H_{\Lambda_1}(x,\omega)-E)^{-1}} > \exp(\,\overline \ell^{\alpha}).
  \end{equation*}
  Let $ \tilde V $ be as in \cref{eq:V-tilde}.  Then $ \cB $ is
  contained in the semialgebraic set $ \tilde \cS $ of
  \begin{equation*}
    (x,\omega,E)\in \tor^d\times\tor^d_{J}(a,b)
    \times [-C(V),C(V)],\ J=\overline \ell^{C(a,b)}
  \end{equation*}
  satisfying
  \begin{gather*}
    \frac{1}{|\Lambda_1|J}\sum_{j=1}^J\log \normhs{\tilde M_{|\Lambda_1|}(x+j\omega,\omega,E)}\ge \frac{\gamma}{2}\\
    \norm{(\tilde H_{\Lambda_0}(x_0,\omega)-E)^{-1}}_{\text{HS}}
    \ge \frac{1}{2}\exp(\,\overline \ell^{\alpha}),\text{ and }
    \norm{(\tilde H_{\Lambda_1}(x,\omega)-E)^{-1}}_{\text{HS}} \ge \frac{1}{2}\exp(\,\overline \ell^{\alpha}).
  \end{gather*}
  We used \cref{eq:E-tilde} (cf. proof of \cref{lem:semialgebraic-all}).
  Clearly the degree of $ \tilde \cS $ is less than $ \overline \ell^C $.
  Furthermore, for $ (x,\omega,E)\in \tilde \cS $ we have $ L_{|\Lambda_1|}(\omega,E)\ge \gamma/4 $,
  \begin{equation*}
    \norm{(H_{\Lambda_0}(x_0,\omega)-E)^{-1}} > \frac{1}{4}\exp(\, \overline \ell^{\alpha}),\text{ and }
    \norm{(H_{\Lambda_1}(x,\omega)-E)^{-1}} > \frac{1}{4}\exp(\,\overline \ell^{\alpha}).
  \end{equation*}
  Applying the Wegner estimate from Proposition~\ref{prop:Wegner} (recall \cref{rem:scale-N-sa-refinement})
  to $ H_{\Lambda_1}(x,\omega) $ with $ E_0\in \spec H_{\Lambda_0}(x_0,\omega) $
  we get
  \begin{equation*}
    \mes(\Proj_{(x,\omega)}\tilde \cS)< \exp(-c\overline \ell^{\alpha\sigma}).
  \end{equation*}
  Set $ \cS:=\Proj_{(x,\omega)}\tilde \cS $.  Let
  $ \cS=\cS_1\cup\cS_2 $ be the decomposition of $ \cS $ afforded by
  Lemma \ref{lem:Bourgain-slopes} with
  $ \epsilon=200/t_0 $. Note that to get the conclusion we just need that
  \begin{equation*}
    \omega\notin \{ \omega : (\{ x_0+t\omega  \},\omega)\in \cS   \}.
  \end{equation*}
  So, we let
  \begin{gather*}
    \Omega_{\overline \ell,t_0,x_0}(\Lambda_0,\Lambda_1)=\Proj_\omega \cS_1\cup
    \left( \bigcup_t \Omega_{\overline \ell,t,x_0}(\Lambda_0,\Lambda_1)\right),\\
    \Omega_{\overline \ell,t,x_0}(\Lambda_0,\Lambda_1):=\{ \omega : (\{ x_0+t\omega \},\omega)\in \cS_2 \},\\
    \Omega_{\overline \ell,t_0,x_0}
    =\bigcup_{\Lambda_0,\Lambda_1} \Omega_{\overline \ell,t_0,x_0}(\Lambda_0,\Lambda_1).
  \end{gather*}
  Note that $ \Proj_\omega \cS_1\supset\{ \omega : (\{ x_0+t\omega  \},\omega)\in \cS_1   \} $ and due to our
  assumptions there are less than $ C\overline \ell ^4 $ possible choices for $ \Lambda_0,\Lambda_1 $.
  We just need to check the estimate on the measure of $ \Omega_{\overline \ell,t_0,x_0} $.
  To this end, note that the set of $ \{x_0+t\omega\} $, with
  $ \omega\in[0,1]^d $, is contained in a union of hyperplanes
  $ L_i $, $ i\le |t|^d $. The hyperplanes $ L_i $
  are parallel to $ (t\omega,\omega) $, $ \omega\in\R^d $, and
  therefore
  \begin{equation*}
    |\Proj_{L_i} e_j|\le \frac{1}{|t|}\le \frac{1}{t_0}<\frac{\epsilon}{100} \text{ for all }  i,j
  \end{equation*}
  ($ e_j $ are as in Lemma \ref{lem:Bourgain-slopes}).  Then by
  Lemma~\ref{lem:Bourgain-slopes},
  \begin{gather*}
    \mes(\Proj_\omega \cS_1)<\overline \ell^C/t_0,\\
	\mes(\Omega_{\overline \ell,t,x_0}(\Lambda_0,\Lambda_1))= \sum_i \mes(\cS_2\cap L_i)
    \lesssim |t|^{d} \overline \ell ^C t_0 \exp(-c\overline \ell^{\alpha\sigma})
    \le \exp(-c'\overline\ell^{\alpha\sigma}),
  \end{gather*}
  and the conclusion follows.
\end{proof}

For the purposes of \cref{sec:elim-NDR} it
will be convenient to eliminate the phase variable from the set of resonant frequencies from
\cref{prop:double-resonance-elimination}.
\begin{cor}\label{cor:double-resonance-elimination}
  We use the same assumptions and notation as in \cref{prop:double-resonance-elimination}.
  For any
  $ \overline \ell\ge \overline\ell_0(V,a,b,\alpha,\gamma)$ and
  $ 1\le t_0\le \exp(c_0\overline \ell^{\alpha\sigma}) $, there exists a set
  $ \Omega_{\overline \ell,t_0} $, $ \mes(\Omega_{\overline \ell,t_0})<\overline \ell^{C(a,b)} t_0^{-\frac{1}{2}} $,
  such that for any $ \omega\notin \Omega_{\overline \ell,t_0} $ there exists a set
  $ \cB_{\overline \ell,t_0,\omega} $,
  $ \mes(\cB_{\overline \ell,t_0,\omega})<\overline \ell^{C(a,b)} t_0^{-\frac{1}{2}} $,
  and the following holds.
  For any $ \omega\in  \tor^d(a,b)\setminus \Omega_{\overline \ell,t_0} $,
  $ x\in \T^d\setminus \cB_{\overline \ell,t_0,\omega} $, $ E\in \R $, such that $ L(\omega,E)>\gamma $,
  we have
  \begin{equation*}
    \min \left( \norm{(H_{\Lambda_0}(x,\omega)-E)^{-1}}, \norm{(H_{\Lambda_1}(x+t\omega,\omega)-E)^{-1}}\right)
    \le \exp(\,\overline \ell^\alpha)
  \end{equation*}
  for all $ t_0\le |t|\le \exp(c_0\overline \ell^{\alpha\sigma}) $.
\end{cor}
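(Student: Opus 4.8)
The plan is to obtain \cref{cor:double-resonance-elimination} from \cref{prop:double-resonance-elimination} by a Fubini argument that trades the dependence of the exceptional frequency set on the fixed phase $x_0$ for a small, $\omega$-dependent set of exceptional phases.

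First I would let $\cE\subset\T^d\times\tor^d(a,b)$ be the set of pairs $(x,\omega)$ for which the conclusion of the corollary fails, i.e.\ those $(x,\omega)$ such that there exist $E\in\R$ with $L(\omega,E)>\gamma$ and $t\in\Z$ with $t_0\le|t|\le\exp(c_0\overline\ell^{\alpha\sigma})$ satisfying
\begin{equation*}
  \min\left(\norm{(H_{\Lambda_0}(x,\omega)-E)^{-1}},\ \norm{(H_{\Lambda_1}(x+t\omega,\omega)-E)^{-1}}\right)>\exp(\overline\ell^\alpha).
\end{equation*}
Only energies with $|E|\le C(V)$ can enter here, since otherwise the two resolvent norms are $\le 1$; moreover both resolvent norms are Borel in $(x,\omega,E)$ and $(\omega,E)\mapsto L(\omega,E)=\inf_N L_N(\omega,E)$ is upper semicontinuous (each $L_N$ being continuous), so $\cE$ is measurable. (If one prefers, $\cE$ is contained in a semialgebraic set exactly as in the proof of \cref{prop:double-resonance-elimination}, and one may run the rest of the argument with outer measures.) The key observation is that for each fixed $x$, applying \cref{prop:double-resonance-elimination} with $x_0=x$ forces the $x$-section $\cE_x:=\{\omega:(x,\omega)\in\cE\}$ to lie inside $\Omega_{\overline\ell,t_0,x}$, whence $\mes(\cE_x)<\overline\ell^{C(a,b)}/t_0$; integrating in $x$ gives $\mes_{\T^{2d}}(\cE)<\overline\ell^{C(a,b)}/t_0$ by Fubini.

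Next I would integrate in the other order. With $\cE^\omega:=\{x:(x,\omega)\in\cE\}$ one has $\int_{\tor^d(a,b)}\mes(\cE^\omega)\,d\omega=\mes(\cE)<\overline\ell^{C(a,b)}/t_0$, so by Markov's inequality the set
\begin{equation*}
  \Omega_{\overline\ell,t_0}:=\bigl\{\omega\in\tor^d(a,b):\mes(\cE^\omega)\ge t_0^{-1/2}\bigr\}
\end{equation*}
obeys $\mes(\Omega_{\overline\ell,t_0})\le t_0^{1/2}\mes(\cE)<\overline\ell^{C(a,b)}t_0^{-1/2}$. For $\omega\in\tor^d(a,b)\setminus\Omega_{\overline\ell,t_0}$ I would then set $\cB_{\overline\ell,t_0,\omega}:=\cE^\omega$, which by construction has measure $<t_0^{-1/2}\le\overline\ell^{C(a,b)}t_0^{-1/2}$, and for any $x\in\T^d\setminus\cB_{\overline\ell,t_0,\omega}$ we have $(x,\omega)\notin\cE$, which is precisely the asserted estimate. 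If one wants the same sets $\Omega_{\overline\ell,t_0},\cB_{\overline\ell,t_0,\omega}$ to handle all admissible pairs $\Lambda_0,\Lambda_1$ simultaneously, it suffices to include in $\cE$ the failure for any such pair; since there are $\lesssim\overline\ell^4$ of them (and the $\Omega_{\overline\ell,t_0,x}$ produced in the proof of \cref{prop:double-resonance-elimination} already accounts for all of them), this only changes the constant $C(a,b)$.

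The only mildly delicate point is the measurability of $\cE$, which is what legitimizes the two applications of Fubini; as indicated this is routine once one restricts to the compact range of relevant energies and uses that the resolvent norms are Borel and $L$ is upper semicontinuous (or else one works with the semialgebraic overset and outer measure). Everything else is bookkeeping with Fubini and Markov's inequality.
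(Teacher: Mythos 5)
Your proposal is correct and is essentially the paper's own argument: the paper likewise forms the product set of bad pairs $(x,\omega)$ (there defined as $\{(x,\omega):\omega\in\Omega_{\overline\ell,t_0,x}\}$, which contains your $\cE$), bounds its measure by Fubini using \cref{prop:double-resonance-elimination}, and then applies Chebyshev/Markov to the $\omega$-sections to produce $\Omega_{\overline\ell,t_0}$ and $\cB_{\overline\ell,t_0,\omega}$. Your extra attention to measurability is a reasonable refinement of a point the paper leaves implicit, but it does not change the argument.
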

\begin{proof}
  Let $ \cB_{\overline\ell,t_0} $ be the set of $ x\in \T^d $, $ \omega\in \T^d(a,b) $
  such that $ \omega\in \Omega_{\overline \ell,t_0,x} $, with $ \Omega_{\overline \ell,t_0,x} $ as in
  \cref{prop:double-resonance-elimination}. Using Chebyshev's inequality we get that there exists a set
  $ \Omega_{\overline \ell,t_0} $,
  $ \mes(\Omega_{\overline \ell,t_0})\le {\mes(\cB_{\overline\ell,t_0})}^{\frac{1}{2}} $
  such that for $ \omega\notin \Omega_{\overline \ell,t_0}$ we have
  $ \mes(\cB_{\overline \ell,t_0,\omega})\le {\mes(\cB_{\overline\ell,t_0})}^{\frac{1}{2}} $, where
  \begin{equation*}
    \cB_{\overline \ell,t_0,\omega}= \{ x : (x,\omega)\in \cB_{\overline\ell,t_0} \}.
  \end{equation*}
  Now the conclusion follows by \cref{prop:double-resonance-elimination}.
\end{proof}

\section{Elimination of Double Resonances under (NDR) Condition}
\label{sec:elim-NDR}

To eliminate double resonances between (NDR) intervals we combine the elimination from
\cref{cor:double-resonance-elimination}  with another tool,
resultants of polynomials (see \cref{sec:resultants}). The biggest problem with the estimate in
\cref{cor:double-resonance-elimination}
is that, due to the weakness of the measure
estimate for the set of resonant phases, we cannot apply it simultaneously to
$ x_0+n\omega $, $ n\in [1,N] $, as is needed for
finite scale localization.
In the next lemma we will obtain a sharper (local) measure estimate via Cartan's estimate applied to the
resultant of the polynomials from the Weierstrass  preparation theorem in the $E$-variable, under
the  (NDR) condition, see  Proposition~\ref{prop:factorNDREvar}.
For Cartan's estimate to be effective, we need a point at which we have a good lower bound on
the modulus of the resultant. This point will come from \cref{cor:double-resonance-elimination}.

\begin{lemma}\label{lem:eliminationa}
  Assume $ x_0\in \T^d $, $ \omega_0\in \T^d(a,b) $, $ E_0\in \R $, and $ L(\omega_0,E_0)>\gamma>0 $.
  Let $ \sigma,\tau $ as in (LDT) and
  \begin{equation*}
    1\le \ell\le \overline \ell \le \exp(\ell),\
    1\le K \le \overline \ell^{\frac{1-\tau}{10}},\quad
    \exp(\ell^2)\le |t|\le \exp(\overline \ell^{\frac{\sigma(1-\tau)}{4}}).
  \end{equation*}
  Let $ \Lambda_j $, $ j=0,1 $, be
  intervals in $ \Z $ such that $ \overline \ell/10 \le |\Lambda_j|\le 10\overline \ell $,
  $ 0\in \Lambda_j $, and $ \Lambda_j $ is $ (K,\ell) $-(NDR) with respect to $ x_j,\omega_0,E_0 $,
  $ x_j:=x_0+jt\omega_0 $, $ j=0,1 $.

  For any $ \ell\ge \ell_0(V,a,b,|E_0|,\gamma) $ there exists a set
  $ \Omega_{\overline \ell} $, $ \mes(\Omega_{\overline \ell})<\exp(-\ell^2/4) $, such that
  if $ \omega_0\notin \Omega_{\overline \ell} $ the following holds.
  There exists a set
  $\cB_{\overline \ell,t}=\cB_{\overline \ell,t}(x_0,\omega_0,E_0)$, such that
  \begin{equation*}
    \mes (\cB_{\overline \ell,t})
    <\exp(-\overline \ell^{\frac{1-\tau}{6d}})/|t|^d,
  \end{equation*}
  and for any $ (x,\omega)\in (\T^d\times\T^d(a,b))\setminus \cB_{\overline \ell,t} $, $ E\in \R $,
  \begin{equation}\label{eq:elimination1}
    \begin{split}
      |x-x_0|<\exp(-4\ell),\quad |\omega-\omega_0|<
      \exp(-4\ell)/|t|,\quad |E-E_0|<\exp(-\ell)/2,
    \end{split}
  \end{equation}
  we have
  \begin{equation}\label{eq:elimination2}
    \max_{j=0,1}\big(\log
    |f_{\La_j}(x+jt\omega,\omega,E)|-|\Lambda_j|L_{|\Lambda_j|}(\omega,E)+2|\Lambda_j|^{1-\tau/2}\big)>0.
  \end{equation}
\end{lemma}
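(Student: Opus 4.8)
The plan is to combine the factorization of Dirichlet determinants under the (NDR) condition from \cref{prop:factorNDREvar} with a resultant estimate and Cartan's bound, using \cref{cor:double-resonance-elimination} to furnish the nondegeneracy point. First I would apply \cref{prop:factorNDREvar} to each of $\Lambda_0,\Lambda_1$: since $\Lambda_j$ is $(K,\ell)$-(NDR) with respect to $x_j,\omega_0,E_0$, we obtain factorizations $f_{\Lambda_j}(z,w,E)=P_j(z,w,E)g_j(z,w,E)$ on a polydisk $\cP_j\times\cD(E_0,r_j)$ with $r_j\simeq\exp(-\ell)$, where $P_j$ is a monic polynomial in $E$ of degree $k_j\le K$ with coefficients analytic in $(z,w)$ on a polydisk of radius $\simeq\exp(-\ell)(K+1)^{-2}$ (resp.\ $\simeq\exp(-\ell)|\Lambda_j|^{-1}(K+1)^{-2}$), and where $g_j$ is nonvanishing and, on the real slice, satisfies $\log|g_j|>|\Lambda_j|L_{|\Lambda_j|}-|\Lambda_j|^{1-\tau/2}$. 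The shift $x\mapsto x+t\omega$ is built into the choice $x_j=x_0+jt\omega_0$, so $P_1$ should be viewed as a polynomial in $E$ whose coefficients are analytic functions of $(x,\omega)$ via $(x+t\omega,\omega)$; the constraint $|t|\le\exp(\overline\ell^{\sigma(1-\tau)/4})$ together with $|\omega-\omega_0|<\exp(-4\ell)/|t|$ keeps $(x+t\omega,\omega)$ inside the relevant polydisk (here one uses $\exp(\ell^2)\le|t|$ and $\overline\ell\le\exp(\ell)$ to absorb the various powers).

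The core quantity is then the resultant $R(x,\omega,E):=\Res_E(P_0(x,\omega,\cdot),\,\widetilde P_1(x,\omega,\cdot))$ where $\widetilde P_1(x,\omega,E)=P_1(x+t\omega,\omega,E)$. This is analytic in $(x,\omega,E)$ on a polydisk whose radius is $\gtrsim\exp(-\ell)|t|^{-1}(K+1)^{-2}$ after accounting for the $t\omega$ shift; scaling the variables to unit polydisks costs only such polynomial-in-$\exp(\ell)$ and polynomial-in-$|t|$ factors. The key point is the dichotomy from \cref{lem:resbasicprop1}: if $|R|>\delta$ at a point, then $\max(|P_0|,|\widetilde P_1|)>(\delta/2)^{\max(k_0,k_1)}$ there, and since $k_j\le K\le\overline\ell^{(1-\tau)/10}$ this in turn forces $\max(|f_{\Lambda_0}|,|f_{\Lambda_1}(x+t\omega,\cdot)|)$ to be large enough (using the $g_j$ lower bounds and $|P_j|\le(2r)^{k_j}<1$) to give \cref{eq:elimination2}. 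So it suffices to show that, off a small set of $(x,\omega)$, the resultant $R$ is not too small at $E$.

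For the lower bound on $R$ I would run Cartan's estimate (\cref{lem:high_cart}) on $R$ as a function of the rescaled $(x,\omega,E)$ variables: the uniform upper bound $\log|R|\le CK\overline\ell^{1-\tau}\cdot(\text{log of polydisk radius})$ follows from \cref{cor:logupperc}/\cref{cor:4.6} applied to the $f_{\Lambda_j}$ and the definition of the resultant as a product over roots. Cartan's inequality then yields $\log|R|>-H\cdot(\text{stuff})$ outside a Cartan set $\cB$ of measure $\exp(-H)$, \emph{provided} we have a point $(x^*,\omega^*,E^*)$ in the relevant smaller polydisk where $\log|R|$ is not too negative — this is exactly the nondegeneracy condition, and it is where \cref{cor:double-resonance-elimination} enters. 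Indeed, away from an exceptional set $\Omega_{\overline\ell}$ of frequencies and an exceptional set of phases (both controlled by $\exp(-\ell^2/4)$-type bounds because $t_0:=\exp(\ell^2)$ gives $\overline\ell^C t_0^{-1/2}<\exp(-\ell^2/4)$), \cref{cor:double-resonance-elimination} guarantees that for at least one of $j=0,1$ the resolvent $\|(H_{\Lambda_j}(x+jt\omega,\omega)-E)^{-1}\|\le\exp(\overline\ell^\alpha)$ with $\alpha$ chosen $\simeq 1-\tau$; combined with the spectral form of (LDT) (\cref{cor:4.6zeros}) this gives $|f_{\Lambda_j}|\ge\exp(|\Lambda_j|L_{|\Lambda_j|}-|\Lambda_j|^{1-\tau/2})$, hence $|P_j|\ge$ a not-too-small quantity, hence a lower bound on $|R|$ at that point (using that the roots of the other polynomial lie in $\cD(E_0,r)$, so $|\widetilde P_{1-j}|$ at a root of $P_j$ is comparable to a product of differences bounded below, or more directly one re-derives the resultant lower bound from $|P_j|$ large via the factored form of $R$).

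The main obstacle I anticipate is the bookkeeping of scales: one must simultaneously (i) keep the complex polydisk radius for $R$ large enough — of order $\exp(-C\ell)/|t|$ — that the target neighborhood \cref{eq:elimination1} of radius $\exp(-4\ell)$ resp.\ $\exp(-4\ell)/|t|$ sits strictly inside, with room for Cartan's $1/6$-shrinkage; (ii) choose the Cartan parameter $H\simeq\overline\ell^{(1-\tau)/(6d)}\log|t|$ or so, so that the exceptional Cartan set has measure $\lesssim\exp(-\overline\ell^{(1-\tau)/6d})/|t|^d$ after rescaling back (here the $|t|^d$ comes from the Jacobian of unscaling in $d$ complex variables, matching the claimed bound on $\mes(\cB_{\overline\ell,t})$); and (iii) verify that $H$ times the degree-and-amplitude factors in Cartan's bound stays below the budget $|\Lambda_j|^{1-\tau/2}$ in \cref{eq:elimination2}, which is where $K\le\overline\ell^{(1-\tau)/10}$ and $|t|\le\exp(\overline\ell^{\sigma(1-\tau)/4})$ are used. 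Once these inequalities are lined up, the estimate \cref{eq:elimination2} follows by tracing the dichotomy backwards, and $\Omega_{\overline\ell}$ is taken to be the union of $\Omega_{\overline\ell,t_0}$ from \cref{cor:double-resonance-elimination} (with $t_0=\exp(\ell^2)$) over the finitely many pairs $\Lambda_0,\Lambda_1$, whose measure is $\lesssim\overline\ell^C\exp(-\ell^2/2)<\exp(-\ell^2/4)$.
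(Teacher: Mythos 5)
Your proposal follows the paper's proof essentially step for step: the Weierstrass factorization under the (NDR) condition from \cref{prop:factorNDREvar}, the resultant of the two $E$-polynomials $P_0(z,w,\cdot)$ and $P_1(z+tw,w,\cdot)$, a Cartan estimate on the resultant over a polydisk of radii $\sim\exp(-3\ell)$ and $\sim\exp(-3\ell)/|t|$ whose nondegeneracy point is supplied by \cref{cor:double-resonance-elimination} with $t_0=\exp(\ell^2)$, and \cref{lem:resbasicprop1} to convert the resultant lower bound into \cref{eq:elimination2}. The only adjustments needed are parametric: the paper takes $\alpha=(1-\tau)/3$ rather than $\alpha\simeq 1-\tau$ (so that the accumulated loss $CK^3\overline{\ell}^{\,2\alpha}$ stays below the budget $|\Lambda_j|^{1-\tau/2}$ while still covering the full range $|t|\le\exp(\overline{\ell}^{\,\sigma(1-\tau)/4})$), and the resultant is a function of $(x,\omega)$ alone rather than of $(x,\omega,E)$, so Cartan is run in $\C^{2d}$ and yields the exponent $H^{1/(2d)}=\overline{\ell}^{\,(1-\tau)/(6d)}$ together with the factor $|t|^{-d}$ from the $\omega$-polydisk, exactly as in the stated measure bound for $\cB_{\overline{\ell},t}$.
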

\begin{proof}
  We start by extracting the information we need from the semialgebraic elimination.
  By \cref{prop:log-Hoelder} we can guarantee that $ L(\omega_0,E)>\frac{\gamma}{2} $ for
  any $ |E-E_0|<\exp(-\ell) $ (provided $ \ell $ is large enough).
  Let  $ \Omega_{\overline \ell,t_0} $, $ \cB_{\overline \ell,t_0,\omega_0} $ be the sets
  from \cref{cor:double-resonance-elimination}, with $ \alpha=(1-\tau)/3 $, $ t_0=\exp(\ell^2) $,
  and $ \gamma/2 $ instead of $ \gamma $.
  Since we are assuming $ \overline \ell \le \exp(\ell) $ it follows that
  \begin{equation*}
    \mes(\Omega_{\overline \ell,t_0}), \mes(\cB_{\overline \ell,t_0,\omega_0})<\exp(-\ell^2/4).
  \end{equation*}
  We set $ \Omega_{\overline \ell}:= \Omega_{\overline \ell,t_0}$ and we assume
  $ \omega_0\notin \Omega_{\overline \ell} $. Let $ x_0'\in \T^d $, $ |x_0'-x_0|<\exp(-\ell^2/(5d)) $
  be such that $ x_0'\notin \cB_{\overline \ell,t_0,\omega_0} $. Then we have
  \begin{equation}\label{eq:reference-point}
    \min \left( \norm{(H_{\Lambda_0}(x_0',\omega_0)-E)^{-1}},
      \norm{(H_{\Lambda_1}(x_0'+t\omega_0,\omega_0)-E)^{-1}}\right)
    \le \exp(\,\overline \ell^\frac{1-\tau}{3})
  \end{equation}
  for all $ |E-E_0|<\exp(-\ell) $
  and $ \exp(\ell^2)\le |t|\le \exp(c\overline \ell^{\sigma(1-\tau)/3} ) $. In particular this holds for $ t $
  satisfying our assumptions.

  Next we collect the relevant facts about the factorization of $ f_{\Lambda_j} $.
  Let $ f_{\Lambda_j}=P_jg_j $, $ j=0,1 $, be the factorizations from Proposition~\ref{prop:factorNDREvar}.
  We have that
  \[
    P_j(z,\omega,E) = E^{k_j} +a_{j,k_j-1} (z,\omega) E^{k_j-1} +
    \cdots + a_{j,0} (z,\omega),\ k_j\le K
  \]
  with $a_{j,i}$ analytic on a polydisk containing
  \[
    \cP_j:=\{ (z,w)\in \IC^{2d}: |z-x_j|<\exp(-2\ell),  |w-\omega_0|<\exp(-3\ell)\},
  \]
  and all the zeros of $ P_j(z,w,\cdot) $, $ (z,w)\in \cP_j $, are contained in $ \cD(E_0,\exp(-\ell)) $
  (we used the assumptions that $ K\le \overline \ell^{\frac{1-\tau}{10}} $, $ \overline \ell\le \exp(\ell) $,
  $ |\Lambda_j|\le 10\overline \ell $).
  We also have
  \begin{equation}\label{eq:g_j-lb}
    \log|g_j(x,\omega,E)|>|\Lambda_j|L_{|\Lambda_j|}(\omega,E)-|\Lambda_j|^{1-\tau/2}
  \end{equation}
  for any $ (x,\omega)\in \cP_j\cap (\T^d\times \T^d(a,b)) $ and $ E\in \cD(E_0,\exp(-\ell)/2) $.
  Note that the condition $ \frac{r_0}{16(K+1)}\ge\exp(-|\Lambda_j|^{\sigma/2}) $ needed for part (e) of
  Proposition~\ref{prop:factorNDREvar} is satisfied because our restrictions on $ t $ imply
  $ \overline \ell\ge \ell^{\frac{8}{\sigma(1-\tau)}} $.

  Now we can proceed with the elimination via resultants and Cartan's estimate (recall \cref{sec:Cartan} and
  \cref{sec:resultants}). Set
  \[
    R(z,w)=\Res(P_0(z,w,\cdot),P_1(z+tw,w,\cdot)).
  \]
  Then $ R $ is well-defined on the polydisk
  \begin{equation*}
    \cP:= \{ (z,w)\in \IC^{2d} : |z-x_0'|<\exp(-3\ell), |w-\omega_0|< \exp(-3\ell)/|t| \}.
  \end{equation*}
  By the definition of the resultant (see \eqref{eq:resdef}) and the properties of $ P_j $, we have
  \begin{equation*}
    R(z,w)=\prod (E_i^{\Lambda_0}(z,w)-E_j^{\Lambda_1}(z+tw,w)),
  \end{equation*}
  where the product is only in terms of eigenvalues contained in $ \cD(E_0,\exp(-\ell)) $. This
  implies
  \begin{equation*}
    \sup|R(z,w)|\le (2\exp(-\ell))^{k_1k_2}<1.
  \end{equation*}
  By \cref{eq:reference-point} applied to
  \begin{equation*}
    E\in \left( \spec H_{\Lambda_0}(x_0',\omega_0)\cup \spec
      H_{\Lambda_1}(x_0'+t\omega_0,\omega_0) \right)\cap \cD(E_0,\exp(-\ell))
  \end{equation*}
  we get that
  \begin{equation*}
    \left| E_i^{\Lambda_0}(x_0',\omega_0)-E_j^{\Lambda_1}(x_0'+t\omega_0,\omega_0) \right|
    \ge \exp(-\overline \ell^{\frac{1-\tau}{3}})
  \end{equation*}
  for the pairs of eigenvalues contained in $ \cD(E_0,\exp(-\ell)) $. It follows that
  \begin{equation*}
    |R(x_0',\omega_0)|\ge \exp(-K^2 \overline\ell^{\frac{1-\tau}{3}}).
  \end{equation*}
  Using Cartan's estimate (recall \cref{lem:high_cart}) with
  \begin{equation*}
    m=-K^2 \overline\ell^{\frac{1-\tau}{3}},\ M=0,\ H=\overline \ell^{\frac{1-\tau}{3}}
  \end{equation*}
  we have that
  \begin{equation*}
    |R(x,\omega)|>\exp(-CK^2 \overline\ell^{\frac{2(1-\tau)}{3}})
  \end{equation*}
  for all $ (x,\omega)\in (6^{-1}\cP\cap \R^{2d})\setminus \cB $ with, $ \cB=\cB(\Lambda_0,\Lambda_1) $,
  \begin{equation*}
    \mes(\cB) \le \frac{C\exp(-6d\ell)}{|t|^d}\exp(-H^{\frac{1}{2d}}).
  \end{equation*}
  Note that $ 6^{-1}\cP\cap \R^{2d} $ contains the points $ (x,\omega) $ satisfying \cref{eq:elimination1}.
  We let $ \cB_{\overline \ell,t}=\cup \cB(\Lambda_0,\Lambda_1) $.
  Finally, Lemma~\ref{lem:resbasicprop1} implies
  \begin{equation*}
    \max(|P_0(x,\omega,E)|,|P_1(x+t\omega,\omega,E)|)\ge \exp(-C K^3\overline\ell^{\frac{2(1-\tau)}{3}})
    >\exp(-\overline\ell^{1-\tau}),
  \end{equation*}
  for all $ E $ and $ (x,\omega)\notin \cB_{\overline \ell,t} $,
  and the conclusion follows using the factorization of the determinants and
  \eqref{eq:g_j-lb}.
\end{proof}



We can now use a covering argument to prove a global version of the previous result.

\begin{prop}\label{prop:NDR-elimination}
  Let $ \gamma>0 $, $ \sigma,\tau $ as in (LDT), and
  \begin{equation*}
    1\le \ell\le \overline \ell \le \exp(\ell),\
    1\le K \le \overline \ell^{\frac{1-\tau}{10}},\
    \exp(\ell^2)\le N.
  \end{equation*}
  Let $ \Lambda_j $, $ j=0,1 $, be
  intervals in $ \Z $ such that $ \overline \ell/10 \le |\Lambda_j|\le 10\overline \ell $,
  $ 0\in \Lambda_j $, $ j=0,1 $.

  For $ \ell\ge \ell_0(V,a,b,\gamma) $ there exists $ \alpha=\alpha(a,b) $ such that if
  $ N\le \exp(\overline \ell^\alpha) $,
  there exists a set $ \Omega_{N,\overline \ell} $, $ \mes(\Omega_{N,\overline \ell})<2\exp(-\ell^2/4) $,
  and for every
  $ \omega\in \tor^d(a,b)\setminus \Omega_{N,\overline \ell} $ there exists a set
  $ \cB_{N,\overline \ell,\omega} $, $ \mes(\cB_{N,\overline \ell,\omega})<\exp(-\overline \ell^\alpha) $
  such that the following statement holds.  For any
  $ \omega\in \tor^d(a,b)\setminus \Omega_{N,\overline \ell} $,
  $ x\in \tor^d\setminus \cB_{N,\overline \ell,\omega} $, $ E\in \R $, and
  $ m_0,m_1\in [1,N] $ such that $ |m_0-m_1|\ge \exp(\ell^2) $, we
  have that if $ L(\omega,E)>\gamma $ and $ \Lambda_j $, $ j=0,1 $, are $ (K,\ell,1/2) $-(NDR) intervals with
  respect to $ x+m_j\omega,\omega,E $,
  then
  \begin{equation}\notag
    \max_{j=0,1}\big(\log
    |f_{\La_j}(x+m_j\omega,\omega,E)|-|\Lambda_j|L_{|\Lambda_j|}(\omega,E)+2|\Lambda_j|^{1-\tau/2}\big)>0.
  \end{equation}
\end{prop}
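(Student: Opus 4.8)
The plan is to deduce Proposition~\ref{prop:NDR-elimination} from Lemma~\ref{lem:eliminationa} by a covering argument in the pair $(m_0,m_1)$, together with the stability of the $(K,\ell)$-(NDR) condition under perturbation of $x_0,\omega_0,E_0$ (Remark~\ref{rem:NDR-stability}, which is precisely why the weaker constant $1/2$ is allowed in the hypothesis). First I would discretize: cover $\T^d$ by a grid $\cG$ of points $x_0$ with spacing $\exp(-5\ell)$ (so $\#\cG\lesssim \exp(Cd\ell)$), and cover the admissible $\omega$-range by a grid of spacing $\exp(-5\ell)/N$. For each pair of integers $m_0,m_1\in[1,N]$ with $|m_0-m_1|\ge \exp(\ell^2)$, set $t=m_1-m_0$; note $\exp(\ell^2)\le |t|\le N\le \exp(\overline\ell^\alpha)$, and choosing $\alpha=\alpha(a,b)$ small enough this falls inside the range $\exp(\ell^2)\le |t|\le \exp(\overline\ell^{\sigma(1-\tau)/4})$ required by Lemma~\ref{lem:eliminationa}. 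For each $t$ in this range and each grid center $x_0\in\cG$ I invoke Lemma~\ref{lem:eliminationa} to produce the exceptional frequency set $\Omega_{\overline\ell}$ (which does not depend on $t$ or $x_0$) and, for $\omega_0\notin\Omega_{\overline\ell}$, the set $\cB_{\overline\ell,t}(x_0,\omega_0,E_0)$ of measure $<\exp(-\overline\ell^{(1-\tau)/(6d)})/|t|^d$.

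Next I would assemble the global bad sets. Put $\Omega_{N,\overline\ell}=\Omega_{\overline\ell}$ (enlarged by the $E_0$-discretization error, harmless since $E$ ranges over a fixed bounded interval and the estimates are Lipschitz-stable in $E_0$ by Corollary~\ref{cor:4.6}), so $\mes(\Omega_{N,\overline\ell})<2\exp(-\ell^2/4)$. For fixed $\omega\notin\Omega_{N,\overline\ell}$ define
\[
  \cB_{N,\overline\ell,\omega}=\bigcup_{x_0\in\cG}\ \bigcup_{\substack{t:\ \exp(\ell^2)\le |t|\le N}}\ \bigcup_{\Lambda_0,\Lambda_1}\ \cB_{\overline\ell,t}(x_0,\omega,E_0),
\]
the inner union over $E_0$ in a grid of the fixed energy interval and over the $\lesssim\overline\ell^4$ choices of $\Lambda_0,\Lambda_1$. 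The measure is at most
\[
  \#\cG\cdot C(V)\cdot\overline\ell^{4}\sum_{|t|\le N}\frac{\exp(-\overline\ell^{(1-\tau)/(6d)})}{|t|^{d}}\lesssim \exp(Cd\ell)\,\overline\ell^{4}\,\exp(-\overline\ell^{(1-\tau)/(6d)}),
\]
which is $<\exp(-\overline\ell^\alpha)$ once $\alpha=\alpha(a,b)$ is chosen smaller than $(1-\tau)/(6d)$ and $\ell\ge\ell_0$ (so $\overline\ell^{(1-\tau)/(6d)}$ dominates the $\exp(Cd\ell)$ factor; this uses $\overline\ell\ge\ell$ but we also need $\overline\ell$ not too close to $\ell$, which is where a lower bound like $\overline\ell\gtrsim\ell^{C}$ hidden in the hypotheses on $t$ enters—I'd spell that out). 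The sum over $t$ converges when $d\ge 2$; for $d=1$ one absorbs the logarithmic factor $\log N\le\overline\ell^\alpha$ into the exponent, still fine.

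Finally, the verification step: given $\omega\notin\Omega_{N,\overline\ell}$, $x\notin\cB_{N,\overline\ell,\omega}$, $E$ with $L(\omega,E)>\gamma$, and $m_0,m_1$ with $\Lambda_j$ being $(K,\ell,1/2)$-(NDR) with respect to $x+m_j\omega,\omega,E$, I pick $x_0\in\cG$ with $|x_0-(x+m_0\omega)|<\exp(-5\ell)$ and $E_0$ in the energy grid with $|E_0-E|<\exp(-5\ell)$; set $t=m_1-m_0$ so $x+m_1\omega=(x+m_0\omega)+t\omega$. By Remark~\ref{rem:NDR-stability}, the $(K,\ell,1/2)$-(NDR) property at $x+m_j\omega,\omega,E$ upgrades to the $(K,\ell)$-(NDR) property at the grid center $x_0+jt\omega_0$ (with $\omega_0$ a grid frequency near $\omega$), so the hypotheses of Lemma~\ref{lem:eliminationa} hold; since $x+m_0\omega\notin\cB_{\overline\ell,t}(x_0,\omega_0,E_0)$ and the point $(x+m_0\omega,\omega)$ satisfies the closeness conditions \eqref{eq:elimination1}, the conclusion \eqref{eq:elimination2} gives exactly the desired inequality (with $|\Lambda_j|L_{|\Lambda_j|}$ and the error term $2|\Lambda_j|^{1-\tau/2}$ matching).

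The main obstacle is bookkeeping the three-parameter discretization $(x_0,\omega_0,E_0)$ so that (i) the grids are fine enough that Remark~\ref{rem:NDR-stability}'s stability radius $\exp(-C'\ell^{1-\tau/3})$ and Lemma~\ref{lem:eliminationa}'s radii $\exp(-4\ell),\ \exp(-4\ell)/|t|$ are all respected simultaneously, while (ii) the grids are coarse enough that $\#\cG\cdot(\text{energy grid})$ times the per-cell measure bound still beats $\exp(-\overline\ell^\alpha)$. This forces the precise choice of $\alpha=\alpha(a,b)$ in terms of $\sigma,\tau$ and $d$, and one must check the $t$-sum and the $\Lambda$-count do not spoil it; everything else is a routine union bound plus the stability lemmas already in hand.
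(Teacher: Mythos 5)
Your overall strategy---discretize $(x_0,\omega_0,E_0)$, apply \cref{lem:eliminationa} for each $t=m_1-m_0$, and use \cref{rem:NDR-stability} to transfer the $(K,\ell,1/2)$-(NDR) hypothesis to grid points---is the paper's strategy. But there is a genuine gap in how you pass from the measure bound of \cref{lem:eliminationa} to the bound on $\cB_{N,\overline\ell,\omega}$. The set $\cB_{\overline\ell,t}(x_0,\omega_0,E_0)$ is a subset of the \emph{product} space of pairs $(x,\omega)$ (it comes from Cartan's estimate applied to the resultant $R(z,w)$ on a polydisk in $\C^{2d}$), and the bound $\exp(-\overline\ell^{(1-\tau)/(6d)})/|t|^d$ is a bound on its $2d$-dimensional measure. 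For a \emph{fixed} frequency $\omega$, the $x$-slice of this set is not controlled by the lemma at all; it could be large for some $\omega$'s. So you cannot define $\cB_{N,\overline\ell,\omega}$ by freezing $\omega$ and summing the per-cell bounds as though they were slice bounds. The paper resolves this by forming the union $\cB_{N,\overline\ell}$ in $(x,\omega)$-space (over the grid points, over $t$, and over the shifts $S_m(x,\omega)=(x-m\omega,\omega)$), bounding its $2d$-dimensional measure by $\exp(-2\overline\ell^\alpha)$, and then applying Chebyshev's inequality: one must discard an \emph{additional} exceptional set $\tilde\Omega_{N,\overline\ell}$ of frequencies, of measure $<\exp(-\overline\ell^\alpha)$, outside of which the $x$-slices have measure $<\exp(-\overline\ell^\alpha)$. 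This extra set is precisely why the proposition's frequency bound is $2\exp(-\ell^2/4)$ rather than the measure of $\Omega_{\overline\ell}$ alone; your choice $\Omega_{N,\overline\ell}=\Omega_{\overline\ell}$ (plus discretization error) cannot work as written.

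A second, smaller defect: the bad set produced by \cref{lem:eliminationa} is localized near $x+m_0\omega$, and your verification step explicitly uses $x+m_0\omega\notin\cB_{\overline\ell,t}(x_0,\omega_0,E_0)$, whereas your definition of $\cB_{N,\overline\ell,\omega}$ only removes $x$ itself from the union. You must include the shifts by $m\omega$, $m\in[1,N]$, in the union (as the paper does), which costs a factor of $N$ in the measure and is absorbed by the choice of $\alpha$. The rest of your bookkeeping---the admissible range of $t$, the $\lesssim\overline\ell^4$ choices of $\Lambda_0,\Lambda_1$, and the compatibility of the grid spacings with the stability radii of \cref{rem:NDR-stability} and \cref{eq:elimination1}---is consistent with the paper's argument.
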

\begin{proof}
  By \cref{lem:LDT-fail}, if the conclusion does not hold, then $ E $ is in a neighborhood
  of
  \begin{equation*}
    \spec H_{\Lambda_0}(x+m_0\omega,\omega)\cup\spec H_{\Lambda_1}(x+m_1\omega_1,\omega_1).
  \end{equation*}
  Therefore, it is enough to prove the result with $ E\in (-C(V),C(V)) $.

  Let $ \Omega_{\overline \ell} $ be the set from Lemma~\ref{lem:eliminationa}
  (with $ \gamma/2 $ instead of $ \gamma $). Let
  \begin{gather*}
    \{ z : |z-x_k|<\exp(-\ell^{3/2}) \},\ k\lesssim \exp(d\ell^{3/2})\\
    \{ \omega : |\omega-\omega_{k'}|<\exp(-\ell^{3/2})/N \},\ k'\lesssim N^d\exp(d\ell^{3/2})\\
    \{ E : |E-E_{k''}|<\exp(-\ell^{3/2}) \},\ k''\lesssim
    C(V)\exp(\ell^{3/2})
  \end{gather*}
  be covers of $ \tor^d $, $ \tor^d(a,b)\setminus \Omega_{\overline \ell} $, and $ (-C(V),C(V)) $,
  respectively. Note that if $ |\omega-\omega_{k'}|<\exp(-\ell^{3/2})/N $, $ |E-E_{k''}|<\exp(-\ell^{3/2}) $, and
  $ L(\omega,E)>\gamma>0 $, then, by \cref{prop:log-Hoelder}, $ L(\omega_{k'},E_{k''})>\gamma/2 $, provided
  $ \ell $ is large enough. So, if $ L(\omega_{k'},E_{k''})>\gamma/2 $ we let
  $ \cB_{\overline \ell,t}(x_k,\omega_{k'},E_{k''}) $   be the set from Lemma~\ref{lem:eliminationa}
  (with $ \gamma/2 $ instead of $ \gamma $). Otherwise we let
  $ \cB_{\overline \ell,t}(x_k,\omega_{k'},E_{k''}) $ be the empty set. Let
  \begin{equation*}
    \cB_{N,\overline \ell}= \bigcup_{k,k',k'',t,m} S_m(\cB_{\overline \ell,t}(x_k,\omega_{k'},E_{k''})),
  \end{equation*}
  where $ S_m(x,\omega)=(x-m\omega,\omega) $, $ m\in [1,N] $, $ |t|\in [\exp(\ell^2),N] $.
  Then we have
  \begin{equation*}
    \mes (\cB_{N,\overline \ell})
    \le CN^{d+2}\exp((2d+1)\ell^{3/2})\exp(-\overline \ell^{\frac{\sigma(1-\tau)}{6d}}).
  \end{equation*}
  Note that, due to the restriction required on $ |t| $ by \cref{lem:eliminationa} we need to have
  $ N\le \exp(\overline \ell^{\frac{\sigma(1-\tau)}{4}})$. We choose $ \alpha=\alpha(\sigma,\tau)=\alpha(a,b) $
  such that if $ N\le \exp(\overline \ell^{\alpha}) $, the previous restriction is satisfied and
  \begin{equation*}
    \mes(\cB_{N,\overline \ell})<\exp(-2\overline \ell^{\alpha}).
  \end{equation*}
  Using Chebyshev's inequality we get that there exists a set
  $ \tilde \Omega_{N,\overline \ell} $,
  $ \mes(\tilde \Omega_{N,\overline \ell})<\exp(-\overline \ell^{\alpha})  $
  such that for $ \omega\notin \tilde \Omega_{N,\overline \ell}$ we have
  $ \mes(\cB_{N,\overline \ell,\omega})<\exp(-\overline \ell^{\alpha}) $, where
  \begin{equation*}
    \cB_{N,\overline \ell,\omega}= \{ x : (x,\omega)\in \cB_{N,\overline \ell} \}.
  \end{equation*}
  We set $ \Omega_{N,\overline \ell}=\Omega_{\overline \ell}\cup \tilde \Omega_{N,\overline \ell} $
  and we claim that the conclusion follows with our choice of sets.
  Indeed, let $ x,\omega,E $, $ m_j $, $ \Lambda_j $ as in the assumptions. There exist $ x_k,\omega_{k'} $,
  $ E_{k''} $ such that
  \begin{equation*}
    |x+m_0\omega-x_k|<\exp(-\ell^{3/2}), \ |\omega-\omega_{k'}|<\exp(-\ell^{3/2})/N,\ |E-E_{k''}|<\exp(-\ell^{3/2})
  \end{equation*}
  (recall that there's nothing to check if $ E\notin (-C(V),C(V)) $). Since $ L(\omega,E)>\gamma $, we
  have $ L(\omega_{k'},E_{k''})>\gamma/2 $, and since $ \Lambda_j $, $ j=0,1 $, are $ (K,\ell,1/2) $-(NDR) with
  respect to $ x+m_j\omega,\omega,E $, it follows (recall \cref{rem:NDR-stability}) that
  $ \Lambda_j $, $ j=0,1 $, are $ (K,\ell) $-(NDR) with respect to
  \begin{equation*}
    x_k+jt\omega_{k'},\omega_{k'},E_{k''},\ t=m_1-m_0.
  \end{equation*}
  The choice of our exceptional sets guarantees
  $ (x+m_0\omega,\omega)\notin \cB_{\overline \ell,t}(x_k,\omega_{k'},E_{k''}) $ and
  the conclusion follows from \cref{lem:eliminationa}.
\end{proof}

\section{Finite and Full Scale Localization: Proofs of Theorems A, B, C}\label{sec:localization}

Combining elimination of resonances under the (NDR) condition with the covering form of (LDT) yields the
following result on elimination of resonances at a given scale, as needed for obtaining \cref{thm:A} via
\cref{prop:NDRloc1}.
\begin{lemma}\label{lem:elimination}
  Let $ \epsilon\in(0,1/5) $, $ \gamma>0 $, and $ \sigma,\tau $ as in (LDT).
  For $ N\ge N_0(V,a,b,\gamma,\epsilon) $, there exists
  a set $ \Omega_N $,
  \begin{equation*}
    \mes(\Omega_N)<\exp(-(\log N)^{\epsilon\sigma}),
  \end{equation*}
  such that for
  $ \omega\in \T^d(a,b)\setminus \Omega_N  $ there exists a set $ \cB_{N,\omega} $,
  \begin{equation*}
    \mes(\cB_{N,\omega})<\exp(-\exp((\log N)^{\epsilon\sigma}))
  \end{equation*}
  and the following holds
  for any $ \omega\in \T^d(a,b)\setminus \Omega_N $, $ x\in \T^d\setminus \cB_{N,\omega} $,
  and  $ E\in \R $ such that $ L(\omega,E)>\gamma $.
  If $ m_0\in [1,N] $ is such that
  \begin{equation*}
    \log|f_\Lambda(x,\omega,E)|\le |\Lambda|L_{|\Lambda|}(\omega,E)-|\Lambda|^{1-\tau/4}
  \end{equation*}
  for all intervals $ \Lambda\subset [1,N] $ satisfying $     \dist(m_0,[1,N]\setminus \Lambda)>|\Lambda|/100 $,
  $ |\Lambda|\ge (\log N)^{\epsilon} $,
  then for any $ \oell\ge \exp((\log N)^{3\epsilon\sigma/2}) $ we have
  \begin{equation}\label{eq:no-long-range-resonance}
    \log |f_{\oell}(x+(m-1)\omega,\omega,E)|>\oell L_{\oell}(\omega,E)-\oell^{1-\tau/2}
  \end{equation}
  for any  $ m\in [1,N-\oell+1] $ such that $ |m-m_0|\ge \oell+2\exp(\ell^2) $,
  $ \ell=\lceil (\log N)^{2\epsilon}\rceil  $.
\end{lemma}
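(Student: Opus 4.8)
The statement is an assembly result: it combines the ``generic (NDR) intervals exist'' mechanism (Proposition~\ref{prop:NDR} and Corollary~\ref{cor:same-scale-NDR}) with the elimination of double resonances between (NDR) intervals (Proposition~\ref{prop:NDR-elimination}). The plan is to show that for a generic $\omega$ and generic $x$, at every relevant scale $\oell$ and every shift $m$ far from $m_0$, the point $x+(m-1)\omega$ is \emph{not} scale-$\oell$ resonant, i.e.\ \eqref{eq:no-long-range-resonance} holds.

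First I would fix the intermediate scale $\ell=\lceil(\log N)^{2\epsilon}\rceil$ and set up the dyadic range of larger scales $\oell$ with $\exp((\log N)^{3\epsilon\sigma/2})\le \oell\le N$; there are only $\lesssim\log N$ such dyadic values, so exceptional sets may be summed harmlessly. For each such $\oell$, apply Corollary~\ref{cor:same-scale-NDR} at scale $\ell$ with the lacunary sets $\cN_i=\{n:\ell^{\underline C_i}\le|n|\le\ell^{\overline C_i}\}$ chosen so that one of the scales produced lies in $[\oell/10,10\oell]$ (this is possible because $\oell$ is polynomially bounded by $N$, hence subexponential in $\ell$, matching the constraint $\max_{\cN}|n|<\exp(c\ell^\sigma)$ since $\ell\simeq(\log N)^{2\epsilon}$ makes $\exp(c\ell^\sigma)$ a large power of $N$). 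This yields an exceptional frequency set of measure $\lesssim\ell^{C}(\min|n|)^{-1}\le\exp(-(\log N)^{2\epsilon\cdot\,\text{(something)}})$, which we absorb into $\Omega_N$ after choosing the exponents so the bound is $<\exp(-(\log N)^{\epsilon\sigma})$; likewise feed $\oell$ into $N\le\exp(\oell^\alpha)$ as required by Proposition~\ref{prop:NDR-elimination}.

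Next, the dichotomy: for the fixed $x,\omega,E$ and a shift $m$ with $|m-m_0|\ge\oell+2\exp(\ell^2)$, either \eqref{eq:no-long-range-resonance} already holds at $x+(m-1)\omega$ (nothing to prove), or it fails, in which case by Corollary~\ref{cor:same-scale-NDR} (used for the \emph{pair} of phases $x+(m_0-1)\omega$ and $x+(m-1)\omega$, which is why we need the same-scale version) there is a common index $i$, hence a common scale $\oell\in[\ell^{\underline C_i},\ell^{\overline C_i}]\cap[\oell/10,10\oell]$, such that both relevant centered intervals $\Lambda_0,\Lambda_1$ of length $\simeq\oell$ are $(K,\ell)$-(NDR) with respect to $x+(m_0-1)\omega$ and $x+(m-1)\omega$ respectively, with $K=\ell^{\overline C_i}\le\oell^{(1-\tau)/10}$ after calibrating the constants. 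Here I would also use the hypothesis on $m_0$: the assumption that $f_\Lambda(x,\omega,E)$ is \emph{not} small on all intervals $\Lambda\ni$ (near) $m_0$ with $|\Lambda|\ge(\log N)^\epsilon$ forces, via Corollary~\ref{cor:same-scale-NDR} applied at scale $\ell$, that the relevant shifts near $m_0$ be non-resonant at scale $\ell$ — so actually only $x+(m-1)\omega$ can be scale-$\ell$-resonant, and the (NDR) structure around $m_0$ holds automatically; this is the step where the two inputs interlock. Then apply Proposition~\ref{prop:NDR-elimination} with $|m-m_0|\ge\exp(\ell^2)$: outside its exceptional sets $\Omega_{N,\oell}$ (measure $<2\exp(-\ell^2/4)$, summable, absorbed into $\Omega_N$) and $\cB_{N,\oell,\omega}$ (measure $<\exp(-\oell^\alpha)\le\exp(-\exp((\log N)^{3\epsilon\sigma\alpha/2}))$, summable over dyadic $\oell$ and absorbed into $\cB_{N,\omega}$ after adjusting $\epsilon\sigma$), we get that the max in \eqref{eq:elimination2} is positive, which by Lemma~\ref{lem:LDT-fail} (contrapositive) is exactly \eqref{eq:no-long-range-resonance} for one of the two intervals — and since the $m_0$-interval is non-resonant by the hypothesis, it must be the $m$-interval that satisfies the bound.

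Finally I would tidy the bookkeeping of exceptional sets: $\Omega_N$ is the union over dyadic $\oell$ and over the two roles of the $\Omega$'s from Corollary~\ref{cor:same-scale-NDR} and Proposition~\ref{prop:NDR-elimination}; $\cB_{N,\omega}$ is the union of the corresponding phase-exceptional sets together with the scale-$\ell$ bad set coming from the (NDR)-existence step. The main obstacle I anticipate is \textbf{the calibration of the chain of exponents}: one must choose the lacunary constants $\underline C_i,\overline C_i$, the intermediate scale $\ell\simeq(\log N)^{2\epsilon}$, and the lower cutoff $\oell\ge\exp((\log N)^{3\epsilon\sigma/2})$ so that \emph{simultaneously} (i) $K\le\oell^{(1-\tau)/10}$, (ii) $|m-m_0|\ge\exp(\ell^2)$ is compatible with $|m-m_0|\le N$ and with the $|t|$-window in Lemma~\ref{lem:eliminationa} (whence $N\le\exp(\oell^\alpha)$), (iii) the (NDR) interval sizes stay in $[\oell/10,10\oell]$, and (iv) every exceptional measure is $\le\exp(-(\log N)^{\epsilon\sigma})$ or $\le\exp(-\exp((\log N)^{\epsilon\sigma}))$ as required by the statement. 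None of the individual inequalities is hard, but threading all of them through a single consistent choice of constants (and verifying the $\epsilon<1/5$ constraint is what makes room for it) is the delicate part.
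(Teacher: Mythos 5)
Your overall architecture (produce same-scale (NDR) intervals via \cref{cor:same-scale-NDR}, eliminate double resonances via \cref{prop:NDR-elimination}, sum exceptional sets) is the paper's, but two concrete errors break the argument as written.

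First, you have inverted the hypothesis on $m_0$. The lemma assumes $\log|f_\Lambda(x,\omega,E)|\le|\Lambda|L_{|\Lambda|}(\omega,E)-|\Lambda|^{1-\tau/4}$ for \emph{all} admissible intervals $\Lambda$ around $m_0$, i.e.\ the determinants \emph{are} small there: $m_0$ is resonant at scale $\ell$ and at every candidate (NDR) scale. That is precisely what makes \cref{prop:NDR-elimination} bite: the $\Lambda_0$-term in \eqref{eq:elimination2} is nonpositive by hypothesis, so positivity of the max forces the $\Lambda_1$-term (around $m'$) to be positive. Your reading (``$f_\Lambda$ is not small near $m_0$, hence the shifts near $m_0$ are non-resonant'') turns the final deduction into a non sequitur: if the $m_0$-interval already satisfied the lower bound, the max being positive would give no information whatsoever about the other interval.

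Second, two structural points. (a) The lacunary sets must be exponential in a power of $\ell$, namely $\cN_k=\{n:\exp(\underline C_k\ell^{\sigma/2})\le|n|\le\exp(\overline C_k\ell^{\sigma/2})\}$, not polynomial $\ell^{\underline C_i}\le|n|\le\ell^{\overline C_i}$. With polynomial sets the frequency exceptional set from \cref{cor:same-scale-NDR} has measure $\simeq\ell^{C-\underline C_1}$, which with $\ell\simeq(\log N)^{2\epsilon}$ is only polylogarithmically small in $N$ and cannot meet $\mes(\Omega_N)<\exp(-(\log N)^{\epsilon\sigma})$; moreover the resulting (NDR) scales are polylogarithmic in $N$, nowhere near $\oell\ge\exp((\log N)^{3\epsilon\sigma/2})$, so ``choosing one of the scales in $[\oell/10,10\oell]$'' is impossible with $N$-independent constants. (b) More fundamentally, the (NDR) scales are deliberately kept $\le\oell^{\sigma/2}\ll\oell$: the bound \eqref{eq:no-long-range-resonance} at scale $\oell$ is not obtained from a single (NDR) interval of size $\simeq\oell$, but by running the dichotomy (non-resonant at scale $\ell$, or non-resonant at some scale $\overline\ell_{k(m')}$ by elimination against $m_0$) for \emph{every} point $m'$ of $[m,m+\oell-1]$ --- all of which lie at distance $\ge\exp(\ell^2)$ from $m_0$ thanks to $|m-m_0|\ge\oell+2\exp(\ell^2)$ --- and then invoking the covering form of (LDT) (\cref{lem:Greencoverap1}) to assemble the scale-$\oell$ estimate. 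This covering step, which is where the constraints $\max_{m'}|\Lambda(m')|\le\oell^{\sigma/2}$ and hence the lower bound on $\oell$ enter, is absent from your proposal; your dichotomy ``either \eqref{eq:no-long-range-resonance} already holds or it fails at $x+(m-1)\omega$'' conflates scale-$\ell$ resonance of one point with the scale-$\oell$ estimate on the whole window.
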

\begin{proof}
  With  $ q=2^{2d+1} $, let
  \begin{equation*}
    1\ll \underline  C_1\ll \overline C_1\ll \ldots \ll \underline C_{(q-1)^2}\ll \overline C_{(q-1)^2}
  \end{equation*}
  be constants such that the sets
  \begin{equation*}
    \cN_k= \{ n\in \Z : \exp(\underline C_k\ell^{\sigma/2})\le |n| \le \exp(\overline C_k \ell^{\sigma/2}) \}
  \end{equation*}
  satisfy the assumptions of Corollary~\ref{cor:same-scale-NDR}.
  Let
  \begin{equation*}
    \overline \ell_k=\lfloor\exp(\overline C_k \ell^{\sigma/2})\rfloor,
    \ K_k=\lceil\exp(\underline C_k \ell^{\sigma/2})\rceil.
  \end{equation*}
  We choose the constants $ \underline C_k,\overline C_k $ such that we also have
  $ K_k\le \overline \ell_k^{\frac{1-\tau}{10}} $ as needed for \cref{prop:NDR-elimination}.
  The conclusion will follow by
  choosing
  \begin{equation*}
    \Omega_N:=\Omega_{\ell} \cup \left( \bigcup_k \Omega_{N,\overline \ell_k} \right),\quad
    \cB_{N,\omega}:= \bigcup_k \cB_{N,\overline \ell_k,\omega},
  \end{equation*}
  where $ \Omega_{\ell} $ is the set from
  Corollary~\ref{cor:same-scale-NDR} and
  $ \Omega_{N,\overline \ell_k}, \cB_{N,\overline \ell_k,\omega}$ are the sets from
  Proposition~\ref{prop:NDR-elimination}.
  Note that
  \begin{gather*}
	\mes(\Omega_N)<\ell^{C}\exp(-\underline C_1 \ell^{1/2})+2(q-1)^2\exp(-\ell^2/4)<\exp(-\ell^{1/2})
    <\exp(-(\log N)^{\epsilon\sigma}),\\
    \mes(\cB_{N,\omega})<(q-1)^2\exp(-\overline \ell_1^{\alpha})<\exp(-\exp(\ell^{1/2}))<
    \exp(-\exp((\log N)^{\epsilon\sigma})),
  \end{gather*}
  provided we choose $ \underline C_1,\overline C_1 $ large enough.

  Let $ \Lambda_0=[m_0',m_0'']\subset [1,N] $, $ |\Lambda_0|=\ell $, be an interval such that
  $ \dist(m_0,[1,N]\setminus \Lambda_0)>|\Lambda_0|/100 $.
  Then by our assumptions
  \begin{equation*}
    \log|f_{\Lambda_0}(x,\omega,E)|=\log|f_{\ell}(x+(m_0'-1)\omega,\omega,E)|
    \le \ell L_\ell(\omega,E)-\ell^{1-\tau/2}.
  \end{equation*}
  By Corollary~\ref{cor:same-scale-NDR}, for each
  $ m'\in [1,N] $ we either have
  \begin{equation*}
    \log|f_\ell(x+(m'-1)\omega,\omega,E)|>\ell L_\ell(\omega,E)-\ell^{1-\tau/2}
  \end{equation*}
  or there exists $ k=k(m_0',m') $ such that $ [-\overline \ell_k,\overline \ell_k] $ is
  $ (K_k,\ell,1/2) $-(NDR) with
  respect to both $ x+(m_0'-1)\omega,\omega,E $ and $ x+(m'-1)\omega,\omega,E $.
  By our assumptions we must also have
  \begin{equation*}
    \log|f_\Lambda(x+(m_0'-1)\omega,\omega,E)|
    \le |\Lambda|L_{|\Lambda|}(\omega,E)-2|\Lambda|^{1-\tau/2},
  \end{equation*}
  where $ \Lambda $ is any of the intervals
  $ [-\overline \ell_k,\overline \ell_k]\cap([1,N]-m_0'+1) $.
  Note that by the definition of (NDR) intervals, if $ [-\overline \ell_k,\overline \ell_k] $ is
  (NDR) with respect to $ x+(m_0'-1)\omega,\omega,E $, then
  $ [-\overline \ell_k,\overline \ell_k]\cap([1,N]-m_0'+1) $ is also (NDR) with respect to
  $ x+(m_0'-1)\omega,\omega,E $. It
  follows from Proposition~\ref{prop:NDR-elimination} that for any
  $ m'\in [1,N] $ such that $ |m'-m_0'|\ge \exp(\ell^2) $, we have
  \begin{equation}\label{eq:lambda-m-lde}
    \log|f_{\Lambda(m')}(x+(m'-1)\omega,\omega,E)|
    > |\Lambda(m')|L_{|\Lambda(m')|}(\omega,E)-2|\Lambda(m')|^{1-\tau/2},
  \end{equation}
  where $ \Lambda(m') $ is either $ [1,\ell] $ or
  $ [-\overline \ell_k,\overline \ell_k] $, with $ k=k(m_0',m') $ as above.
  In addition, \cref{prop:NDR-elimination} guarantees that if
  $ \Lambda(m')=[-\overline \ell_k,\overline \ell_k] $, then  \eqref{eq:lambda-m-lde} also
  holds for any subintervals of $ [-\overline \ell_k,\overline \ell_k] $ with length
  greater than $   \overline \ell_k/10 $.

  Take $ m\in [1,N-\oell+1] $ such that $ |m-m_0|\ge \oell+2\exp(\ell^2) $.
  This choice of $ m $ guarantees that
  \begin{equation*}
    [m,m+\oell-1]\subset [1,N]\setminus \{ m' : |m'-m_0'|\ge \exp(\ell^2) \}.
  \end{equation*}
  So \eqref{eq:no-long-range-resonance} follows from \cref{eq:lambda-m-lde} and
  the covering form of (LDT) (see  Lemma~\ref{lem:Greencoverap1}). Indeed, each point of $ [m,m+\overline \ell-1] $
  is covered by an interval of the form
  $ \Lambda(m')\cap [m,m+\overline \ell-1] $ for some $ m'\in [m-1,m+\overline \ell-1-\ell] $,
  on which the large deviations estimate holds.  Note that we wanted
  to make sure that $ m'+[1,\ell]\subset [m,m+\overline \ell-1] $, because if
  a large deviations estimate holds on $ m'+[1,\ell] $, it
  does not necessarily hold on $ (m'+[1,\ell])\cap[m,m+\overline \ell-1] $. On
  the other hand, we already noted that when a large deviations estimate holds
  on $ m'+[-\overline \ell_k,\overline \ell_k] $ it also holds on
  $ (m'+[-\overline \ell_k,\overline \ell_k])\cap[m,m+\overline \ell-1] $.  Finally note that the lower bound
  on $ \oell $ is such that
  \begin{equation*}
    |\Lambda(m')|\le \oell_{(q-1)^2}\le \oell^{\sigma/2},
  \end{equation*}
  as needed for the covering form of (LDT).
\end{proof}

The following propositions  are more detailed versions of \cref{thm:A} and \cref{thm:B}.
\begin{prop}\label{prop:A}
  Let $ \epsilon\in(0,1/5) $, $ \gamma>0 $, $ \sigma $ as in (LDT), and $ \Omega_N $, $ \cB_{N,\omega} $
  as in \cref{lem:elimination}.
  For any $ N\ge N_0(V,a,b,\gamma,\epsilon) $, $ \omega_0\in \T^d(a,b)\setminus \Omega_N $,
  $ x_0\in \T^d\setminus \cB_{N,\omega} $,
  and any eigenvalue $ E_0=E_j^{(N)}(x_0,\omega_0) $, such that $ L(\omega_0,E_0)>\gamma $,
  there exists an interval $ I=I (x_0, \omega_0,E_0)\subset [1,N] $,
  \begin{equation*}
    |I|<\exp((\log N)^{5\epsilon}),
  \end{equation*}
  such that for any $ (x,\omega)\in \T^d\times \T^d(a,b) $, with
  $$|x - x_0|,\ |\omega - \omega_0|< \exp(-\exp((\log N)^{2\epsilon\sigma})),$$ we have
  \begin{equation*}
    \left| \psi_j^{(N)}(x,\omega;m) \right|< \exp\left(-\frac{\gamma}{4}\dist(m,I)\right),
  \end{equation*}
  provided  $ \dist(m,I)> \exp((\log N)^{2\epsilon\sigma})$.
\end{prop}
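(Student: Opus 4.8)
The plan is to read off \cref{prop:A} from \cref{lem:elimination} and \cref{prop:NDRloc1}, the only real work being to exhibit, for the given eigenpair, a single bad point $m_0\in[1,N]$ around which all small-scale resonances of $f_{\cdot}(x_0,\omega_0,E_0)$ are concentrated. Fix $x_0,\omega_0,E_0=E_j^{(N)}(x_0,\omega_0)$ as in the statement, so $E_0\in\spec H_N(x_0,\omega_0)$, $E_0\in\R$, and $L(\omega_0,E_0)>\gamma$. Read contrapositively at $(x_0,\omega_0,E_0)$, the covering form of (LDT) (\cref{lem:Greencoverap1}) forbids the existence of intervals $I_m\subset[1,N]$, one for each $m\in[1,N]$, with $\dist(m,[1,N]\setminus I_m)\ge|I_m|/100$, $|I_m|\ge\ell_0(V,a,b,|E_0|,\gamma)$, and $\log|f_{I_m}(x_0,\omega_0,E_0)|>|I_m|L_{|I_m|}(\omega_0,E_0)-|I_m|^{1-\tau/4}$, since such a family would force $\dist(E_0,\spec H_N(x_0,\omega_0))>0$. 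Hence there is $m_0\in[1,N]$ with $\log|f_\Lambda(x_0,\omega_0,E_0)|\le|\Lambda|L_{|\Lambda|}(\omega_0,E_0)-|\Lambda|^{1-\tau/4}$ for every interval $\Lambda\subset[1,N]$ with $\dist(m_0,[1,N]\setminus\Lambda)\ge|\Lambda|/100$ and $|\Lambda|\ge\ell_0$. Because the eigenvalues of $H_N$ lie in a fixed bounded set, $\ell_0$ is bounded and $(\log N)^\epsilon\ge\ell_0$ for $N$ large, so this $m_0$ satisfies the hypothesis of \cref{lem:elimination} at $(x_0,\omega_0,E_0)$.

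Next, put $\ell=\lceil(\log N)^{2\epsilon}\rceil$ and $\oell=\lceil\exp((\log N)^{3\epsilon\sigma/2})\rceil$. Since $\omega_0\notin\Omega_N$ and $x_0\notin\cB_{N,\omega_0}$, \cref{lem:elimination} gives $\log|f_{\oell}(x_0+(m-1)\omega_0,\omega_0,E_0)|>\oell L_{\oell}(\omega_0,E_0)-\oell^{1-\tau/2}$ for every $m\in[1,N-\oell+1]$ with $|m-m_0|\ge\oell+2\exp(\ell^2)$. Define $I=[\max(1,m_0-\oell-2\exp(\ell^2)),\ \min(N,m_0+\oell+2\exp(\ell^2))]\subset[1,N]$. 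Then $|I|\le 2\oell+4\exp(\ell^2)+1<\exp((\log N)^{5\epsilon})$ for $N$ large, because $3\epsilon\sigma/2<3\epsilon/2<5\epsilon$ and $\ell^2\simeq(\log N)^{4\epsilon}$ with $4\epsilon<5\epsilon$. Every $m\in[1,N-\oell+1]\setminus I$ satisfies $|m-m_0|>\oell+2\exp(\ell^2)$, and since $\oell^{1-\tau/2}\le\oell^{1-\tau/4}$ the bound above is precisely condition \cref{eq:fellN1N2nores} of \cref{prop:NDRloc1}, with small scale $\oell$, interval $I$, and parameters $x_0,\omega_0,E_0$.

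It remains to invoke \cref{prop:NDRloc1} and match the radii. Its admissibility requirement $\ell_0(V,a,b,|E_0|,\gamma)\le\oell\le N^{\sigma/2}$ holds for $N$ large, since $\oell\to\infty$ and $(\log N)^{3\epsilon\sigma/2}<(\sigma/2)\log N$ eventually. Thus, whenever $(x,\omega)\in\T^d\times\T^d(a,b)$ with $|x-x_0|,|\omega-\omega_0|<\exp(-\oell)$ and whenever an eigenvalue satisfies $|E_k^{(N)}(x,\omega)-E_0|<\exp(-\oell)$, we get $|\psi_k^{(N)}(x,\omega;n)|<\exp(-\tfrac\gamma4\dist(n,I))$ for $\dist(n,I)\ge\oell^{2/\sigma}$. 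Now suppose $|x-x_0|,|\omega-\omega_0|<\exp(-\exp((\log N)^{2\epsilon\sigma}))$: since $2\epsilon\sigma>3\epsilon\sigma/2$, for $N$ large these radii are below $\exp(-\oell)$, and $|E_j^{(N)}(x,\omega)-E_0|\le\|H_N(x,\omega)-H_N(x_0,\omega_0)\|\le C(V)(1+N)\exp(-\exp((\log N)^{2\epsilon\sigma}))<\exp(-\oell)$ as well; moreover $\exp((\log N)^{2\epsilon\sigma})\ge\oell^{2/\sigma}$ for $N$ large (compare $(\log N)^{2\epsilon\sigma}$ with $\tfrac2\sigma(\log N)^{3\epsilon\sigma/2}$). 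Applying the previous display to the $j$-th eigenvalue $E_j^{(N)}(x,\omega)$ then yields $|\psi_j^{(N)}(x,\omega;m)|<\exp(-\tfrac\gamma4\dist(m,I))$ whenever $\dist(m,I)>\exp((\log N)^{2\epsilon\sigma})$, which is the assertion; the exceptional sets $\Omega_N,\cB_{N,\omega}$ are exactly those furnished by \cref{lem:elimination}.

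All the analytic content sits inside \cref{lem:elimination} and \cref{prop:NDRloc1}, so the proof of \cref{prop:A} is essentially an assembly. The one step that needs care is the bookkeeping of the three scales $\ell\ll\oell\ll N^{\sigma/2}$ against the neighborhood radius $\exp(-\exp((\log N)^{2\epsilon\sigma}))$: the choice $\oell\simeq\exp((\log N)^{3\epsilon\sigma/2})$ is forced, being simultaneously large enough to feed \cref{lem:elimination} and small enough that $\oell^{2/\sigma}\le\exp((\log N)^{2\epsilon\sigma})$ and $|I|<\exp((\log N)^{5\epsilon})$. No other difficulty is expected.
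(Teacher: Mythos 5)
Your proof is correct and follows essentially the same route as the paper: locate a bad site $m_0$, feed it to \cref{lem:elimination} with $\ell=\lceil(\log N)^{2\epsilon}\rceil$, $\oell=\lceil\exp((\log N)^{3\epsilon\sigma/2})\rceil$, and conclude via \cref{prop:NDRloc1}. The only (immaterial) difference is how $m_0$ is produced: you contrapose the covering form of (LDT), \cref{lem:Greencoverap1}, against the fact that $E_0$ is an eigenvalue, whereas the paper takes $m_0$ to be the maximizer of $|\psi|$ and contradicts maximality via Poisson's formula and \cref{lem:Green} --- two phrasings of the same mechanism.
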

\begin{proof}
  Let  $ \psi $  a choice of normalized
  eigenvector for $ E_0 $ and let $ m_0 $ be such that
  \begin{equation*}
    |\psi(m_0)|=\max_{m\in [1,N]}|\psi(m)|.
  \end{equation*}
  Then $ m_0 $ satisfies the assumptions of \cref{lem:elimination}, because otherwise we can use Poisson's formula
  \cref{eq:poissonC}
  and \cref{lem:Green} to contradict the maximality of $ |\psi(m_0)| $. So, if we let
  \begin{equation*}
    \oell= \lceil \exp((\log N)^{3\epsilon\sigma/2})\rceil,
  \end{equation*}
  by \cref{lem:elimination},
  \cref{eq:no-long-range-resonance} holds for $ m\in [1,N]\setminus I $,
  $ I=[1,N]\cap[m_0-\oell-2\exp(\ell^2),m_0+\oell+2\exp(\ell^2)] $,
  $ \ell=\lceil (\log N)^{2\epsilon}\rceil $.
  The conclusion follows immediately from \cref{prop:NDRloc1} (with $ \oell $ instead of $ \ell $).
\end{proof}

\begin{prop}\label{prop:B}
  Let $ \epsilon\in(0,1/5) $, $ \gamma>0 $, and $ \Omega_N $, $ \cB_{N,\omega} $ as in \cref{prop:A}.
  For any $ N\ge N_0(V,a,b,\gamma,\epsilon) $, $ \omega_0\in \T^d(a,b)\setminus \Omega_N $,
  $ x_0\in \T^d\setminus \cB_{N,\omega} $,
  and any eigenvalue $ E_0=E_j^{(N)}(x_0,\omega_0) $, such that $ L(\omega_0,E_0)>\gamma $, if we take
  $ I=I(x_0,\omega_0,E_0) $ as in \cref{prop:A}, then
  \begin{equation*}
    |E_k^{(N)}(x,\omega)-E_{j}^{(N)}(x,\omega)|>\exp(-C(V)|I|),
  \end{equation*}
  for any $ k\neq j $ and any $ (x,\omega)\in \T^d\times \T^d(a,b) $, with
  $$|x - x_0|,\ |\omega - \omega_0|< \exp(-\exp((\log N)^{2\epsilon\sigma})).$$
\end{prop}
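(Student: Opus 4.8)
The plan is to mimic the proof of \cref{prop:A} almost verbatim, replacing the appeal to \cref{prop:NDRloc1} (localization under no long-range resonances) by an appeal to \cref{prop:Ej_NDRsep} (separation of eigenvalues under the same hypothesis). Fix $x_0,\omega_0,E_0=E_j^{(N)}(x_0,\omega_0)$ as in the statement; note $E_0\in\R$ since $H_N(x_0,\omega_0)$ is Hermitian. Let $\psi=\psi_j^{(N)}(x_0,\omega_0)$ and choose $m_0\in[1,N]$ with $|\psi(m_0)|=\max_m|\psi(m)|$. First I would argue, exactly as in \cref{prop:A}, that $m_0$ satisfies the hypothesis of \cref{lem:elimination}: if some interval $\Lambda\subset[1,N]$ with $\dist(m_0,[1,N]\setminus\Lambda)>|\Lambda|/100$, $|\Lambda|\ge(\log N)^{\epsilon}$, satisfied $\log|f_\Lambda(x_0,\omega_0,E_0)|>|\Lambda|L_{|\Lambda|}(\omega_0,E_0)-|\Lambda|^{1-\tau/4}$, then $E_0\notin\spec H_\Lambda(x_0,\omega_0)$, \cref{lem:Green} would force decay of $\cG_\Lambda(x_0,\omega_0,E_0)$, and Poisson's formula \cref{eq:poissonC} on $\Lambda$ together with the maximality of $|\psi(m_0)|$ would give $|\psi(m_0)|<|\psi(m_0)|$, a contradiction.

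Granting this, set $\ell=\lceil(\log N)^{2\epsilon}\rceil$ and $\oell=\lceil\exp((\log N)^{3\epsilon\sigma/2})\rceil$, and let $\Omega_N$, $\cB_{N,\omega}$ be the sets supplied by \cref{lem:elimination}. For $\omega_0\notin\Omega_N$, $x_0\notin\cB_{N,\omega_0}$, \cref{lem:elimination} yields \cref{eq:no-long-range-resonance} at scale $\oell$ for every $m\in[1,N-\oell+1]$ with $|m-m_0|\ge\oell+2\exp(\ell^2)$. Equivalently, with $I=[1,N]\cap[m_0-\oell-2\exp(\ell^2),\,m_0+\oell+2\exp(\ell^2)]$ --- the same interval produced in \cref{prop:A}, which obeys $|I|<\exp((\log N)^{5\epsilon})$ --- the condition \cref{eq:fellN1N2nores} holds with $\oell$ in place of $\ell$ and base point $x_0,\omega_0,E_0$, using $\oell^{1-\tau/2}<\oell^{1-\tau/4}$ to weaken the exponent in \cref{eq:no-long-range-resonance} to the one required there.

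Next I would verify the remaining numerical hypotheses of \cref{prop:Ej_NDRsep} with $\oell$ for $\ell$: that $\ell_0(V,a,b,|E_0|,\gamma)\le\oell\le N^{\sigma}$, which holds for $N$ large because $3\epsilon\sigma/2<1$; and that $|I|\ge\oell^{2/\sigma}+\log N$, which holds since $|I|\ge 2\exp(\ell^2)=2\exp((\log N)^{4\epsilon}(1+o(1)))$ while $\oell^{2/\sigma}=\exp(\tfrac2\sigma(\log N)^{3\epsilon\sigma/2})$ and $4\epsilon>3\epsilon\sigma/2$ because $\sigma<1$. Finally, since $2\epsilon\sigma>3\epsilon\sigma/2$, the neighborhood $|x-x_0|,|\omega-\omega_0|<\exp(-\exp((\log N)^{2\epsilon\sigma}))$ of the statement is contained in $\{|x-x_0|,|\omega-\omega_0|<\exp(-\oell)\}$, and by Weyl's perturbation inequality $|E_j^{(N)}(x,\omega)-E_0|\le\|H_N(x,\omega)-H_N(x_0,\omega_0)\|\le C(V)(N+1)\exp(-\exp((\log N)^{2\epsilon\sigma}))<\exp(-\oell)/2$. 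Thus \cref{prop:Ej_NDRsep} applies and gives $|E_j^{(N)}(x,\omega)-E_k^{(N)}(x,\omega)|\ge\exp(-C|I|)$ for all $k\ne j$, with $C=C(V,|E_0|)=C(V)$ since $E_0$ lies in the bounded set $\bigcup_{x,\omega}\spec H_N(x,\omega)$; this is exactly the claimed bound.

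I do not anticipate a real obstacle: all the substance --- elimination of multiple resonances, the (NDR) mechanism, the resultant/Cartan argument, and the Harnack-based separation estimate --- is already packaged in \cref{lem:elimination} and \cref{prop:Ej_NDRsep}. The only delicate point is the bookkeeping of the three successive scales $(\log N)^{\epsilon}\ll\ell\ll\oell\ll|I|\ll\exp((\log N)^{5\epsilon})$ and checking that the perturbation radius $\exp(-\exp((\log N)^{2\epsilon\sigma}))$ is small enough to feed into \cref{prop:Ej_NDRsep}; all of this reduces to the inequalities $3\epsilon\sigma/2<2\epsilon\sigma$, $3\epsilon\sigma/2<4\epsilon$, and $\epsilon<1/5$, valid throughout the stated range.
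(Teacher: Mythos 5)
Your proposal is correct and follows essentially the same route as the paper: the paper's proof of \cref{prop:B} likewise reuses the output of the proof of \cref{prop:A} (namely that \cref{eq:no-long-range-resonance} holds off the interval $I$) and then invokes \cref{prop:Ej_NDRsep} with $\oell$ in place of $\ell$, noting that $C$ can be taken independent of $E_0$ since $E_0$ lies in the (bounded) spectrum. Your additional bookkeeping of the scale inequalities and the Weyl-perturbation step for $|E_j^{(N)}(x,\omega)-E_0|$ is accurate and merely makes explicit what the paper leaves implicit.
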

\begin{proof}
  From the proof of \cref{prop:A} we know that
  \eqref{eq:no-long-range-resonance} holds
  for $ m\in [1,N]\setminus I $,
  $ I=[1,N]\cap[m_0-\oell-2\exp(\ell^2),m_0+\oell+2\exp(\ell^2)] $,
  $ \ell=\lceil (\log N)^{2\epsilon}\rceil $.
  The conclusion
  follows from Proposition~\ref{prop:Ej_NDRsep} (with $ \overline \ell $ instead of $ \ell $). Since $ E_0 $
  is restricted to the spectrum, we can choose the constant $ C $ from Proposition~\ref{prop:Ej_NDRsep}
  independent of $ E_0 $.
\end{proof}

Next we establish two auxilliary results needed for the proof of \cref{thm:C}.
\begin{lemma}\label{lem:C}
  Let $ \epsilon\in (0,1/5) $, $ \gamma>0 $ and $ \hat \Omega_N $, $ \hat\cB_{N,\omega} $ as in \cref{thm:C}.
  For
  $ N\ge N_0(V,a,b,\gamma,\epsilon) $, $ \omega\in \T^d(a,b)\setminus \hat\Omega_{N} $,
  $ x\in\T^d\setminus \hat \cB_{N,\omega} $,
  if
  \begin{equation*}
    L(\omega,E_j^{[-N,N]}(x,\omega))>\frac{3\gamma}{2}
    \text{ and }I=I(x,\omega,E_j^{[-N,N]}(x,\omega))\subset [-N/2,N/2]
  \end{equation*}
  then for any $ N\le N'\le \exp((\log N)^{\frac{1}{10\epsilon}}) $ there exists
  $ j_{N'} $,  such that
  \begin{gather}
    \label{eq:EN'}
	\left| E_{j_{N'}}^{[-N',N']}(x,\omega)-E_{j}^{[-N,N]}(x,\omega) \right|
    <\exp \left( -\frac{\gamma}{9}N \right), \\
    \label{eq:psiN'}
    \norm{\psi_{j_{N'}}^{[-N',N']}(x,\omega)-\psi_j^{[-N,N]}(x,\omega)}
    < \exp \left( -\frac{\gamma}{9}N \right).
  \end{gather}
  Furthermore, $ E_{j_{N'}}^{[-N',N']}(x,\omega) $ is localized,
  $ I(x,\omega,E_{j_{N'}}^{[-N',N']}(x,\omega))\subset [-3N/4,3N/4] $, and
  \begin{equation*}
    \left| \psi_{j_{N'}}^{[-N',N']}(x,\omega;n) \right|<\exp \left( -\frac{\gamma}{17}\dist(n,I) \right),
    \ N\le |n|\le N'.
  \end{equation*}
\end{lemma}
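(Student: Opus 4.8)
The plan is to pass from scale $N$ to scale $N'$ by using the fact that the eigenfunction $\psi_j^{[-N,N]}(x,\omega)$ localized in $I\subset[-N/2,N/2]$ is exponentially small near the edges $\pm N$, hence is an approximate eigenfunction on the larger interval $[-N',N']$, and then invoking the perturbation-theoretic stabilization results from \cref{sec:basic-tools}. First I would apply \cref{prop:A} (the detailed form of \cref{thm:A}) at scale $N$ on the interval $[-N,N]$: since $x\notin\hat\cB_{N,\omega}\supseteq\cB_{N,\omega}$ and $L(\omega,E_j^{[-N,N]}(x,\omega))>3\gamma/2>\gamma$, we get $|\psi_j^{[-N,N]}(x,\omega;m)|<\exp(-\tfrac{\gamma}{4}\dist(m,I))$ for $\dist(m,I)>\exp((\log N)^{2\epsilon\sigma})$, and with $I\subset[-N/2,N/2]$ this gives $|\psi_j^{[-N,N]}(x,\omega;\pm N)|<\exp(-\tfrac{\gamma}{8}N)$, say. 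Extending $\psi_j^{[-N,N]}(x,\omega)$ by zero to $[-N',N']$ and applying \cref{lem:eigenvector-stability}(a) (via \cref{cor:localglob1}) shows there is an eigenvalue $E_{j_{N'}}^{[-N',N']}(x,\omega)$ within $\exp(-\tfrac{\gamma}{8}N)\sqrt2$ of $E_j^{[-N,N]}(x,\omega)$; this already gives something close to \cref{eq:EN'} after adjusting the constant from $\gamma/8$ to $\gamma/9$.

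For the eigenvector estimate \cref{eq:psiN'} I would use \cref{lem:eigenvector-stability}(b), which requires a one-dimensionality (separation) hypothesis near $E_j^{[-N,N]}(x,\omega)$ at scale $N'$. This is where \cref{prop:B} (the detailed \cref{thm:B}) enters: it gives $|E_k^{[-N,N]}(x,\omega)-E_j^{[-N,N]}(x,\omega)|>\exp(-C(V)|I|)$ at scale $N$, and since $|I|<\exp((\log N)^{5\epsilon})$ while $N'\le\exp((\log N)^{1/(10\epsilon)})$, this gap is much larger than $\exp(-\tfrac{\gamma}{9}N)$. To transfer this separation to scale $N'$ one runs \cref{lem:elimination}/\cref{prop:NDRloc1} at scale $N'$: the hypotheses on $\omega,x$ (membership in the unions $\hat\Omega,\hat\cB$) are exactly designed so that the localization and separation machinery applies simultaneously at all scales $N\le N'\le\exp((\log N)^{1/(10\epsilon)})$. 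This yields that the only eigenvalue of $H_{[-N',N']}(x,\omega)$ in a window of size $\gg\exp(-\tfrac{\gamma}{9}N)$ around $E_j^{[-N,N]}(x,\omega)$ is $E_{j_{N'}}^{[-N',N']}(x,\omega)$ itself, so \cref{lem:eigenvector-stability}(b) applies with $\eta$ comparable to that window and $\epsilon=\exp(-\tfrac{\gamma}{8}N)$, giving \cref{eq:psiN'}.

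Finally, for the localization of $E_{j_{N'}}^{[-N',N']}(x,\omega)$ and the decay estimate on $\psi_{j_{N'}}^{[-N',N']}(x,\omega;n)$ for $N\le|n|\le N'$: the proximity \cref{eq:psiN'} forces $\psi_{j_{N'}}^{[-N',N']}(x,\omega)$ to be concentrated on $I$, so its localization interval $I(x,\omega,E_{j_{N'}}^{[-N',N']}(x,\omega))$ produced by \cref{prop:A} at scale $N'$ must be contained in, say, $[-3N/4,3N/4]$ (if it were not, the eigenfunction would carry non-negligible mass outside $[-N/2,N/2]$, contradicting \cref{eq:psiN'} and the fact that $\psi_j^{[-N,N]}(x,\omega)$ is tiny there). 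Then \cref{prop:A} at scale $N'$ gives $|\psi_{j_{N'}}^{[-N',N']}(x,\omega;n)|<\exp(-\tfrac{\gamma}{4}\dist(n,I(x,\omega,E_{j_{N'}}^{[-N',N']}(x,\omega))))$; combining with $I\subset I(x,\omega,E_{j_{N'}})^{(N/4)}$ (both sit inside $[-3N/4,3N/4]$, so they differ by at most $\sim 3N/2$ in position but the decay rate $\gamma/4$ absorbs the difference down to $\gamma/17$ for $|n|\ge N$) yields the stated bound with the slightly worse rate $\gamma/17$.

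The main obstacle I expect is the bookkeeping in the last step: one must be careful that the localization interval at scale $N'$ does not drift outside $[-3N/4,3N/4]$, and that the decay rate is preserved well enough (the downgrades $\gamma/4\to\gamma/9\to\gamma/17$ are there precisely to absorb the constant losses from: the zero-extension approximation error $\exp(-\tfrac{\gamma}{8}N)$, the passage from $E_j^{[-N,N]}$ to $E_{j_{N'}}^{[-N',N']}$ in the decay estimate via the transfer matrix / Poisson formula, and the difference between the two localization intervals). Making these constant-chasing arguments precise — while keeping track of which of $\hat\Omega_N,\hat\cB_{N,\omega}$ removes the bad frequencies/phases at \emph{each} intermediate scale $N'$ — is the delicate but essentially routine part.
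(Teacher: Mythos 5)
Your proposal follows essentially the same route as the paper's proof: edge decay from \cref{thm:A} at scale $N$, zero extension plus \cref{lem:eigenvector-stability} to produce $j_{N'}$ and the estimates \cref{eq:EN'}--\cref{eq:psiN'}, and a mass-concentration argument to force $I(x,\omega,E_{j_{N'}}^{[-N',N']}(x,\omega))\subset[-3N/4,3N/4]$ before reading off the decay from \cref{thm:A} at scale $N'$. The only cosmetic difference is that the separation hypothesis for part (b) of \cref{lem:eigenvector-stability} is not obtained by ``transferring'' the scale-$N$ separation: the paper simply applies \cref{thm:B} directly at scale $N'$ to $E_{j_{N'}}^{[-N',N']}(x,\omega)$ (after \cref{prop:log-Hoelder} guarantees $L>\gamma$ there), which is what your remark about $\hat\Omega_N,\hat\cB_{N,\omega}$ covering all scales already amounts to.
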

\begin{proof}
  By \cref{thm:A},
  \begin{equation*}
    \left| \psi_j^{[-N,N]}(x,\omega;\pm N) \right|<\exp
    \left( -\frac{\gamma}{4} \left( \frac{N}{2}-1 \right)\right).
  \end{equation*}
  Then
  \begin{equation*}
    \norm{(H_{[-N',N']}(x,\omega)-E_j^{[-N,N]}(x,\omega))\psi_j^{[-N,N]}(x,\omega)}<
    2\exp\left( -\frac{\gamma}{4} \left( \frac{N}{2}-1 \right)\right)
  \end{equation*}
  and applying \cref{lem:eigenvector-stability} (with the aid of \cref{thm:B}) we get that there exists $ j_{N'} $
  such that \cref{eq:EN'} and \cref{eq:psiN'} hold.
  Invoking \cref{prop:log-Hoelder} we have  $$ L(\omega,E_{j_{N'}}^{[-N',N']}(x,\omega))>\gamma $$ and therefore
  $ E_{j_{N'}}^{[-N',N']}(x,\omega) $ is localized. Using \cref{eq:psiN'} and $ I\subset [-N/2,N/2] $ we get
  \begin{equation*}
    \sum_{|n|\le 5N/9} \left| \psi_{j_{N'}}^{[-N',N']}(x,\omega;n) \right|^2>1/2.
  \end{equation*}
  It follows from \cref{thm:A} (arguing by contradiction) that
  \begin{equation*}
    I(x,\omega,E_{j_{N'}}^{[-N',N']}(x,\omega))\subset [-3N/4,3N/4],
  \end{equation*}
  and
  \begin{align*}
    \left| \psi_{j_{N'}}^{[-N',N']}(x,\omega;n) \right|&<\exp \left( -\frac{\gamma}{4}\dist(n,[-3N/4,3N/4]) \right)\\
    &<\exp \left( -\frac{\gamma}{17}\dist(n,I) \right),
  \end{align*}
  for any   $N\le |n|\le N'.$
\end{proof}

\begin{lemma}\label{lem:Cb}
  Let $ \epsilon\in (0,1/5) $, $ \gamma>0 $ and $ \hat \Omega_N $, $ \hat\cB_{N,\omega} $ as in \cref{thm:C}.
  The following statement holds for any
  $ N_0\ge C(V,a,b,\gamma,\epsilon) $, $ \omega\in \T^d(a,b)\setminus \hat\Omega_{N_0} $,
  $ x\in\T^d\setminus \hat \cB_{N_0,\omega} $.
  If $ E,\psi $ is a generalized eigenpair for $ H(x,\omega) $, that is
  \begin{equation*}
    H(x,\omega)\psi=E\psi,\quad \psi\not \equiv 0,\quad |\psi(m)|<C(\psi)(|m|+1),\ m\in \Z,
  \end{equation*}
  and $ L(\omega,E)>\gamma  $, then
  \begin{equation*}
    |\psi(m)|<\exp \left( -\frac{\gamma}{4}|m| \right),\ |m|\ge C(\psi,V,a,b,\gamma).
  \end{equation*}
\end{lemma}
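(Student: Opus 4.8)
The plan is to reduce the statement to the finite-volume localization result \cref{thm:C} by producing, out of the given generalized eigenpair $E,\psi$, a genuine Dirichlet eigenpair on a large finite box $[-N,N]$ whose eigenvalue is localized away from the edges, and then invoking \cref{prop:A} and \cref{prop:NDRloc1}. First I would fix $N_0$ large and observe that, since $\psi$ is polynomially bounded and not identically zero, there is some scale at which $\psi$ carries a definite fraction of its ``mass'' near the origin: more precisely, normalizing $\psi$ on a window $[-N,N]$ and using $H(x,\omega)\psi = E\psi$, the vector $\phi_N := \psi|_{[-N,N]}/\|\psi|_{[-N,N]}\|$ satisfies $\|(H_{[-N,N]}(x,\omega)-E)\phi_N\| \le (|\psi(N+1)|+|\psi(-N-1)|)/\|\psi|_{[-N,N]}\|$, and by the polynomial bound together with the fact that $\|\psi|_{[-N,N]}\|\to\infty$ (as $\psi\not\equiv 0$ and $\psi\in\ell^2$ would force decay, but here $\psi$ is a generalized eigenfunction so $\|\psi|_{[-N,N]}\|$ grows at least like a constant), this quantity is $\le N^{C}$ and can be made $<\exp(-\exp((\log N)^{2\epsilon\sigma}))$-type small only after we know more. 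The cleaner route: pick $N$ so that $|\psi|$ attains its maximum over $[-N/2,N/2]$ at some $m_0$ with $|\psi(m_0)| = \max_{|m|\le N/2}|\psi(m)|$, and use \cref{lem:Green} together with Poisson's formula \cref{eq:poissonC} to show $m_0$ cannot be too close to $\pm N/2$: if it were, the Green's function decay at scale $N$ around $m_0$ would contradict the maximality. This is exactly the mechanism used in the proof of \cref{prop:A}.

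The key steps, in order: (i) Fix a large $N\ge N_0$ (to be sent to $\infty$ at the end) and let $m_0\in[-N,N]$ be a point where $|\psi(m)|$ restricted to $[-N,N]$ is maximal. Argue, via \cref{lem:elimination} applied at phase $x+m\omega$ and the Poisson/Green estimates as in \cref{prop:A}, that $m_0$ is forced to lie in $[-N/2,N/2]$ for $N$ large (depending on $\psi$ through $C(\psi)$), since otherwise the polynomially-controlled boundary values $\psi(\pm(N+1))$ together with exponential Green's-function decay would make $|\psi(m_0)|$ smaller than $|\psi|$ at an interior point. (ii) Form $\phi = \psi|_{[-N,N]}/\|\psi|_{[-N,N]}\|$; then $\|(H_{[-N,N]}(x,\omega)-E)\phi\| \le 2\,C(\psi)(N+2)/\|\psi|_{[-N,N]}\| =: \varepsilon_N$. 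Since $\|\psi|_{[-N,N]}\|\ge |\psi(m_0)|\ge |\psi(0)|>0$ and in fact grows, $\varepsilon_N$ is at most polynomial in $N$; more importantly, using (i) one gets that $\phi$ has a definite fraction of its norm in $[-N/2,N/2]$. (iii) Apply \cref{lem:eigenvector-stability}(a), together with the level-repulsion from \cref{prop:B} (valid since $L(\omega,E)>\gamma$ and $E\in\cS_\omega$) to upgrade to part (b), obtaining a Dirichlet eigenvalue $E_{j}^{[-N,N]}(x,\omega)$ with $|E_j^{[-N,N]}(x,\omega)-E|$ small and $\|\psi_j^{[-N,N]}(x,\omega)-\phi\|$ small. (iv) Since $\phi$ (hence $\psi_j^{[-N,N]}$) is concentrated in $[-N/2,N/2]$, \cref{prop:A} (equivalently \cref{thm:A}) forces $I(x,\omega,E_j^{[-N,N]}(x,\omega))\subset[-3N/4,3N/4]$ with $|I|<\exp((\log N)^{5\epsilon})$, and gives $|\psi_j^{[-N,N]}(x,\omega;m)|<\exp(-\tfrac{\gamma}{4}\operatorname{dist}(m,I))$ for $\operatorname{dist}(m,I)>\exp((\log N)^{2\epsilon\sigma})$. (v) Transfer this decay back to $\psi$: on the range $\exp((\log N)^{2\epsilon\sigma})\le |m|\le N/2$ we have $|\psi(m)| \le \|\psi|_{[-N,N]}\|\big(|\psi_j^{[-N,N]}(x,\omega;m)| + \|\psi_j^{[-N,N]}-\phi\|\big)$; the first term is $\le \exp(-\tfrac{\gamma}{4}|m|)\cdot(\text{normalization})$ — here one must also control $\|\psi|_{[-N,N]}\|$, which is where a slightly more careful bookkeeping than $\gamma/4$ versus $\gamma/4$ is needed, so one proves the bound with exponent $\gamma/4$ on the left and absorbs the polynomial normalization factor into the regime $|m|\ge C(\psi,V,a,b,\gamma)$. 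Letting $N\to\infty$ along a suitable subsequence then yields the claimed bound $|\psi(m)|<\exp(-\tfrac{\gamma}{4}|m|)$ for all $|m|\ge C(\psi,V,a,b,\gamma)$.

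The main obstacle I anticipate is bookkeeping the normalization factor $\|\psi|_{[-N,N]}\|$ cleanly. The decay estimate from \cref{thm:A} is stated for a unit eigenvector at scale $N$, but $\psi$ itself is only polynomially bounded and its norm on $[-N,N]$ may grow; one needs that this growth is at most polynomial (clear from $|\psi(m)|<C(\psi)(|m|+1)$, giving $\|\psi|_{[-N,N]}\|\lesssim C(\psi)N^{3/2}$) and then that, for $|m|\ge C(\psi,V,a,b,\gamma)$ and $N$ chosen so that $\exp((\log N)^{2\epsilon\sigma})\le |m|$, the exponential factor $\exp(-\tfrac{\gamma}{4}|m|)$ beats the polynomial prefactor while leaving a clean exponent $\gamma/4$. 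A secondary subtlety is ensuring that the ``small enough'' constants $\exp(-\exp((\log N)^{2\epsilon\sigma}))$ governing the $x,\omega$-proximity in \cref{prop:A} are irrelevant here, since we apply everything at the \emph{exact} $(x,\omega)$ rather than a perturbation — this is fine, as $(x,\omega)\notin(\hat\Omega_{N_0}\cup\hat\cB_{N_0,\omega})$ precisely gives the conclusion of \cref{thm:A} at $(x,\omega,E)$ for every $N\ge N_0$. The remaining steps are routine applications of Poisson's formula, \cref{lem:Green}, \cref{lem:eigenvector-stability}, and \cref{thm:A,thm:B}.
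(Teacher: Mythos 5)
Your reduction to finite-volume eigenpairs has a genuine circularity that I do not see how to repair. The error in the approximate eigenvalue equation for $\phi=\psi|_{[-N,N]}/\norm{\psi|_{[-N,N]}}$ is $\varepsilon_N=\bigl(|\psi(N+1)|^2+|\psi(-N-1)|^2\bigr)^{1/2}/\norm{\psi|_{[-N,N]}}$, and a priori this is only polynomially bounded in $N$ — it is small only if the boundary values of $\psi$ are already small relative to its mass, which is essentially the conclusion you are trying to prove. With $\varepsilon_N$ of polynomial size, \cref{lem:eigenvector-stability}(a) locates an eigenvalue only to within $O(\varepsilon_N)$, which is useless, and part (b) requires an eigenvalue separation $\eta>\varepsilon_N$, whereas the separation supplied by \cref{thm:B} is $\exp(-C|I|)$, astronomically smaller than any polynomial in $N$. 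Hence steps (iii) and (v) collapse: you never obtain $\norm{\psi_j^{[-N,N]}-\phi}$ exponentially small, so no decay can be transferred back to $\psi$. Step (i) is also unjustified: for a generalized eigenfunction there is no Dirichlet condition at $\pm(N+1)$, so the maximum of $|\psi|$ over $[-N,N]$ can perfectly well sit at the edge; the maximality-contradiction via Poisson's formula only constrains intervals $J$ that contain $m_0$ \emph{well inside} $[-N,N]$, and when $m_0$ is near the edge there are no large such intervals. (A Schnol-type selection of good scales would improve $\varepsilon_N$ to roughly $N^{-1/2}$ along a subsequence, but that is still hopelessly large compared to the separation $\eta$, and the overlap $(2N)^{-1/2}$ from part (a) alone is too weak to force the finite-volume eigenfunction to live where $\phi$ does.)

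The paper's proof avoids finite boxes entirely and is worth internalizing. The resonance condition needed for \cref{lem:elimination} is pinned at $m_0=0$ (not at a maximum): if the large deviation estimate held for arbitrarily large intervals $\Lambda$ well containing $0$, then Poisson's formula, \cref{lem:Green}, and the polynomial bound on the boundary values would force $|\psi(n)|$ to be arbitrarily small for every fixed $n$, i.e.\ $\psi\equiv0$, a contradiction. Thus the hypothesis of \cref{lem:elimination} holds with $m_0=0$, yielding \cref{eq:no-long-range-resonance} on $[-N,N]$ outside a small neighborhood $I$ of the origin. For $\exp((\log N)^{5\epsilon})\le|m|\le N/2$ one then applies the covering form of (LDT) on $J=[\lfloor m/4\rfloor,2m]$ (or its mirror image) and Poisson's formula \emph{directly to} $\psi$ on $J$: the boundary values are only polynomially large, $\dist(m,\partial J)\gtrsim 3|m|/4$, and the Green's function decay swallows the polynomial prefactor, giving $|\psi(m)|<\exp(-\gamma|m|/4)$. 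The polynomial bound on $\psi$ thus enters only as a harmless prefactor against exponential Green's function decay, rather than as the (fatal) size of the perturbation in an approximate eigenvalue equation.
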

\begin{proof}
  There exists $ C_0(\psi,V,a,b,\gamma) $ such that for any interval $ \Lambda\subset\Z $ satisfying
  \begin{equation*}
    |\Lambda|\ge C_0\text{ and }\dist(0,\Z\setminus \Lambda)>|\Lambda|/100
  \end{equation*}
  we have
  \begin{equation*}
    \log|f_\Lambda(x,\omega,E)|\le |\Lambda|L_{|\Lambda|}(\omega,E)-|\Lambda|^{1-\tau/4}.
  \end{equation*}
  Otherwise, we could use Poisson's formula and \cref{lem:Green} to show that $ \psi\equiv 0 $. So, for $ N $
  large enough we can apply \cref{lem:elimination} with $ m_0=0 $ to get that \cref{eq:no-long-range-resonance}
  holds for any
  \begin{gather*}
	m\in [-N,N-\oell+1]\setminus I,\ I=[-\oell-2\exp(\ell^2),\oell+2\exp(\ell^2)],\\
    \oell=\lfloor \exp((\log N)^{2\epsilon\sigma}) \rfloor,\   \ell=\lceil (\log (2N+1))^{2\epsilon} \rceil.
  \end{gather*}
  Let
  \begin{equation*}
    J=\begin{cases}
      [\lfloor m/4 \rfloor, 2m] &, m>0\\
      [2m, \lceil m/4 \rceil] &, m<0.
    \end{cases}
  \end{equation*}
  Take $ m $ such that $ \exp((\log N)^{5\epsilon})\le |m|\le N/2 $.
  Then $ J\subset [-N,N]\setminus I $, and we can use
  \cref{eq:no-long-range-resonance} and the covering form of (LDT) to get
  \begin{equation*}
    \log|f_J(x,\omega,E)|>|J|L_{|J|}(\omega,E)-|J|^{1-\tau/2}.
  \end{equation*}
  Poisson's formula and \cref{lem:Green} imply
  \begin{equation*}
    |\psi(m)|<2C(\psi)(2|m|+1)\exp \left( -\frac{\gamma}{2}\frac{3|m|}{4}+C|m|^{1-\tau} \right)
    <\exp \left( -\frac{\gamma}{4}|m| \right),
  \end{equation*}
  provided $ N $ is large enough. This concludes the proof.
\end{proof}

\begin{proof}[Proof of \cref{thm:C}]
  (a) Let $ N_k=N^{2^k} $. Iterating \cref{lem:C} (and using \cref{prop:log-Hoelder}), we get that there
  exist $ j_k $, $ k\ge 0 $, $ j_0=j $,
  such that
  \begin{gather*}
	\left| E_{j_{k+1}}^{[-N_{k+1},N_{k+1}]}(x,\omega)-E_{j_k}^{[-N_k,N_k]}(x,\omega) \right|
    <\exp \left( -\frac{\gamma}{9}N_k \right), \\
    \norm{\psi_{j_{k+1}}^{[-N_{k+1},N_{k+1}]}(x,\omega)-\psi_{j_{k}}^{[-N_k,N_k]}(x,\omega)}
    < \exp \left( -\frac{\gamma}{9}N_k \right),\\
    L(\omega,E_{j_{k+1}}^{[-N_{k+1},N_{k+1}]}(x,\omega))>\frac{3\gamma}{2},\\
    I_k=I(x,\omega,E_{j_{k+1}}^{[-N_{k+1},N_{k+1}]}(x,\omega))\subset [-3N_{k}/4,3N_{k}/4],\\
    \left| \psi_{j_{k+1}}^{[-N_{k+1},N_{k+1}]}(x,\omega;n) \right|
    <\exp \left( -\frac{\gamma}{17}\dist(n,I_k) \right)
    <\exp \left( -\frac{\gamma}{18}\dist(n,I) \right),
  \end{gather*}
  for all $ N_{k}\le |n|\le N_{k+1}. $
  It follows that for any $ k'\ge k\ge  0 $,
  \begin{gather*}
	\left| E_{j_{k'}}^{[-N_{k'},N_{k'}]}(x,\omega)-E_{j_k}^{[-N_k,N_k]}(x,\omega) \right|
    <\sum_{i=k}^{k'-1}\exp \left( -\frac{\gamma}{9}N_i \right)
    <\exp\left( -\frac{\gamma}{10}N_k \right), \\
    \norm{\psi_{j_{k'}}^{[-N_{k'},N_{k'}]}(x,\omega)-\psi_{j_k}^{[-N_k,N_k]}(x,\omega)}
    <\sum_{i=k}^{k'-1}\exp \left( -\frac{\gamma}{9}N_i \right)
    <\exp\left( -\frac{\gamma}{10}N_k \right).
  \end{gather*}
  This proves \cref{eq:jk}, \cref{eq:jk'k}, and the existence of the limits $ E(x,\omega)$, $ \psi(x,\omega) $.
  The fact that $ \norm{\psi}=1 $ follows immediately.

  We already established that \cref{eq:psik} holds for $ N_{k-1}\le |n|\le N_k $. For $ 1\le k'\le k-2 $ we have
 \begin{equation} \begin{aligned}\label{eq:psikk'}
    \left| \psi_{j_k}^{[-N_k,N_k]}(x,\omega;n) \right|
    &<\left| \psi_{j_{k'}}^{[-N_{k'},N_{k'}]}(x,\omega;n) \right|
    +\sum_{i=k'}^{k-1}\exp \left( -\frac{\gamma}{9}N_{i} \right)\\
   & <\exp \left( -\frac{\gamma}{20}\dist(n,I) \right),
  \end{aligned}\end{equation}
  for $ N_{k'-1}\le |n|\le N_{k'} $. This shows \cref{eq:psik} holds for $ N\le |n|\le N_k $.
  By \cref{thm:A},
  \begin{equation*}
    \left| \psi_{j}^{[-N,N]}(x,\omega;n) \right|<\exp \left( -\frac{\gamma}{17}\dist(n,I) \right),\
    3N/4\le |n|\le N.
  \end{equation*}
  Using the reasoning of \cref{eq:psikk'} again we conclude that \cref{eq:psik} holds as stated.

  Taking $ k'\to \infty $ in \cref{eq:jk'k} we have
  \begin{equation*}
    \norm{\psi(x,\omega)-\psi_{j_k}^{[-N_k,N_k]}(x,\omega)}<\exp \left( -\frac{\gamma}{10} N_k \right)
  \end{equation*}
  and \cref{eq:psi} follows in the same way as \cref{eq:psik}.

  (b) It is well-known that to get purely pure point spectrum it is enough to show that generalized eigenvectors
  decay exponentially. We have this by \cref{lem:Cb}, so we just need to argue that all eigenpairs can be obtained
  as limits from (a). Let $ E,\psi $ be an eigenpair for $ H(x,\omega) $, $ E\in [E',E''] $,
  $ L(\omega,E)>3\gamma $. By \cref{lem:Cb},
  \begin{equation}\label{eq:psi-decay}
    |\psi(m)|<\exp \left( -\frac{\gamma}{4}|m| \right),\ |m|\ge C(\psi,V,a,b,\gamma).
  \end{equation}
  Since
  \begin{equation*}
    \norm{(H_{[-N,N]}(x,\omega)-E)\psi}^2=|\psi(-N-1)|^2+|\psi(N+1)|^2,
  \end{equation*}
  it follows from \cref{lem:eigenvector-stability} that for $ N $ large enough there exists $ j $ such that
  \begin{equation}\label{eq:psi-psiN}
    \left| E_j^{[-N,N]}(x,\omega)-E \right|<\exp \left( -\frac{\gamma}{5}N \right),\quad
    \norm{\psi_j^{[-N,N]}(x,\omega)-\psi}<\exp\left( -\frac{\gamma}{5}N \right).
  \end{equation}
  By \cref{prop:log-Hoelder}, $ L(\omega,E_j^{[-N,N]}(x,\omega))>2\gamma $ and hence $  E_j^{[-N,N]}(x,\omega) $
  is localized. For $ N $ large enough, \cref{eq:psi-decay} and \cref{eq:psi-psiN} imply that
  $ I=I(x,\omega,E_j^{[-N,N]}(x,\omega))\subset [-N/2,N/2] $. Let $ j_k $ be the sequence from (a). By the same
  reasoning as above, there exist $ j_k' $ such that
  \begin{equation}\label{eq:psi-psiNk}
    \left| E_{j_k'}^{[-N_k,N_k]}(x,\omega)-E \right|<\exp \left( -\frac{\gamma}{5}N_k \right),\quad
    \norm{\psi_{j_k'}^{[-N_k,N_k]}(x,\omega)-\psi}<\exp\left( -\frac{\gamma}{5}N_k \right).
  \end{equation}
  We just need to argue that $ j_k'=j_k $ and that the choice of normalized eigenvector is the same as in (a).
  This follows by induction. We just check the initial step, the general
  step being analogous. We have that
  \begin{equation*}
    \left| E_{j_1'}^{[-N_1,N_1]}(x,\omega)-E_{j_1}^{[-N_1,N_1]}(x,\omega) \right|
    \lesssim \exp \left( -\frac{\gamma}{10} N\right),
  \end{equation*}
  so \cref{thm:B} implies that $ j_1=j_1' $. If the choice of normalized eigenvector in \cref{eq:psi-psiNk} is not
  the same as in (a) we would have
  \begin{align*}
    &\norm{\psi_{j_1}^{[-N_1,N_1]}(x,\omega)-\psi_{j}^{[-N,N]}(x,\omega)}<\exp \left( -\frac{\gamma}{10} N \right)\\
    &\norm{-\psi_{j_1}^{[-N_1,N_1]}(x,\omega)-\psi}<\exp\left( -\frac{\gamma}{5} N_1 \right)
  \end{align*}
  which would imply
  \begin{equation*}
    \norm{\psi_{j_1}^{[-N_1,N_1]}(x,\omega)}\lesssim \left( -\frac{\gamma}{10} N \right),
  \end{equation*}
  contradicting the fact that $ \psi_{j_1}^{[-N_1,N_1]}(x,\omega) $ is normalized.
\end{proof}

\begin{remark}
  It is possible to prove part (b) of \cref{thm:C} in a direct manner by showing that the eigenvectors from part
  (a) form an orthonormal basis for the spectral subspace correspoding to  $ [E',E''] $. This argument is
  straightforward in the case when the Lyapunov exponent is positive on $ \cS_\omega $, see
  \cite[Prop.\ 13.1]{GolSch11}, but in general it is more complicated than the above indirect argument.
\end{remark}

\section{Stabilization of the Spectrum: Proofs of Theorems D, E}\label{sec:Thm-D-E}

We prove the relations \cref{eq:S-SN} and \cref{eq:SN-S} from \cref{thm:D} separately.
The relation \cref{eq:S-SN} is an immediate consequence of the following lemma.
\begin{lemma}\label{lemma:full-to-restricted}
  Assume $ \omega\in \T^d(a,b) $ and $ L(\omega,E)>\gamma>0 $ for all $ E\in [E',E''] $. Let $ k_0\ge 0 $,
  $ s>1 $ integers, and
  $ \sigma $ as in (LDT).
  Then for all $ N\ge N_0(V,a,b,\gamma) $ we have
  \begin{equation*}
    \cS_\omega\cap [E',E''] \subset \fS_{N,\omega}(s,k_0,\rho),
  \end{equation*}
  for any $ \rho\in \R^{k_0+1} $ such that $ \rho_{k}\ge \exp(-(N^{(k)})^{\sigma/2}) $, $ k=0,\ldots,k_0 $.
\end{lemma}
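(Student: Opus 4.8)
## Proof proposal for Lemma~\ref{lemma:full-to-restricted}

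The plan is to fix an energy $E_0\in\cS_\omega\cap[E',E'']$ and an arbitrary phase $x\in\tor^d$, and to show that $E_0\in\bigl(\spec H_{[-N^{(k)},N^{(k)}]}(x,\omega)\bigr)^{(\rho_k)}$ for every $k=0,\ldots,k_0$; taking the union over $x$ then gives the claim. The key point is that membership in $\cS_\omega$ is, by \cref{lem:elemspec1} (or rather its contrapositive together with the $x$-independence of the spectrum), equivalent to the failure of a uniform resolvent bound at every finite scale: for any $M$ and any $x$, $\dist(E_0,\spec H_{[-M,M]}(x,\omega))$ cannot be bounded below by a fixed positive constant as $M\to\infty$ unless $E_0\notin\cS_\omega$. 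So the real content is to convert ``$E_0$ is genuinely in the spectrum'' into the quantitative statement ``at \emph{each} scale $N^{(k)}$ and \emph{each} phase $x$, there is an eigenvalue of $H_{[-N^{(k)},N^{(k)}]}(x,\omega)$ within $\exp(-(N^{(k)})^{\sigma/2})$ of $E_0$.''

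First I would reduce to a single scale: it suffices to prove that for any $M\ge M_0(V,a,b,\gamma)$, any $x\in\tor^d$, and any $E_0\in\cS_\omega\cap[E',E'']$, one has $\dist(E_0,\spec H_{[-M,M]}(x,\omega))<\exp(-M^{\sigma/2})$, and then apply this with $M=N^{(k)}$ for each $k$ (using $N\ge N_0$ large so that all $N^{(k)}\ge M_0$). To prove the single-scale statement I argue by contradiction: suppose $\dist(E_0,\spec H_{[-M,M]}(x,\omega))\ge\exp(-M^{\sigma/2})$, so $\norm{(H_{[-M,M]}(x,\omega)-E_0)^{-1}}\le\exp(M^{\sigma/2})$. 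By \cref{cor:4.6zeros} (the spectral form of (LDT)) this gives $\log|f_M(x,\omega,E_0)|>ML_M(\omega,E_0)-M^{1-\tau/2}$, i.e.\ a large-deviations lower bound holds \emph{at the specific phase $x$}. I now want to propagate this good lower bound along the orbit to the left and right using the covering form of (LDT), \cref{lem:Greencoverap1}: the crucial input there is assumption (iii), a good lower bound for $f_{I_m}$ on the relevant subintervals $I_m$. For the subintervals sitting well away from $x$ this is exactly (LDT) for determinants at almost every phase, but the scale-$M$ phase $x$ itself need not satisfy (LDT) — so the argument must instead use the good bound just obtained at $x$ together with (LDT) at the shifted phases, and a Schur-complement / interlacing argument (as in the proof of \cref{lem:evcount}) to pass from resolvent control on $[-M,M]$ to resolvent control on a larger interval $[-M',M']$, deducing that $\dist(E_0,\spec H_{[-M',M']}(x,\omega))$ stays bounded below by, say, $\exp(-2M^{\sigma/2})$ uniformly in $M'\ge M$. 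Iterating (or directly invoking \cref{lem:elemspec1}) then yields $\dist(E_0,\cS_\omega)\ge\exp(-2M^{\sigma/2})>0$, contradicting $E_0\in\cS_\omega$.

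The cleanest route, which I would actually write out, avoids the delicate propagation: since $E_0\in\cS_\omega$ and $\cS_\omega$ is independent of phase, by \cref{lem:elemspec1} there exist $N'_k\to-\infty$, $N''_k\to+\infty$ with $\dist(E_0,\spec H_{[N'_k,N''_k]}(x,\omega))$ not bounded below; in particular for every $\delta>0$ there is a finite interval $\Lambda\supset[-M,M]$ and a normalized $\varphi$ supported in $\Lambda$ with $\norm{(H_\Lambda(x,\omega)-E_0)\varphi}<\delta$. Restricting $\varphi$ to $[-M,M]$ and using localization at the larger scale (Theorem~A / \cref{prop:NDRloc1}, applicable because $L(\omega,E)>\gamma$ throughout $[E',E'']$) to show the truncation error at $\pm M$ is smaller than $\exp(-M^{\sigma/2})/3$, \cref{lem:eigenvector-stability}(a) produces an eigenvalue of $H_{[-M,M]}(x,\omega)$ within $\sqrt2$ times that error of $E_0$, hence within $\exp(-M^{\sigma/2})$ as required. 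I then apply this with $M=N^{(k)}$, $\rho_k\ge\exp(-(N^{(k)})^{\sigma/2})$, for $k=0,\ldots,k_0$, and intersect over $k$ and union over $x$.

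The main obstacle I anticipate is exactly the truncation step: to bound $\varphi(\pm M)$ one needs that the approximate eigenvector $\varphi$ of the large interval already decays at scale $M$, and justifying this rigorously requires the finite-scale localization machinery (Theorems~A--C) applied at the large scale together with the fact that $\varphi$ can be taken to be a genuine eigenvector of $H_\Lambda(x,\omega)$ with eigenvalue close to $E_0$; one must also be careful that the exceptional sets $\hat\Omega,\hat\cB$ do not enter, since the present lemma claims a statement for \emph{all} Diophantine $\omega$ with $L>\gamma$ on $[E',E'']$ and \emph{all} $x$ — so the localization input that may be used here is only the deterministic part (the covering form of (LDT), which holds for every Diophantine $\omega$ and every $x$, \cref{lem:Greencoverap1}), not the probabilistic Theorems~A--C. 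Thus the write-up must route the decay estimate through \cref{lem:Greencoverap1} and \cref{lem:Green} applied at the scale $M$ itself, using only that $\log|f_M(x,\omega,E_0)|$ is large — which is what one is trying to prove. Breaking this apparent circularity is the delicate point, and it is resolved by the contradiction argument of the second paragraph: assume the resolvent bound at scale $M$, \emph{derive} the good $f_M$ lower bound at $x$, propagate it to all larger scales by the covering form of (LDT) at shifted (generic) phases plus the Schur complement, and contradict $E_0\in\cS_\omega$.
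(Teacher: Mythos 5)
There is a genuine gap, and it is structural: you are trying to prove a statement that is strictly stronger than the lemma and is in general false. Your reduction asserts that for \emph{every} phase $x\in\tor^d$ and every large $M$ one has $\dist(E_0,\spec H_{[-M,M]}(x,\omega))<\exp(-M^{\sigma/2})$ whenever $E_0\in\cS_\omega$. That would place $\cS_\omega\cap[E',E'']$ inside $\bigl(\spec H_{[-M,M]}(x,\omega)\bigr)^{(\exp(-M^{\sigma/2}))}$ for a \emph{fixed} $x$, a set of measure at most $(2M+1)\cdot 2\exp(-M^{\sigma/2})$; since $\cS_\omega$ has positive local measure at supercritical energies (\cref{thm:E}), this is impossible for large $M$. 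The union over $x$ in the definition \cref{eq:respec-C} of $\fS_{N,\omega}$ is essential, and the lemma only claims that \emph{some} $x$ works for each $E_0$. Both of your routes to the single-scale claim also fail on their own terms: Cauchy interlacing and Schur complements control eigenvalue \emph{counts}, not the distance from $E_0$ to $\spec H_{[-M',M']}(x,\omega)$ — enlarging the interval can create new eigenvalues arbitrarily close to $E_0$, localized near the new edges — and in the covering step the sites $m$ far from the origin may be resonant, i.e.\ the (LDT) lower bound for $f_{I_m}$ at the shifted phase $x+m\omega$ can fail for some $m$ (handling exactly those sites is what the (NDR)/elimination machinery of the paper is for). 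Likewise, in your ``cleanest route'' the approximate eigenvector $\varphi$ on the large interval $\Lambda$ need not be localized near $0$, so its restriction to $[-M,M]$ may be essentially zero and \cref{lem:eigenvector-stability}(a) then gives nothing.

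The paper's proof avoids all of this by arguing the contrapositive. If $E\notin\fS_{N,\omega}(s,k_0,\rho)$, then by definition for \emph{every} $y\in\tor^d$ there is some scale $k=k(y)$ with $\dist\bigl(E,\spec H_{[-N^{(k)},N^{(k)}]}(y,\omega)\bigr)\ge\rho_k\ge\exp(-(N^{(k)})^{\sigma/2})$; the spectral form of (LDT) (\cref{cor:4.6zeros}) converts this into the determinant lower bound $\log|f_{I(y)}(y,\omega,E)|>|I(y)|L_{|I(y)|}(\omega,E)-|I(y)|^{1-\tau/2}$ with $I(y)=[-N^{(k(y))},N^{(k(y))}]$. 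Applying this at $y=m\omega$ for each $m\in[-\oN,\oN]$ produces a good interval around \emph{every} site of the orbit — precisely hypothesis (iii) of the covering form of (LDT), with no exceptional sites — so \cref{lem:Greencoverap1} gives $\dist(E,\spec H_{\Lambda_{\oN}}(0,\omega))\ge\exp(-N)$ for $\Lambda_{\oN}=\bigcup_{|m|\le\oN}I(m\omega)$, and letting $\oN\to\infty$, \cref{lem:elemspec1} yields $E\notin\cS_\omega$. Note that the quantifier structure of ``$E\notin\bigcup_x\bigcap_k$'' is exactly what feeds the covering lemma; your direct approach inverts the quantifiers into an unprovable (and false) uniform statement.
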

\begin{proof}
  Take $ E\in[E',E'']\setminus \fS_{N,\omega}(s,k_0,\rho)$. We just need to show $ E\notin \cS_\omega $.
  For any $ x\in \T^d $ there exists
  \begin{equation*}
    I=I(x)\in \{ [-N^{(k)},N^{(k)}]: k=0,\ldots,k_0 \}
  \end{equation*}
  such that
  \begin{equation*}
    \dist(E,\spec H_{I}(x,\omega))\ge\exp(-|I|^{\sigma/2}),
  \end{equation*}
  and by the spectral form of (LDT) (see \cref{cor:4.6zeros})
  \begin{equation*}
    \log|f_{I}(x,\omega,E)|>|I|L_{|I|}(\omega,E)-|I|^{1-\tau/2}.
  \end{equation*}
  Given $ \oN$ set
   \[
   \Lambda_{\oN}=\bigcup_{m\in[-\oN,\oN]} I(m\omega).
   \]
   From the covering form of (LDT)
  (see \cref{lem:Greencoverap1}) we get
  \begin{equation*}
    \dist(E,\spec H_{\Lambda_{\overline{N}}}(0,\omega))\ge \exp(-N).
  \end{equation*}
  Since $ \oN $ can be chosen arbitrarily large,  Lemma~\ref{lem:elemspec1} implies
  that $ \dist(E,\cS_\omega)\ge\exp(-N) $. In particular, $ E\notin \cS_\omega $ and the conclusion follows.
\end{proof}

We will prove relation \cref{eq:SN-S} from \cref{thm:D} in \cref{prop:SN-S},
but first we establish some auxilliary results. We start with an application of Bourgain's elimination of
multiple resonances. The next lemma is the reason for the choice of parameters $ s,k_0,\rho $ in
\cref{thm:D}.

\begin{lemma}\label{lem:SN-NDR}
  Let $ x\in \T^d $, $ E\in \R $, $ \gamma>0 $,  $ A\ge 1 $, $ s> 1 $, $ N\ge 1 $, $ N^{(k)}=N^{s^k} $, $ \sigma $
  as in (LDT).
  For any $ s\ge s_0(a,b,A) $, $ N\ge N_0(V,a,b,\gamma,A,s) $  there exists a
  set $ \Omega_N=\Omega_N(s) $, $ \mes(\Omega_N)<N^{-A} $, such that the following holds.
  If $ \omega\in \T^d\setminus \Omega_N $, $ L(\omega,E)>\gamma $, and
  \begin{gather*}
	\dist(E,\spec H_{[-N,N]}(x,\omega))<\exp(-N^{\sigma/2}),\\
    \dist(E,\spec H_{[-N^{(k)},N^{(k)}]}(x,\omega))<\exp \left( -\frac{\gamma}{10}N \right)
    ,\ k=1,\ldots,k_0,\ k_0=2^{2d+1}-1,
  \end{gather*}
  then  there exist $ k=k(x)\in \{ 1,\ldots,k_0 \}  $, $ j=j(x) $,
  such that
  \begin{gather}
    \label{eq:Nk-E}|E-E_j^{[-N^{(k)},N^{(k)}]}(x,\omega)|<\exp\left( -\frac{\gamma}{10}N \right),\\
    \label{eq:Nk-psi}|\psi_j^{[-N^{(k)},N^{(k)}]}(x',\omega,\pm N^{(k)})|
    <\exp \left( -\frac{\gamma}{8}N^{(k)} \right),\
     |x'-x|<\exp(-(\log N)^{3/\sigma}).
  \end{gather}
\end{lemma}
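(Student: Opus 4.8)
The plan is to prove the lemma by contradiction, running Bourgain's elimination of multiple resonances (\cref{lem:Bourgain-elimination}, \cref{lem:Bourgain-lacunary}) in the spirit of \cref{prop:NDR}, with the lacunary sequence of scales $N^{(1)},\dots,N^{(k_0)}$, $k_0=2^{2d+1}-1$, playing the role of the resonant shifts and the hypothesis at scale $N$ playing the role of the zeroth slot; since $k_0+1=2^{2d+1}$, this matches the number of slots in \cref{lem:Bourgain-lacunary}. First note that \eqref{eq:Nk-E} is essentially automatic: the hypothesis at scale $N^{(k)}$ supplies an index $j_k$ with $|E-E_{j_k}^{[-N^{(k)},N^{(k)}]}(x,\omega)|<\exp(-\tfrac{\gamma}{10}N)$, and by the modulus-of-continuity bounds of \cref{sec:basic-tools} (e.g.\ \eqref{eq:E-tilde} and \cref{cor:4.6}) the $j_k$-th eigenvalue stays within that threshold of $E$ for every $x'$ with $|x'-x|<\exp(-(\log N)^{3/\sigma})$, so $j:=j_k$ is unambiguous. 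The content is thus the decay estimate \eqref{eq:Nk-psi} for one well-chosen $k$, which I would negate: suppose that for \emph{every} $k\in\{1,\dots,k_0\}$ there are $x_k'$ with $|x_k'-x|<\exp(-(\log N)^{3/\sigma})$ and $\varepsilon_k\in\{\pm1\}$ such that $|\psi_{j_k}^{[-N^{(k)},N^{(k)}]}(x_k',\omega;\varepsilon_kN^{(k)})|\ge\exp(-\tfrac{\gamma}{8}N^{(k)})$.

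The key step --- and the step I expect to be the main obstacle --- is to convert each such non-decay at an edge into a genuine resonance at a controlled shift. Writing $\psi=\psi_{j_k}^{[-N^{(k)},N^{(k)}]}(x_k',\omega)$, $E_k'$ for its eigenvalue, and taking $\varepsilon_k=+1$, one restricts $\psi$ to a half-box $W_k$ of length $\simeq N^{(k)}$ abutting the right edge; the truncation $\psi|_{W_k}$ solves the eigenvalue equation for $H_{W_k}(x_k',\omega)$ with error supported at the single inner endpoint of $W_k$, that endpoint value is controlled by the covering form of (LDT) (\cref{lem:Green}, \cref{lem:Greencoverap1}, as in the proof of \cref{prop:NDRloc1}), and the non-decay at the edge keeps a definite fraction of $\|\psi\|$ inside $W_k$; together these give $\dist(E,\spec H_{W_k}(x_k',\omega))<2\exp(-\tfrac{\gamma}{10}N)$. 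Since $W_k$ is a box of length $\simeq N^{(k)}$ at a phase $x+n_k\omega$ with $n_k$ in a window $\cN_k$ of size $\simeq N^{(k)}$ around $N^{(k)}$, and since $|x_k'-x|$ and $\|V-\tilde V\|_\infty$ are harmless, this places $(x+n_k\omega,\omega,E)$ in the semialgebraic set $\cR^{(k)}$ constructed below. The delicate points are the lower bound on the mass of $\psi$ in $W_k$ and the choice of $W_k$ making its inner endpoint genuinely far from the bulk of $\psi$ (so that the covering form of (LDT) applies there); carrying this out will likely require invoking \cref{thm:A} at scale $N^{(k)}$, or iterating across the intermediate scales by means of the hypotheses $\dist(E,\spec H_{[-N^{(k')},N^{(k')}]}(x,\omega))<\exp(-\tfrac{\gamma}{10}N)$, $k'<k$, together with \cref{lem:eigenvalue-stabilization} and \cref{lem:eigenvector-stability}.

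It remains to assemble the semialgebraic input and close the loop. Let $\tilde V$ be the truncation of \eqref{eq:V-tilde}, with degree $\lesssim N^{C}$ and $\|V-\tilde V\|_\infty$ smaller than every relevant threshold. For $1\le p\le k_0$ let $\cR^{(p)}\subset\tor^d\times\tor^d_J(a,b)\times\R$, $J=N^{C(a,b)}$, be the semialgebraic set cut out --- as in the proof of \cref{lem:semialgebraic-all}, using \cref{lem:averaging} to render the Lyapunov exponent semialgebraic --- by $L_{cN^{(p)}}(\omega,E)>\gamma/2$ and $\dist(E,\spec\tilde H_{[-cN^{(p)},cN^{(p)}]}(z,\omega))<\exp(-\tfrac{\gamma}{10}N)$, and let $\cR^{(0)}$ be the analogous set at scale $N$ with threshold $\exp(-N^{\sigma/2})$; by \eqref{eq:E-tilde}, \cref{lem:stability-in-V} and Wegner's estimate (\cref{prop:Wegner}, applicable by \cref{rem:scale-N-sa-refinement}), each $\cR^{(p)}$ has degree $\lesssim N^{C(a,b)}$ and $z$-sections of measure $<\exp(-N^{c(a,b)})$. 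Put $\cR=\bigcup_{p=0}^{k_0}\cR^{(p)}$ and let $\cA$ be the semialgebraic set of $(y,x,\omega,E)$ with $(x+y,\omega,E)\in\cR$; applying \cref{lem:Bourgain-elimination} with intersected variable $y$ (so its $q$ equals $d$) and parameter $(x,\omega,E)$ (so its $r$ equals $2d+1$) yields $\cA^{\ast}=\{(y_1,\dots,y_{2^{2d+1}}):\bigcap_i\cA(y_i)\ne\emptyset\}$ of degree $\lesssim N^{C}$ and measure $<\exp(-N^{c'})$, $c'=c'(a,b)>0$. Because $\log N^{(k)}=s^k\log N$, for $s\ge s_0(a,b)$ the windows $\cN_1,\dots,\cN_{k_0}$ satisfy the lacunarity hypothesis of \cref{lem:Bourgain-lacunary}, while $N^{(k_0)}<\exp(N^{c'})$ once $N\ge N_0(V,a,b,\gamma,A,s)$; that lemma (with its $q$ equal to $2^{2d+1}$ and $r$ equal to $d$) then produces $\Omega_N=\{\omega:(\omega,n_1\omega,\dots,n_{k_0}\omega)\in\cA^{\ast}\text{ for some }n_k\in\cN_k\}$ with $\mes(\Omega_N)\lesssim N^{C}(N^{(1)})^{-1}=N^{C-s}<N^{-A}$ once $s\ge s_0(a,b,A)$. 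For $\omega\notin\Omega_N$ it is therefore impossible to have $(x,\omega,E)\in\cR^{(0)}$ and, for every $k$, some $n_k\in\cN_k$ with $(x+n_k\omega,\omega,E)\in\cR$. But the scale-$N$ hypothesis gives $(x,\omega,E)\in\cR^{(0)}$, whereas the negation of \eqref{eq:Nk-psi} produced precisely such $n_k$ --- a contradiction. Hence \eqref{eq:Nk-psi} holds for some $k$, and for that $k$, undoing the negation, the bound holds for all $x'$ with $|x'-x|<\exp(-(\log N)^{3/\sigma})$, which is the assertion of the lemma.
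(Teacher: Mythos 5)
Your high-level skeleton (Bourgain elimination with $2^{2d+1}$ slots, windows $\cN_k$ around $N^{(k)}$, the scale-$N$ hypothesis occupying the zeroth slot) is the right one, and your observation that \eqref{eq:Nk-E} is automatic is correct. But the execution has two gaps, and the second is structural.

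First, the step you yourself flag as ``the main obstacle'' --- converting non-decay of $\psi$ at the edge $\pm N^{(k)}$ into $\dist(E,\spec H_{W_k}(x_k',\omega))<2\exp(-\gamma N/10)$ --- does not go through as sketched. The edge value is only bounded below by $\exp(-\gamma N^{(k)}/8)$, which gives no lower bound on the mass of $\psi$ in $W_k$ beyond that same exponentially small quantity; and to control the error at the inner endpoint of $W_k$ by the covering form of (LDT) you would need to already know that the resonant interval of $\psi$ avoids that endpoint, which is what is being proved. Your proposed repair --- invoking \cref{thm:A} at scale $N^{(k)}$ --- is unavailable here: \cref{thm:A} excludes a set of phases $\cB_{N^{(k)},\omega}$, whereas \cref{lem:SN-NDR} must hold for \emph{every} $x\in\T^d$ (this is the whole point of the lemma, since $\fS_{N,\omega}$ is a union over all phases and, as the paper notes before Theorem~D, bad phase sets cannot be discarded when $d>1$). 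Second, and fatally, your semialgebraic sets $\cR^{(p)}$ are built from the operators on boxes of size $N^{(p)}$, so $\deg(\cR^{(p)})\gtrsim N^{(p)}$ and $\deg(\cR)\gtrsim N^{(k_0)}=N^{s^{k_0}}$, not $N^{C(a,b)}$. With $B\gtrsim N^{s^{k_0}}$ the lacunarity hypothesis of \cref{lem:Bourgain-lacunary}, $|n_2|>(B|n_1|)^C$, fails ($N^{s^2}\not>N^{Cs^{k_0}}$ for any $s$, since $k_0\ge 7$), and the output measure bound $B^C(\min_{n\in\cN_1}|n|)^{-1}$ is $\gg1$.

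The paper's proof avoids both problems by detecting resonances at the \emph{small} scale $\ell\simeq(\log N)^{2/\sigma}$: the contrapositive of the covering form of (LDT) (\cref{lem:Greencoverap1}) turns the hypothesis $\dist(E,\spec H_{[-N,N]}(x,\omega))<\exp(-N^{\sigma/2})$ into a failure of the $\ell$-scale determinant estimate at some shift $m_0\in[-N,N]$; \cref{prop:NDR}, whose semialgebraic sets have degree $\ell^{C}=(\log N)^{C}$ (so lacunarity of the $\cN_k$ and the measure bound $\ell^C(N^{(1)})^{-1/2}<N^{-A}$ hold), then yields a $k$ for which \emph{no} $\ell$-scale resonance occurs for $2(N^{(k)})^{1/2}\le|m|\le N^{(k)}$; and this is exactly the hypothesis of \cref{prop:NDRloc1}, which delivers the decay \eqref{eq:Nk-psi} directly --- no contradiction argument, no appeal to \cref{thm:A}, and no large boxes inside the semialgebraic sets.
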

\begin{proof}
  Let $ s $ be a sufficiently large integer, so that the sets
  \begin{equation*}
    \cN_{k}=\{n\in \Z: (N^{(k)})^{1/2}\le |n|\le 2N^{(k)}\},\ k=1,\ldots,k_0
  \end{equation*}
  satisfy the assumptions of \cref{prop:NDR}, with $ \ell=\lfloor (\log N)^{2/\sigma} \rfloor $, for $ N $ large
  enough. We let $ \Omega_N $ be the set of resonant frequencies from \cref{prop:NDR}. We have
  \begin{equation}\label{eq:N(1)-A}
    \mes(\Omega_N)\le \ell^C (N^{(1)})^{-1/2}<N^{-A},
  \end{equation}
  provided $ s $ is large enough. Since
  \begin{equation*}
    \dist(E,\spec H_{[-N,N]}(x,\omega))<\exp(-N^{\sigma/2})<\exp(-\ell),
  \end{equation*}
  it follows from the covering form of (LDT) (see \cref{lem:Greencoverap1}) that there exist $ m_0\in[-N,N] $ such
  that
  \begin{equation*}
    \log|f_\ell(x+m_0\omega,\omega,E)|\le \ell L_\ell(\omega,E)-\ell^{1-\tau/2}.
  \end{equation*}
  Applying \cref{prop:NDR}, we get that there exists $ k $ such that
  \begin{equation*}
    \log|f_\ell(x+(m-1)\omega,\omega,E)|>\ell L_\ell(\omega,E)-\ell^{1-\tau/2}\text{ for any }
    2(N^{(k)})^{1/2}\le |m|\le N^{(k)}
  \end{equation*}
  (note that we chose $ \cN_k $ so that we get the above estimate after we take into account the shift by
  $ m_0\omega $).
  By the hypothesis, there exists $ E_j^{[-N^{(k)},N^{(k)}]}(x,\omega) $ so that
  \begin{equation*}
    |E-E_j^{[-N^{(k)},N^{(k)}]}(x,\omega)|<\exp\left( -\frac{\gamma}{10}N \right).
  \end{equation*}
  The localization estimate \cref{eq:Nk-psi} follows immediately from  \cref{prop:NDRloc1} by noting that
  \begin{equation*}
    |E-E_j^{[-N^{(k)},N^{(k)}]}(x',\omega)|<\exp\left( -\frac{\gamma}{10}N \right)+C|x'-x|<\exp(-\ell),    
  \end{equation*}
  provided $ |x'-x|<\exp(-(\log N)^{3/\sigma})<\exp(-\ell^{3/2}) $. 
\end{proof}

The next result is a technical tool for \cref{lem:SN1-SN2}. It's peculiar statement stems from the
following issue. The function $ \mes(\fS_N(s,k_0,\rho)) $ can have jump-discontinuities in $ \rho $, and we have no
control over the size of the jumps. This forces us to work with $ \mes((\fS_N(s,k_0,\rho))^{(\rho')}) $
(recall \cref{eq:rho-neighborhood}) which is Lipschitz continuous in $ \rho' $.
\begin{lemma}\label{lem:SN-fattening}
  Let $ N\ge 1 $, $ \omega\in \T^d $, $ s>1 $, $ k_0\ge 1 $, $ \rho\in \R^{k_0+1} $,
  $ \rho'>0 $,
  $ \epsilon\in \R^{k_0+1} $,
  $ \epsilon_k=\exp(-N^{3/2}) $. For any $ N\ge N_0(V,a,b) $ we have
  \begin{equation*}
    \mes \left( (\fS_{N,\omega}(s,k_0,N^{-1/2}(\rho-\epsilon)-\epsilon))^{(\rho')}\setminus
      \fS_{N,\omega}(s,k_0,\rho) \right)< (N^{(k_0)})^{C(a,b)} \rho'.
  \end{equation*}
\end{lemma}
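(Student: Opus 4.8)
The plan is to compare $\fS_{N,\omega}(s,k_0,\tilde\rho)$, where $\tilde\rho_k:=N^{-1/2}(\rho_k-\epsilon_k)-\epsilon_k$, to a single \emph{semialgebraic} set of controlled degree built from a polynomial approximation of $V$; once that set is known to have few connected components, the fattening estimate is elementary. Concretely, I would fix a polynomial $\tilde V$ with $\deg\tilde V\le(N^{(k_0)})^{4}$ and $\norm{V-\tilde V}_\infty\le\exp(-(N^{(k_0)})^{2})$ as in \eqref{eq:V-tilde}, write $\tilde H_\Lambda$ for the associated finite–volume operators, and note that by \eqref{eq:E-tilde} one has $\norm{H_{[-N^{(k)},N^{(k)}]}(x,\omega)-\tilde H_{[-N^{(k)},N^{(k)}]}(x,\omega)}\le\delta:=\tfrac14\exp(-N^{3/2})$ for all $x,\omega$ and all $0\le k\le k_0$, provided $N\ge N_0(V,a,b)$ (using $(N^{(k_0)})^{2}\gg N^{3/2}$). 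Setting $r_k:=\tilde\rho_k+\delta$ and
\[
\hat\fS:=\bigcup_{x\in\tor^d}\ \bigcap_{0\le k\le k_0}\bigl(\spec\tilde H_{[-N^{(k)},N^{(k)}]}(x,\omega)\bigr)^{(r_k)},
\]
the bound $\norm{H-\tilde H}\le\delta$, used in both directions together with $|\dist(E,\spec A)-\dist(E,\spec B)|\le\norm{A-B}$ for Hermitian $A,B$, yields the sandwich $\fS_{N,\omega}(s,k_0,\tilde\rho)\subseteq\hat\fS\subseteq\fS_{N,\omega}(s,k_0,\rho)$: for the right inclusion, $\dist(E,\spec\tilde H_{[-N^{(k)},N^{(k)}]}(x,\omega))<r_k$ forces $\dist(E,\spec H_{[-N^{(k)},N^{(k)}]}(x,\omega))<r_k+\delta=\tilde\rho_k+2\delta\le N^{-1/2}\rho_k-\epsilon_k+2\delta<N^{-1/2}\rho_k<\rho_k$, using $2\delta<\epsilon_k=\exp(-N^{3/2})$. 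If some $r_k\le0$ then also $\tilde\rho_k<0$, so $\fS_{N,\omega}(s,k_0,\tilde\rho)=\hat\fS=\emptyset$ and the assertion is trivial; so I assume all $r_k>0$, in which case each $(\cdot)^{(r_k)}$ is open and hence $\hat\fS$ is open.

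Next I would verify that $\hat\fS$ is semialgebraic of degree at most $(N^{(k_0)})^{C(a,b)}$. For fixed $\omega$, the entries of $\tilde H_{[-N^{(k)},N^{(k)}]}(x,\omega)$ are polynomials in $x\in[0,1]^d$ of degree $\le\deg\tilde V$, so $p_k(x,E):=\det(\tilde H_{[-N^{(k)},N^{(k)}]}(x,\omega)-E)$ is a polynomial in $(x,E)$ of degree polynomial in $N^{(k_0)}$. Since $\tilde H_{[-N^{(k)},N^{(k)}]}$ is Hermitian, the condition $\dist(E,\spec\tilde H_{[-N^{(k)},N^{(k)}]}(x,\omega))<r_k$ is equivalent to $\exists E'_k\,\bigl(p_k(x,E'_k)=0\ \wedge\ (E-E'_k)^2<r_k^2\bigr)$, so, restricting $E$ to a fixed bounded interval $I_0$ containing all the spectra,
\[
\hat\fS=\Proj_E\bigl\{(x,E)\in[0,1]^d\times I_0:\ \forall k\ \exists E'_k\ \bigl(p_k(x,E'_k)=0\ \wedge\ (E-E'_k)^2<r_k^2\bigr)\bigr\}.
\]
By the quantitative Tarski--Seidenberg bounds for semialgebraic sets (cf.\ \cite[Ch.~9]{Bou05}), eliminating the $d$ phase coordinates and the $k_0+1$ auxiliary energy variables keeps the degree polynomial in $N^{(k_0)}$, with exponent depending only on $d$; hence $\hat\fS$ is semialgebraic of degree $\le(N^{(k_0)})^{C(a,b)}$, and being an open, bounded subset of $\R$ it is a disjoint union of at most $(N^{(k_0)})^{C(a,b)}$ bounded open intervals $J_i$.

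To conclude, from the sandwich,
\[
\bigl(\fS_{N,\omega}(s,k_0,N^{-1/2}(\rho-\epsilon)-\epsilon)\bigr)^{(\rho')}\setminus\fS_{N,\omega}(s,k_0,\rho)\ \subseteq\ \hat\fS^{(\rho')}\setminus\hat\fS\ \subseteq\ \bigcup_i\bigl(J_i^{(\rho')}\setminus J_i\bigr),
\]
and since $\mes(J_i^{(\rho')}\setminus J_i)\le2\rho'$ for each of the $\le(N^{(k_0)})^{C(a,b)}$ intervals, the left-hand side has measure $<(N^{(k_0)})^{C(a,b)}\rho'$ after enlarging the constant. The only genuinely delicate point is the degree count for $\hat\fS$: one must track that the elimination of $x$ and of the $k_0+1$ auxiliary variables keeps the degree polynomial in $N^{(k_0)}$ uniformly, and that the polynomial approximation error $\delta$ is negligible relative to $\epsilon_k$ \emph{simultaneously at all $k_0+1$ scales} — which is precisely why the parameters $\epsilon_k$ appear in \cref{thm:D}. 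Everything else is bookkeeping.
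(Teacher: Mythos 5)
Your proof is correct and follows the paper's argument in all essentials: replace $V$ by a polynomial $\tilde V$, sandwich $\fS_{N,\omega}$ between versions of a single semialgebraic set of degree polynomial in $N^{(k_0)}$, and bound the measure of the $\rho'$-fattening by the number of connected components of that set. The only difference is cosmetic: the paper encodes $\dist(E,\spec \tilde H)<\rho_k$ by the single polynomial inequality $\normhs{(\tilde H-E)^{-1}}>\rho_k^{-1}$ and projects only in $x$ (this is where the $N^{\pm 1/2}$ factors in the statement actually come from, via $\norm{\cdot}\le \normhs{\cdot}\le \sqrt{N}\norm{\cdot}$, whereas in your version they are just slack), while you quantify existentially over roots of the characteristic polynomials and invoke Tarski--Seidenberg for the extra $k_0+1$ energy variables as well.
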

\begin{proof}
  Take $ \tilde V $ as in \cref{eq:V-tilde} and consider the set of $ (x,E) $ such that
  \begin{equation*}
    \normhs{(\tilde H_{[-N^{(k)},N^{(k)}]}(x,\omega)-E)^{-1}}> \rho_k^{-1},\ k=0,\ldots,k_0.
  \end{equation*}
  This set is semialgebraic of degree less than $ (N^{(k_0)})^C $. We let $ \tilde \fS_{N,\omega}(s,k_0,\rho) $ be
  its projection onto the $ E $-axis. By the Tarski-Seidenberg principle, $ \tilde \fS_{N,\omega}(s,k_0,\rho) $ is
  also semialgebraic of degree less than $ (N^{(k_0)})^{C'} $ and hence it has less than $ (N^{(k_0)})^{C''} $
  connected components (see \cite[Ch.\ 9]{Bou05}).
  Note that
  \begin{equation*}
    |E_j^{[-N^{(k)},N^{(k)}]}(x,\omega)-\tilde E_j^{[-N^{(k)},N^{(k)}]}(x,\omega)|<\exp(-N^{3/2}),
    \ k=0,\ldots,k_0
  \end{equation*}
  (recall \cref{eq:E-tilde}) and therefore
  \begin{equation*}
    \fS_{N,\omega}(s,k_0,\rho)\subset \tilde \fS_{N,\omega}(s,k_0,\rho+\epsilon),\quad
    \tilde \fS_{N,\omega}(s,k_0,\rho)\subset \fS_{N,\omega}(s,k_0,N^{1/2}\rho+\epsilon)
  \end{equation*}
  (recall that $ \norm{\cdot}\le \normhs{\cdot}\le \sqrt{N} \norm{\cdot} $ on $ \C^{N\times N} $). It follows that
  \begin{multline*}
    (\fS_{N,\omega}(s,k_0,N^{-1/2}(\rho-\epsilon)-\epsilon))^{(\rho')}\setminus
    \fS_{N,\omega}(s,k_0,\rho)\\
    \subset (\tilde \fS_{N,\omega}(s,k_0,N^{-1/2}(\rho-\epsilon)))^{(\rho')}
    \setminus \tilde \fS_{N,\omega}(s,k_0,N^{-1/2}(\rho-\epsilon)).
  \end{multline*}
  Since $ \tilde \fS_{N,\omega} $ has less than $ (N^{(k_0)})^{C''} $ components,
  \begin{equation*}
    \mes \left( (\tilde \fS_{N,\omega}(s,k_0,N^{-1/2}(\rho-\epsilon)))^{(\rho')}
      \setminus \tilde \fS_{N,\omega}(s,k_0,N^{-1/2}(\rho-\epsilon)) \right)\lesssim (N^{(k_0)})^{C''}\rho'
  \end{equation*}
  and we are done.
\end{proof}

The following is a finite volume version of the second part of \cref{thm:D} and it will imply the full scale
statement.
\begin{lemma}\label{lem:SN1-SN2}
  Let $ \gamma>0 $,  $ A\ge 1 $, $ \sigma $
  as in (LDT).
  For any $ s\ge s_0(a,b,A) $, $ N\ge N_0(V,a,b,\gamma,A,s) $, there exists a
  set $ \Omega_N=\Omega_N(s) $, $ \mes(\Omega_N)<N^{-A} $, such that the following holds with
  \begin{gather*}
    \fS_{N,\omega}=\fS_{N,\omega}(s,k_0,\rho_N),\ k_0=2^{2d+1}-1,\\
    \rho_{N,0}=\exp(-N^{\sigma/2}),\ \rho_{N,k}=\exp\left( -\frac{\gamma}{10}N \right),k=1,\ldots,k_0.
  \end{gather*}
  If $ \omega\in \T^d\setminus \Omega_N $ and $ L(\omega,E)>\gamma $ for all $ E\in [E',E''] $ and
  $ \oN=\lfloor \exp(N^{1/2})\rfloor $, then
  \begin{equation*}
    \mes \left( (\fS_{N,\omega}\cap [E',E''])\setminus  \fS_{\oN,\omega}\right)
    <\exp\left( -\frac{\gamma}{15}N \right).
  \end{equation*}
\end{lemma}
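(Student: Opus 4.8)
The plan is to compare $\fS_{N,\omega}$ and $\fS_{\oN,\omega}$ through the full-line spectrum $\cS_\omega$: every $E\in\fS_{N,\omega}\cap[E',E'']$ will be shown to lie within $\rho':=2\exp(-\gamma N/10)$ of a slightly shrunken copy of $\fS_{\oN,\omega}$, and then \cref{lem:SN-fattening} will convert this into the stated measure bound on $(\fS_{N,\omega}\cap[E',E''])\setminus\fS_{\oN,\omega}$.

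First I would fix $\omega\in\T^d(a,b)$ outside the set $\Omega_N$, taken to be the union of the exceptional frequency set produced by \cref{lem:SN-NDR} at scale $N$ (measure $<N^{-A}$) together with the (much smaller) frequency-exceptional sets required to run the stabilization results of \cref{sec:localization} at the relevant scales. Given $E\in\fS_{N,\omega}\cap[E',E'']$ and a witnessing phase $x$ — so that $\dist(E,\spec H_{[-N^{(m)},N^{(m)}]}(x,\omega))<\rho_{N,m}$ for all $m$ — these are exactly the hypotheses of \cref{lem:SN-NDR}, which (since $\omega\notin\Omega_N$) yields $k=k(x)\in\{1,\dots,k_0\}$, an index $j=j(x)$, the eigenvalue $E_0:=E_j^{[-N^{(k)},N^{(k)}]}(x,\omega)$ with $|E-E_0|<\exp(-\gamma N/10)$, and the endpoint decay $|\psi_j^{[-N^{(k)},N^{(k)}]}(x,\omega;\pm N^{(k)})|<\exp(-\gamma N^{(k)}/8)$. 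The crucial feature is that the only price \cref{lem:SN-NDR} charges is in $\omega$: for admissible $\omega$ the conclusion holds for \emph{every} phase $x$, which is precisely what sidesteps the fact that a small phase set need not have small spectral image when $d>1$.

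Next I would upgrade $E_0$ to a genuine full-line eigenpair. Since $E_0$ is a finite-volume eigenvalue whose eigenfunction is exponentially small at $\pm N^{(k)}$, extending that eigenfunction by zero and invoking \cref{lem:eigenvalue-stabilization} and \cref{lem:elemspec1} gives $\dist(E_0,\cS_\omega)<2\exp(-\gamma N^{(k)}/8)$; as $L(\omega,\cdot)$ stays above $\gamma/2$ near $E_0$ (\cref{prop:log-Hoelder}) and $\omega\notin\Omega_N$, the spectrum of $H(x,\omega)$ is pure point there (\cref{thm:C}(b), a phase-independent property of $\cS_\omega$), so there is an eigenvalue $E_\infty$ of $H(x,\omega)$ with $|E_\infty-E_0|<2\exp(-\gamma N^{(k)}/8)$ whose normalized eigenvector $\psi_\infty$ is, by the eigenvector-stabilization argument behind \cref{thm:C}(a) and \cref{lem:Cb}, exponentially localized about a short interval $I$ near the origin: $|\psi_\infty(n)|<\exp(-\tfrac{\gamma}{20}\dist(n,I))$ for large $|n|$. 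In particular $|E-E_\infty|<2\exp(-\gamma N/10)=\rho'$. For each $m=0,\dots,k_0$, since $\oN^{(m)}=\oN^{s^m}\gg N^{(k)}$, restricting $\psi_\infty$ to $[-\oN^{(m)},\oN^{(m)}]$ gives
\begin{equation*}
\dist\big(E_\infty,\spec H_{[-\oN^{(m)},\oN^{(m)}]}(x,\omega)\big)\le 2\big|\psi_\infty(\pm\oN^{(m)})\big|<\exp\big(-\tfrac{\gamma}{40}\oN^{(m)}\big),
\end{equation*}
and because $s>1$ and $\sigma<1$ this right-hand side is below $\tilde\rho_m:=\oN^{-1/2}(\rho_{\oN,m}-\epsilon_m)-\epsilon_m$, with $\epsilon_m=\exp(-\oN^{3/2})$ as in \cref{lem:SN-fattening} — for $m\ge1$ because $\oN^{s}\gg\oN$, for $m=0$ because $\gamma\oN\gg\oN^{\sigma/2}$. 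Thus $E_\infty\in\fS_{\oN,\omega}(s,k_0,\tilde\rho)$ (with common witness $x$), whence $E\in(\fS_{\oN,\omega}(s,k_0,\tilde\rho))^{(\rho')}$.

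Granting that the upgrading step succeeds off a set of energies whose measure is itself $o(\exp(-\gamma N/15))$ — this is the one non-routine point, discussed below — we conclude
\begin{equation*}
(\fS_{N,\omega}\cap[E',E''])\setminus\fS_{\oN,\omega}\subseteq\big(\fS_{\oN,\omega}(s,k_0,\tilde\rho)\big)^{(\rho')}\setminus\fS_{\oN,\omega}(s,k_0,\rho_{\oN}),
\end{equation*}
and since $\tilde\rho=\oN^{-1/2}(\rho_{\oN}-\epsilon)-\epsilon$ componentwise, \cref{lem:SN-fattening} applied with $\oN$ in place of $N$, with $\rho=\rho_{\oN}$, and with the above $\rho'$, bounds the measure of the right-hand side by $(\oN^{(k_0)})^{C(a,b)}\rho'\le\exp(C s^{k_0}N^{1/2})\cdot2\exp(-\gamma N/10)<\exp(-\gamma N/15)$ for $N$ large, since $N^{1/2}=o(N)$. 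The main obstacle is the upgrading step: it must be available for essentially \emph{all} $E\in\fS_{N,\omega}\cap[E',E'']$, not just those whose witness $x$ avoids the phase-exceptional sets of \cref{thm:A}, \cref{thm:B} and \cref{thm:C}; bounding the measure of the leftover energies — where "small $\cB\subset\T^d$ does not force small $E_j^{[-N,N]}(\cB,\omega)$ for $d>1$" is exactly the difficulty — forces one back into Bourgain's semialgebraic/steep-planes elimination, now applied to the relevant phase-exceptional set, and keeping the scales $\ell$, $N^{(k)}$, $\oN^{(m)}$ mutually compatible in that step is the delicate part of the whole argument.
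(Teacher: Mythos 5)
Your overall architecture (apply \cref{lem:SN-NDR} to a witness phase, transfer the localized eigenvalue up to scale $\oN$, then convert the resulting neighborhood inclusion into a measure bound via \cref{lem:SN-fattening}) matches the paper, and your final bookkeeping with $\tilde\rho=\oN^{-1/2}(\rho_{\oN}-\epsilon)-\epsilon$ and the degree bound $(\oN^{(k_0)})^{C}\rho'$ is correct. But the step you yourself flag as unresolved — making the ``upgrading'' work when the witness phase $x$ lies in the phase-exceptional sets of \cref{thm:A}--\cref{thm:C} — is a genuine gap, and the route you propose to close it (another round of semialgebraic elimination applied to the phase-exceptional set, or a measure estimate on the leftover energies) is not what is needed and is exactly the kind of ``small phase set $\Rightarrow$ small energy set'' argument the paper warns fails for $d>1$. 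Your detour through the full-line operator also cannot be justified as written: pure point spectrum and exponential decay of generalized eigenfunctions (\cref{thm:C}(b), \cref{lem:Cb}) are statements about $H(x,\omega)$ for $x$ \emph{outside} $\hat\cB_{N_0,\omega}$; only the set $\cS_\omega$ is phase-independent, not the spectral type, so you cannot invoke them at an arbitrary witness $x$.

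The missing idea is much simpler: you never need the witness $x$ itself to be good, only some $x'$ extremely close to it, and you are free to change the witness because $\fS_{\oN,\omega}$ is a union over \emph{all} phases. The endpoint-decay conclusion \cref{eq:Nk-psi} of \cref{lem:SN-NDR} is already stated for every $x'$ with $|x'-x|<\exp(-(\log N)^{3/\sigma})$, while $\mes(\cB_{\oN,\omega})<\exp(-\exp((\log\oN)^{\sigma/10}))$ is far smaller than the volume of that ball, so one picks $x'\notin\cB_{\oN,\omega}$ in the ball. Then \cref{cor:localglob1} (not the full-line spectrum) produces an eigenvalue $E_i^{[-\oN,\oN]}(x',\omega)$ within $\lesssim\exp(-\gamma N^{(k)}/8)$ of $E_j^{[-N^{(k)},N^{(k)}]}(x',\omega)$ whose eigenvector carries mass $\ge(2N^{(k)}\oN)^{-1/2}$ on $[-N^{(k)},N^{(k)}]$; since $x'$ is a good phase, \cref{thm:A} at scale $\oN$ forces its localization center $I$ to sit near $[-N^{(k)},N^{(k)}]$, hence the eigenvector decays like $\exp(-\gamma\oN/8)$ at $\pm\oN$, and \cref{lem:eigenvalue-stabilization} puts $E_i^{[-\oN,\oN]}(x',\omega)$ into $\fS_{\oN,\omega}(s,k_0,\rho')$ with witness $x'$. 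No further elimination and no compatibility-of-scales analysis is required; the exceptional frequency set is just the union of the one from \cref{lem:SN-NDR} and the one from \cref{thm:A} at scale $\oN$.
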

\begin{proof}
  Let $ \Omega_N' $ be the set from \cref{lem:SN-NDR} with $ A+1 $ instead of $ A $.
  Let $ \Omega_{\oN} $, $ \cB_{\oN,\omega} $, be the sets
  from \cref{thm:A} with $ \epsilon=1/10 $, $ [-\oN,\oN] $ instead of $ [1,\oN] $,  and $ \gamma/2 $ instead of
  $ \gamma $. The conclusion will hold
  with $ \Omega_N=\Omega_N'\cup \Omega_{\oN} $. Clearly the measure estimate for $ \Omega_N $ is satisfied.

  Take $ E\in \fS_{N,\omega}\cap [E',E''] $. Then by \cref{lem:SN-NDR},
  there exist $ x\in \T^d $, $ k=k(x) $, $ j=j(x) $, such that \eqref{eq:Nk-E} and \eqref{eq:Nk-psi} hold. Since
  \begin{equation*}
    \mes(\cB_{\oN,\omega})<\exp(-\exp((\log \oN)^{\sigma/10})),
  \end{equation*}
  there exists $ x'\notin \cB_{\oN,\omega} $,
  \begin{equation*}
    |x'-x|<\exp(-\exp((\log \oN)^{\sigma/20}))<\exp(-(\log N)^{3/\sigma}).
  \end{equation*}
  Using \cref{cor:localglob1}, we have that there exists $ E_i^{[-\oN,\oN]}(x',\omega) $ such that
  \begin{gather*}
	|E_i^{[-\oN,\oN]}(x',\omega)-E_j^{[-N^{(k)},N^{(k)}]}(x',\omega)|
    \lesssim \exp \left( -\frac{\gamma}{8}N^{(k)} \right),\\
    \max_{n\in [-N^{(k)},N^{(k)}]} |\psi_i^{[-\oN,\oN]}(x',\omega;n)|>(2N^{(k)}\oN)^{-1/2}.
  \end{gather*}
  Note that
  \begin{multline*}
    |E-E_i^{[-\oN,\oN]}(x',\omega)|\lesssim \exp\left( -\frac{\gamma}{10}N \right)
    +C\exp(-\exp((\log \oN)^{\sigma/20}))
    +\exp \left( -\frac{\gamma}{8}N^{(k)} \right)\\
    < \exp\left( -\frac{\gamma}{11}N \right),
  \end{multline*}
  so by \cref{prop:log-Hoelder}, $ L(E_i^{[-\oN,\oN]}(x',\omega),\omega)>\gamma/2 $, provided $ N $ is large
  enough.
  Since $ x'\notin \cB_{\oN,\omega} $, by \cref{thm:A}, there exists $ I=I(E_i^{[-\oN,\oN]}(x',\omega)) $ such that
  $ |I|<\exp((\log \oN)^{1/2}) $ and
  \begin{equation*}
    |\psi_i^{[-\oN,\oN]}(x',\omega;n)|<\exp \left( -\frac{\gamma}{4} \dist(n,I) \right),
  \end{equation*}
  provided $ \dist(n,I)>\exp((\log \oN)^{\sigma/2}) $. It follows that
  \begin{equation*}
    \dist([-N^{(k)},N^{(k)}],I)\le \exp((\log \oN)^{\sigma/2})
  \end{equation*}
  and in particular
  \begin{equation*}
    |\psi_i^{[-\oN,\oN]}(x',\omega;\pm \oN)|<\exp \left( -\frac{\gamma}{8} \oN \right).
  \end{equation*}
  Using \cref{lem:eigenvalue-stabilization} we get that
  \begin{equation*}
    E_i^{[-\oN,\oN]}(x',\omega)\in \fS_{\oN,\omega}(s,k_0,\rho'),
    \ \rho'_k=4\exp \left( -\frac{\gamma}{8} \oN \right).
  \end{equation*}
  and therefore
  \begin{equation*}
    E\in \left( \fS_{\oN,\omega}(s,k_0,\rho') \right)^{(\rho'')},\ \rho''=\exp\left( -\frac{\gamma}{11}N \right).
  \end{equation*}
  Thus we showed
  \begin{equation*}
    \fS_{N,\omega}\cap [E',E'']\subset \left( \fS_{\oN,\omega}(s,k_0,\rho') \right)^{(\rho'')}.
  \end{equation*}
  Then
  \begin{multline*}
    (\fS_{N,\omega}\cap [E',E''])\setminus \fS_{\oN,\omega}\subset
    \left( \fS_{\oN,\omega}(s,k_0,\rho') \right)^{(\rho'')}\setminus \fS_{\oN,\omega}\\
    \subset
    \left( \fS_{\oN,\omega}(s,k_0,\oN^{-1/2}(\rho_{\oN}-\epsilon)-\epsilon) \right)^{(\rho'')}
    \setminus \fS_{\oN,\omega},\ \epsilon_k=\exp(-\oN^{3/2})
  \end{multline*}
  and the conclusion follows from \cref{lem:SN-fattening}.
\end{proof}

We can now prove the second part of \cref{thm:D}.
\begin{prop}\label{prop:SN-S}
  Let $ \gamma>0 $,  $ A\ge 1 $.
  For any $ s\ge s_0(a,b,A) $, $ N\ge N_0(V,a,b,\gamma,A,s) $,  there exists a
  set $ \Omega_N=\Omega_N(s) $, $ \mes(\Omega_N)<N^{-A} $, such that the following holds with
  $ \fS_{N,\omega} $ as in \cref{lem:SN1-SN2}.
  If $ \omega\in \T^d\setminus \Omega_N $ and $ L(\omega,E)>\gamma $ for all $ E\in [E',E''] $  then
  \begin{equation*}
    \mes \left( (\fS_{N,\omega}\cap [E',E''])\setminus  \cS_\omega \right)<\exp\left( -\frac{\gamma}{20}N \right).
  \end{equation*}
\end{prop}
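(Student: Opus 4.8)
The plan is to iterate \cref{lem:SN1-SN2} so as to descend from scale $N$ all the way to the infinite-volume spectrum, and then to identify the resulting ``limit'' of the finite-scale restricted spectra with $\cS_\omega$ by means of \cref{lem:SN-NDR}.

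First I would set $N_0=N$ and $N_{k+1}=\lfloor\exp(N_k^{1/2})\rfloor$, so that $N_{k+1}$ is precisely the scale $\oN$ furnished by \cref{lem:SN1-SN2} at scale $N_k$. I would define the exceptional set by $\Omega_N:=\bigcup_{k\ge0}\Omega_{N_k}$, where $\Omega_{N_k}=\Omega_{N_k}(s)$ is the set from \cref{lem:SN1-SN2} applied at scale $N_k$ with $A+1$ in place of $A$; since $N_k$ grows faster than exponentially, $\mes(\Omega_N)\le\sum_{k}N_k^{-(A+1)}<N^{-A}$ for $N\ge N_0(A)$, provided $s\ge s_0(a,b,A+1)$. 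Note that, by its construction, $\Omega_{N_k}$ already contains the exceptional set of \cref{lem:SN-NDR} at scale $N_k$, so that for $\omega\notin\Omega_N$ the conclusions of \emph{both} lemmas are available at every scale $N_k$. Fix such an $\omega$ with $L(\omega,E)>\gamma$ on $[E',E'']$ and write $X_k:=\fS_{N_k,\omega}\cap[E',E'']$. By \cref{lem:SN1-SN2} at scale $N_k$ we have $\mes(X_k\setminus\fS_{N_{k+1},\omega})<\exp(-\gamma N_k/15)$, and since $X_k\subset[E',E'']$ this gives $\mes(X_k\setminus X_{k+1})<\exp(-\gamma N_k/15)$. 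The elementary set inclusion $X_0\setminus\bigcap_{k\ge0}X_k\subset\bigcup_{k\ge0}(X_k\setminus X_{k+1})$ then yields
\begin{equation*}
  \mes\Bigl(X_0\setminus\bigcap_{k\ge0}X_k\Bigr)\le\sum_{k\ge0}\exp(-\gamma N_k/15)<2\exp(-\gamma N/15)<\exp(-\gamma N/20)
\end{equation*}
for $N$ large. Since $X_0=\fS_{N,\omega}\cap[E',E'']$, everything reduces to proving $\bigcap_{k\ge0}X_k\subset\cS_\omega$.

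For this remaining inclusion --- which I expect to be the main point --- I would fix $E\in\bigcap_k X_k$ and show $\dist(E,\cS_\omega)\to 0$ along the scales $N_k$. For each $k$, membership $E\in\fS_{N_k,\omega}$ supplies a phase $x$ with $\dist(E,\spec H_{[-N_k^{(j)},N_k^{(j)}]}(x,\omega))<\rho_{N_k,j}$ for $j=0,\dots,k_0$, which is exactly the hypothesis of \cref{lem:SN-NDR} at scale $N_k$. Applying \cref{lem:SN-NDR} (legitimate since $\omega\notin\Omega_{N_k}$) produces $k'\in\{1,\dots,k_0\}$ and an index $j$ such that, with $M:=N_k^{(k')}\ge N_k$, the eigenvalue $E_j^{[-M,M]}(x,\omega)$ satisfies $|E-E_j^{[-M,M]}(x,\omega)|<\exp(-\gamma N_k/10)$ while its normalized eigenvector has boundary values $<\exp(-\gamma M/8)\le\exp(-\gamma N_k/8)$, i.e. this eigenvalue is localized away from the edges of $[-M,M]$. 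By \cref{lem:eigenvalue-stabilization} (applied with $\Lambda_0=[-M,M]$ and $\Lambda=\Z$) this forces $\dist(E_j^{[-M,M]}(x,\omega),\spec H(x,\omega))<2\exp(-\gamma N_k/8)$, and $\spec H(x,\omega)=\cS_\omega$ by minimality of the shift, so $\dist(E,\cS_\omega)<\exp(-\gamma N_k/10)+2\exp(-\gamma N_k/8)$. Letting $k\to\infty$ and using that $\cS_\omega$ is closed gives $E\in\cS_\omega$, which completes the proof. The essential difficulty, and the reason \cref{lem:SN-NDR} is needed rather than a bare eigenvalue count, is that an eigenvalue of a finite box whose eigenvector is concentrated near an edge need not be anywhere near $\cS_\omega$; only the localized eigenvalue produced via Bourgain's elimination of multiple resonances can be extended by zero to an approximate eigenfunction of $H(x,\omega)$ and thereby pushed to the infinite volume.
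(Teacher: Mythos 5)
Your proposal is correct and follows essentially the same route as the paper: the same iteration of scales $N_{k+1}=\lfloor\exp(N_k^{1/2})\rfloor$, the same union of exceptional sets from \cref{lem:SN1-SN2}, the same decomposition $X_0\setminus\bigcap_k X_k\subset\bigcup_k(X_k\setminus X_{k+1})$, and the same use of \cref{lem:SN-NDR} together with \cref{lem:eigenvalue-stabilization} to show $\bigcap_k X_k\subset\cS_\omega$. The only difference is that you spell out explicitly (and correctly) that the exceptional set of \cref{lem:SN1-SN2} already contains that of \cref{lem:SN-NDR}, a point the paper leaves implicit.
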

\begin{proof}
  Let $ N_j=\lfloor \exp(N_{j-1}^{1/2}) \rfloor $, $ N_0=N $. Let $ \Omega_{N_j}' $ be the set from
  \cref{lem:SN1-SN2} with $ A+1 $ instead of $ A $. The conclusion will hold with
  $ \Omega_N=\bigcup_j \Omega_{N_j} $. Clearly the measure estimate for $ \Omega_N $ is satisfied.

  Using \cref{lem:eigenvalue-stabilization} it follows from \cref{lem:SN-NDR} that if
  $ E\in \fS_{N,\omega}\cap [E',E''] $, then $ \dist(E,\cS_\omega)<\exp(-\gamma N/11) $ for $ N $ large enough.
  Then we have
  \begin{equation*}
     \bigcap_{j\ge 1} \fS_{N_j,\omega}\cap [E',E'']\subset \cS_\omega
  \end{equation*}
  and
  \begin{multline*}
	(\fS_{N,\omega}\cap [E',E''])\setminus  \cS_\omega
    \subset (\fS_{N,\omega}\cap [E',E''])\setminus \left(\bigcap_{j\ge 1} \fS_{N_j,\omega}\cap [E',E'']\right)\\
    =(\fS_{N,\omega}\cap [E',E''])\setminus \bigcap_{j\ge 1} \fS_{N_j,\omega}
    \subset \bigcup_{j\ge 0} \left( (\fS_{N_j,\omega}\cap [E',E''])\setminus\fS_{N_{j+1},\omega} \right).
  \end{multline*}
  So the conclusion follows from \cref{lem:SN1-SN2}.
\end{proof}

\cref{thm:E} is an immediate consequence of the following proposition.
\begin{prop}\label{prop:pre-homogeneous}
  Let $ \gamma>0 $,  $ A\ge 1 $, $ \sigma $
  as in (LDT).
  For any $ N\ge N_0(V,a,b,\gamma,A) $ there exists a
  set $ \Omega_N $, $ \mes(\Omega_N)<N^{-A} $, such that the following holds.
  If $ \omega\in \T^d\setminus \Omega_N $, $ E_0\in \cS_\omega $ and $ L(\omega,E_0)>\gamma $, then
  \begin{equation*}
    \mes \left( (E_0-\delta_N,E_0+\delta_N) \cap \cS_\omega \right)> \frac{2}{3}\delta_N,
    \ \delta_N=\exp(-(\log N)^{4/\sigma^2}).
  \end{equation*}
\end{prop}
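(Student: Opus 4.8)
The plan is to combine three ingredients: the identification $\cS_\omega\approx\fS_{N,\omega}$ from \cref{thm:D}, the existence near $E_0$ of a finite–scale eigenvalue localized well inside $[-N,N]$, and the non‑flatness of eigenvalue graphs from \cref{lem:Wegner-graphs}. Fix $\epsilon\in(0,1/5)$ once and for all, and let $\Omega_N$ be the union of the exceptional sets of \cref{thm:D} (i.e.\ of \cref{prop:SN-S}) and of \cref{thm:A}, taken at scale $N$ with the parameter $2A$ in place of $A$, so that $\mes(\Omega_N)<N^{-A}$. By \cref{prop:log-Hoelder} there is $\epsilon_0=\epsilon_0(V,a,b,|E_0|,\gamma)$ with $L(\omega,E)>\gamma/2$ for $|E-E_0|<\epsilon_0$; we may assume $\delta_N<\epsilon_0$ and set $[E',E'']=[E_0-\epsilon_0,E_0+\epsilon_0]$. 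By \cref{lemma:full-to-restricted} and \cref{prop:SN-S} (applied at scale $N$ with $\gamma/2$ in place of $\gamma$) one has $\cS_\omega\cap[E',E'']\subset\fS_{N,\omega}$ and $\mes\big((\fS_{N,\omega}\cap[E',E''])\setminus\cS_\omega\big)<\exp(-\gamma N/40)$. Since $\exp(-\gamma N/40)\ll\delta_N$, it therefore suffices to exhibit an interval $J\subset\fS_{N,\omega}\cap[E',E'']$ of length $\gg\delta_N$ containing a point within $\tfrac{1}{50}\delta_N$ of $E_0$: an elementary interval computation then gives $\mes\big(\fS_{N,\omega}\cap(E_0-\delta_N,E_0+\delta_N)\big)\geq 0.98\,\delta_N$, whence $\mes\big(\cS_\omega\cap(E_0-\delta_N,E_0+\delta_N)\big)>\tfrac{2}{3}\delta_N$.

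To build $J$ I would first pin down a good state. As $\cS_\omega$ is $x$–independent, $E_0\in\spec H(x,\omega)$ for every $x$; for a generic $x_1$, \cref{thm:C}(b) (with $\gamma/3$) shows the spectrum of $H(x_1,\omega)$ in $[E',E'']$ is pure point with exponentially localized eigenfunctions, so there is an eigenvalue within $\delta_N/100$ of $E_0$. Translating the phase by the location of its eigenfunction (covariance of the family), and then perturbing slightly off $\cB_{N,\omega}$, one obtains a generic $x_0$ for which, after restricting to $[-N,N]$ and using \cref{lem:eigenvector-stability} together with \cref{thm:A} and \cref{thm:B}, there is an eigenvalue $\mathcal E_0:=E_{j_0}^{[-N,N]}(x_0,\omega)$ with $|\mathcal E_0-E_0|<\delta_N/50$ whose eigenvector is localized on $I_0=I(x_0,\omega,\mathcal E_0)\subset[-N/3,N/3]$, $|I_0|<\exp((\log N)^{5\epsilon})$. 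Extending $\psi_{j_0}^{[-N,N]}(x_0,\omega)$ by zero one checks (via the decay estimate of \cref{thm:A}) that $\mathcal E_0\in\fS_{N,\omega}$.

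The interval $J$ is then produced by sweeping over a ball. Put $r:=\exp\big(-(\log N)^{2/\sigma}/(2d)\big)$, $B:=B(x_0,r)$, and for $x\in B$ let $\mathcal E(x):=E_{j_0}^{[-N,N]}(x,\omega)$. This is continuous, and by the separation estimate of \cref{thm:B} it never undergoes an avoided crossing, so it stays a simple localized eigenvalue with $L(\omega,\mathcal E(x))>\gamma/2$ and $|\mathcal E(x)-E_0|<\delta_N$ throughout $B$. The key observation is: if $x\in B\setminus\cB_{N,\omega}$ and $I(x):=I(x,\omega,\mathcal E(x))$ lies in $[-0.6N,0.6N]$, then $\mathcal E(x)\in\fS_{N,\omega}$ — indeed, extending $\psi_{j_0}^{[-N,N]}(x,\omega)$ by zero, translating it so that $I(x)$ is centered at the origin, and restricting to $[-N^{(i)},N^{(i)}]$, the boundary defect is $<\rho_{N,i}/2$ for each $i=0,\dots,k_0$, so $\mathcal E(x)$ lies within $\rho_{N,i}$ of $\spec H_{[-N^{(i)},N^{(i)}]}(x',\omega)$ for a suitable shifted phase $x'$ and all $i$. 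Let $\mathrm{Bad}:=\{x\in B:\ x\in\cB_{N,\omega}\ \text{or}\ I(x)\not\subset[-0.6N,0.6N]\}$. Using that $\mes(\cB_{N,\omega})$ is super‑exponentially small in $\log N$, together with a Wegner‑type bound on the measure of phases at which an eigenvalue within $\delta_N$ of $E_0$ is localized within $0.4N$ of an endpoint of $[-N,N]$, one obtains $\mes(\mathrm{Bad})\ll\mes(B)$; combining this with the bounded (semialgebraic) complexity of $\mathrm{Bad}$, the open set $B\setminus\mathrm{Bad}$ has a connected component $S\ni x_0$ with $\mes(S)\geq\tfrac12\mes(B)$. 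Applying \cref{lem:Wegner-graphs} to $E_{j_0}^{[-N,N]}$ over $S$ (with $\gamma/2$, and with $\ell$ chosen so that $(2\log N)^{1/\sigma}\leq\ell\leq N$ and $\ell^{\sigma/2}$ slightly exceeds $(\log N)^{2/\sigma}/2$, so that $\exp(-\ell)\gg\delta_N$) shows that $J:=\mathcal E(S,\omega)$ is an interval of length $\geq\exp(-\ell)\gg\delta_N$; by the above $J\subset\fS_{N,\omega}\cap[E',E'']$ and $\mathcal E_0\in J$, as required.

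\textbf{Main obstacle.} The hard part is the control of $\mathrm{Bad}$: one must show that as $x$ runs over a ball of radius $r=\exp(-(\log N)^{2/\sigma}/(2d))$ the localization interval of the tracked eigenvalue $\mathcal E(x)$ cannot drift within $0.4N$ of the edge of $[-N,N]$ except on a set of phases of measure $\ll\mes(B)$, and that this set does not disconnect $B$. The radius $r$ is vastly larger than the radius on which \cref{prop:A} guarantees that localization is unperturbed, so this cannot be read off directly from the finite‑scale localization theorems and genuinely requires a measure/complexity estimate for $\mathrm{Bad}$; this is also where the exponent $4/\sigma^2$ in $\delta_N$ is forced, through the relation $\ell^{\sigma/2}\approx(\log N)^{2/\sigma}\approx\log(1/\mes B)$ between the Wegner‑graphs threshold and the size of the sweeping ball.
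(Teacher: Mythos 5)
Your overall architecture matches the paper's at the skeleton level: produce a finite-volume eigenvalue branch near $E_0$ whose eigenfunction stays localized away from the edges as the phase sweeps a ball $S$, deduce that its image lies in $\fS$, get a lower bound $\gtrsim\delta_N$ on the length of that image from \cref{lem:Wegner-graphs}, and finish with \cref{prop:SN-S} and an elementary interval computation. The difference is where the ``uniform over $S$'' edge-localization comes from, and that is exactly where your argument has a genuine gap --- one you candidly flag yourself. Working at the fixed scale $[-N,N]$, you must show that the localization interval $I(x)$ of the tracked branch $E_{j_0}^{[-N,N]}(x,\omega)$ stays inside $[-0.6N,0.6N]$ for all $x$ in a ball of radius $\exp(-(\log N)^{2/\sigma}/(2d))$, outside a set $\mathrm{Bad}$ that is both small in measure and does not disconnect the ball. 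Neither half is available from the paper's toolbox: \cref{thm:A} controls the position of $I(x)$ only under phase perturbations of size $\exp(-\exp((\log N)^{2\epsilon\sigma}))$, vastly smaller than your ball; no Wegner-type bound is proved for the event ``the eigenvalue near $E_0$ is an edge state''; and the connectivity claim cannot be rescued by semialgebraic complexity, since $\cB_{N,\omega}$ is assembled from Cartan sets (via \cref{prop:NDR-elimination}) and is not semialgebraic, while \cref{lem:Wegner-graphs} genuinely requires $S$ connected. (A smaller slip: $|\mathcal{E}(x)-E_0|<\delta_N$ throughout $B$ is false, since the eigenvalue can drift by $O(r)\gg\delta_N$; this is harmless because you only need $\mathcal{E}(S,\omega)\subset[E',E'']$ and one point of $J$ near $E_0$.)

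The paper sidesteps all of this by not fixing the scale: \cref{lem:SN-NDR} (Bourgain's elimination of multiple resonances, \cref{prop:NDR}, applied with the lacunary sets $\cN_k=\{(N^{(k)})^{1/2}\le|n|\le 2N^{(k)}\}$) produces, for each $E_0\in\fS_{N,\omega}$, a phase $x$ and an adaptive scale $N^{(k)}$ at which the scale-$\ell$ determinants, $\ell\simeq(\log N)^{2/\sigma}$, satisfy the (LDT) lower bound for all shifts $2(N^{(k)})^{1/2}\le|m|\le N^{(k)}$. Because these are lower bounds at the tiny scale $\ell$, they are stable under phase perturbations of size $\exp(-(\log N)^{3/\sigma})$, so \cref{prop:NDRloc1} yields the edge decay $|\psi_j^{[-N^{(k)},N^{(k)}]}(x',\omega;\pm N^{(k)})|<\exp(-\gamma N^{(k)}/8)$ for \emph{every} $x'$ in the full ball $S$ --- no exceptional set inside $S$, no connectivity issue. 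Then \cref{lem:eigenvalue-stabilization} places $E_j^{[-N^{(k)},N^{(k)}]}(S,\omega)$ into $\fS_{N^{(k)},\omega}$, and the remainder (Wegner graphs plus \cref{prop:SN-S}) runs as you describe. As written, your proposal does not close its central step, so it does not constitute a proof.
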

\begin{proof}
   Let $ \Omega_N' $ be the exceptional set from \cref{lem:SN1-SN2}, with $ A+1 $ instead of $ A $, $ s=s_0 $, and
  $ \gamma/2 $  instead of $ \gamma $. The conclusion will hold with
  $ \Omega_N= \bigcup_{k=0}^{k_0} \Omega_{N^{(k)}}' $, $ k_0=2^{2d+1}-1 $.

  Let $ \fS_{N,\omega} $ as in \cref{lem:SN1-SN2}.
  By \cref{lemma:full-to-restricted}, $ E_0\in \fS_{N,\omega} $, and by  \cref{lem:SN-NDR},
  there exist $ x $, $ k=k(x) $, $ j=j(x) $, such that
  \begin{gather*}
    |E_0-E_j^{[-N^{(k)},N^{(k)}]}(x,\omega)|<\exp\left( -\frac{\gamma}{10}N \right),\\
    |\psi_j^{[-N^{(k)},N^{(k)}]}(x',\omega,\pm N^{(k)})|
    <\exp \left( -\frac{\gamma}{8}N^{(k)} \right),
    \text{ for any }|x'-x|<\exp(-(\log N)^{3/\sigma}).
  \end{gather*}
  Let $ S= \{ x' : |x'-x|<\exp(-(\log N)^{3/\sigma}) \} $. Using \cref{lem:eigenvalue-stabilization} we have
  \begin{equation*}
    E_j^{[-N^{(k)},N^{(k)}]}(S,\omega)\subset \fS_{N^{(k)},\omega}.
  \end{equation*}
  Let $ [E',E'']=[E_0-N^{-1},E_0+N^{-1})] $. By \cref{prop:log-Hoelder}, $ L(\omega,E)>\gamma/2 $ for all
  $ E\in [E',E''] $, provided $ N $ is large enough. We have
  $ E_j^{[-N^{(k)},N^{(k)}]}(S,\omega)\subset [E',E''] $, so \cref{lem:Wegner-graphs} yields
  \begin{equation*}
    \mes(E_j^{[-N^{(k)},N^{(k)}]}(S,\omega))>\exp(-(\log N)^{4/\sigma^2}).
  \end{equation*}
  By \cref{prop:SN-S},
  \begin{equation*}
    \mes(E_j^{[-N^{(k)},N^{(k)}]}(S,\omega)\setminus \cS_\omega)<\exp\left( -\frac{\gamma}{20}N \right).
  \end{equation*}
  Let
  \begin{equation*}
    I=(E_0-\delta_N,E_0+\delta_N)\cap E_j^{[-N^{(k)},N^{(k)}]}(S,\omega).
  \end{equation*}
  Then
  \begin{multline*}
    \mes \left( (E_0-\delta_N,E_0+\delta_N) \cap \cS_\omega \right)\ge
    \mes(I\cap \cS_\omega)> \mes(I)-\exp\left( -\frac{\gamma}{20}N \right)\\
    >\exp(-(\log N)^{4/\sigma^2})-2\exp\left( -\frac{\gamma}{20}N \right)>\frac{2}{3}\delta_N.
  \end{multline*}
\end{proof}

\begin{proof}[Proof of \cref{thm:E}]
  Let $ \Omega_N' $ be the set from \cref{prop:pre-homogeneous} with $ A=3 $. The conclusion
  follows immediately from \cref{prop:pre-homogeneous},
  by letting $ \Omega_N=\bigcup_{N'\ge N}\Omega_{N'}' $ and $ \delta_0=\exp(-(\log N)^{4/\sigma^2})  $
  (provided $ N $ is large enough).
\end{proof}

\def\cprime{$'$}
\providecommand{\bysame}{\leavevmode\hbox to3em{\hrulefill}\thinspace}
\providecommand{\MR}{\relax\ifhmode\unskip\space\fi MR }
\providecommand{\MRhref}[2]{%
  \href{http://www.ams.org/mathscinet-getitem?mr=#1}{#2}
}
\providecommand{\href}[2]{#2}

\end{document}